\newtheorem{theo}{Theorem}[subsection]
\newtheorem*{theo*}{Theorem}
\newtheorem{prop}[theo]{Proposition}
\newtheorem{lemm}[theo]{Lemma}
\newtheorem{coro}[theo]{Corollary}
\newtheorem{defi}[theo]{Definition}
\theoremstyle{remark}
\newtheorem{rema}[theo]{Remark}
\newtheorem{ex}[theo]{Example}
\newtheorem{thmx}{Theorem}
\newcommand{\R}{\mathbb{R}}
\newcommand{\Z}{\mathbb{Z}}
\newcommand{\Q}{\mathbb{Q}}
\newcommand{\F}{\mathbb{F}}
\newcommand{\eps}{\varepsilon}
\newcommand{\spec}{\operatorname{Spec}}
\newcommand{\tr}{\operatorname{tr}}
\newcommand{\Ext}{\operatorname{Ext}}
\newcommand{\GL}{\operatorname{GL}}
\newcommand{\m}{\mathfrak{m}}
\newcommand{\len}{\operatorname{len}}
\newcommand{\Gal}{\operatorname{Gal}}
\newcommand{\A}{\mathcal{A}}
\newcommand{\id}{\operatorname{id}}
\newcommand{\Fil}{\operatorname{Fil}}
\newcommand{\unr}{\operatorname{unr}}
\newcommand{\E}{\mathcal{E}}
\newcommand{\X}{\mathcal{X}}
\newcommand{\Y}{\mathcal{Y}}
\newcommand{\z}{\mathcal{Z}}
\newcommand{\D}{\mathcal{D}}
\newcommand{\V}{\mathcal{V}}
\newcommand{\f}{\mathcal{F}}
\newcommand{\M}{\mathcal{M}}
\newcommand{\cyc}{\chi_{cycl}}
\newcommand{\e}{\overline{e}}
\newcommand{\OO}{\mathcal{O}}
\newcommand{\PP}{\mathbb{P}}
\newcommand{\p}{\mathcal{P}}
\newcommand{\fp}{\mathfrak{p}}
\newcommand{\ind}{\operatorname{ind}}
\newcommand{\den}{\operatorname{denom}}
\newcommand{\mua}{\mu_{\text{aut}}}
\newcommand{\Dcr}[1]{\operatorname{D}^{#1}_{\text{crys}}}
\newcommand{\Dst}[1]{\operatorname{D}^{#1}_{\text{st}}}
\newcommand{\Ddr}[1]{\operatorname{D}^{#1}_{\text{dR}}}
\newcommand{\DcrO}{\operatorname{D}_{\text{crys},0}}
\newcommand{\DdRO}{\operatorname{D}_{\text{dR},0}}
\newcommand{\GF}{\Gal(F/\Q_p)}
\newcommand{\WD}{\operatorname{WD}}
\newcommand{\LL}{\mathcal{L}}
\newcommand{\ltau}{\mathcal{L}_{\tau}}
\newcommand{\Max}{\operatorname{Max}}
\newcommand{\matr}[4]{\left(\begin{smallmatrix}#1 & #2 \\ #3 & #4\end{smallmatrix}\right)}
\title[On the locus of $2$-dimensional crystalline representations]{%
On the locus of $2$-dimensional crystalline representations with a
given reduction modulo $p$}
\author{Sandra Rozensztajn}
\address{UMPA, \'ENS de Lyon\\
UMR 5669 du CNRS\\
46, all\'ee d'Italie\\
69364 Lyon Cedex 07\\
France}
\email{sandra.rozensztajn@ens-lyon.fr}
\subjclass[2010]{11F80,14G22}
\begin{document}

\begin{abstract}
We consider the family of irreducible crystalline representations of
dimension $2$ of $\Gal(\bar\Q_p/\Q_p)$ given by the $V_{k,a_p}$ for a
fixed weight $k\geq 2$.
We study the locus of the parameter $a_p$ where these representations
have a given reduction modulo $p$. We give qualitative results on this
locus and show that for a fixed $p$ and $k$ it can be computed by
determining the reduction modulo $p$ of $V_{k,a_p}$ for a finite number
of values of the parameter $a_p$.
We also generalize these results to other Galois types.
\end{abstract}.

\maketitle

\setcounter{tocdepth}{1}
\tableofcontents

\section*{Introduction}

Let $p$ be a prime number. Fix a continuous representation $\bar\rho$ of
$G_{\Q_p} = \Gal(\bar\Q_p/\Q_p)$
with values in
$\GL_2(\bar\F_p)$. In \cite{Kis08}, Kisin has defined local rings
$R^{\psi}(k,\bar\rho)$ that parametrize the deformations of
$\bar\rho$ to characteristic $0$ representations that are crystalline
with Hodge-Tate weights $(0,k-1)$ and determinant $\psi$. These rings are
very hard to compute, even for relatively small values of $k$.
We are interested in this paper in the
rings $R^{\psi}(k,\bar\rho)[1/p]$. These rings lose some information from 
$R^{\psi}(k,\bar\rho)$, but still retain all the information about the
parametrization of deformations of $\bar\rho$ in characteristic $0$.

We can relate the study of the rings $R^{\psi}(k,\bar\rho)[1/p]$ to
another problem: When we fix an integer $k\geq 2$ and set the character
$\psi$ to be $\cyc^{k-1}$, the set of
isomorphism classes of irreducible crystalline representations of
dimension $2$, determinant $\psi$ and Hodge-Tate weights $(0,k-1)$ is in bijection
with the set $D = \{x\in\bar\Q_p, v_p(x)>0\}$ via a parameter $a_p$, and
we call $V_{k,a_p}$ the representation corresponding to $a_p$.
So given a residual representation $\bar\rho$ we can consider the set
$X(k,\bar\rho)$ of $a_p\in D$ such that the semi-simplified reduction
modulo $p$ of $V_{k,a_p}$ is equal to $\bar\rho^{ss}$.

It turns out that $X(k,\bar\rho)$ has a special form. We say that a
subset of $\bar\Q_p$ is a standard subset if it is a finite union of
rational open disks from which we have removed a finite union of
rational closed disks. Then we show that (when $\bar\rho$ has trivial
endomorphisms, so that the rings $R^{\psi}(k,\bar\rho)$ are
well-defined):

\begin{thmx}[Theorem \ref{bound} and Proposition \ref{defring1/p}]
\label{A}
The set $X(k,\bar\rho)$ is a standard subset of $\bar\Q_p$, and
$R^{\psi}(k,\bar\rho)[1/p]$ is the ring of bounded analytic functions on
$X(k,\bar\rho)$.
\end{thmx}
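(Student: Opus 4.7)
The plan is to realize $X(k,\bar\rho)$ as the $\bar\Q_p$-points of the rigid generic fiber of $\spf R^\psi(k,\bar\rho)$, and then to describe this rigid space explicitly as a standard subset of the open unit disk $D$. By Kisin's work, $R^\psi(k,\bar\rho)$ is a complete Noetherian local ring, and the $\bar\Q_p$-points of its rigid generic fiber $\X := (\spf R^\psi(k,\bar\rho))^{\mathrm{rig}}$ correspond bijectively to the crystalline deformations of $\bar\rho$ of Hodge--Tate weights $(0,k-1)$ and determinant $\psi$, hence to $X(k,\bar\rho)$. The trace of crystalline Frobenius on the Dieudonn\'e module of the universal deformation is an analytic function on $\X$; since the parametrized representations are all irreducible, it takes values in $D$ and defines a rigid-analytic morphism $\iota:\X\to D$ which, on $\bar\Q_p$-points, realizes the inclusion $X(k,\bar\rho)\subseteq D$. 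I would first show that $\iota$ is an open immersion: injectivity on points is clear because $V_{k,a_p}$ is determined by $a_p$, and \'etaleness follows from a tangent-space computation that uses the trivial-endomorphism hypothesis on $\bar\rho$ to conclude that $\X$ and $D$ are both smooth and one-dimensional at corresponding points.

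To identify the image $X(k,\bar\rho)\subseteq D$ as a standard subset, I would exhaust $D$ by affinoid closed disks $D_n=\{x\in D : v_p(x)\geq 1/n\}$ and argue in two steps. First, only finitely many semi-simple reductions $\bar\rho'$ occur on each $D_n$, since $D_n$ is affinoid and the family $(V_{k,a_p})_{a_p\in D_n}$ is bounded. Second, for each such $\bar\rho'$, the locus of $a_p\in D_n$ giving reduction $\bar\rho'$ is cut out by finitely many conditions on $a_p$ coming from an explicit model (via Wach or Breuil--Kisin modules) of the family $V_{k,a_p}$. Combining these two finiteness statements with the observation that each $X(k,\bar\rho')$ is an admissible open in $D$ (a consequence of continuity of reduction modulo $p$) describes $X(k,\bar\rho)\cap D_n$ as a finite Boolean combination of rational disks; gluing across $n$ yields the desired standard form. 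Once $\iota$ is identified as an isomorphism onto this standard subset, the identification $R^\psi(k,\bar\rho)[1/p]\cong\O^b(X(k,\bar\rho))$ follows from Berthelot's description of bounded analytic functions on the rigid generic fiber of a complete Noetherian local ring.

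The main obstacle, as I see it, is the second finiteness step above: while general continuity arguments give that each $X(k,\bar\rho')$ is an admissible open, proving that within a bounded slice of $D$ it is cut out by \emph{finitely}, rather than merely locally finitely, many rational-disk conditions requires concrete and quantitative control of the reduction map $a_p\mapsto \bar V_{k,a_p}^{ss}$. This is presumably where the bulk of the technical work lies, and it is consistent with the computability consequence announced in the abstract: once one knows that finitely many strata suffice on each bounded slice, computing the reduction at finitely many sample values of $a_p$ determines the whole locus $X(k,\bar\rho)$.
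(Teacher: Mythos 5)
Your first half coincides with the paper's: pass to the Berthelot generic fiber $\X(k,\bar\rho)$ of $\spf R^\psi(k,\bar\rho)$, show that $a_p$ (the trace of $\phi$ on the family of Dieudonn\'e modules furnished by Bellovin's results) is an analytic function, and conclude that it gives an open immersion $\X(k,\bar\rho)\to \D$ (the paper deduces \'etaleness simply from injectivity of an analytic map on a smooth one-dimensional space, Corollary \ref{imparameter}, rather than from a tangent-space computation, but that is a detail). The gap is in your route to the standard-subset structure of the image. First, your slicewise finiteness claim is false as stated: on a single affinoid slice $D_n$ infinitely many semi-simplified reductions can occur, because the reducible reductions involve unramified characters whose parameter varies continuously with $a_p$ (already for slope $1$, by the results of \cite{BG13,BGR}, the reduction on the circle $v_p(a_p)=1$ depends on $\overline{a_p/p}\in\bar\F_p^\times$, giving infinitely many isomorphism classes inside $D_1$). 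Second, the step you yourself flag as the main obstacle --- that within each slice the locus with a given reduction is cut out by \emph{finitely} many rational-disk conditions --- is precisely the content to be proved, and no argument is offered. Third, even granting both slicewise statements, gluing over the infinitely many slices $D_n$ only yields a locally finite description; a standard subset requires globally finitely many disks and holes, and nothing in your argument rules out accumulation of such disks towards the boundary $|a_p|\to 1$.

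The missing idea, and the paper's actual mechanism, is that this finiteness is a soft consequence of noetherianness via quasi-affinoid (semi-affinoid) geometry, with no input whatsoever about the reduction map: $R^\psi(k,\bar\rho)[1/p]$ is a quasi-affinoid algebra of open type, Lipshitz--Robinson's structure theory says every $R$-subdomain of the disk is a finite union of special subsets (Lemma \ref{Rspecial}), and Theorem \ref{immap} together with Corollary \ref{qaffstandard} shows that the image of a normal, connected quasi-affinoid space of open type under an open immersion into $\D$ is a standard subset \emph{and} that the map is an isomorphism onto its image. This last point is not automatic from ``bijective open immersion'' --- the paper's example of the open disk together with the unit circle mapping to the closed disk shows a bijective open immersion need not be an isomorphism --- and it is exactly what is needed, combined with de Jong's description of $\Gamma(\X,\O^0_\X)$ and normality of the generic fiber, to identify $R^\psi(k,\bar\rho)[1/p]$ with the ring of bounded analytic functions on $X(k,\bar\rho)$; your appeal to ``Berthelot's description'' silently assumes this isomorphism onto the image. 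Quantitative control of $a_p\mapsto \bar V^{ss}_{k,a_p}$ (local constancy with explicit radius) enters only later, for the finite-set-of-points computability statement (Theorem \ref{algoepscris}), not for Theorem A.
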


This tells us that we can recover $R^{\psi}(k,\bar\rho)[1/p]$ from
$X(k,\bar\rho)$. But we need to be able to understand $X(k,\bar\rho)$
better. 

We can define a notion of complexity for a standard subset $X$ which
is invariant under the absolute Galois group of $E$ for some finite extension $E$ of
$\Q_p$. This complexity is a positive integer $c_E(X)$, which mostly
counts the number of disks involved in the definition of $X$, but with
some arithmetic multiplicity that measures how hard it is to define the
disk on the field $E$. A consequence of this definition is
that if an upper bound for
$c_E(X)$ is given, then $X$ can be recovered from the sets $X\cap F$ for
some finitely many finite
extensions $F$ of $E$, and even from the intersection of $X$ with some
finite set of points under an additional hypothesis
(Theorems \ref{algo} and \ref{algoeps}).

A key point is that
this complexity, which is defined in a combinatorial way, is
actually related to the Hilbert-Samuel multiplicity of the special
fiber of the rings of analytic functions bounded by $1$ on the set $X$
(Theorem \ref{samecompl}). This is especially interesting in the case
where the set $X$ is $X(k,\bar\rho)$ as in this case this Hilbert-Samuel
multiplicity can be bounded explicity using the Breuil-Mézard conjecture.
So, when $\bar\rho$ has trivial endomorphisms and under some conditions
that ensure that the Breuil-Mézard conjecture is known in this case:

\begin{thmx}[Theorem \ref{bound}]
\label{B}
There is an explicit upper bound for the complexity of $X(k,\bar\rho)$.
\end{thmx}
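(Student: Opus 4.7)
The plan is to chain together the three ingredients set up earlier in the introduction. By Theorem \ref{A}, the ring $R^{\psi}(k,\bar\rho)[1/p]$ is identified with bounded analytic functions on $X(k,\bar\rho)$, and a parallel identification (which should be established before Theorem \ref{samecompl}) identifies the integral deformation ring $R^{\psi}(k,\bar\rho)$ itself with the subring of functions whose sup-norm is $\leq 1$ on $X(k,\bar\rho)$. Theorem \ref{samecompl} then turns the combinatorial invariant $c_E(X(k,\bar\rho))$ into the Hilbert--Samuel multiplicity of the special fiber of this integral ring. Thus the task reduces to bounding $e\bigl(R^{\psi}(k,\bar\rho)/\varpi\bigr)$ explicitly in terms of $k$ (and $p$), independently of $\bar\rho$ in a uniform enough way.

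For this I would invoke the Breuil--Mézard conjecture for $\GL_2(\Q_p)$, which has been proved by Kisin in the generic residual case and completed by Pa\v{s}k\=unas: it gives
\[
e\bigl(R^{\psi}(k,\bar\rho)/\varpi\bigr) = \sum_{\sigma} m_\sigma(\bar\rho) \cdot \bigl[\,\overline{\sym^{k-2}\F_p^2} : \sigma\,\bigr],
\]
where $\sigma$ ranges over Serre weights, $m_\sigma(\bar\rho)$ are the automorphic multiplicities attached to $\bar\rho$, and $[\,\cdot : \sigma\,]$ denotes Jordan--H\"older multiplicity. On the right-hand side, the Serre-weight multiplicities $m_\sigma(\bar\rho)$ depend only on the isomorphism class of $\bar\rho$ and are bounded by a small constant (at most $2$ in dimension $2$), while the Jordan--H\"older multiplicities $[\,\overline{\sym^{k-2}\F_p^2} : \sigma\,]$ are classical objects that one can compute (or at least bound) explicitly from the base-$p$ expansion of $k-2$. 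Putting these together yields an explicit upper bound depending only on $k$ and $p$.

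The main obstacle, and the place where I would expect to spend the most care, is the verification that the integral ring $R^{\psi}(k,\bar\rho)$ really corresponds, via the identification of Theorem \ref{A}, to the functions bounded by $1$ on $X(k,\bar\rho)$: this is what allows the Hilbert--Samuel multiplicity of the deformation ring to be interpreted as the multiplicity of a ring attached to the set $X(k,\bar\rho)$, so that Theorem \ref{samecompl} can be applied. The rest of the argument is a bookkeeping of known results and an effective estimate of the Serre-weight content of $\sym^{k-2}$.
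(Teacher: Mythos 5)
Your overall strategy (convert the complexity into a Hilbert--Samuel multiplicity of a special fiber, then bound that multiplicity by the Breuil--M\'ezard conjecture) is indeed the paper's, but the bridge you single out as the ``main obstacle'' is not just delicate, it is false. The integral deformation ring $R^{\psi}(k,\bar\rho)$ is in general \emph{not} the ring of functions bounded by $1$ on $X(k,\bar\rho)$: that ring is $\oplus_i R_i^0$, where $R_i^0$ is the integral closure of $R_i = R^{\psi}(k,\bar\rho)/\fp_i$ in $R_i[1/p]$, and the deformation ring need not be normal, nor need it have the same residue field as its normalization. The paper exhibits this explicitly: for $p=5$, $k=30$, the component attached to $\bar{r}_0(2)$ has $R$ with residue field $\F_p$ while $\A^0_{\Q_p}(X)$ has residue field $\F_{p^2}$ (Paragraph \ref{ex30}), and Section \ref{nonnormal} is devoted to criteria detecting $R_i \neq R_i^0$; Remark \ref{exdiffe} and Paragraph \ref{recover} give further counterexamples. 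So the identification you hope to ``establish before Theorem \ref{samecompl}'' cannot be established, and with it your literal equality between $c_E(X(k,\bar\rho))$ and $\e(R^{\psi}(k,\bar\rho))$ collapses.

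What rescues the argument is that only an inequality is needed, and that inequality is exactly what Proposition \ref{multsmaller} provides: for a complete local domain $A$ and a finite $A$-algebra $B \subset A[1/\pi]$ one has $\e(A) \geq [k_B:k_A]\e(B)$. Applying this with $A = R_i$ and $B = R_i^0 = \A_E^0(X_i)$, where $X_i$ are the pieces of $X(k,\bar\rho)$ defined and irreducible over $E$, gives $c_E(X_i) = [k_{X_i,E}:k_E]\,\e(R_i^0) \leq \e(R_i)$, and summing over the minimal primes (using \cite[Lemme 5.1.6]{BM}) yields $c_E(X(k,\bar\rho)) \leq \e(R(k,\bar\rho))$, which Theorem \ref{BM} converts into the explicit bound $\mua(k,\bar\rho)$ (this is Theorem \ref{bound}). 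Your invocation of Breuil--M\'ezard and of the combinatorial computability of the right-hand side matches the paper's use of $\mua$. Note finally that the statement actually proved as Proposition \ref{boundss} concerns the semi-simplified reduction, so it also needs the bookkeeping of Propositions \ref{sameimage} and \ref{unitcircle} (passing from $\bar\rho$ with trivial endomorphisms to $\bar{r}=\bar\rho^{ss}$, and removing the unit-circle contribution, which lowers the bound by $1$ in the reducible case); this is minor compared with the normality issue, but your sketch omits it.
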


As a consequence we get (with some additional conditions on $\bar\rho$,
that are satisfied for example when $\bar\rho$ is irreducible):

\begin{thmx}[Corollary \ref{algocrisbis}]
\label{C}
The set $X(k,\bar\rho)$ can be determined by computing the
reduction modulo $p$ of $V_{k,a_p}$ for $a_p$ in some finite set.
\end{thmx}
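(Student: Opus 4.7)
The plan is to deduce Theorem~C by combining the three earlier structural results: the geometric identification from Theorem~A, the effective complexity bound from Theorem~B, and the reconstruction algorithm of Theorem~\ref{algoeps} that recovers a standard subset of bounded complexity from its trace on a sufficiently rich finite set of points. The overall shape is therefore: Theorem~A says the object we wish to determine is a standard subset, Theorem~B says we know \emph{a priori} how complicated it can be, and Theorem~\ref{algoeps} says that objects of bounded complexity are combinatorially finite data.

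More concretely, I would first fix a finite extension $E/\Q_p$ over which $\bar\rho$, the determinant character $\psi = \cyc^{k-1}$ and the hypotheses on $\bar\rho$ needed for Theorems~A and~B are defined, so that $X(k,\bar\rho)$ is stable under $\Gal(\bar\Q_p/E)$ and the complexity $c_E(X(k,\bar\rho))$ is defined. By Theorem~A the set $X(k,\bar\rho)$ is a standard subset of $\bar\Q_p$, and by Theorem~B (i.e.\ Proposition~\ref{boundss}) there is an explicit integer $N = N(k,\bar\rho)$ with $c_E(X(k,\bar\rho)) \leq N$. Since $k$, $\bar\rho$, and $E$ are fixed and the bound is explicit via the Breuil--Mézard conjecture, this step is simply a matter of bookkeeping once Theorem~B has been applied.

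Now I would invoke Theorem~\ref{algoeps}: under its additional hypothesis, any $G_E$-stable standard subset $X$ with $c_E(X) \leq N$ is determined by its intersection with a specific finite set $S = S(N, E) \subset \bar\Q_p$ depending only on the complexity bound and on $E$. Applied to $X = X(k,\bar\rho)$, this means that to know $X(k,\bar\rho)$ it suffices to decide, for each of the finitely many $a_p \in S$, whether $a_p \in X(k,\bar\rho)$, i.e.\ whether $V_{k,a_p}^{ss} \simeq \bar\rho^{ss}$. This last test is, by definition, the computation of the reduction modulo $p$ of $V_{k,a_p}$ for $a_p$ in the finite set $S$, giving the statement of Theorem~C.

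The main obstacle I expect is verifying that $X(k,\bar\rho)$ satisfies the additional hypothesis needed to apply the sharper reconstruction result Theorem~\ref{algoeps} rather than only Theorem~\ref{algo}, which would merely reduce the problem to finitely many finite extensions rather than to finitely many points. Concretely, one must check that the residual disks appearing in the standard presentation of $X(k,\bar\rho)$ are controlled well enough (in terms of radii or centers) by the Breuil--Mézard bound so that the finite test set $S$ from Theorem~\ref{algoeps} actually separates all possibilities allowed by the complexity bound $N$. Assuming this compatibility is built into the statements of Theorem~B and Theorem~\ref{algoeps} as formulated earlier in the paper, the proof of Theorem~C is then a direct concatenation of the three results.
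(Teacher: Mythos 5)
Your skeleton matches the paper's: Theorem \ref{A} (standard subset), the Breuil--M\'ezard bound of Proposition \ref{boundss}, and the reconstruction result Corollary \ref{algoeps} are indeed the three ingredients. But there is a genuine gap at exactly the point you flag and then wave away. Corollary \ref{algoeps} requires an $\eps>0$ such that every point of $X$ has an open disk of radius $\eps$ inside $X$ and every point outside has such a disk missing $X$, and this hypothesis is \emph{not} ``built into'' Theorem \ref{B} or into the complexity bound. A bound on $c_E(X)$ controls the number of disks in the standard presentation and the arithmetic (denominators of the valuations of radii, ramification needed to reach the disks), but it gives no lower bound on the radii themselves: a single open disk $D(a,p^{-M})^-$ with $a\in E$ and $M$ arbitrarily large has complexity $1$, and no finite test set fixed in advance can detect it. So the passage from ``finitely many extensions'' (Theorem \ref{algo}, which is all that complexity alone buys, and which is what Theorem \ref{algogen} gives for general types) to ``finitely many points'' cannot be a formal concatenation.

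The missing ingredient, which the paper supplies in the proof of Theorem \ref{algoepscris}, is the local constancy of $a_p\mapsto \bar{V}_{k,a_p}^{ss}$ with an \emph{explicit} radius: by Berger and Berger--Li--Zhu (Proposition \ref{locconst}), one may take for $\eps$ the norm of an element of valuation $\lfloor 3p(k-1)/(p-1)^2\rfloor$. This is precisely the input that is available in the crystalline case but not for general Galois types, which is why the paper only gets the weaker Theorem \ref{algogen} there. A secondary point you skip: since one can only compute the semi-simplified reduction, the paper first replaces $X(k,\bar\rho)$ by $Y(k,\bar{r})$ for $\bar{r}=\bar\rho^{ss}$ nice (Proposition \ref{sameimage}) and treats the locus $|a_p|=1$ separately (Proposition \ref{unitcircle}, Corollary \ref{boundbis}); this is needed to make ``test whether $a_p\in X$'' an actual computation. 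With the local constancy input added and this reduction to $Y(k,\bar{r})$ made, your argument becomes the paper's proof; without them it does not close.
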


In particular, it is possible to compute the set $X(k,\bar\rho)$, and
also the ring $R^{\psi}(k,\bar\rho)[1/p]$, by a finite number of numerical
computations. We give some examples of this in Section \ref{examples}.
One interesting outcome of these computations is that when $\bar\rho$ is
irreducible, in every example that we computed we observed that the upper bound
for the complexity given by Theorem \ref{B} is actually an equality. 
It would be interesting to have an interpretation for this fact and to
know if it is true in general.

Finally, we could ask the same questions about more general rings
parametrizing potentially semi-stable deformations of a given Galois
type, instead of only rings parametrizing crystalline deformations. 
Our method relies on the fact that we work with rings that have relative
dimension $1$ over $\Z_p$, so we cannot use it beyond the case of
$2$-dimensional representations of $G_{\Q_p}$. But in this case we can
actually generalize our results to all Galois types.
In order to do this, we need to introduce a parameter classifying the
representations that plays a role similar to the role the function $a_p$
plays for crystalline representations, and to show that it defines an
analytic function on the rigid space attached to the deformation ring.
This is the result of Theorem
\ref{parameter}. Once we have this parameter, we show that an
analogue of Theorem \ref{A} holds, and an analogue of Theorem
\ref{B} (Theorem \ref{bound}). However we get only a weaker analogue of
Theorem \ref{C} (Theorem \ref{algogen}). 
The main ingredient of this theorem that is known in the crystalline
case, but missing in the
case of more general Galois types, is the fact that the reduction of the
representation is locally constant with respect to the parameter $a_p$,
with an explicit radius for local constancy. 

\subsection*{Plan of the article}

The first three sections contain some preliminaries.
In Section \ref{disks} we prove some results on the smallest degree of an
extension generated by a point of a disk in $\bar{\Q}_p$. These results may be
of independent interest.
In Section \ref{commalg} we prove some results on Hilbert-Samuel
multiplicities and how to compute them for some special rings of
dimension $1$.
In Section \ref{geometry} we introduce the notion of a standard subset of
$\PP^1(\bar\Q_p)$ and prove some results about
some special rigid subspaces of the affine line.

Section \ref{complexity} contains the main technical results. This is
where we introduce the complexity of so-called standard subsets of
$\PP^1(\bar\Q_p)$, and show that it can be defined in either a combinatorial or an
algebraic way. 

We apply these results in Section \ref{potcrys} to the locus of points
parametrizing potentially semi-stable representations of a fixed Galois
type with a given reduction. We also explain some particularities of the
case of parameter rings for crystalline representations.

In Section \ref{examples} we report on some numerical computations that were made
using the results of Section \ref{potcrys} in the case of crystalline
representations, and mention some questions inspired by these
computations.

Finally in Section \ref{paramsect} we explain the construction of a
parameter classifying the representations on the potentially semi-stable
deformation rings.

\subsection*{Notation}

If $E$ is a finite extension of $\Q_p$, we denote its ring of integers by
$\OO_E$, with maximal ideal $\m_E$, and its residue field by $k_E$. We
write $\pi_E$ for a uniformizer of $E$, and $v_E$ for the
valuation on $E$ normalized so that $v_E(\pi_E) = 1$ and its extension to
$\bar{\Q}_p$. 
We write also write $v_p$ for $v_{\Q_p}$.
Finally, $G_E$ denotes the absolute Galois group of $E$.  

If $R$ is a ring and $n$ a positive integer, we denote by $R[X]_{<n}$ the
subspace of $R[X]$ of polynomials of degree at most $n-1$.

If $a\in\bar{\Q}_p$ and $r\in \R$, we write $D(a,r)^+$ for the set
$\{x\in\bar{\Q}_p,
|x-a| \leq r\}$ (closed disk) and $D(a,r)^-$ for the set $\{x\in\bar{\Q}_p,
|x-a| < r\}$ (open disk).

We denote by $\cyc$ the $p$-adic cyclotomic character, and $\omega$ its
reduction modulo $p$. We denote by $\unr(x)$ the unramified character
that sends a geometric Frobenius to $x$.

\section{Points in disks in extensions of the base field}
\label{disks}

\begin{defi}
\label{definedover}
Let $X$ be a subset of $\bar\Q_p$, and let $E$ be an algebraic extension of
$\Q_p$. We say that $X$ is defined over $E$ if it is invariant by the
action of $G_E$.
\end{defi}

Let $D \subset \bar\Q_p$ be a disk (open or closed). It can happen that $D$
is defined over a finite extension $E$ of $\Q_p$,
but $E \cap D$ is empty. 
For example, let $\pi$ be a $p$-th root of $p$ and let $D$ be the
disk $\{x, v_p(x-\pi) > 1/p\}$. Then $D$ is defined over $\Q_p$, as it
contains all the conjugates of $\pi$, that is, the $\zeta_p^i\pi$ for a
primitive $p$-th root $\zeta_p$ of $1$. On the other hand, $D$ does not
contain any element of $\Q_p$. The goal of this section is to understand
the relationship between the smallest ramification degree over $E$ of an
extension field $F$ such that $F \cap D \neq\emptyset$, and the smallest degree
over $E$ of such a field $F$.

In this Section a disk will mean either a closed or an open disk.

The results of this Section are used in the proofs of Propositions
\ref{pointexistsin} and \ref{pointexistsout}.

\subsection{Statements}

\begin{theo}
\label{illdeftheo}
Let $E$ be a finite extension of $\Q_p$.
Let $D$ be a disk defined over $E$. 
Let $e$ be the smallest integer such
that there exists a finite extension $F$ of $E$ with $e_{F/E}=e$ and 
$F \cap D \neq \emptyset$.
Then $e=p^s$ for some $s$, and there exists an extension $F$ of $E$ with
$[F:E] \leq \max(1,p^{2s-1})$ such that $F \cap D \neq \emptyset$.
For $s \leq 1$ any such $F/E$ is totally ramified.
\end{theo}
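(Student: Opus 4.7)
The central tool is an averaging principle: for any finite set $S\subseteq D$ with $|S|$ coprime to $p$, the barycenter $|S|^{-1}\sum_{x\in S}x$ again lies in $D$, because dividing by a $p$-adic unit preserves valuations under the ultrametric inequality. Combined with the $G_E$-invariance of $D$, this says the Galois orbit of any $\alpha\in D$ is contained in $D$, so averaging over any prime-to-$p$ sub-orbit produces a new point of $D$ living in a smaller subfield of the Galois closure.

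The plan is first to show that $e$ is a power of $p$. I would take $F$ realizing the minimum ramification $e$ and $\alpha\in F\cap D$. Let $F_0 = F\cap E^{\unr}$, so $F/F_0$ is totally ramified of degree $e$; pass to the Galois closure $\tilde F/F_0$, whose Galois group equals its inertia and decomposes as $\tilde G = \tilde P \rtimes \tilde T$ with $\tilde P$ wild (normal pro-$p$) and $\tilde T$ tame cyclic of prime-to-$p$ order. Writing $e = e'p^s$, I would use Schur--Zassenhaus together with the solvability of $\tilde G$ to construct a subgroup $\tilde H' \supsetneq \mathrm{Stab}(\alpha)$ with $[\tilde H':\mathrm{Stab}(\alpha)]$ prime to $p$; the fixed field $\tilde F^{\tilde H'}/F_0$ is then totally ramified of degree strictly smaller than $e$, and averaging over the $\tilde H'$-orbit of $\alpha$ yields a point of $\tilde F^{\tilde H'}\cap D$, contradicting the minimality of $e$ unless $e' = 1$. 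The base case $s=0$ is itself non-trivial: one must show that an unramified $F$ with $F\cap D\neq\emptyset$ forces $E\cap D\neq\emptyset$. I would prove this by iterated Teichmüller approximation, using that $G_E$-invariance of $D$ forces each $\pi_E$-adic digit of a point of $F\cap D$ (to the precision set by the radius of $D$) to lie in $k_E$.

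For the quantitative bound $[F:E]\leq p^{2s-1}$ I expect an induction on $s$. For $s\leq 1$, the inequality $[F:E]\leq p^s = e_{F/E}$ already forces total ramification directly, and this also closes the last assertion of the statement. For the inductive step from $s$ to $s+1$, the strategy is to analyse the wild filtration: at each layer one extracts a totally ramified degree-$p$ subextension through which a point of $D$ can be passed, paying at most a residue extension of degree $p$. Accumulated over $s$ steps, this gives a residue degree at most $p^{s-1}$ on top of the ramification $p^s$, so $[F:E]\leq p^{2s-1}$.

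The main obstacle will be the inductive construction in the quantitative bound: exhibiting an extension with prescribed small residue degree requires a precise understanding of how much residual information the disk $D$ actually encodes, and the example $\pi = p^{1/p}$, $D = \{x : v_p(x-\pi)>1/p\}$ from the introductory discussion shows that the bound $p^{2s-1}$ is sharp already at $s=1$, so no slack is available in the argument.
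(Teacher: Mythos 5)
Your averaging principle is sound, but the group theory you hang it on is where the proof of ``$e=p^s$'' breaks. First, the Galois closure $\tilde F/F_0$ of a totally ramified extension is in general \emph{not} totally ramified, so its Galois group is not its inertia group and is not of the form $\tilde P \rtimes \tilde T$ with $\tilde T$ of prime-to-$p$ order; Schur--Zassenhaus in that shape is not available. More seriously, the subgroup $\tilde H' \supsetneq \mathrm{Stab}(\alpha)$ with $[\tilde H':\mathrm{Stab}(\alpha)]$ prime to $p$ that your contradiction needs can fail to exist: an overgroup of the stabilizer corresponds to an intermediate field $F_0 \subseteq M \subsetneq F$ with $[F:M]$ prime to $p$, and a totally ramified extension need not have one (the tame subextension sits at the \emph{bottom} of the tower, not the top). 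Concretely, for $p=2$ there are totally ramified sextic fields $F/\Q_2$ whose Galois closure has group $S_4$ with point stabilizer $C_4$; the only proper overgroup of $C_4$ is $D_4$, of index $2$ over it, so no admissible $\tilde H'$ exists although $e=6$ and $e'=3>1$. Thus your argument yields no contradiction precisely in a situation it is meant to exclude. (Dropping the containment requirement — e.g.\ averaging over a Sylow $3$-orbit in that example — can rescue special cases, but then one must control the ramification $[I:I\cap H']$ of the fixed field, where $I$ is the inertia subgroup, and you give no general argument.) The paper does not reprove this step at all: it quotes Benedetto's result (Lemma \ref{Benedetto}, via Corollary \ref{corBen}), whose proof is different. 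Your treatment of the unramified case $s=0$ by truncating the expansion in powers of $\pi_E$ is essentially Proposition \ref{illdefunram} of the paper and is fine.

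The second gap is the bound $[F:E]\leq p^{2s-1}$, which is the real content of the theorem and is only hand-waved. You do not explain how to ``extract a totally ramified degree-$p$ subextension through which a point of $D$ can be passed'', why each such step costs at most an unramified degree $p$, or why only $s-1$ payments occur. The paper's route is quite different: it first treats the case where $D$ contains a point $a$ with $v_E(a)$ of exact denominator $e$ (Proposition \ref{illdefeasycase}), by comparing $v_E(\mu^0(a))$ and $v_E(\sigma(\mu)(a))$ for the minimal polynomial $\mu$, its Frobenius twist, and its $E$-part $\mu^0$; it then reduces the general case to this one by constructing a polynomial $f\in E[X]_{<e}$ with $f(a)$ a uniformizer of $E(a)$ (a delicate minimization over expressions $\sum_i\alpha_iP_i(a)=\pi_F$, using the image disk $f(D)$ and Proposition \ref{illdefunram}), pulls a point of $f(D)$ of degree $\leq e$ back to a point of $D$ of degree $\leq e(e-1)$, and only then applies Benedetto's lemma once more over the $p$-power part of the maximal unramified subextension to reach $p^{2s-1}$. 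None of these ingredients appears in your outline. Finally, your claim that the example $D=\{x: v_p(x-p^{1/p})>1/p\}$ shows sharpness of $p^{2s-1}$ at $s=1$ is empty: there the bound $p^{2s-1}=p$ coincides with the trivial lower bound $e=p$, and $\Q_p(p^{1/p})$ itself meets $D$; so it says nothing about the exponent $2s-1$. Your deduction that any $F$ with $[F:E]\leq p$ meeting $D$ is totally ramified when $s\leq 1$ is correct.
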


We can in fact do better in the case where $p=2$. Note that this result
proves Conjecture 2 of \cite{Ben} in this case.

\begin{theo}
\label{illdeftheo2}
Let $p=2$.
Let $E$ be a finite extension of $\Q_p$.
Let $D$ be a disk defined over $E$. 
Let $e$ be the smallest integer such
that there exists a finite extension $F$ of $E$ with $e_{F/E}=e$ and 
$F \cap D \neq \emptyset$.
Then $e=p^s$ for some $s$, and there exists a totally ramified
extension $F$ of $E$ with
$[F:E] = p^s$ such that $F \cap D \neq \emptyset$.
\end{theo}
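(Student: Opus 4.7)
The plan is to build upon Theorem \ref{illdeftheo}. That theorem already establishes $e = 2^s$ and supplies an extension $F/E$ with $e_{F/E} = 2^s$, $F \cap D \neq \emptyset$, and $[F:E] \leq \max(1, 2^{2s-1})$, and moreover guarantees that such an $F$ is totally ramified whenever $s \leq 1$. The cases $s = 0, 1$ are therefore immediate, with $F$ itself being the desired totally ramified extension of degree $2^s$. I may focus on $s \geq 2$.

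The first move is to reformulate the goal: it suffices to exhibit a point $y \in D$ with $[E(y):E] = 2^s$. Indeed, by minimality of $e$, every $y \in D$ satisfies $e_{E(y)/E} \geq 2^s$, so $[E(y):E] = e_{E(y)/E} \cdot f_{E(y)/E} \geq 2^s$; equality forces $f_{E(y)/E} = 1$, which means $E(y)/E$ is totally ramified of degree $2^s$ and one takes $F = E(y)$. Equivalently, one seeks a monic polynomial $Q \in E[X]$ of degree $2^s$ having a root in $D$. Starting from $x \in F \cap D$ provided by Theorem \ref{illdeftheo}, with minimal polynomial $P \in E[X]$ of degree $2^s f$ where $f = f_{F/E}$, the stability of $D$ under $G_E$ forces all roots of $P$ to lie in $D$. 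Over the maximal unramified subextension $L/E$ inside $F$, of degree $f$, the polynomial $P$ splits into $f$ irreducible factors of degree $2^s$, each carving out a totally ramified degree-$2^s$ extension of $L$ whose roots sit in $D$.

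The remaining step is a descent from $L[X]$ to $E[X]$: find $Q \in E[X]$ of degree $2^s$ close enough to one such $L$-factor that Krasner's lemma produces a root of $Q$ in $D$. I expect this descent to be the main obstacle. The natural attempt, averaging roots of the $L$-factor along a Frobenius action in $\Gal(L/E)$ to kill the residue degree, costs $v_E(f)$ units of valuation; when $p = 2$ and $f$ is even, this loss is nonzero and the averaged point typically exits $D$. The $p = 2$ refinement must therefore compensate for the valuation loss. A plausible mechanism is to exploit the fact that $\mu_2 \subset E$, so every totally ramified quadratic extension of $E$ has the Kummer form $E(\sqrt a)$, and to run an induction on $s$ in which the loss is recovered one power of $2$ at a time, at each step replacing a nearby $L$-polynomial by an $E$-polynomial through a quadratic Kummer adjustment combined with a Krasner perturbation. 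This refined descent is the technical heart of the proof and is what explains the gap between the optimal bound $p^s$ here and the weaker $p^{2s-1}$ in Theorem \ref{illdeftheo}.
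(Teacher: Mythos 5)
Your reduction is fine as far as it goes: it does suffice to produce a point $y\in D$ with $[E(y):E]=2^s$, and the cases $s\le 1$ do follow directly from Theorem \ref{illdeftheo}. But the argument stops exactly where the content of the theorem lies. The descent from the unramified extension $L$ down to $E$ is never carried out: you flag it yourself (``I expect this descent to be the main obstacle'', ``a plausible mechanism is\dots''), so the key step is a conjecture rather than a proof. Moreover the mechanism you suggest is misdirected. The obstruction to be removed is the \emph{residue-field} (unramified) part of $E(x)/E$, and the fact that $\mu_2\subset E$, so that totally ramified quadratic extensions of $E$ are Kummer, has no bearing on killing residue degree; nor is it explained how a ``quadratic Kummer adjustment combined with a Krasner perturbation'' would produce an $E$-polynomial of degree $2^s$ whose root stays inside $D$. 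Simply approximating the coefficients of the $L$-factor by elements of $E$ loses valuation and in general pushes the roots out of $D$ — that is precisely why Theorem \ref{illdeftheo} only reaches the bound $p^{2s-1}$.

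For comparison, the paper's proof uses $p=2$ in a different way. By Corollary \ref{corBen} one may take $a\in D$ generating a totally ramified extension of degree $e$ of an unramified $2$-power-degree extension $K/E$, with $[K:E]$ minimal. If $K\neq E$, choose $K'\subset K$ of index $2$ and write the minimal polynomial $\mu$ of $a$ over $K$ as $\mu=\mu_0+u\mu_1$ with $\mu_0,\mu_1\in K'[X]$, where $(1,u)$ is a basis of $K$ over $K'$; this decomposition into exactly two $K'$-components is the only place $p=2$ is used. Assuming $\mu_0$ has no zero in $D$, the rational function $f=\mu_1/\mu_0\in K'(X)$ sends $D$ to a disk $D'$ defined over $K'$ (Lemma \ref{imdisk}) which contains an element of the unramified extension $K$, hence by Proposition \ref{illdefunram} an element $c\in K'$. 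Pulling back gives $b\in D$ annihilated by a polynomial of degree at most $e$ over $K'$, and minimality of $e$ forces $K'(b)/K'$ to be totally ramified of degree $e$, contradicting the minimality of $[K:E]$; hence $K=E$. To complete your plan you would need a genuine descent argument of this sort; as written, the proposal leaves the theorem's essential step unproved.
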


\subsection{Preliminaries}

We recall the following result, which is \cite[Lemma 3.6]{Ben} (it is
stated only for closed disks, but applies also to open disks).

\begin{lemm}
\label{Benedetto}
Let $K$ be an algebraic extension of $\Q_p$.
Let $D$ be a disk defined over $K$. Suppose that $D$ contains an
$a \in \bar\Q_p$ of degree $n$ over $K$. Then $D$ contains an element
$b\in\bar\Q_p$ of degree $\leq p^s$ over $K$ where $s = v_p(n)$.
\end{lemm}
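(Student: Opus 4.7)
The approach is to produce $b$ as an average of a well-chosen subset of the Galois conjugates of $a$, exploiting the fact that since $D$ is $G_K$-stable and contains $a$, it contains every conjugate of $a$. Let $L$ be the Galois closure of $K(a)/K$, $G = \Gal(L/K)$, and $H = \Gal(L/K(a))$, so that the conjugates of $a$ are indexed by $G/H$, of cardinality $n = p^s m$ with $\gcd(m,p) = 1$.

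The key observation is the following: for any subgroup $H' \le G$ stabilising a subset $S \subseteq G/H$ under left multiplication, the element
\[
b = \frac{1}{|S|}\sum_{gH \in S} g \cdot a
\]
lies in the fixed field $L^{H'}$, so $[K(b):K] \le [G:H']$; and if $|S|$ is coprime to $p$, then $b$ lies in $D$, by the ultrametric inequality together with $v_p(1/|S|) = 0$ (pick any center $c$ of $D$, and estimate $|b-c| \le \max_{gH \in S}|g\cdot a - c| \le r$). Thus the task reduces to producing $H' \le G$ of index at most $p^s$ admitting an $H'$-orbit on $G/H$ of cardinality coprime to $p$; in particular it suffices to find $H \subseteq H' \le G$ with $[H':H] = m$, since then the orbit of the trivial coset has the required size $m$.

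The main obstacle is this group-theoretic construction. For an arbitrary finite group the required $H'$ need not exist, but $G$ is not arbitrary: being the Galois group of a finite extension of $p$-adic fields, it is solvable and carries the canonical normal filtration $1 \trianglelefteq P \trianglelefteq I \trianglelefteq G$, with $P$ (wild inertia) a normal $p$-subgroup, $I/P$ cyclic of order coprime to $p$, and $G/I$ cyclic. I would exploit this rigidity by passing to the metacyclic quotient $G/P$ and applying Hall-type theorems for solvable groups to locate a Hall $p'$-subgroup of a suitable quotient of $G$ whose preimage, intersected with the tower, supplies the desired $H'$. When a single step does not suffice, I would proceed inductively, peeling off one prime-to-$p$ factor at a time: at each stage, averaging over an orbit of size a single prime $q \mid m$ produces an intermediate element $c \in D$ with $\deg_K c = n/q$ and $v_p(\deg_K c) = s$, and the induction hypothesis is applied to $c$. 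The base case $s = 0$ is immediate: the trace $\operatorname{Tr}_{K(a)/K}(a)/n$ belongs to $D \cap K$, of degree $1 = p^0$.
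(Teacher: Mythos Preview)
The paper does not prove this lemma; it is quoted from \cite[Lemma 2.6]{Ben}. So the question is simply whether your argument is correct, and there is a genuine gap at the group-theoretic step.

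Your averaging construction is sound: if $S \subseteq G/H$ is $H'$-stable with $|S|$ prime to $p$, then $\frac{1}{|S|}\sum_{gH \in S} g\cdot a$ lies in $D \cap L^{H'}$ and has degree at most $[G:H']$ over $K$. What fails is the existence of a suitable $H'$. Take $p=2$, $G=S_4$, $H=\langle(1234)\rangle$, so $n=6$, $s=1$, $m=3$; the group $S_4$ does occur as a Galois group over a $2$-adic field (wild inertia $V_4$, inertia $A_4$, unramified quotient of order $2$), so this configuration is not excluded by the ramification filtration you invoke. The only subgroups of index at most $p^s=2$ are $A_4$ and $S_4$, and both act transitively on the six cosets of $H$ (for $A_4$ the stabiliser of $eH$ is $A_4\cap H=\{e,(13)(24)\}$, giving orbit size $6$); so every admissible orbit has even size. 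Your inductive variant requires an $H'$ with $v_2([G:H'])\le 1$ admitting an odd orbit of size $>1$; but $v_2([G:H'])\le 1$ forces $4\mid|H'|$, and for $|H'|\in\{4,8\}$ all orbit sizes are powers of $2$, while for $|H'|\in\{12,24\}$ the action is transitive. Hence no $K$-linear average of the conjugates of $a$ can produce an element of degree $\le 2$, even though the lemma asserts that $D$ contains one. The hand-wave toward ``Hall-type theorems'' therefore cannot be completed; Benedetto's argument in \cite{Ben} must use a different mechanism, and you would need to consult that paper to see what it is.
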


\begin{coro}
\label{mindeg}
Let $K$ be an algebraic extension of $\Q_p$.
Let $D$ be a disk defined over $K$. Suppose that $D$ contains an
element $a$ such that $[K(a):K] = n$. Then the minimal degree over $K$ of
an element of $D$ is of the form $p^t$ for some $t \leq v_p(n)$.
\end{coro}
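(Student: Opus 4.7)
The plan is to apply Lemma \ref{Benedetto} twice and exploit minimality. First, I would apply it directly to $a$: this yields some $b \in D$ with $[K(b):K] \leq p^{v_p(n)}$. In particular, if $m$ denotes the minimal degree over $K$ of an element of $D$, we already get $m \leq p^{v_p(n)}$.

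Next, I would pick some $c \in D$ realizing this minimum, i.e.\ with $[K(c):K] = m$, and apply Lemma \ref{Benedetto} to $c$. This produces an element of $D$ of degree at most $p^{v_p(m)}$ over $K$. Minimality of $m$ then forces $m \leq p^{v_p(m)}$, and since the reverse inequality $p^{v_p(m)} \leq m$ is automatic, we conclude $m = p^{v_p(m)}$. This equality says exactly that $m$ is a power of $p$. Writing $m = p^t$, the bound $m \leq p^{v_p(n)}$ gives $t \leq v_p(n)$, which is the claim.

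There is no real obstacle here: Lemma \ref{Benedetto} does all the work, and the corollary is just the observation that iterating the lemma stabilizes at a power of $p$, bounded above by $p^{v_p(n)}$. The only slightly non-obvious point is the elementary remark that a positive integer $m$ with $m \leq p^{v_p(m)}$ must itself be a power of $p$, which is used to upgrade the bound $m \leq p^{v_p(n)}$ to the statement about the shape of $m$.
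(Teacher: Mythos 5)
Your argument is correct and is essentially the paper's own proof: the paper likewise applies Lemma \ref{Benedetto} to a minimal-degree element to see the minimum is a power of $p$, and to $a$ to bound it by $p^{v_p(n)}$. You merely spell out the elementary step $m \leq p^{v_p(m)} \Rightarrow m = p^{v_p(m)}$, which the paper leaves implicit.
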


\begin{proof}
It follows from Lemma \ref{Benedetto} that the minimal degree over $K$ of
an element of $D$ is a power of $p$. On the other hand,
applying Lemma \ref{Benedetto} to $a$, 
we get an element of degree at most $p^s$ for $s = v_p(n)$. 
Hence the minimal degree is of the form $p^t$ for some $t \leq s$.
\end{proof}

\begin{coro}
\label{corBen}
Let $E$ be a finite extension of $\Q_p$.
Let $D$ be a disk defined over $E$. Then the minimal ramification
degree over $E$ of an element of $D$ is a power of $p$, and it can be
reached for an element $a$ such that $[E(a):E]$ is a power of $p$.
\end{coro}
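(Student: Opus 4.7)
The plan is to combine Corollary \ref{mindeg} with a judicious enlargement of the base field to an unramified $p$-extension.

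I first tackle the assertion that the minimum ramification degree $e$ is a power of $p$. Choose $a \in D \cap \bar\Q_p$ realizing this minimum, and let $L$ be the maximal unramified subextension of $E(a)/E$, so that $[E(a):L] = e$. Since $D$ is defined over $E$, it is a fortiori defined over $L$. Applying Corollary \ref{mindeg} over $L$ to the element $a$ produces $b \in D$ with $[L(b):L]$ a power of $p$, at most $p^{v_p(e)}$. Since $L/E$ is unramified we have $e_{L(b)/E} = e_{L(b)/L}$, and $e_{E(b)/E}$ divides $e_{L(b)/E}$; hence $e_{E(b)/E} \leq p^{v_p(e)}$. Minimality of $e$ then gives $e \leq p^{v_p(e)}$, so $e = p^s$ for some $s$.

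For the second statement I need an element whose total degree over $E$ is also a power of $p$, not merely its degree over $L$. To arrange this, I replace $L$ by its subextension $L'/E$ of degree $p^{v_p(f)}$, where $f = [L:E]$; this $L'$ exists and is unique because the unramified extension $L/E$ is cyclic. Then $[E(a):L'] = e \cdot f/p^{v_p(f)}$, and since $f/p^{v_p(f)}$ is coprime to $p$ one has $v_p([E(a):L']) = s$. Applying Lemma \ref{Benedetto} over $L'$ yields $b \in D$ with $[L'(b):L'] \leq p^s$, and the same argument as in the previous paragraph forces $e_{E(b)/E} = e$. Crucially, $[L'(b):E] = [L'(b):L'] \cdot [L':E]$ is now a product of two $p$-powers, hence a $p$-power, and $[E(b):E]$ divides it and is therefore itself a $p$-power.

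No serious obstacle arises beyond identifying the correct intermediate field $L'$: the point is to pass to an unramified $p$-extension that absorbs the $p$-primary part of the residue degree of $E(a)/E$, so that Lemma \ref{Benedetto} produces an element lying in a $p$-power degree extension of $E$. Everything else is routine multiplicativity of ramification and residue indices in towers.
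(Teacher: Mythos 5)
Your argument is correct and takes essentially the same route as the paper: the paper also proves the $p$-power claim by applying Corollary \ref{mindeg} over an unramified extension, and then, for the second claim, replaces the base field by the maximal $p$-power-degree subextension of the maximal unramified part of the field generated by a minimizing element before applying Lemma \ref{Benedetto}/Corollary \ref{mindeg} again and invoking minimality of the ramification degree. The only point to make explicit is that $[L'(b):L']$ equals $p^s$ exactly (not merely $\leq p^s$), which is what justifies calling $[L'(b):E]$ a product of two $p$-powers; this follows at once from your own chain $e \leq e_{E(b)/E} \leq e_{L'(b)/L'} \leq [L'(b):L'] \leq p^s = e$, or alternatively by choosing $b$ of minimal degree over $L'$, which Corollary \ref{mindeg} guarantees is a power of $p$.
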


\begin{proof}
We first apply Corollary \ref{mindeg} with $K = E^{unr}$ to see that the
minimal ramification degree is a power of $p$. Let $b\in D$ be such that
$e_{E(b)/E} = p^t$ is the minimal ramification degree.

Let $E(b)_0 = E^{unr}\cap E(b)$, and 
let $F$ be the maximal extension of $E$ contained in $E(b)_0$
such that $[F:E]$ is a power of $p$. 
Note that $v_p([E(b):F]) = t$, as $[E(b)_0:F]$ 
is prime to $p$. We apply Corollary \ref{mindeg} to $K=F$, and
we get an element $a\in D$ of degree at most $p^t$ over $F$. By
minimality of $t$, we get that in fact $[F(a):F] = p^t$, and $F(a)/F$ is
totally ramified. As $[F(a):E]$ is a power of $p$, so is $[E(a):E]$,
and $e_{E(a)/E} = p^t$.
\end{proof}

Let $\pi_E$ be a uniformizer of $E$, and let $F$ be a finite unramified
extension of $E$. For $x\in F$, we define the $E$-part of $x$, which we
denote by $x^0$, as follows:
we write $x$
as $x = \sum_{n\geq N}a_n\pi_E^n$ where the $a_n$ are Teichmueller lifts of
elements of the residue field of $F$.
Let $x^0 = \sum_{n=N}^ma_n\pi_E^n$ with $a_n\in E$ for all $n\leq m$ and
$a_{m+1}\not\in E$ (or $m=\infty$ if $a\in E$) so that $x^0\in E$. We
have that $v_E(x-x^0) = m+1$. This definition depends on the choice of
$\pi_E$.

\begin{prop}
\label{illdefunram}
Let $D$ be a disk defined over $E$, and suppose that $F \cap D \neq \emptyset$ for some
unramified extension $F$ of $E$. Then $E \cap D \neq \emptyset$.
\end{prop}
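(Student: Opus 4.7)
The plan is to take any $x \in F \cap D$ and show that its $E$-part $x^0$, as constructed just before the proposition, already lies in $E \cap D$. The case $x \in E$ (i.e.\ $m = \infty$) is immediate with $x^0 = x$, so I will focus on $x \notin E$ and work with the index $m$ for which $a_n \in E$ for $n \leq m$ and $a_{m+1} \notin E$; by construction $v_E(x - x^0) = m+1$.

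The main step is to pair $x^0$ against a well-chosen Galois conjugate of $x$. Because $F/E$ is unramified, $\pi_E$ is fixed by $\Gal(F/E)$, so for any $\sigma \in \Gal(F/E)$ the action is coefficientwise: $\sigma(x) - x = \sum_n (\sigma(a_n) - a_n)\pi_E^n$. A Teichm\"uller lift $a_n$ is $\sigma$-fixed iff its residue lies in $k_E$, which holds iff $a_n \in E$. Since $a_{m+1} \notin E$, I can choose $\sigma \in \Gal(F/E)$ with $\sigma(a_{m+1}) \neq a_{m+1}$; for this $\sigma$, the lowest nonzero coefficient in the above expansion is the one at $n = m+1$, so $v_E(\sigma(x) - x) = m+1$, matching $v_E(x - x^0)$ exactly.

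To conclude, I use that $D$ is defined over $E$ and write it as a disk centered at $x$ (open or closed) of some radius $r$. Then $\sigma(x) \in D$ gives $|\sigma(x) - x| \leq r$ in the closed case and $|\sigma(x) - x| < r$ in the open case, i.e.\ $|\pi_E|^{m+1} \leq r$ or $|\pi_E|^{m+1} < r$ respectively. Since $|x - x^0| = |\pi_E|^{m+1}$ as well, the same inequality yields $x^0 \in D$, and since $x^0 \in E$ by construction we are done.

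The only subtlety I anticipate is being careful to treat the open and closed cases uniformly, and making sure that the Galois-conjugate argument actually forces the same valuation on $x - x^0$ and on $\sigma(x) - x$. The unramified hypothesis is essential here: it guarantees that $\Gal(F/E)$ acts only on the Teichm\"uller coefficients while preserving the powers of $\pi_E$, which is precisely what lets the truncation $x^0$ inherit its distance to $x$ from a genuine element of $D$.
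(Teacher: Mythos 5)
Your proof is correct and is essentially the paper's own argument: both take $x\in F\cap D$, show $v_E(x-x^0)=v_E(\sigma(x)-x)$ for a suitable $\sigma\in\Gal(F/E)$ (the paper takes $\sigma$ to be the Frobenius, which automatically moves $a_{m+1}$, while you pick any $\sigma$ not fixing $a_{m+1}$), and then use the $G_E$-invariance of $D$ together with the ultrametric inequality to place $x^0$ in $D$. The only nitpick is your phrase ``a Teichm\"uller lift is $\sigma$-fixed iff its residue lies in $k_E$'', which for a fixed non-generating $\sigma$ is not literally true; what you actually use, namely that some $\sigma\in\Gal(F/E)$ moves $a_{m+1}$ because $a_{m+1}\notin E$, is correct.
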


\begin{proof}
Let $a \in F \cap D$. We fix $\pi_E$ a uniformizer of $E$, and 
let $a^0$ be the $E$-part of $a$.
Let $\sigma$ be the Frobenius of
$\Gal(F/E)$. Then $v_E(a-\sigma(a)) = v_E(a-a^0)$.
So any disk containing $a$ and $\sigma(a)$ also contains $a^0$.
\end{proof}

We also recall the well-known result:

\begin{lemm}
\label{imdisk}
Let $f \in \bar\Q_p(X)$ be a rational fraction with indeterminate $X$.
Then for any disk $D$, if $f$ does not have a pole in $D$, then $f(D)$ is
also a disk. Moreover, if $D$ is defined over $E$ and $f\in E(X)$, then
$f(D)$ is defined over $E$.
\end{lemm}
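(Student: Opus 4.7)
The plan is to pick a base point $a \in D$, expand $f$ as a convergent power series in $X-a$ on $D$, and read off the image from this expansion.

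Since $f$ is rational with no pole in $D$, partial fractions gives an expansion $f(X) = \sum_{n \geq 0} c_n (X-a)^n$ converging on $D$; it also shows that $f$ is bounded on $D$, so $R := \sup_{n \geq 1} |c_n| r^n$ is finite, where $r$ is the radius of $D$. I would then prove that $f(D) = D(f(a), R)^{\pm}$, with the sign matching that of $D$; the degenerate case $R = 0$ corresponds to $f$ being constant and yields a singleton image, which is a closed disk of radius $0$.

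The inclusion $f(D) \subseteq D(f(a),R)^{\pm}$ would follow immediately from the ultrametric inequality applied term by term to the power series. For the reverse inclusion, given $y$ in the target disk I would consider $g(Y) := f(a+Y) - y = (f(a)-y) + \sum_{n \geq 1} c_n Y^n$: its constant term has absolute value $\leq R$ (strictly less, in the open case), while some coefficient $c_n$ with $n \geq 1$ attains (resp.\ approximates) the bound $|c_n| r^n = R$. The Newton polygon of $g$ then has a leftmost segment of slope $\leq -v_p(r)$ (resp.\ $< -v_p(r)$), producing a root $Y$ with $|Y| \leq r$ (resp.\ $< r$) and hence a preimage $a+Y \in D$ of $y$. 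For the second assertion, if $D$ is $G_E$-stable and $f \in E(X)$ then every $\sigma \in G_E$ commutes with $f$, so $\sigma(f(D)) = f(\sigma(D)) = f(D)$.

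The main delicate point will be the surjectivity argument in the open disk case, where $R$ need not be attained and one cannot simply pick the index realising the maximum. I would handle this by noting that any $y$ with $|y - f(a)| < R$ satisfies $|c_n| r^n > |y - f(a)|$ for some $n \geq 1$, which is enough to force the leftmost Newton polygon slope strictly below $-v_p(r)$, hence a root strictly inside the open disk.
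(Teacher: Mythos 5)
The paper records this lemma without proof, as a well-known fact, so there is no argument of the paper to compare yours against; your plan is the standard proof and it is correct. The two delicate points are handled properly: surjectivity onto the closed disk $D(f(a),R)^{+}$ follows from the Newton polygon of $f(a+Y)-y$ exactly as you say (the chord from $(0,v(b_0))$ to the index realising $\max_{n\geq 1}|c_n|r^n$ forces a root with $|Y|\leq r$), and in the open case, where $R=\sup_{n\geq 1}|c_n|r^n$ need not be attained, your observation that any $y$ with $|y-f(a)|<R$ is dominated by a single term $|c_n|r^n$ does give a strictly smaller leftmost slope and hence a root with $|Y|<r$ (equivalently, one can exhaust the open disk by closed subdisks). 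Two small confirmations: $R$ is indeed finite, since every pole $b$ of $f$ satisfies $|x-b|\geq r$ for $x\in D$, so $f$ is bounded on $D$; and the degenerate constant case you flag is harmless and does not occur in the paper's uses of the lemma in Theorems \ref{illdeftheo} and \ref{illdeftheo2}.
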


\subsection{Proofs}

The part that states that $e$ is a power of $p$ in Theorems
\ref{illdeftheo} and \ref{illdeftheo2} is a consequence of Corollary
\ref{corBen}.

We start with the rest of the proof of Theorem \ref{illdeftheo2} which is actually
easier.

\begin{proof}[Proof of Theorem \ref{illdeftheo2}]
By applying Corollary \ref{corBen}, we get an element $a\in D$ that
generates a totally ramified extension $F$ of $K$ of degree $e=p^s$, where
$K$ is an unramified extension of $E$ of degree a power of $p$, and we
take $[K:E]$ minimal. If $K\neq
E$, let $K' \subset K$ with $[K:K'] = p$. We will show that we can find $b\in
D$ of degree $e$ over $K'$, which gives a contradiction by minimality of
$K$ so in fact $K=E$.

Let $\mu$ be the minimal polynomial of $a$ over $K$, so $\mu\in K[X]$ is
monic of degree $e$. Now we use that $p=2$: let $(1,u)$ be a basis of $K$ over
$K'$, and write $\mu = \mu_0 + u\mu_1$ with $\mu_0$, $\mu_1$ in $K'[X]$.
If $\mu_0$ has a root in $D$ we are finished, so we can assume that
$\mu_0$ has no zero in $D$, and let $f = \mu_1/\mu_0 \in K'(X)$. Let $D' =
f(D)$. It is a disk defined over $K'$, containing $-u\in K$, so by
Proposition
\ref{illdefunram}, $D'$ contains an element $c\in K'$. This means that
$\mu_0-c\mu_1$ has a root $b$ in $D$.

Then $b$ is of degree at most $e$ over $K'$. By minimality of $e$, it
means that $b$ is of degree exactly $e$ over $K'$, and $K'(b)/K'$ is totally
ramified. So this gives the contradiction we were looking for.
\end{proof}

Now we turn to the proof of Theorem \ref{illdeftheo}. We first prove the
result when we assume an additional condition.

\begin{prop}
\label{illdefeasycase}
Let $D$ be a disk defined over $E$ and  $a\in D$.
Suppose that $v_E(a) = n/e$ where $e =
e_{E(a)/E}$ and $n$ is prime to $e$. Then there exists an extension $F$ of $E$ of
degree at most $e$ such that $F \cap D \neq\emptyset$.
\end{prop}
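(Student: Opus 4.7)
The plan is to take $F = E(c^{1/e})$ for a suitably chosen $c \in E$ with $v_E(c) = n$. Since $\gcd(n,e) = 1$, the Newton polygon of $X^e - c$ is a single segment of slope $-n/e$; any proper factor of degree $d$ would force $e \mid d$ (since $dn/e$ must be an integer), so $X^e - c$ is irreducible and $[F:E] = e$.

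To produce an element of $F \cap D$, fix $\xi_0$ an $e$-th root of $c$ and $\zeta$ a primitive $e$-th root of unity. From the identity
\[
a^e - c = \prod_{i=0}^{e-1}\bigl(a - \zeta^i \xi_0\bigr)
\]
we obtain $\sum_i v_E(a - \zeta^i\xi_0) = v_E(a^e - c)$; since each factor has $v_E \geq n/e$, it follows that $\max_i v_E(a - \zeta^i\xi_0) \geq v_E(a^e - c) - (e-1)n/e$. Set $\delta(a) := \min_{\sigma \in G_E \setminus \{\id\}} v_E(a - \sigma(a))$. Since $D$ is $G_E$-stable and contains $a$, every $b$ with $v_E(b - a) \geq \delta(a)$ belongs to $D$. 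So it suffices to find $c \in E$ with
\[
v_E(a^e - c) \,\geq\, \delta(a) + (e-1)n/e,
\]
for then the $e$-th root of $c$ attaining the maximum above lies in $F \cap D$.

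To construct such a $c$, the identical computation controls every Galois conjugate of $a^e$: for $\sigma \in G_E$, expanding $\sigma(a)^e - a^e = \prod_i(\sigma(a) - \zeta^i a)$ shows that the $i = 0$ factor has $v_E \geq \delta(a)$ while each of the remaining $e-1$ factors has $v_E \geq n/e$. Hence $v_E(\sigma(a^e) - a^e) \geq \delta(a) + (e-1)n/e$ for every $\sigma$, so the closed disk $D'$ around $a^e$ cut out by this inequality is $G_E$-stable, i.e.\ defined over $E$. Because $E(a)/(E(a) \cap E^{nr})$ is totally ramified, the residue of $a^e/\pi_E^n$ in $k_{E(a)}$ lies in $k_{E^{nr}}$, and lifting this residue via Teichmüller and multiplying by $\pi_E^n$ produces an element of $E^{nr}$ inside $D'$. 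Proposition~\ref{illdefunram} then yields $c \in E \cap D'$.

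The principal technical hurdle is verifying that the Teichmüller approximation of $a^e$ by elements of $E^{nr}$ attains precision at least $\delta(a) + (e-1)n/e$. The one-step lift gives precision $n + 1/e$, which suffices whenever $\delta(a) \leq (n+1)/e$. For larger $\delta(a)$ one iterates along the $\pi_E$-adic filtration of $\O_{E(a)}$, successively lifting higher-order Teichmüller digits (each of which remains in $k_{E^{nr}}$ by the same total-ramification argument), and one bounds the required number of iterations via a ramification-theoretic estimate for $\delta(a)$ in terms of $n$, $e$, and $v_E(e)$.
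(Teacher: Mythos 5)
Your reduction is correct and genuinely different from the paper's argument: granting a $c\in E$ with $v_E(a^e-c)\geq \delta(a)+(e-1)n/e$, the factorization of $a^e-c$ does place an $e$-th root of $c$ inside $D$, and any such root has degree at most $e$ over $E$ (the degenerate cases $a\in E$ and $\delta(a)=n/e$, where $D$ contains the whole disk $\{v_E(z)\geq n/e\}$ and hence points of $E$, should be disposed of separately, and $\delta(a)>n/e$ is also what guarantees $v_E(c)=n$ in your irreducibility remark). The genuine gap is in the production of $c$. Your disk $D'$ around $a^e$ of radius $\delta(a)+(e-1)n/e$ is indeed defined over $E$, but, as the example opening this section shows, a disk defined over $E$ need not meet $E$, nor even $E^{nr}$; so before Proposition \ref{illdefunram} can be invoked you must prove that $D'$ meets an unramified extension, i.e.\ that the distance from $a^e$ to $E^{nr}$ is at least $\delta(a)+(e-1)n/e$. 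The iterated Teichm\"uller-digit scheme you sketch cannot deliver this: an element of $E^{nr}$ can only match the digits of $a^e$ sitting in integral $v_E$-valuations, so the iteration necessarily halts at the first digit of non-integral valuation, which is exactly the distance $d(a^e,E^{nr})$ you are trying to bound; nothing in the sketch shows this happens no earlier than depth $\delta(a)+(e-1)n/e$, and the promised ``ramification-theoretic estimate for $\delta(a)$ in terms of $n$, $e$, $v_E(e)$'' is precisely the missing content, not a routine verification.

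The missing inequality is in fact true and can be closed in one line, which rescues your approach: take $b=N_{E(a)/K}(a)$ with $K=E(a)\cap E^{nr}$, so that $E(a)=K(a)$ is of degree $e$ over $K$. Every non-identity $K$-embedding $\sigma$ moves $a$, and $v_E(\sigma(a)-a)\geq\delta(a)$, so $b=a^e\prod_{\sigma\neq\id}\bigl(1+(\sigma(a)-a)/a\bigr)=a^e(1+z)$ with $v_E(z)\geq\delta(a)-n/e>0$, whence $v_E(a^e-b)\geq n+\delta(a)-n/e=\delta(a)+(e-1)n/e$, and $b$ lies in the finite unramified extension $K$ of $E$; Proposition \ref{illdefunram} then yields the desired $c\in E\cap D'$. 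With that replacement your proof becomes complete and is a different route from the paper's, which never approximates $a^e$ at all: it approximates the minimal polynomial $\mu$ of $a$ over $K$ coefficientwise by a polynomial $\mu^0$ over $E$ (using the $E$-part and the Frobenius comparison, where the hypothesis $\gcd(n,e)=1$ ensures no cancellation among the terms $(b_i^0-b_i)a^i$) and shows directly that $\mu^0$ has a root in $D$.
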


\begin{proof}
Let $K = E(a) \cap E^{unr}$. Let $\mu$ be the minimal polynomial of $a$ over
$K$, so that $\mu$ has degree $e$. We write $\mu = \sum b_iX^i$, $b_i\in
K$. Define $\mu^0 = \sum b_i^0X^i$ where $b_i^0\in E$ is the $E$-part of
$b_i$.
Let $x_1,\dots,x_e$ be the roots
of $\mu^0$. Then $v_E(\mu^0(a)) = \sum_{i=1}^ev_E(a-x_i)$. On the other hand,
$\mu^0(a) = \mu^0(a) - \mu(a) = \sum_{i=0}^{e-1}(b_i^0-b_i)a^i$. By the
condition on $v_E(a)$, we get that 
$v_E(\mu^0(a)) = \min_{0\leq i <e}(v_E(b_i^0-b_i)+ in/e)$. 
Let $\sigma$ be an element of $G_E$ that
induces the Frobenius on $K$. Let $y_1,\dots,y_e$ be the roots of
$\sigma(\mu) = \sum \sigma(b_i)X^i$. Then as before, 
$v_E(\sigma(\mu)(a)) =\sum v_E(a-y_i)$, and 
$v_E(\sigma(\mu)(a)) = \min_{0\leq i <e}(v_E(\sigma(b_i)-b_i)+ in/e)$. 
As $v_E(b_i^0-b_i) = v_E(\sigma(b_i)-b_i)$ for all $i$, we get that
$v_E(\mu^0(a)) = v_E(\sigma(\mu)(a))$. 

Suppose first that $D$ is closed.
Write $D$ as the set $\{z,v_E(z-a)\geq \lambda\}$ for some $\lambda$, 
then we get that
$v_E(\sigma(\mu)(a)) \geq e\lambda$ as the $y_i$ are among the conjugates of $a$ over
$E$ and hence are in $D$, so $v_E(\mu^0(a)) \geq e\lambda$ and so there exists
an $i$ with $x_i\in D$. Let $F = E(x_i)$ then $F$ is an extension of $E$
of degree at most $e$.
The case of an open disk is similar.
\end{proof}

Note that if we take $e$ to be minimal, then necessarily $F/E$ is totally
ramified and of degree $e$.

\begin{proof}[Proof of Theorem \ref{illdeftheo}]
The case $e=1$ is a consequence of Proposition \ref{illdefunram}.

Assume now that $e > 1$. Let $a\in D$, $F = E(a)$ with $e_{F/E} = e$.
If $a$ is a uniformizer of $F$, the result follows from 
Proposition \ref{illdefeasycase}. Otherwise,
let $f\in E[X]_{<e}$ be a polynomial such that $f(a)$ is a
uniformizer of $F$.

Assume first that such an $f$ exists. Let $D' = f(D)$. Then $D'$ is a
disk defined over $E$ by Lemma \ref{imdisk}, 
containing an element $\varpi = f(a)$ with $e_{E(\varpi)/E} = e$ and
$v_E(\varpi) = 1/e$, so it satisfies the hypotheses of
Proposition \ref{illdefeasycase}. Hence there exists a $c\in D'$ with
$[E(c):E] \leq e$. Let $b\in D$ such that $f(b)=c$, then $[E(b):E]\leq
e(e-1)$ as $b$ is a root of $f(X)-c$, which is a polynomial of degree at
most $e-1$ with coefficients in an extension of degree $e$ of $E$. 
Moreover, by minimality of $e$, we get that $e_{E(b)/E} \geq e$, and so
$[E(b)\cap E^{unr}:E] \leq e-1$. Let $K$ be the maximal extension of $E$
contained in $E(b) \cap E^{unr}$ such that $[K:E]$ is a power of $p$.
Then $[K:E] \leq p^{s-1}$ where $e=p^s$, because $[K:E] \leq e-1$.
Now we apply again Lemma \ref{Benedetto}, to the field $K$:
$D$ contains a point $a'$ with
$[K(a'):K] \leq p^{v_p([E(b):K])}$, that is, $[K(a'):K] \leq p^s$. So
finally $a'\in D$ and $[E(a'):E] \leq p^{2s-1}$.

We prove now the existence of such a polynomial $f$. Fix a uniformizer $\pi_F$ of
$F$, and let $K = E(a) \cap E^{unr}$.
Let $\E$ be the set of pairs of $e$-uples
$(\alpha,P)$ where $\alpha = \alpha_1,\dots,\alpha_e$ are elements of
$K$, $P = P_1,\dots,P_e$ are elements of $E[X]_{<e}$, and
$\sum_i\alpha_iP_i(a) = \pi_F$. Then $\E$ is not empty:  we can write
$\pi_F = Q(a)$ for some $Q\in K[X]_{<e}$; now let $\alpha_1,\dots,\alpha_e$ be a
basis of $K$ over $E$, and write $Q = \sum \alpha_iP_i$ with $P_i\in
E[X]_{<e}$. For each $(\alpha,P)\in \E$ let $m_{(\alpha,P)} =
\inf_iv_E(\alpha_iP_i(a))$, so $m_{(\alpha,P)} \leq 1/e$. It is enough to
show that there is an $(\alpha,P)$ with $m_{(\alpha,P)} = 1/e$. Indeed,
if $v_E(\alpha_iP_i(a)) = 1/e$, let $\beta_i\in E$ with $v_E(\alpha_i) =
v_E(\beta_i)$ then $\beta_iP_i$ is the $f$ we are looking for. 

So choose
a $(\alpha,P)\in \E$ with $m = m_{(\alpha,P)}$ minimal, and with minimal
number of indices $i$ such that $v_E(\alpha_iP_i(a))=m$. 
Suppose that $m<1/e$.
Then there are at least
two indices $i$ with $v_E(\alpha_iP_i(a)) = m$. Say for simplicity that
$v_E(\alpha_1P_1(a)) = v_E(\alpha_2P_2(a)) = m$. By minimality of $e$, $P_1$
and $P_2$
have no root in $D$. Let $f = P_1/P_2$, and $D' = f(D)$. Then $D'$ is
defined over $E$, and contains an element $f(a)$ of valuation $r =
v_E(P_1(a)/P_2(a)) \in \Z$, as $r = v_E(\alpha_2/\alpha_1)$. Consider
$\pi_E^{-r}D'$. It contains an element of valuation $0$ and it does not
contain $0$, so it is contained in a disk $\{z,v_E(z-c)>0\}$ for some
element $c$ that is the Teichmueller lift of an element of
$\bar\F_p^\times$.  So $v_E(\pi_E^{-r}P_1(a)/P_2(a)-c) > 0$.  As
$\pi_E^{-r}D'$ is defined over $E$, we have that $c\in E$.  Let $x =
c\pi_E^r$, then $v_E(P_1(a)-xP_2(a)) > r+v_E(P_2(a))= v_E(P_1(a))$. We
define an element $(\alpha',P')$ of $\E$ by setting $P_1' = P_1-xP_2$ and
$\alpha_2' = \alpha_2+x\alpha_1$, and $\alpha_i' = \alpha_i$ and $P_i' =
P_i$ for all other indices. We observe that $v_E(\alpha_1'P_1'(a)) > m$,
$v_E(\alpha_2'P_2'(a)) \geq m$, and all other valuations are unchanged.
This contradicts the choice we made for $(\alpha,P)$ at the beginning. So
in fact $m=1/e$.  
\end{proof}

\section{Some results on Hilbert-Samuel multiplicities}
\label{commalg}

\subsection{Hilbert-Samuel multiplicity}

Let $A$ be a noetherian local ring with maximal ideal $\m$, and $d$ be
the dimension of $A$. Let $M$ be a finitely generated module over $A$. 
We recall the definition of the Hilbert-Samuel multiplicity $e(A,M)$ (see
\cite[Chapter 13]{Mat}).
For
$n$ large enough, $\len_A(M/\m^nM)$ is a polynomial in $n$ of degree at
most $d$. We can write its term of degree $d$ as $e(A,M)n^d/d!$ for an
integer $e(A,M)$, which is the Hilbert-Samuel multiplicity of $M$
(relative to $(A,\m)$). We also write $e(A)$ for $e(A,A)$.

If $\dim A = 1$, it follows from the definition that $e(A,M) =
\len_A(M/\m^{n+1}M) - \len_A(M/\m^nM) = 
\len_A(\m^nM/\m^{n+1}M)$ for $n$ large enough.

We give some results that will enable us to compute $e(A)$ for some
special cases of rings $A$ of dimension $1$.

\begin{lemm}
\label{computee}
Let $k$ be a field, and $(A,\m)$ be a local noetherian $k$-algebra of dimension
$1$, with $A/\m = k$. 
Suppose that there exists an element $z \in \m$ such that $A$ has no
$z$-torsion and for all
$n$ large enough, $z\m^n = \m^{n+1}$. Then $e(A) = \dim_kA/(z)$.
\end{lemm}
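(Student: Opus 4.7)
The plan is to compare the function $n \mapsto \len_A A/\m^n$, whose leading coefficient in $n$ is $e(A)$, with $n \mapsto \len_A A/(z^n)$, which is directly computable from the hypotheses. Observe first that $(z)$ is $\m$-primary: since $z$ is a nonzerodivisor in the $1$-dimensional ring $A$, the quotient $A/(z)$ is $0$-dimensional and hence Artinian, so $\dim_k A/(z) = \len_A A/(z)$ is finite.

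The key computation is that $\len_A A/(z^n) = n \cdot \dim_k A/(z)$ for all $n \geq 1$, which I would prove by induction on $n$ using the short exact sequence
\[ 0 \longrightarrow A/(z^{n-1}) \xrightarrow{\;\;z\;\;} A/(z^n) \longrightarrow A/(z) \longrightarrow 0. \]
Injectivity of the left-hand map is the one point requiring care: if $za \in (z^n)$, write $za = z^n b$, so that $z(a - z^{n-1}b) = 0$, and then $z$-torsion-freeness forces $a \in (z^{n-1})$.

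To conclude, I would sandwich $\m^n$ between two powers of $(z)$. On one side, $z \in \m$ gives $(z^n) \subset \m^n$. On the other, iterating the hypothesis $z\m^n = \m^{n+1}$ for $n \geq n_0$ yields $\m^{n_0+k} = z^k \m^{n_0} \subset (z^k)$, so $\m^n \subset (z^{n-n_0})$ for all $n \geq n_0$. Taking lengths of quotients reverses inclusions:
\[ (n - n_0)\,\dim_k A/(z) \;\leq\; \len_A A/\m^n \;\leq\; n\,\dim_k A/(z). \]
Since $\dim A = 1$, $\len_A A/\m^n$ is a polynomial of degree $1$ in $n$ with leading term $e(A)\,n$, so dividing by $n$ and letting $n \to \infty$ gives $e(A) = \dim_k A/(z)$. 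The argument has no serious obstruction; the one delicate step is the injectivity in the short exact sequence for $A/(z^n)$, which is precisely where the $z$-torsion-freeness hypothesis enters.
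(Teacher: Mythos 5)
Your proof is correct, but it is organized differently from the paper's. The paper works directly with the $\m$-adic graded pieces: for $n$ large the hypotheses give an exact sequence
$$0 \to A/\m^{n} \stackrel{z}{\longrightarrow} A/\m^{n+1} \to A/(z) \to 0,$$
where injectivity is exactly your torsion-freeness trick (if $za \in \m^{n+1} \subseteq z\m^n$, write $za = zm$ and cancel $z$), so $\len_A(\m^n/\m^{n+1}) = \dim_k A/(z)$ for every large $n$, and this is $e(A)$ by the difference formula for dimension-$1$ rings recalled just before the lemma. You instead compute the Hilbert function of the principal ideal $(z)$ exactly ($\len_A A/(z^n) = n\dim_k A/(z)$, by the same cancellation trick applied to $A/(z^{n-1}) \xrightarrow{z} A/(z^n)$), and use the hypothesis $z\m^n = \m^{n+1}$ only to sandwich $\m^n$ between powers of $(z)$, recovering $e(A)$ as a limit of $\len_A(A/\m^n)/n$; in effect you prove $e(\m;A) = e((z);A) = \len_A A/(z)$, the standard multiplicity computation for a parameter ideal generated by a nonzerodivisor, with $(z)$ acting as a reduction of $\m$. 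The paper's route is shorter and yields the sharper finite-level statement that each graded piece $\m^n/\m^{n+1}$ has length exactly $\dim_k A/(z)$ (a fact it reuses implicitly in later arguments), while yours is more generic and asymptotic; both ultimately rest on the same use of $z$-torsion-freeness, and both in fact only need the inclusion $\m^{n+1} \subseteq z\m^n$ for large $n$. Your side remark that $\dim_k A/(z)$ is finite via dimension theory is fine, though it also follows more cheaply from $\m^{n+1} \subseteq (z)$, as the paper notes.
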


\begin{proof}
For $n$ large enough, we have $\m^{n+1} \subset (z)$. So the surjective map
$A \to A/(z)$ factors through $A/\m^{n+1}$ (and in particular
$\len_A(A/(z))$ is finite). We have an exact sequence:
$$
A \stackrel{z}{\rightarrow} A/\m^{n+1} \to A/(z) \to 0
$$
For $n$ large enough,
the kernel of the first map is $\m^n$ by the assumptions on $z$: it
contains $\m^n$, and as multiplication by $z$ is injective, it is exactly
equal to $\m^n$. So we have an exact sequence:
$$
0 \to A/\m^n \stackrel{z}{\rightarrow} A/\m^{n+1} \to A/(z) \to 0
$$
This gives $\len_A(\m^n/\m^{n+1}) = \len_A(A/(z)) = \dim_kA/(z)$ as
stated.
\end{proof}

\begin{coro}
\label{computeebis}
Let $k$ be a field, and $(A,\m)$ be a local noetherian $k$-algebra of dimension
$1$, with $A/\m = k$. 
Suppose that there exist an element $z \in \m$ such that $A$ has no
$z$-torsion and a nilpotent ideal $I$ such that $\m = (z,I)$.
Then $e(A) = \dim_kA/(z)$.
\end{coro}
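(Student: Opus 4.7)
The plan is to reduce Corollary \ref{computeebis} directly to Lemma \ref{computee}. The hypothesis on $z$ (no $z$-torsion) is already assumed, so the only thing to verify is that $z\m^n = \m^{n+1}$ for all sufficiently large $n$.

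To see this, I would use that $\m = (z,I) = zA + I$ and expand $\m^n = (zA + I)^n$. Since $I$ is an ideal, this expansion collapses to
\[
\m^n = \sum_{j=0}^{n} z^{n-j} I^j.
\]
Choose $N$ with $I^N = 0$. Then for every $n \geq N-1$, all terms with $j \geq N$ vanish, leaving
\[
\m^n = z^n A + z^{n-1} I + z^{n-2} I^2 + \cdots + z^{n-N+1} I^{N-1},
\]
and similarly
\[
\m^{n+1} = z^{n+1} A + z^{n} I + \cdots + z^{n-N+2} I^{N-1}.
\]
Multiplying the first expression by $z$ gives exactly the second, so $z\m^n = \m^{n+1}$ for $n \geq N-1$.

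Once this is verified, Lemma \ref{computee} applies and yields $e(A) = \dim_k A/(z)$. The main (and only) step requiring care is the expansion of $\m^n$ and the observation that nilpotency of $I$ forces the expansion to stabilize after finitely many terms, all of which are divisible by a power of $z$; there is no real obstacle beyond keeping track of indices.
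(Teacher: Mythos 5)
Your proof is correct and is essentially the same as the paper's: both reduce to Lemma \ref{computee} by expanding $\m^n = (zA+I)^n = \sum_j z^{n-j}I^j$ and using nilpotency of $I$ to see that $z\m^n = \m^{n+1}$ for $n$ large. No issues.
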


\begin{proof}
We need only show that $z\m^n = \m^{n+1}$ for all $n$ large enough, as we
can then apply Lemma \ref{computee}. Let $m$ be an integer such that
$I^m=0$. Then for $n > m$ we have $\m^n = \sum_{i=0}^mI^iz^{n-i}$, 
which gives the result.
\end{proof}

\begin{defi}
\label{nearlysum}
Let $k$ be a field. Let $A_1,\dots,A_s$ be a family of local noetherian
complete
$k$-algebras of dimension $1$ with maximal ideals $M_i$ and residue field
$k$.
Let $A$ be a local noetherian complete $k$-algebra with maximal ideal
$\m$ and residue field $k$. We
say that $A$ is nearly the sum of the family $(A_i)$ if 
there are injective $k$-algebra maps $u_i : A_i \to A$ such that
$A = k \oplus(\oplus_{i=1}^su_i(M_i))$ as a
$k$-vector space and $\m = \oplus_{i=1}^su_i(M_i)$. 

In this case, we write $V_i$ for $u_i(M_i)$, and for all $n>0$, $V_i^n$
is defined as $u_i(M_i^n)$, and $V_i^0$ is defined as $\{1\}$.
For $\alpha = (\alpha_1,\dots,\alpha_s)\in
\Z_{\geq 0}^s$, we denote by $V^\alpha$ the 
vector space generated by elements of the form $x_1\dots x_s$, 
where $x_i$ is an element of $V_i^{\alpha_i}$.
Note that this is not in general an ideal of $A$.
We also denote by $V^\alpha V_i^n$ the set $V^\beta$ where $\beta_j =
\alpha_j$, except for $\beta_i = \alpha_i + n$.
\end{defi}

\begin{ex}
\label{exns}
Let $A_1 = k[[z_1]]$, $A_2 = k[[x_2,z_2]]/(x_2^2)$, and $A =
k[[z_1,x_2,z_2]]/(x_2^2,z_1x_2,z_1z_2-x_2)$. Then $A$ is nearly the sum
of $A_1$ and $A_2$, for the natural maps $A_i \to A$.
\end{ex}

\begin{lemm}
\label{additivitye}
Let $k$ be a field. Let $A_1,\dots,A_s$ be a family of local noetherian
complete
$k$-algebras of dimension $1$ with maximal ideals $M_i$ and residue field
$k$. Suppose that for all
$i$, there is an element $z_i\in A_i$ such that $A_i$ has no
$z_i$-torsion and that for all $n$ large enough, $z_iM_i^n =
M_i^{n+1}$.

Let $A$ be a $k$-algebra with maximal ideal $\m$ 
that is nearly the sum of the family $(A_i)$, and let $V_i = u_i(M_i)$ as
in Definition \ref{nearlysum}.
Moreover, suppose that there exist integers $N_0$ and $t_0$ with $t_0
< N_0$ such that
for all $i$ and $j$, $V_jV_i^n \subset V_i^{n-t_0}$ for all $n
\geq N_0$.

Then $e(A) = \sum_{i=1}^s e(A_i)$.
\end{lemm}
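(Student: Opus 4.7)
The plan is to reduce the problem to a dimension comparison: show that $\dim_k A/\m^n - \sum_i \dim_k A_i/V_i^n$ is bounded in $n$. Since the right hand side equals $(\sum_i e(A_i))\cdot n + O(1)$ while the left hand side equals $e(A)\cdot n + O(1)$, boundedness of this difference forces $e(A) = \sum_i e(A_i)$.

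As a first move, I will introduce the $k$-vector space $F_n := \bigoplus_i V_i^n \subset A$. The direct sum is legitimate inside $A$ because the $V_i$ are pairwise disjoint subspaces of $\m$ by the ``nearly the sum'' hypothesis, and $F_n \subset \m^n$ since each $V_i \subset \m$. Directly from the decomposition $A = k \oplus \bigoplus V_i$ one computes
$\dim_k A/F_n = 1 + \sum_i \dim_k V_i/V_i^n = \sum_i \dim_k A_i/V_i^n - (s-1)$,
which by Lemma \ref{computee} applied to each $A_i$ equals $(\sum_i e(A_i))\cdot n + O(1)$ as $n \to \infty$.

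It remains to bound $\dim_k \m^n/F_n$ uniformly in $n$: then the identity $\dim_k A/\m^n = \dim_k A/F_n - \dim_k \m^n/F_n$ closes the argument. The natural approach is to establish a sandwich $F_n \subset \m^n \subset F_{n-C}$ for a constant $C$ depending only on $s$, $N_0$, and $t_0$; this would give $\dim_k \m^n/F_n \leq \dim_k F_{n-C}/F_n = \sum_i \dim_k V_i^{n-C}/V_i^n$, which for $n$ large is $C\sum_i e(A_i)$, a constant independent of $n$. Expanding $\m^n$ as a sum of products $V_1^{a_1} \cdots V_s^{a_s}$ with $\sum a_j = n$: pure terms $(a) = n \delta_{i}$ already lie in $F_n$, and mixed terms must be absorbed. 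For each mixed multi-index I would choose the index $i_0$ with $a_{i_0}$ maximal (so $a_{i_0} \geq n/s \geq N_0$ for $n$ large), then apply the hypothesis $V_j V_{i_0}^m \subset V_{i_0}^{m-t_0}$ iteratively to absorb the remaining $n - a_{i_0}$ factors into the $V_{i_0}$-component, landing the product inside $V_{i_0}^{a_{i_0} - (n-a_{i_0})t_0}$.

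The main obstacle is ensuring this absorption yields a uniform constant $C$: the naive estimate places a mixed product in $V_{i_0}^{a_{i_0}(1+t_0) - nt_0}$, which is contained in $V_{i_0}^{n-C}$ only when $a_{i_0}$ is close to $n$. For multi-indices with balanced distribution (say $a_{i_0} \approx n/s$) a more delicate argument is required, likely via partitioning the mixed multi-indices according to the profile of $(a_j)$, re-applying the hypothesis to several different components in sequence rather than a single $V_{i_0}$, and using completeness of $A$ (and the fact that $N_0 \geq t_0$) to keep each iteration step within the regime $m \geq N_0$ where the hypothesis is valid. Once the uniform bound $\m^n \subset F_{n-C}$ is in hand, the three-step reduction above yields $\dim_k A/\m^n = \sum_i \dim_k A_i/V_i^n + O(1)$ and hence $e(A) = \sum_i e(A_i)$.
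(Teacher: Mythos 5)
Your surrounding framework is sound, and in fact it repackages the second half of the paper's argument more cleanly: once the sandwich $F_n \subset \m^n \subset F_{n-C}$ is known, comparing the Hilbert--Samuel functions of $A$ and of the $A_i$ does give the result directly (the paper instead introduces the ideals $W_{i,n} = \m^n \cap V_i$ of $A_i$ and uses the multiplication-by-$z_i$ isomorphisms to compute $\dim_k W_{i,n-r}/W_{i,n}$; your counting bypasses that bookkeeping, and even the $z_i$-hypothesis, entirely). But the entire content of the lemma is the upper containment $\m^n \subset F_{n-C}$ with a uniform $C$, and this is precisely the step you leave unproven: you compute yourself that absorbing all mixed factors into the single dominant component $V_{i_0}$ fails badly when the exponent vector is balanced, and then only gesture at ``a more delicate argument \dots via partitioning \dots re-applying the hypothesis to several different components \dots using completeness of $A$.'' As written, the central claim of the proof is unsupported; moreover completeness of $A$ plays no role in closing it, so the sketched repair is pointing in the wrong direction.

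The two missing ideas (which constitute the first half of the paper's proof) are the following. First, upgrade the hypothesis from single factors to products: for any multi-index $\beta$ one has $V^{\beta}V_i^n \subset V_i^{n-st_0}$ for all $n \geq sN_0$, by applying the hypothesis one factor at a time and using $N_0 \geq t_0$ to guarantee the exponent stays $\geq N_0$ at every intermediate step (this, not completeness, is where $N_0 \geq t_0$ enters). Second, and crucially: if $|\alpha| = n$ is large and two distinct indices $i \neq j$ both satisfy $\alpha_i, \alpha_j > sN_0$, then by the first point $V^{\alpha} \subset V_i \cap V_j = 0$, since the $V_i$ are in direct sum inside $\m$. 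So the balanced profiles you were worried about do not need to be absorbed at all --- they vanish. For every nonzero term $V^{\alpha}$ with $|\alpha| = n > s^2N_0$ there is then a unique index $i$ with $\alpha_i > sN_0$, forcing $\alpha_i \geq n-(s-1)sN_0$, and the single-component absorption you already set up places $V^{\alpha}$ inside $V_i^{n-s^2N_0}$. This yields $\m^n \subset F_{n-C}$ with $C = s^2N_0$, after which your dimension count finishes the proof. Without the vanishing observation the gap is genuine, since the naive absorption really does fail on balanced multi-indices, as you noted.
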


Note that if we had the stronger property that $V_iV_j = 0$ for all $i
\neq j$ the result would be trivial, as then $\m^n =
\oplus_{i=1}^sV_i^n$ and $\m^n/\m^{n+1} = \oplus_{i=1}^sV_i^n/V_i^{n+1}$.

\begin{proof}
Observe first that there exist integers $N$ and $t$ with $N > t$ such that for all
$\alpha$, for all $i$, $V^{\alpha}V_i^n \subset V_i^{n-t}$ for all $n
\geq N$. Indeed, $V^\alpha \subset V_{j_1}\dots V_{j_r}$ where
$\{j_1,\dots,j_r\}\subset \{1,\dots,s\}$ is the set of indices with
$\alpha_j >0$. Then if $n \geq rN_0$, then $V_{j_1}\dots V_{j_r}V_i^n
\subset V_i^{n-rt_0}$. So we can take $N = sN_0$ and $t = st_0$.

If $\alpha_j > N$, then $V^\alpha \subset V_j^{\alpha_j-t} \subset
V_j$. So if there are two different indices $i,j$ with $\alpha_i > N$
and $\alpha_j > N$, then $V^{\alpha} = 0$ as it is contained in $V_i \cap
V_j$. If $|\alpha| > sN$, then there exists at least one $i$ with
$\alpha_i > N$ so $V^\alpha = \sum (V^{\alpha}\cap V_j)$.

Fix some index $i$. 
Let $n > 0$. Then $\m^n = \sum_{|\alpha| = n}V^\alpha$. So if $n > Ns$,
then $(\m^n \cap V_i) = \sum_{\alpha} (V^\alpha \cap V_i)$ and the only
contributing terms are those with $\alpha_j \leq  N$ for all $j \neq i$, and
$\alpha_i > N$. 
For such an $\alpha$, we have $V^\alpha \subset V_i^{n-sN}$ as $\alpha_i \geq n-(s-1)N$. 
Let $r = sN$, so that $V_i^{n-r}\subset V_i$ for all $n>r$. So for all
$n>r$ and all such $\alpha$ we have $V^\alpha \subset V_i$,
so finally for $n>r$ we have: 
\begin{equation}
\label{mn}
(\m^n\cap V_i) = \sum_{|\alpha|=n,\alpha_j \leq N\text{ if }j\neq
i}V^\alpha.
\end{equation}
We see that
$V_i^n \subset (\m^n\cap V_i) \subset V_i^{n-r}$ for all $n > r$.

Note that $(\m^n \cap V_i)$ is an ideal of $A_i$, which we denote by
$W_{i,n}$. We know that $z_iV_i^n = V_i^{n+1}$ for all $n$ large enough, so
by the formula (\ref{mn}) for $W_{i,n}$ we see that $z_iW_{i,n} = W_{i,n+1}$ for all $n$ large
enough. In $A_i$, multiplication by $z_i$ induces an isomorphism from
$V_i^n$ to $V_i^{n+1}$ and from $W_{i,n}$ to $W_{i,n+1}$, so it also induces an
isomorphism from $V_i^{n-r}/W_{i,n}$ to $V_i^{n+1-r}/W_{i,n+1}$ for all $n$
large enough. Note that these vector spaces are finite-dimensional, so
they have the same dimension, as $\dim_k V_i^{n-r}/V_i^n$ is finite for
all $n$. 

We consider the inclusions
$$V_i^n \subset W_{i,n} \subset V_i^{n-r} \subset W_{i,n-r} \subset
V_i^{n-2r}.$$
We know that
for all $n \gg 0$,
$\dim_k V_i^{n-r}/V_i^n = \dim_k V_i^{n-2r}/V_i^{n-r} = re(A_i)$ and
$\dim_k V_i^{n-r}/W_{i,n} = \dim_kV_i^{n-2r}/W_{i,n-r}$, which gives that
$\dim_k W_{i,n-r}/W_{i,n} = re(A_i)$.

We now go back to $A$. For all $n \gg 0$ we have that 
$\dim_k(\m^{n-r}/\m^n) = re(A)$. 
On the other hand, we have seen that for all $n \gg 0$, $\m^n 
= \oplus_i (\m^n\cap V_i)$, so $\m^{n-r}/\m^n$ is isomorphic to
$\oplus_i(\m^{n-r}\cap V_i)/(\m^n\cap V_i) = \oplus_i (W_{i,n-r}/W_{i,n})$. 
So $re(A) = \sum_{i=1}^sre(A_i)$, and so $e(A) = \sum_i e(A_i)$.
\end{proof}

\begin{ex}
\label{exnsbis}
Let us take again $A_1$, $A_2$, $A$ as in Example \ref{exns}. For all $n
>0$, $\m^n$ is the set of terms of the form $\sum_{i \geq n}a_iz_1^i +
>\sum_{i \geq n}b_iz_2^i + \sum_{i \geq n-2}c_ix_2z_2^{i}$. Indeed, we
have that $x_2 \in V_1V_2 \subset \m^2$, although in $A_2$ we have $x_2 \in M_2
\setminus M_2^2$. So we do not have $\m^n \cap V_2 = V_2^n$, but
$\m^n \cap V_2 = V_2^n + (V_1V_2^{n-1} \cap \m^n)$, where $V_1V_2^{n-1}$ is the part
that contains the term $x_2z_2^{n-2}$.
\end{ex}

\subsection{Hilbert-Samuel multiplicity of the special fiber}
\label{HSspecial}

Let $R$ be a discrete valuation ring with uniformizer $\pi$ and residue
field $k$.

Let $A$ be a local $R$-algebra with maximal ideal $\m$, and let $M$ be an
$A$-module of finite type. We denote by $\e_R(A,M)$ the Hilbert-Samuel multiplicity of
$M\otimes_R k$ as an $A\otimes_Rk$-module, with respect to the ideal
$\m\otimes_Rk$. When $M = A$ we just write $\e_R(A)$ instead of
$\e_R(A,A)$, and we omit the subscript $R$ when the choice of the ring is
clear from the context.

\begin{lemm}
\label{extring}
Let
$(T,\m_T) \to (S,\m_S)$ be a local morphism of local noetherian rings of
the same dimension,
with residue fields $k_T$ and $k_S$ respectively,
then $e(T,S) \geq [k_S:k_T]e(S)$.
\end{lemm}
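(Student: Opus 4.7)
The plan is to compare the two Hilbert--Samuel polynomials directly, using that the morphism is local to relate the $\m_T$-adic filtration on $S$ to the $\m_S$-adic filtration.

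First I would observe that since $T \to S$ is a local morphism of local rings, $\m_T S \subseteq \m_S$, and hence $\m_T^n S \subseteq \m_S^n$ for every $n \geq 0$. This gives a surjective $T$-module map $S/\m_T^n S \twoheadrightarrow S/\m_S^n$, so
\[
\operatorname{len}_T(S/\m_T^n S) \;\geq\; \operatorname{len}_T(S/\m_S^n).
\]

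Next I would use the standard length comparison: for any $S$-module $M$ of finite length, $\operatorname{len}_T(M) = [k_S:k_T]\operatorname{len}_S(M)$. The point is that every composition factor of $M$ over $S$ is isomorphic to $k_S$, which itself has length $[k_S:k_T]$ as a $T$-module. Note that $[k_S:k_T]$ is finite under our hypotheses, because $e(T,S)$ being defined requires $\operatorname{len}_T(S/\m_T S) < \infty$, and $k_S$ is a $T$-module quotient of $S/\m_T S$. Applying this with $M = S/\m_S^n$, we obtain
\[
\operatorname{len}_T(S/\m_T^n S) \;\geq\; [k_S:k_T]\,\operatorname{len}_S(S/\m_S^n).
\]

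Finally, setting $d = \dim T = \dim S$, for $n$ large enough the left-hand side equals $e(T,S)\,n^d/d! + O(n^{d-1})$ and the right-hand side equals $[k_S:k_T]\,e(S)\,n^d/d! + O(n^{d-1})$, by the very definition of the Hilbert--Samuel multiplicity recalled at the start of Section \ref{commalg}. Comparing leading coefficients yields $e(T,S) \geq [k_S:k_T]\,e(S)$, as desired.

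The proof is essentially formal once the local morphism inclusion $\m_T^n S \subseteq \m_S^n$ is noted; the only mild subtlety is justifying the length formula $\operatorname{len}_T = [k_S:k_T]\operatorname{len}_S$ on finite-length $S$-modules and the finiteness of $[k_S:k_T]$, which I expect to be the only point worth spelling out carefully. The equality of dimensions $\dim T = \dim S$ is used precisely to ensure that both Hilbert--Samuel polynomials have degree $d$, so that comparing their degree-$d$ coefficients gives a meaningful inequality.
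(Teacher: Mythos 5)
Your proof is correct and follows essentially the same route as the paper: the surjection $S/(\m_T S)^n \twoheadrightarrow S/\m_S^n$ giving $\len_T(S/\m_S^n) \leq \len_T(S/(\m_T S)^n)$, the identity $\len_T(M) = [k_S:k_T]\len_S(M)$ for finite-length $S$-modules (which the paper checks via the graded pieces $\m_S^i/\m_S^{i+1}$), and a comparison of the degree-$d$ coefficients of the two Hilbert–Samuel polynomials. Your explicit remarks on the finiteness of $[k_S:k_T]$ and on why both polynomials have degree $d$ are fine and only make explicit what the paper leaves implicit.
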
 

\begin{proof}
Let $n \geq 0$ be an integer. Then $S/\m_S^n$ is a quotient of
$S/(\m_TS)^n$, so 
$\len_T(S/\m_S^n) \leq \len_T(S/(\m_TS)^n)$.
Morevoer, 
$$
\len_T(S/\m_S^n) = 
\sum_{i=0}^{n-1}\dim_{k_T}\m_S^i/\m_S^{i+1}
=
[k_S:k_T]
\sum_{i=0}^{n-1}\dim_{k_S}\m_S^i/\m_S^{i+1}
= 
[k_S:k_T]
\len_S(S/\m_S^n)
$$
so finally $[k_S:k_T]\len_S(S/\m_S^n) \leq
\len_T(S/(\m_TS)^n)$ which gives the result.
\end{proof}

\begin{prop}
\label{multsmaller}
Let $A$ be a complete noetherian local $R$-algebra which is a
domain. Let $B \subset A[1/\pi]$ be a finite $A$-algebra.
Let $k_A$ and $k_B$ be the residue fields of $A$ and $B$ respectively.
Then $\e(A) \geq [k_B:k_A]\e(B)$.
\end{prop}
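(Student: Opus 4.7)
The plan is to combine Lemma \ref{extring} with a short Tor computation, reducing the inequality to the equality $\e(A) = \e(A,B)$. Since $A$ is a domain, so is $A[1/\pi]$, hence $B\subset A[1/\pi]$ is a domain; being a finite algebra over the complete local ring $A$, it is itself a complete local ring with $\dim B = \dim A =: d$, and the structure morphism $A\to B$ is an injective local homomorphism. As $\pi$ is a nonzero nonunit in both rings, $\dim\bar A = \dim\bar B = d-1$, and Lemma~\ref{extring} applied to the local map $\bar A \to \bar B$ of rings of dimension $d-1$ gives $\e(A,B) \geq [k_B:k_A]\,\e(B)$. It therefore suffices to prove $\e(A)=\e(A,B)$.

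To this end, I would start from the short exact sequence of $A$-modules $0\to A \to B \to B/A \to 0$. Since $B$ is a finitely generated $A$-submodule of $A[1/\pi]$, some power $\pi^m$ kills $B/A$, so $\dim(B/A)\leq d-1$. Tensoring over $R$ with $k$, using that $A$ and $B$ are $\pi$-torsion free and that $\operatorname{Tor}_1^R(M,k)=M[\pi]$, yields a four-term exact sequence of $\bar A$-modules
$$0 \to K \to \bar A \to \bar B \to Q \to 0, \qquad K := (B/A)[\pi],\ Q := (B/A)/\pi(B/A),$$
with $K$ and $Q$ finite $\bar A$-modules of Krull dimension at most $d-1$. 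Additivity of the Hilbert-Samuel multiplicity at dimension $d-1$ on this four-term sequence gives
$$\e(A) - \e(A,B) = e(\bar A, K) - e(\bar A, Q).$$

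To finish I need $e(\bar A,K)=e(\bar A,Q)$, which I would extract from the four-term exact sequence of $A$-modules
$$0 \to K \to B/A \xrightarrow{\ \pi\ } B/A \to Q \to 0,$$
splitting it into the two short exact sequences $0 \to K \to B/A \to \pi(B/A) \to 0$ and $0 \to \pi(B/A) \to B/A \to Q \to 0$ and applying additivity of the degree-$(d-1)$ coefficient of the Hilbert polynomial for finite $A$-modules of dimension at most $d-1$ to cancel the middle term. Because $K$ and $Q$ are themselves $\bar A$-modules, this coefficient computed over $A$ agrees with $e(\bar A,K)$ and $e(\bar A,Q)$ respectively, closing the argument. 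The main delicate point is the bookkeeping between multiplicities computed over $A$ versus $\bar A$, and making sure that additivity at a fixed degree is applied correctly when $K$, $Q$, $B/A$ and $\pi(B/A)$ may drop in dimension below $d-1$; in that case the relevant leading coefficients simply vanish, and the identity still holds.
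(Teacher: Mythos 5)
Your argument is correct, and its skeleton is the paper's: start from $0\to A\to B\to B/A\to 0$, tensor with $k$ over $R$ to get the four-term sequence $0\to (B/A)[\pi]\to A\otimes_Rk\to B\otimes_Rk\to (B/A)/\pi(B/A)\to 0$, deduce $\e(A)=\e(A,B)$, and finish with Lemma \ref{extring} applied to $A\otimes_Rk\to B\otimes_Rk$ (together with the same preliminary observation that $B$ is a complete local domain of the same dimension). The one real difference is how the two end terms are cancelled. The paper first reduces to the case $\pi B\subset A$, via a d\'evissage through intermediate rings generated by the elements $x/\pi^n$, so that both end terms are literally $B/A$ and cancel on the nose; you keep a general $B$ and show directly that $K=(B/A)[\pi]$ and $Q=(B/A)/\pi(B/A)$ have the same degree-$(d-1)$ coefficient, by splitting the multiplication-by-$\pi$ sequence $0\to K\to B/A\to B/A\to Q\to 0$ and using additivity of the fixed-degree Hilbert--Samuel coefficient (with the convention that it vanishes when the dimension drops, which you correctly flag, and which is justified by Artin--Rees or the associativity formula). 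Your route is slightly more self-contained — no induction over intermediate rings, and the dimension bookkeeping ($\dim A/\pi A=\dim B/\pi B=d-1$, $\dim B/A\le d-1$, the identification of the coefficient over $A$ with the one over $A\otimes_Rk$ for modules killed by $\pi$) is made explicit — at the price of invoking fixed-degree additivity rather than plain additivity of multiplicities; both versions rest on the same key Lemma \ref{extring}, and both share the paper's implicit nondegeneracy assumptions ($\pi\neq 0$ in $A$ and $\pi\in\m_B$, the latter following from finiteness of $B$ over $A$ and Nakayama).
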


\begin{proof}
Note that $B$ is also a complete noetherian local $R$-algebra which
is a domain.  Indeed, $A$ is henselian and $B$ is a finite $A$-algebra,
so $B$ is a finite product of local rings, and so it is a local ring as
it is a domain.

It is enough to prove the result when $\pi B \subset A$, as $B$ is
generated over $A$ by a finite number of elements of the form $x/\pi^n$
for $x \in A$. 

We have an exact sequence of $R$-modules:
$$
0 \to A \to B \to B/A \to 0.
$$
After tensoring by $k$ over $R$ we get the exact sequence:
$$
0 \to B/A \to A\otimes_Rk \to B\otimes_Rk \to B/A \to 0.
$$
Indeed, $(B/A)\otimes_Rk = B/A$, and $(B/A)[\pi] = B/A$ and $B$ is
$\pi$-torsion free so $B[\pi] = 0$.

Hence we get that $\e(A,B) = \e(A,A)$. So we only need to show that
$\e(A,B) \geq [k_B:k_A]\e(B)$, which follows from Lemma \ref{extring}
applied to $T = A\otimes_Rk$ and $S = B\otimes_Rk$.
\end{proof}

\begin{rema}
\label{exdiffe}
We give some examples: Let $R = \Z_p$, $C = R[[X]]$,
$A_n = R[[pX,X^n]] \subset C$ for $n \geq 1$, $B_n =
R[[pX,pX^2,\dots,pX^{n-1},X^n]] \subset C$ for $n \geq 1$.
We check easily that $A_n \subset B_n \subset C$ and that $C$ is finite
over $A_n$, and $A_n$ is not equal to $B_n$ if $n>2$.
We compute that $\e(A_n) = \e(B_n) = n$, and $\e(C) = 1$. So
we see that in Proposition \ref{multsmaller}, both possibilities $\e(B) <
\e(A)$ and $\e(B) = \e(A)$ can happen for $A \neq B$.
See also Paragraph \ref{recover} for more examples.
\end{rema}

\subsection{Change of ring}

We suppose now that $R$ is the ring of integers of a finite extension $K$ of
$\Q_p$. If $K'$ is a finite extension of $K$, we denote by $R'$ its ring
of integers.

\begin{prop}
\label{changeringram}
Let $K'$ be a finite extension of $K$, with ramification degree
$e_{K'/K}$.
Let $A$ be a local noetherian $R'$-algebra.
Then $\e_R(A) = e_{K'/K}\e_{R'}(A)$.
\end{prop}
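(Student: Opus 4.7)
The plan is to exploit the equality $\pi A = (\pi')^eA$, where $e = e_{K'/K}$ and $\pi = u(\pi')^e$ for some unit $u \in R'$. Writing $\bar{A} := A/\pi'A = A\otimes_{R'}k'$, this gives $A\otimes_R k = A/(\pi')^eA$, and I would filter the latter by powers of $\pi'$:
\[A/(\pi')^eA \supset \pi'A/(\pi')^eA \supset \cdots \supset (\pi')^{e-1}A/(\pi')^eA \supset 0,\]
with successive quotients $N_i := (\pi')^iA/(\pi')^{i+1}A$ for $i=0,\ldots,e-1$.

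First I would identify each $N_i$ with $\bar A$ as $\bar A$-modules. Each $N_i$ is killed by $\pi'$, hence is naturally a $\bar A$-module, and multiplication by $(\pi')^i$ gives a surjection $\bar A\twoheadrightarrow N_i$ whose kernel is the image in $\bar A$ of the $\pi'$-torsion of $A$. Granting that $A$ is $\pi'$-torsion free (the natural hypothesis here, satisfied in particular by the $R'$-flat rings to which the proposition is applied), this surjection is an isomorphism of $\bar A$-modules.

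Next I would apply the additivity of the Hilbert-Samuel multiplicity in short exact sequences. Since $A/\pi A$ and $\bar A$ have the same Krull dimension $d$ (they differ only by the nilpotent element $\pi'$) and each $N_i\cong \bar A$ also has dimension $d$, iterating additivity along the filtration yields
\[e(A/\pi A) = \sum_{i=0}^{e-1} e\!\left(A/\pi A,\, N_i\right).\]
Because $N_i$ is annihilated by $\pi'$, the action of $A/\pi A$ on $N_i$ factors through $\bar A$; as the two rings share the same residue field, the length of $N_i/\m^n N_i$ is the same over either ring, and so $e(A/\pi A,\, N_i) = e(\bar A) = \e_{R'}(A)$. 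Summing over $i$ gives $\e_R(A) = e\cdot \e_{R'}(A) = e_{K'/K}\,\e_{R'}(A)$.

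The main obstacle is the identification $N_i \cong \bar A$, which rests on $\pi'$-torsion-freeness of $A$: without it the map $\bar A \to N_i$ can have a nontrivial kernel and the formula can genuinely fail (e.g.\ for $A = R'/(\pi')$, both $\e_R(A)$ and $\e_{R'}(A)$ equal $1$ irrespective of $e$). Granted this, the rest is a routine application of the additivity of Hilbert-Samuel multiplicities together with the fact that adjoining a nilpotent to a local ring does not affect the length function of a module annihilated by the nilpotent.
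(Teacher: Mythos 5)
Your proof is correct, and it takes a genuinely different route from the paper's. The paper splits $K'/K$ into its unramified and totally ramified parts and composes: the unramified case is immediate, and in the totally ramified case it writes $R' = R[X]/(u)$ with $u$ Eisenstein and passes through the identification $A\otimes_R k = (A\otimes_{R'}k)\otimes_k k[X]/(X^s)$, from which $\e_R(A) = s\,\e_{R'}(A)$ is read off. You instead use only the relation $\pi = u(\pi')^{e}$, filter $A/\pi A = A/(\pi')^{e}A$ by the images of the $(\pi')^{i}$, identify each graded piece with $A/\pi'A$, and conclude by additivity of the Hilbert--Samuel multiplicity, using that $A/\pi A$ and $A/\pi'A$ have the same dimension and the same residue field. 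This handles arbitrary $K'/K$ in one stroke, needs no Eisenstein presentation and no passage through the maximal unramified subextension, and it is arguably more robust: the paper's displayed identity $A\otimes_{R'}(k[X]/X^s) = (A\otimes_{R'}k)\otimes_k k[X]/X^s$ is not an isomorphism of rings in general, even for $R'$-flat $A$ (for $A$ the ring of integers of a ramified quadratic extension of $K'$ and $s=2$, the left side is $k[Y]/(Y^4)$ while the right side is $k[X,Y]/(X^2,Y^2)$); the multiplicity equality it is meant to justify is nevertheless true under flatness, and your filtration argument is precisely a correct way to establish it.

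The one caveat is the hypothesis you flag yourself: the identification of the graded pieces $(\pi')^{i}A/(\pi')^{i+1}A$ with $A/\pi'A$ requires $A$ to have no $\pi'$-torsion (equivalently, $R'$-flatness for these local rings), and your example $A = R'/(\pi') = k'$ shows the stated equality genuinely fails without it, so the proposition as printed is missing this assumption. Since it is only applied to $\O_F$-flat rings of bounded functions such as $\A_F^0(X_1)$ in Proposition \ref{ccomponent}, the omission is harmless in context, but your proof has the merit of making explicit exactly where flatness enters.
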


\begin{proof}
Suppose first that $K'$ is an unramified extension of $K$, and let $k$
and $k'$ be the residue fields of $K$ and $K'$ respectively, and
let $\pi$ be a uniformizer of $R$ and $R'$.
Then $A\otimes_{R'}k' = A\otimes_Rk = A/\pi A$. 
So $\e_R(A) = e(A/\pi A) = \e_{R'}(A)$.

Suppose now that $K'$ is a totally ramified extension of $K$. 
Let $u$ be
an Eisenstein polynomial defining the extension, so that $R' = R[X]/u(X)$,
and $\bar{u}(X) = X^s$ where $s = [K':K]$. Then $A\otimes_Rk =
A\otimes_{R'}(R'\otimes_Rk) = A\otimes_{R'}(k[X]/(X^s)) =
(A\otimes_{R'}k)\otimes_kk[X]/(X^s)$. So $\e_R(A) = s\e_{R'}(A) =
[K':K]\e_{R'}(A)$.

For the general case, let $R_0$ be the ring of integers of the maximal
unramified extension $K_0$ of $K$ in $K'$, then $e_R(A) =
e_{R_0}(A)$ and $e_{R_0}(A) = [K':K_0]e_{R'}(A)$ which gives the
result.
\end{proof}

We recall the following result, which is \cite[Lemme 2.2.2.6]{BM}:

\begin{lemm}
\label{changeringany}
Let $A$ be a local noetherian $R$-algebra, with the same residue field as
$R$ and $A$ is complete and topologically of finite type over $R$.
Let $K'$ be a finite 
extension of $K$, and $A' =
R'\otimes_RA$. Suppose that $A'$ is still a local ring.
Then $\e_R(A) = \e_{R'}(A')$. 
\end{lemm}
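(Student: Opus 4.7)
The plan is to identify the special fibre $A' \otimes_{R'} k'$ with the extension of scalars $\bar A \otimes_k k'$ of $\bar A := A \otimes_R k$ along the residue extension $k \to k'$, and then to use that tensoring by a finite field extension preserves the Hilbert-Samuel function, provided lengths are measured in the appropriate residue field.

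First I would compute $A' \otimes_{R'} k' = A \otimes_R R' \otimes_{R'} k' = A \otimes_R k'$. Letting $\varpi$ be a uniformizer of $R'$ and $\pi$ one of $R$, the fact that $\pi \in \varpi R'$ means the composite $R \to R' \to k'$ kills $\pi$, hence factors through $R \to k \to k'$. This yields a canonical isomorphism $A' \otimes_{R'} k' \isom \bar A \otimes_k k'$, which I denote by $\bar A'$. Since $A'$ is local by hypothesis, so is $\bar A'$. To identify its maximal ideal, I would observe that $A'/(\m_A A' + \varpi A') = k \otimes_R k' = k'$ is a field, so $\m_{A'} = \m_A A' + \varpi A'$; quotienting by $\varpi A'$, the maximal ideal $\bar \m'$ of $\bar A'$ corresponds under the above isomorphism to $\bar \m \otimes_k k'$, where $\bar \m$ is the maximal ideal of $\bar A$.

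With this in hand, for every integer $t \geq 1$ one has $\bar A'/(\bar \m')^t = (\bar A/\bar \m^t) \otimes_k k'$. Its length as an $\bar A'$-module equals its $k'$-dimension, which equals $\dim_k \bar A/\bar \m^t = \len_{\bar A}(\bar A/\bar \m^t)$. So the Hilbert-Samuel functions of $\bar A$ and of $\bar A'$ coincide for all $t$, and in particular so do their leading coefficients $\e_R(A)$ and $\e_{R'}(A')$.

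The only delicate point is the identification of the maximal ideal of the special fibre, where the hypothesis that $A'$ is local is essential: without it, $\bar A'$ could decompose into several local factors and one would have to single out the correct one before reading off the multiplicity.
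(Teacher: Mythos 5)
Your proof is correct. Note that the paper does not prove this lemma at all: it is quoted verbatim from Breuil--M\'ezard (Lemme 2.2.2.6 of \cite{BM}), and the argument there is the same base-change computation you give, namely identifying the special fibre $A'\otimes_{R'}k'$ with $(A\otimes_Rk)\otimes_k k'$ and using that $\dim_{k'}(V\otimes_kk')=\dim_kV$, so that the two Hilbert--Samuel functions agree termwise. You also handle correctly the one genuinely delicate point, which is where the hypothesis that $A'$ is local enters: it forces $\m_AA'+\varpi A'$ (whose quotient you check is the field $k'$) to be \emph{the} maximal ideal of $A'$, so that the multiplicity $\e_{R'}(A')$ is computed with respect to the ideal your identification produces, and the equality of Hilbert functions then gives equality of dimensions and of leading coefficients simultaneously.
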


\section{Rigid geometry and standard subsets of the affine line}
\label{geometry}

\subsection{Quasi-affinoid algebras and rigid spaces}

\subsubsection{Quasi-affinoid algebras}
\label{qaffalg}

Let $F$ be a finite extension of $\Q_p$, with ring of integers $\OO_F$.
We denote by $R_{n,m}$, or $R_{n,m,F}$, the $F$-algebra 
$\OO_F\langle x_1,\dots,x_n\rangle [[y_1\dots,y_m]]\otimes_{\OO_F}F$.
Following \cite{LR00}, we say that an $F$-algebra is a quasi-affinoid
algebra (or an $F$-quasi-affinoid algebra) if it is a quotient of
$R_{n,m}$ for some $n$, $m$. For example, an $F$-affinoid algebra
is $F$-quasi-affinoid, as it is a quotient of the Tate algebra
$R_{n,0,F}$ for some $n$.  The theory of quasi-affinoid algebras has also
been studied by other authors under the name "semi-affinoid algebras" (see
for example \cite{Kap}).

Let $A$ be an $F$-quasi-affinoid algebra. Following \cite[Definition
2.2]{Kap}, we say that an $\OO_F$-subalgebra $\underline{A}$ of $A$ is an
$\OO_F$-model of $A$ if the canonical morphism
$\underline{A}\otimes_{\OO_F}F \to A$ is an isomorphism. Note that an
$\OO_F$-model is automatically $\OO_F$-flat.
Assume that $A$ is normal. Let $\underline{A}$ be an $\OO_F$-model of $A$,
and let $A^0$ be the integral closure of $\underline{A}$ in $A$. Then
$A^0$ is normal, and is an $\OO_F$-model of $A$.

A quasi-affinoid algebra is said to be of open type if it has an $\OO_F$-model
that is local, or equivalently, if it is a quotient of $R_{0,m}$ for
some $m$. For example, let $\mathcal{R}$ be one of the potentially
semi-stable deformation rings defined by Kisin (as recalled in Section
\ref{kisinrings}), then $\mathcal{R}[1/p]$
is a quasi-affinoid algebra of open type. These will be our main focus of
interest, but we need to use quasi-affinoid algebras that are not
necessarily of open type in order to study them.

Quasi-affinoid algebras have some properties that are similar to affinoid
algebras: for example they are noetherian and they are Jacobson rings,
and the Nullstellensatz holds for them.

\subsubsection{Rigid spaces attached to quasi-affinoid algebras}
\label{dJconstr}

Let $A$ be an $F$-quasi-affinoid algebra.
Using Berthelot's construction, as described in \cite[\S 7]{dJ}, we can
attach canonically to it a rigid space $\X = \X_A$ defined over $F$. 
We say that such a rigid space is the quasi-affinoid space attached to
$A$. We say that a quasi-affinoid space is of open type if it is attached
to a quasi-affinoid algebra of open type.

We give here some properties of this construction. We denote by
$\underline{A}$ an $\OO_F$-model of $A$.

\begin{prop}
\label{propdJ}
\begin{enumerate}
\item 
we have a natural map $A \to \Gamma(\X,\OO_\X)$ which induces a map
$\underline{A} \to \Gamma(\X,\OO_\X^0)$ (where $\OO_\X^0$ is the sheaf of functions
bounded by $1$). 

\item
the map $\underline{A} \to \Gamma(\X,\OO_\X^0)$ is an isomorphism as soon as
$\underline{A}$ is normal. In particular, in this case $A$ is isomorphic to
the subring of $\Gamma(\X,\OO_\X)$ of functions that are bounded.

\item
there is a functorial bijection between $\Max(A)$ and the points of $\X$.

\item
this construction is compatible with base change by a finite extension $F
\to F'$

\end{enumerate}

\end{prop}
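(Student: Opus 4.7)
The approach is to read off each claim from Berthelot's construction of $X=X_A$ as described in \cite[\S7]{dJ}. Fix a presentation $A=R_{n,m,F}/I$, and let $\underline{A}$ be a chosen $\O_F$-model, which in the setting of (2) I may assume is normal. Berthelot's space $X$ is the admissible union over $r\geq 1$ of affinoid subdomains $X_r\hookrightarrow X_{r+1}$ whose coordinate rings are
$C_r = \bigl(F\langle x_1,\dots,x_n,y_1,\dots,y_m,z_1,\dots,z_m\rangle/(y_i^r-\pi z_i)\bigr)/IC_r$,
each equipped with a natural integral model $\underline{C_r}$ obtained by the same construction with $\O_F$ in place of $F$ and $\underline{A}$ in place of $A$.

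Points (1) and (3) are essentially formal. The maps $A\to C_r$ induced by $R_{n,m,F}\hookrightarrow F\langle x,y,z\rangle/(y_i^r-\pi z_i)$ assemble into the natural morphism $A\to\Gamma(X,\O_X)=\varprojlim_r C_r$; elements of $\underline{A}$ map into $\underline{C_r}$, hence into the power-bounded subrings $C_r^0$, giving the refinement $\underline{A}\to\Gamma(X,\O_X^0)$. For (3), the Nullstellensatz recalled in Paragraph \ref{qaffalg} guarantees that each maximal ideal of $A$ has residue field a finite extension of $F$; such an ideal pulls back to a maximal ideal of some $C_r$ and therefore defines a rigid point of $X$. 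Conversely every rigid point of $X$ lies in some $X_r$ and projects to an element of $\Max(A)$, and these two constructions are mutually inverse.

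For (2), assuming $\underline{A}$ is normal, I would identify $\Gamma(X,\O_X^0)$ with $\varprojlim_r C_r^0$. The key algebraic input is that for Tate algebras $C_r^0$ is exactly the integral closure of $\underline{C_r}$ in $C_r$, and that this formation is compatible with the transition maps $C_{r+1}^0\to C_r^0$. Using normality of $\underline{A}$, together with the fact that each $\underline{C_r}$ is obtained from $\underline{A}$ by adjoining the elements $z_i=y_i^r/\pi$ and completing, I would argue that a compatible family in $\varprojlim_r C_r^0$ must come from $\underline{A}$: the additional integral elements in the normalizations of the $\underline{C_r}$ carry denominators in $y_i$ whose required powers grow with $r$, and so cannot define a coherent system at all levels. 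Inverting $\pi$ then identifies $A$ with the bounded global sections of $\O_X$. Claim (4) follows from the functoriality of the construction and the base-change identities $R_{n,m,F'}=R_{n,m,F}\hat\otimes_F F'$ and $C_{r,F'}=C_r\hat\otimes_F F'$, which together exhibit $X_{A\hat\otimes_F F'}=X_A\times_F F'$.

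The main obstacle will be the convergence step in (2): showing rigorously that a compatible system of power-bounded functions across every $X_r$ lies in $\underline{A}$, rather than in a strictly larger subring of $A$. This is where normality is used essentially, and where I would need to track carefully how the integral closures of the $\underline{C_r}$ inside $C_r$ vary with $r$, probably by writing down explicit Weierstrass-style generators and verifying that any candidate element outside $\underline{A}$ becomes incompatible under the restriction maps for $r$ sufficiently large.
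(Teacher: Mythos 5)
The paper itself disposes of all four points by citation: (1), (3), (4) are \cite[7.1.8, 7.1.9, 7.2.6]{dJ}, and the substantive point (2) is \cite[7.4.1]{dJ}, combined with the remark that an $\O_F$-model is automatically $\O_F$-flat (so the hypotheses of de Jong's theorem are met). Your treatment of (1), (3), (4) by unwinding Berthelot's construction $X=\bigcup_r X_r$ is essentially a reproof of the formal parts of those references and is fine in outline, modulo the small point in (3) that you should say why a maximal ideal of $A$ actually lands in some $X_r$: one needs that the images of the $y_i$ in the finite extension $A/\m$ have absolute value strictly less than $1$, which is exactly the content of de Jong's 7.1.9 rather than a formal consequence of the Nullstellensatz alone.

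The genuine gap is in (2), and you have named it yourself. What you offer there is a heuristic ("the extra integral elements of the normalizations of $\underline{C_r}$ carry denominators in the $y_i$ whose powers grow with $r$, so no coherent system survives") together with the admission that the convergence step is not carried out. That heuristic is verifiable in the model case $\underline{A}=\O_F\langle x\rangle[[y]]$, where every element has a canonical expansion and one can compare coefficients level by level; but for a general normal quotient of $R_{n,m}$ there is no such expansion, no ``Weierstrass-style generators'' to write down, and no evident way to control how the integral closure of $\underline{C_r}$ in $C_r$ varies with $r$. (Also, your identification of $C_r^0$ with the integral closure of $\underline{C_r}$ uses reducedness of $C_r$, which you should justify from normality of $A$.) So as written the proposal does not prove (2); to close it you would either have to reconstruct de Jong's argument for \cite[7.4.1]{dJ} — which uses flatness and normality of $\underline{A}$ structurally, not via explicit expansions — or do what the paper does and quote that theorem directly, checking only the flatness hypothesis.
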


\begin{proof}
Property (1) is \cite[7.1.8]{dJ}, (2) is \cite[7.4.1]{dJ}, using the fact
that an $\OO_F$-model is $\OO_F$-flat.

Property (3) is \cite[7.1.9]{dJ} and (4) is \cite[7.2.6]{dJ}.
\end{proof}

If $\X$ is a rigid space over $F$, we write $\A^0_F(\X)$ for
$\Gamma(\X,\OO_\X^0)$ and $\A_F(\X)$ for the subring of $\Gamma(\X,\OO_\X)$ of
functions that are bounded. If $\X$ is the rigid space attached to an
$F$-quasi-affinoid algebra $A$ that is normal, then $A$ has a
normal $\OO_F$-model $\underline{A}$, and we have
$A = \A_F(\X)$ and $\underline{A} = \A_F^0(\X)$ (in particular, there is
actually only one $\OO_F$-model of $A$ that is normal, and it contains all
other $\OO_F$-models).

A map $f : \X \to \Y$ between $F$-quasi-affinoid rigid spaces is quasi-affinoid
if it is induced by an $F$-algebra map $f^\#: \A_F(\Y) \to \A_F(\X)$. By
Proposition \ref{propdJ}, it is easy to see that any rigid analytic map
$f : \X \to \Y$ between $F$-quasi-affinoid spaces is in fact
quasi-affinoid as soon as $\X$ is normal.

\subsubsection{$R$-subdomains}

As in the case of affinoid algebras and rigid spaces, we define some
special subsets of quasi-affinoid spaces.

Let $\X$ be a quasi-affinoid space. Let $h$, $f_1,\dots,f_n$,
$g_1,\dots,g_m$ be
elements of $\A_F(\X)$ that generate the unit ideal of
$\A_F(\X)$. A quasi-rational subdomain of $\X$ is a subset $U$ of the form
$\{x, |f_i(x)| \leq |h(x)|\;\forall i\text{ and }|g_i(x)| < |h(x)|\;\forall i\}$
(see \cite[Definition 5.3.3]{LR00}). 
This generalizes the notion of an affinoid subdomain: let $\X$ be an
affinoid rigid space, then an affinoid subdomain of $\X$ is a subset
defined by equations of the form $\{x, |f_i(x)| \leq |h(x)|\;\forall i\}$
where $f_1,\dots,f_n$ and $h$ generate the unit ideal.

In contrast to the case of affinoid subdomains, it is not necessarily
true that a quasi-rational subdomain of a quasi-rational subdomain of $\X$ is itself a
quasi-rational subdomain of $\X$, see \cite[Example 5.3.7]{LR00}.
We recall the definition of a $R$-subdomain of $\X$ (\cite[Definition
5.3.3]{LR00}):
the set of $R$-subdomains of $\X$ is defined as the smallest set of
subsets of $\X$
that contains $\X$ and is closed by the operation of taking a quasi-rational
subdomain of an element of this set. In particular, any finite
intersection of $R$-subdomains of $\X$ is also an $R$-subdomain.

Any $R$-subdomain of a quasi-affinoid space $\X$ is itself a
quasi-affinoid space in a canonical way, attached to the quasi-affinoid
algebra constructed as in \cite[Definition 5.3.3.]{LR00}.

\subsection{$R$-subdomains of the unit disk}
\label{Rsubdisk}

Our goal now is to understand the subsets of $\bar\Q_p$ that are the set
of points of some quasi-affinoid space. For simplicity, we consider for
the moment only subsets of the unit disk.  Let $\D$ be the rigid closed unit
disk, seen as a quasi-affinoid space defined over $\Q_p$, or over
any finite extension of $\Q_p$, so that $\D(\bar\Q_p) = D(0,1)^+$.  We
say that $X \subset D(0,1)^+$ is an $R$-subdomain if it is of the form
$\X(\bar\Q_p)$ for some $R$-subdomain $\X$ of $\D$, and that it is a
quasi-rational subdomain if is of the form $\X(\bar\Q_p)$ for some
quasi-rational subdomain $\X$ of $\D$.

\begin{defi}
We say that a subset of $\bar\Q_p$ is a rational disk if it is a set of the
form 
$\{x,|x-a|<r\}$ with $a\in\bar\Q_p$, 
$r \in |\bar\Q_p^\times|$
(open disk), or of
the form 
$\{x,|x-a|\leq r\}$ with $a\in\bar\Q_p$, 
$r \in |\bar\Q_p^\times|$
(closed disk).

Let $F$ be a finite extension of $\Q_p$, we say that a disk is
well-defined over $F$ if it can be written as 
$\{x, |x-a|<r\}$ or as $\{x,|x-a|\leq r\}$ for some $a\in F$ and $r\in
|F^\times|$.
\end{defi}

Recall (see Definition \ref{definedover}) that a disk is defined over $F$
if it is fixed by $G_F$, so a disk that is well-defined over $F$ is
defined over $F$, although the converse is not necessarily true.

From now on, when we write "disk" we always mean "rational disk". It is
clear that a rational disk is a quasi-rational subdomain of the affine line.

Following \cite[Definition 4.1]{LR96}, we define:

\begin{defi}
\label{defspecial}
A special subset of $\bar\Q_p$ is a subset of one of the following forms:
\begin{enumerate}
\item 
$\{x, r < |x-a| < r'\}$ for some $a \in \bar\Q_p$ and $r$,$r'$ in
$|\bar\Q_p^\times|$.

\item
$\{x, |x-a| \leq r\text{ and for all }i\in\{1,\dots,N\}, |x-\alpha_i|\geq
r_i\}$ for some $a,(\alpha_i)_{1 \leq i \leq N},\in \bar\Q_p$ and $r$,
$(r_i)_{1 \leq i \leq N}$ in $|\bar\Q_p^\times|$.
\end{enumerate}
\end{defi}

Special subsets are $R$-subdomains, as they are finite intersections of
quasi-rational subdomains. We have the following result:
\begin{lemm}[Theorem 4.5 of \cite{LR96}]
\label{Rspecial}
An $R$-subdomain of $D(0,1)^+$ is a finite union of special sets.
\end{lemm}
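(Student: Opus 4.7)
The plan is to proceed by induction on the construction of $R$-subdomains of $\D$. The base case is that $\D$ itself is a special subset of type (2) in Definition \ref{defspecial}, taking $a=0$, $r=1$ and no excluded sub-disks. For the inductive step, suppose $Y$ is a finite union of special subsets already known to be an $R$-subdomain of $\D$, and let $U$ be a quasi-rational subdomain of $Y$ cut out by functions $h, f_1, \ldots, f_n, g_1, \ldots, g_m \in \A_F(Y)$ which generate the unit ideal. Since both the class of quasi-rational subdomains and the class of finite unions of special subsets distribute over finite unions of the ambient space, it suffices to handle the case where $Y$ is a single special subset.

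The key analytic input I would use is a quasi-affinoid analogue of Weierstrass preparation: on a single special subset $Y$, any nonzero $f \in \A_F(Y)$ can be written as $f = P \cdot u$, where $P \in \bar\Q_p[X]$ is a polynomial whose roots lie in the ambient closed disk of $Y$, and $u \in \A_F(Y)^\times$ is a unit whose absolute value is locally constant on $Y$. Such a decomposition is available because an analytic function on a special subset has only finitely many zeros, and the unit factor, having no zeros, has absolute value determined by the Newton-polygon/Mittag-Leffler description of the relevant ring of definition. Applying this to each of the functions $f_i$, $g_j$ and $h$, the conditions cutting out $U$ reduce to a finite system of polynomial comparisons
\[
|P_i(x)| \leq c_i|R(x)|, \qquad |Q_j(x)| < d_j|R(x)|,
\]
for polynomials $P_i, Q_j, R$ and positive real constants $c_i, d_j$.

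What remains is the purely geometric statement: the set in $Y$ defined by finitely many polynomial comparisons of this form is a finite union of special subsets. I would argue this by induction on the total degree of the polynomials. The linear case $|x-\alpha| \leq c|x-\beta|$ breaks into subcases depending on the relative position of $|\alpha-\beta|$ and $c$: the solution is either a closed disk centered at $\alpha$, the complement of an open disk around $\beta$, or the whole space, and similarly with strict inequality. For higher degree, factor each polynomial into linear factors and stratify $Y$ according to which linear factor of $P_i$ (resp.\ $R$, $Q_j$) has smallest absolute value at $x$; by the ultrametric inequality the comparison collapses on each stratum to a single linear-over-linear comparison which is handled by the previous case. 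Intersecting all the resulting sets, stratum by stratum, with $Y$ yields the required finite union of special subsets.

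The main obstacle is the combinatorial bookkeeping in this last step. The ultrametric stratification can produce a large number of cases, each of which must be checked to land in the class of special subsets of Definition \ref{defspecial} after intersection with the ambient $Y$. In particular, one must carefully track the strict versus non-strict nature of each inequality, so that the output consists only of closed disks with finitely many open sub-disks removed, or open annuli, and never the mixed configurations (such as an open disk with a closed sub-disk removed) which fall outside the definition. A second, more technical, subtlety is a clean proof that units on a single special subset have locally constant absolute value; for annuli this reduces to the classical Laurent expansion, but for type (2) special subsets with several excluded sub-disks one needs the corresponding statement for quasi-affinoid algebras of the form $R_{0,m,F}$ modulo suitable ideals, which is where the quasi-affinoid machinery of \cite{LR00,LR96} is essential.
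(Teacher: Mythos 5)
First, a point of comparison: the paper does not prove this lemma at all --- it is imported verbatim as Theorem 4.5 of \cite{LR96} --- so you are supplying an argument where the paper supplies only a reference. Your overall plan (induct on the construction of the $R$-subdomain, reduce to a single special subset, factor the defining functions so that the conditions become comparisons of absolute values of polynomials, then analyse such comparisons by an ultrametric stratification) is a sensible one-variable reconstruction of the Lipshitz--Robinson result, and the finiteness of zeros you invoke does hold here precisely because the coefficients lie in a finite, hence discretely valued, extension $F$.

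However, two steps fail as stated. (i) The factorization $f = P\cdot u$ with $P$ a polynomial and $|u|$ locally constant is false: on an annulus $x-a$, and on a disk with a hole $1/(x-a_j)$ with $a_j$ the hole's centre, are units whose absolute value is not locally constant, and a zero-free bounded $f$ need not be a polynomial times a locally-constant-norm unit. The correct statement is that $f$ is a rational function with zeros and poles in the ambient closed disk (poles lying in the holes) times a factor $c(1+h)$ with $\|h\|_Y<1$; this still reduces the defining conditions to comparisons $|P_i(x)|\le c_i|R(x)|$ and $|Q_j(x)|<d_j|R(x)|$ after clearing denominators, so this slip is repairable. (ii) More seriously, the stratification step is wrong as described: $|P(x)|=\prod_i|x-\alpha_i|$ is a product, so knowing which linear factor is smallest at $x$ does not collapse the comparison to a single linear-over-linear inequality, and the induction on total degree does not close. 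What does work is to partition $\D$ into the finitely many disks and annuli determined by all roots of all the polynomials involved (the critical radii being the pairwise distances between roots); on each such piece every $|x-\gamma|$ is either constant or equal to $|x-\gamma_0|$ for the piece's centre $\gamma_0$, so each comparison becomes $C|x-\gamma_0|^m\le c$ (or $<$), i.e.\ a disk or annulus condition, and intersecting finitely many such conditions with $Y$ yields a finite union of special sets after the open/closed bookkeeping you already flag. With (i) corrected and (ii) replaced by this root-tree partition the argument can be completed, but as written the key combinatorial step is a genuine gap.
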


\begin{defi}
\label{defRsubset}
We say that a subset $X$ of $D(0,1)^+$ is a connected $R$-subset if it is
of the following form: $D_0 \setminus \cup_{i=1}^n D_i$ where the $D_i$
are rational disks contained in $D(0,1)^+$, $D_0 \neq D_i$ for all $i>0$,
$D_i\subset D_0$, and $D_i$ and $D_j$ are disjoint if $i\neq j$ and
$i,j>0$.

We say that a subset $X$ of $D(0,1)^+$ is an $R$-subset if it is a finite
disjoint union of connected $R$-subsets.
\end{defi}

We say that a connected $R$-subset is of closed type if $D_0$ is
closed and the $D_i$, $i>0$ are open. We say that it is of open type if
$D_0$ is open and the $D_i$, $i>0$ are closed. We say that an
$R$-subset is of closed type (resp. open type) if it is a finite union of connected
$R$-subset of closed type (resp. open type).
We say that a connected $R$-subset is well-defined over some extension
$F$ of $\Q_p$ if each disk involved in its description is well-defined
over $F$.

We check easily the following result:
\begin{lemm}
\label{interstd}
Let $X$ and $Y$ be two connected $R$-subsets of closed (resp. open) 
type. If $X \cap Y \neq \emptyset$ then $X\cap Y$ and $X\cup Y$ are connected
$R$-subsets of closed (resp. open) type. As a consequence, any
finite union of connected $R$-subsets of closed (resp. open) type is
an $R$-subset of closed (resp. open) type.
\end{lemm}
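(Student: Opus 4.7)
The plan is to use the ultrametric dichotomy for rational disks in $\bar\Q_p$: any two such disks are either disjoint or nested, with the direction of nesting constrained by the radii when one is open and the other closed. Write the two given subsets as $X = D_0 \setminus \bigcup_{i=1}^n D_i$ and $Y = E_0 \setminus \bigcup_{j=1}^m E_j$, both of closed type (the open-type case being entirely symmetric). Since $X \cap Y \neq \emptyset$ we have $D_0 \cap E_0 \neq \emptyset$, so one of the closed disks $D_0, E_0$ contains the other, and we may assume $D_0 \subseteq E_0$. The hypothesis also forbids $D_0 \subseteq E_j$ for any $j$, since this would give $D_0 \cap Y = \emptyset$; hence the dichotomy between the closed $D_0$ and the open $E_j$ leaves only the possibilities $D_0 \cap E_j = \emptyset$ or $E_j \subsetneq D_0$.

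For the intersection,
\[
X \cap Y \;=\; D_0 \setminus \Bigl(\bigcup_i D_i \;\cup\; \bigcup_{j:\,E_j \subset D_0} E_j\Bigr),
\]
and the removed disks are rational open disks each strictly contained in the closed disk $D_0$. Any two of them are either disjoint or nested, so passing to the family of maximal elements under inclusion produces a pairwise disjoint subfamily with the same union, which exhibits $X \cap Y$ as a connected $R$-subset of closed type.

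For the union, write $X \cup Y = E_0 \setminus F$. A point $x \in E_0$ lies in $F$ iff it belongs to some $E_{j_0}$ and is also outside $X$. A short case analysis on $E_{j_0}$ (disjoint from $D_0$; strictly contained in $D_0$ and either containing some $D_i$ or not) shows that $F$ is a finite union of rational open disks, each strictly contained in the closed disk $E_0$. Replacing this family by its maximal elements again gives a pairwise disjoint collection, so $X \cup Y$ is a connected $R$-subset of closed type. Nonemptiness is automatic because $X \neq \emptyset$: a closed disk over $\bar\Q_p$, whose residue field is infinite, cannot be covered by finitely many proper open subdisks.

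The final assertion then follows by iteration: given connected $R$-subsets $X_1,\dots,X_N$ of the same type, form the graph on $\{1,\dots,N\}$ with edges between intersecting pairs, and on each connected component apply the union statement repeatedly, using the fact that once $X_{i_1},\dots,X_{i_k}$ have been merged into a single connected $R$-subset $U$, any further $X_{i_{k+1}}$ in the same component intersects $U$ as soon as it intersects one of the $X_{i_\ell}$. Across distinct components the resulting connected $R$-subsets are pairwise disjoint, so the total union is a finite disjoint union of connected $R$-subsets, i.e.\ an $R$-subset of the prescribed type. The main bookkeeping hurdle is the case analysis for the union, tracking which holes of $Y$ survive in $X \cup Y$ and which are filled entirely or only partially by $X$; everything else is a direct consequence of the ultrametric dichotomy.
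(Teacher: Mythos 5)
Your proof is correct and is exactly the routine verification the paper leaves to the reader (the lemma is stated with ``We check easily\dots'' and no proof): the ultrametric nesting dichotomy for rational disks, the exclusion of $D_0\subseteq E_j$ from $X\cap Y\neq\emptyset$, the disjointification by passing to maximal disks, and the graph-merging step for finite unions are precisely the intended argument. Nothing is missing.
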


From Lemma \ref{Rspecial}, we get the following property of
$R$-subdomains of the unit disk:

\begin{prop}
\label{descrRdom}
Any $R$-subdomain of the unit disk is an $R$-subset.
\end{prop}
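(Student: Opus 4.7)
By Lemma \ref{Rspecial}, any $R$-subdomain of $\D$ is a finite union of special sets in the sense of Definition \ref{defspecial}. The plan is therefore (a) to rewrite each special set as a single connected $R$-subset (possibly empty), and (b) to show that any finite union of connected $R$-subsets can be rearranged into an $R$-subset, i.e. into a finite \emph{disjoint} union of connected $R$-subsets. The underlying tool throughout is the ultrametric trichotomy: any two rational disks in $\C_p$ are disjoint, equal, or nested.

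For (a), a type (1) special set $\{r<|x-a|<r'\}$ is exactly $D(a,r')^-\setminus D(a,r)^+$, which is a connected $R$-subset (of open type). A type (2) special set $S = D(a,r)^+\setminus \bigcup_i D(\alpha_i,r_i)^-$ requires a normalization: by the trichotomy, one discards each index $i$ for which $D(\alpha_i,r_i)^-$ is disjoint from $D(a,r)^+$; one declares $S=\emptyset$ (trivially an $R$-subset) if some $D(\alpha_i,r_i)^-$ contains $D(a,r)^+$; among the remaining inner disks, which are all strictly contained in $D(a,r)^+$, one retains only the maximal ones. These maximal disks are pairwise disjoint, so the result is a connected $R$-subset (of closed type).

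For (b), I proceed by induction on the number of summands, reducing to showing that the union $X\cup Y$ of two connected $R$-subsets is an $R$-subset. If the outer disks of $X$ and $Y$ are disjoint, the union is already a disjoint union of two connected $R$-subsets. Otherwise the trichotomy forces one outer disk, say the outer disk $E_0$ of $Y$, to lie inside the outer disk $D_0$ of $X$. If $E_0$ is contained in some hole $D_i$ of $X$, then $X\cap Y=\emptyset$ and again we are done. In the remaining case $E_0$ meets $X$: all holes of $X$ that intersect $E_0$ must lie in $E_0$, and applying the trichotomy to each hole of $Y$ against each such hole of $X$ one produces a new, pruned collection of pairwise disjoint disks strictly inside $D_0$; removing this collection from $D_0$ gives $X\cup Y$ as a connected $R$-subset. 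An induction on the total number of disks involved shows the procedure terminates.

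The main obstacle is the bookkeeping in (b): the holes of $X$ and the holes of $Y$ lying inside $E_0$ may themselves be nested in arbitrary ways, and one must systematically apply the ultrametric trichotomy to produce a final list of pairwise disjoint inner disks strictly contained in the outer disk. Once this case analysis is carried out, the proposition follows at once by combining (a), (b) and Lemma \ref{Rspecial}.
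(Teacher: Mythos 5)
Your proof is correct and takes essentially the same route as the paper, which states Proposition \ref{descrRdom} without proof as an immediate consequence of Lemma \ref{Rspecial}, leaving the normalization of special sets and the closure of connected $R$-subsets under finite unions as an easy check in the spirit of Lemma \ref{interstd}. Your parts (a) and (b) simply write out that check via the ultrametric trichotomy (just make sure the final hole collection in (b) also retains the holes of $X$ disjoint from $E_0$), so there is nothing genuinely different here.
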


On the other hand, we can ask whether any $R$-subset is an $R$-subdomain.

\begin{prop}
\label{isRdomain}
Let $X$ be a connected $R$-subset. Let $F$ be a finite extension of
$\Q_p$ such that $X$ is well-defined over $F$. Then $X$ is a
quasi-affinoid subdomain of $D(0,1)^+$, and it is the set of points of a
quasi-affinoid space defined over $F$ which is uniquely defined as a
quasi-rational subdomain of $\D$.  
\end{prop}

\begin{proof}
From Definition \ref{defRsubset}, we see that $X$ can be defined
by a finite number of equations of the form $|x-a| < |b|$ or $|x-a|\leq
|b|$ or $|x-a| > |b|$ or $|x-a| \geq |b|$ for $a$, $b$ in $F$ and $b \neq
0$.
\end{proof}

In particular, if $X$ is a connected $R$-subset of closed type, then it
is the set of points of an affinoid subdomain of the unit disk, and 
any affinoid subdomain of the unit disk is of this form by
\cite[Theorem 9.7.2/2]{BGRa}. 

\begin{defi}
\label{defringfunc}
Let $\X$ be an $R$-subdomain of $\D$, defined over $F$ as a quasi-affinoid
space, and let $X = \X(\bar\Q_p)$. Then we write $\A_F(X)$ for $\A_F(\X)$
and $A^0_F(X)$ for $\A_F^0(\X)$. 
\end{defi}

For example, let $X$ be the disk defined by $|x-a|<|b|$ for some $a$, $b$
in $F$, $b \neq 0$. Then $\A^0_F(X) = \OO_F[[(x-a)/b]]$ is isomorphic to the
power series ring $\OO_F[[t]]$.
Let $Y$ be the annulus defined by $|c|<|x-a|<|b|$ for some $a$,$b$,$c$ in
$F$ with $c\neq 0$ and $|c| < |b|$. Then $\A_F^0(Y) =
\OO_F[[(x-a)/b,c/(x-a)]]$, which is isomorphic to $\OO_F[[t,u]]/(tu-c/b)$. 
In general the ring $\A_F(X)$ can be entirely described using
\cite[Def. 5.3.3.]{LR00}. We will not give a formula, but we see
easily that for a connected $R$-subset $X$, $\A_F^0(X)$ is local if and
only if $X$ is of open type. So we have:

\begin{prop}
\label{isopentype}
Let $X$ be a connected $R$-subset. Assume that we know that $X$ is the
set of points of a quasi-affinoid space of open type. Then $X$ is an
$R$-subset of open type.
\end{prop}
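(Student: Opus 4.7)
The plan is to reduce to showing $\A_F^0(X)$ is itself a local ring, and then use the explicit description from Proposition~\ref{ringfuncsgen} to conclude $D_0$ is open and each $D_j$ (for $j>0$) is closed, by exhibiting two distinct maximal ideals of $\A_F^0(X)$ whenever this fails.

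First I would establish that $\A_F^0(X)$ is local. Since $X$ is of open type, there is a local $\O_F$-model $\underline{A}$ of $A := \A_F(X)$, and $\A_F^0(X)$ is the integral closure of $\underline{A}$ in $A$. The ring $A$ is a domain, as one sees directly from Proposition~\ref{ringfuncsgen} by expanding elements as formal Laurent series around $a_0$. Because $\underline{A}$ is a complete Noetherian local ring, hence excellent, and $\A_F^0(X) \subset A \subset \Frac(\underline{A})$, the integral closure $\A_F^0(X)$ is finite over $\underline{A}$. A finite integral extension of a Henselian local ring is a finite product of local rings, and because $\A_F^0(X)$ is a domain it must in fact be local.

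Second, after replacing $F$ by a finite unramified extension I may assume $|k_F| > n+2$ and that $X$ is well-defined over $F$; locality of $\A_F^0(X)$ is preserved because inspection of Proposition~\ref{ringfuncsgen} shows that its residue field equals $k_F$. Now suppose for contradiction $D_0 = D(a_0, r_0)^+$ is closed, and choose $\alpha, \alpha' \in F$ with $|\alpha - a_0| = |\alpha' - a_0| = |t_0|$ such that the residues $\overline{(\alpha-a_0)/t_0}$ and $\overline{(\alpha'-a_0)/t_0}$ in $k_F$ are distinct, nonzero, and different from each $\overline{(a_j - a_0)/t_0}$; the size condition on $k_F$ makes such a choice possible and ensures $\alpha,\alpha' \in X$. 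The function $f := (x-\alpha)/t_0$ lies in $\A_F^0(X)$, vanishes at $\alpha$, and takes the unit value $(\alpha'-\alpha)/t_0 \in \O_F^\times$ at $\alpha'$; hence the two maximal ideals of $\A_F^0(X)$ defined by evaluation-and-reduction at $\alpha$ and $\alpha'$ are distinct, contradicting locality. A symmetric argument with $v_j = t_j/(x-a_j)$ and two points on $|x - a_j| = |t_j|$—which lies in $X$ exactly when $D_j$ is open—rules out any $D_j$ with $j > 0$ being open.

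The main obstacle I expect is the first step, namely upgrading the mere existence of a local $\O_F$-model to locality of the canonical normal model $\A_F^0(X)$. The crucial input is that $A$ is a domain, which combined with excellence of $\underline{A}$ yields finiteness of the normalization, and then Henselianity forces it to be local. Once this is in hand, the evaluation argument in the second step is a clean residue-class-counting calculation.
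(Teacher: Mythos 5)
Your overall strategy is the right reading of the paper's terse ``from this we deduce'': use the open-type hypothesis to force locality of the canonical model $\A^0_F(X)$, and then use the explicit description of Proposition \ref{ringfuncsgen} to exhibit two distinct evaluation-and-reduction maximal ideals whenever $D_0$ is closed or some hole is open; the point-selection and the boundedness of your test functions are correct. But two steps are not justified as written, and they are exactly the delicate ones. First, the identification of $\A^0_F(X)$ with the integral closure of the local model $\underline{A}$ in $A=\A_F(X)$ is available in the paper (Paragraphs \ref{qaffalg} and \ref{dJconstr}) only under the hypothesis that $A$ is normal; what your argument really needs is that every function bounded by $1$ on $X$ is integral over $\underline{A}$, and you never say why this holds. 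It does hold here (for instance because $X$ is a one-dimensional smooth subdomain of the line, so $\A_F(X)$ is normal and the unique normal model $\A^0_F(X)$ contains, and is finite over, every model), but without this input the locality of one model says nothing about $\A^0_F(X)$: an arbitrary local model cannot be probed by boundary points, since a bounded-by-one function $f$ only satisfies $\pi^n f\in\underline{A}$ for some $n$, and $\pi^n f$ never has absolute value $1$ at any point.

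Second, the assertion that locality is preserved under your unramified extension ``because inspection of Proposition \ref{ringfuncsgen} shows that the residue field of $\A^0_F(X)$ equals $k_F$'' is circular at the moment you invoke it: in the for-contradiction configuration you do not yet know the shape of $\A^0_F(X)$, only (from your first step) that it is local with residue field some finite extension $k'$ of $k_F$, finite because $\A^0_F(X)$ is finite over the complete local model. If $k'\neq k_F$, then $\A^0_{F'}(X)=\O_{F'}\otimes_{\O_F}\A^0_F(X)$ has fiber $k'\otimes_{k_F}k_{F'}$ over the maximal ideal, which may split, so $\A^0_{F'}(X)$ can have several maximal ideals and exhibiting two of them is no contradiction. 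This is repairable in either of two ways: choose the unramified extension $F'$ with $[k_{F'}:k_F]$ prime to $[k':k_F]$, so that $k'\otimes_{k_F}k_{F'}$ is a field and locality does pass to $\A^0_{F'}(X)$; or avoid base change altogether by replacing $(x-\alpha)/t_0$ with the product of its $\Gal(F'/F)$-conjugates, $\prod_{\sigma}(x-\sigma\alpha)/t_0^{[F':F]}$, which lies in $\A^0_F(X)$, reduces to $0$ at $\alpha$ and to a unit at $\alpha'$ once the residue of $(\alpha'-a_0)/t_0$ avoids those of all conjugates of $\alpha$ as well as the residues attached to the holes; this produces two distinct maximal ideals of $\A^0_F(X)$ itself, giving the desired contradiction (and the same norm trick repairs the symmetric argument for an open hole).
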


\subsection{Rings of functions on standard subsets}

We continue studying subsets of $\bar\Q_p$, or more generally of
$\PP^1(\bar\Q_p)$, coming from quasi-affinoid spaces. From now on, we will
be only interested in $R$-subsets that are of open type, but we will not
necessarily assume that the subsets are contained in the unit disk
anymore. 

\subsubsection{Standard subsets}

We make the following definitions:

\begin{defi}
\label{standard}
We say that a subset $X$ of $\PP^1(\bar\Q_p)$ is a connected standard subset if it
is of one of the following forms:
\begin{enumerate}
\item
$D_0 \setminus \cup_{i=1}^n D_i$ where the $D_i$ are rational disks,
$D_0$ is open and each $D_i$ is closed for $i>0$,
$\infty\not\in D_0$,
$D_0 \neq D_i$ for all $i>0$, $D_i\subset D_0$, and $D_i$ and $D_j$ are
disjoint if $i\neq j$ and $i,j>0$ (bounded connected standard subset).

\item
$\PP^1(\bar\Q_p) \setminus \cup_{i=1}^n D_i$ where the $D_i$ are rational
disks, each $D_i$ is closed,
and $D_i$ and $D_j$ are
disjoint if $i\neq j$ (unbounded connected standard subset).
\end{enumerate}
The disks $(D_i)$ are called the defining disks of $X$.
\end{defi} 

So a bounded standard subset contained in the unit disk is the same thing
as a connected $R$-subset of open type.

\begin{defi}
A standard subset is a finite disjoint union of connected standard
subsets of $\PP^1(\bar\Q_p)$. The connected standard subsets that
appear are called the connected components of the standard subset.
The defining disks of a standard subsets are the defining disks of each
of its connected components.
\end{defi}

It is clear that a standard subset can be written in a unique way as a
finite disjoint union of connected standard subsets so the notion of
connected component is well-defined.

Let $F$ be a finite extension of $\Q_p$. We say that a standard subset is
well-defined over $F$ if each defining disk of $X$ is
well-defined over $F$.

\subsubsection{Definition of the rings of functions of standard subsets}

Let $X \subset \bar\Q_p$ a connected standard subset, which is
well-defined over some finite extension $F$ of $\Q_p$. Although it is not
necessarily contained in the unit disk, it is contained in some closed
disk, and so all the results of Section \ref{Rsubdisk} apply to $X$. In
particular we can define $\A_F(X)$ and $\A_F^0(X)$ as in Definition
\ref{defringfunc}.

Let $X \subset \PP^1(\bar\Q_p)$ be an unbounded connected standard subset, which is
not equal to all of $\PP^1(\bar\Q_p)$. 
Let $f$ be a rational function with $\bar\Q_p$-coefficients which defines
a bijection of $\PP^1(\bar\Q_p)$, and such that 
its pole is outside of $X$, then $Y=f(X)$ is a bounded
connected standard subset of $\bar\Q_p$. Let $F$ be a finite extension of
$\Q_p$ such that $X$ is
well-defined over $F$, and such that the rational function $f$ has coefficients
in $F$.
Then $\A_F(Y)$ and $\A^0_F(Y)$ are
well-defined. We define $\A_F(X)$ and $\A^0_F(X)$ to be the functions of $X$
of the form $u\circ f$ for $u\in\A_F(Y)$ and $\A^0_F(Y)$ respectively. It is
clear that this does not depend on the choice of $f$, as different
choices of $f$ give rise to bounded connected standard subsets coming
from isomorphic quasi-affinoids.

We give now a general formula for these functions rings, which can be
obtained using \cite[Def. 5.3.3.]{LR00}:

\begin{prop}
\label{ringfuncs}
Let $X$ be a connected standard subset that is well-defined over some
finite extension $E$ of $\Q_p$. Write
$X = D(a_0,r_0)^- \setminus \cup_{j=1}^n D(a_j,r_j)^+$ 
or $X = \PP^1(\bar\Q_p) \setminus \cup_{j=1}^n D(a_j,r_j)^+$ 
with $a_j\in E$ for all $j$, and the
sets $D(a_j,r_j)^+$ are pairwise disjoint for $j>0$.
For each $j$, let $t_j\in E$ be such that
$|t_j| = r_j$. 
Then for any finite extension $F/E$, we have:
\begin{eqnarray*}
\A_F(X) = \{f, f(x) = \sum_{i\geq 0}c_{i,0}\left(\frac{x-a_0}{t_0}\right)^i
+ \sum_{j=1}^n \sum_{i> 0}c_{i,j}\left(\frac{t_j}{x-a_j}\right)^i
\\
\text{ with }c_{i,j}\in F\text{ for all }i,j\text{ and }
\{c_{i,j},0 \leq j \leq n, i \geq 0\}\text{ bounded }\}
\end{eqnarray*}
if $X$ is bounded and
\begin{eqnarray*}
\A_F(X) = \{f, f(x) = c_0
+ \sum_{j=1}^n \sum_{i> 0}c_{i,j}\left(\frac{t_j}{x-a_j}\right)^i
\\
\text{ with }c_{i,j}\in F\text{ for all }i,j\text{ and }
\{c_{i,j},0 \leq j \leq n, i \geq 0\}\text{ bounded }\}
\end{eqnarray*}
if $X$ is unbounded.

Moreover, $\|f\|_X = \sup_{i,j}|c_{i,j}|$ if $f$ is written as above. If
we write $f_0 = \sum_{i\geq 0}c_{i,0}\left(\frac{x-a_0}{t_0}\right)^i$ (or
$f_0 = c_0$ in the unbounded case), and $f_j = \sum_{i>
0}c_{i,j}\left(\frac{t_j}{x-a_j}\right)^i$ for $j>0$ so that $f =
\sum_{i=0}^nf_i$ then $\|f\|_X = \max_{0\leq i \leq n}\|f_i\|_X$.

In particular, $f\in\A^0_F(X)$ if and only if $c_{i,j}\in\OO_F$ for all $i,j$.
\end{prop}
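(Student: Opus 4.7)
The plan is to deduce the proposition from Proposition \ref{ringfuncsgen} by treating the bounded and unbounded cases in turn. In both, $X$ is well-defined over any $F\supset E$ by the very definition of $E$ as the field generated by the centers $a_i$ and scalars $t_i$ with $|t_i|=r_i$.

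For the bounded case, after an affine change of variable $x\mapsto x/T$ with $T\in F^\times$ of sufficiently large absolute value, $X$ becomes a connected $R$-subset of the unit disk well-defined over $F$, and both the coefficient description and the norm of Proposition \ref{ringfuncsgen} transport compatibly along the substitution. In the open-type situation at hand ($D(a_0,r_0)^-$ open, $D(a_i,r_i)^+$ closed for $i>0$), the two null-sequence conditions of Proposition \ref{ringfuncsgen} are both vacuous, leaving only the boundedness of the family $(c_{i,j})$. Rescaling back to $X$ yields the claimed description, and the norm formula $\|f\|_X = \sup_{i,j}|c_{i,j}|$ and the characterization of $\A_F^0(X)$ are then immediate consequences.

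For the unbounded case, pick $\alpha\in D(a_1,r_1)^+\cap F$ (for example $\alpha = a_1$) and set $h(x) = 1/(x-\alpha)$. Then $Y = h(X)$ is a bounded connected standard subset of open type well-defined over $F$, and by definition $\A_F(X) = \A_F(Y)\circ h$, a bijection that is moreover an isometry since $\|u\circ h\|_X = \sup_{x\in X}|u(h(x))| = \|u\|_Y$. Apply the bounded case to $Y$, whose outer open disk is centered at $0 = h(\infty)$ and whose removed closed disks are centered at the $h(a_j)$ for $j\geq 2$; then pull back the resulting series via $h$, using the identity
\[
\frac{1}{h(x)-h(a_j)} = -(a_j-a_1) - \frac{(a_j-a_1)^2}{x-a_j}
\]
and the analogous rewriting of the outer piece (a power series in $h(x)$ around $0$) as a series in $1/(x-a_1)$. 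After regrouping, this yields the claimed expansion $c_0 + \sum_{j=1}^n f_j$ on $X$, with coefficient norms matching by the isometry.

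Finally, the identity $\|f\|_X = \max_{0\leq i\leq n}\|f_i\|_X$ is formal: each $f_i$ involves only the single index $j=i$, hence $\|f_i\|_X = \sup_k|c_{k,i}|$, and taking the maximum over $i$ recovers $\sup_{i,j}|c_{i,j}| = \|f\|_X$. The one step that requires genuine attention is the substitution bookkeeping in the unbounded case, where one must verify that the pulled-back expansion regroups uniquely into principal parts at each $a_j$ plus a single constant term with the expected bounded coefficient norms; everything else then follows formally.
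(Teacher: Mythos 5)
Your proposal is correct and follows essentially the route the paper intends: the paper gives no written argument beyond ``From Proposition \ref{ringfuncsgen} we deduce'', and its definition of $\A_F(X)$ for unbounded connected standard subsets via a homography with pole outside $X$ is exactly your rescaling-plus-pullback deduction. The only point you flag but do not fully write out (regrouping the pulled-back series and matching coefficient norms) goes through routinely from the identity you state together with the non-archimedean estimate $|t'_j(a_j-a_1)|<1$, which holds because $a_1\notin D(a_j,r_j)^+$.
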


\begin{rema}
\label{cstterm}
Assume that $X$ is unbounded. Then the value of the constant term $c_0$
is independent from the choice of the $a_i \in D(a_i,r_i)^+$ used to write
the decomposition, as it is the value of the function at $\infty$. 
\end{rema}

Let now $X$ be a standard subset. It can be written uniquely as $X =
\cup_{i=1}^n X_i$ where the $X_i$ are disjoint connected standard
subsets. Then we set $\A_F(X) = \oplus_{i=1}^n\A_F(X_i)$ and $\A^0_F(X) =
\oplus_{i=1}^n\A^0_F(X_i)$ where $F$ is a finite extension of $\Q_p$ such
that $X$ is well-defined over $F$.

\subsubsection{Field of definition and change of field}

Let $F$ be a finite extension of $\Q_p$. The field of definition of $X
\subset \PP^1(\bar\Q_p)$ over $F$ is the fixed field of $\{\sigma\in G_F,
\sigma(X) = X\}$. The field of definition of $X$ is the field of
definition of $X$ over $\Q_p$. Then $X$ is defined over $F$ (as in
Definition \ref{definedover}) if and only if $F$ contains the field of
definition of $X$.

Let $X$ be a standard subset defined over some finite extension $E$ of
$\Q_p$. Let $F$ be a finite Galois extension of $E$ such that $X$ is
well-defined over $F$.  In this case $\Gal(F/E)$ acts on $\A_F(X)$ and
$\A^0_F(X)$ by $(\sigma f)(x) = \sigma(f(\sigma^{-1}x))$.  We write
$\A_E(X)$ and $\A^0_E(X)$ for $\A_F(X)^{\Gal(F/E)}$ and
$\A^0_F(X)^{\Gal(F/E)}$. So for example, if $X = D(0,1)^-$, then $X$ is
defined over $\Q_p$, and $\A^0_E(X)$ is $\OO_E[[x]]$ for any finite
extension $E$ of $\Q_p$.
It is clear that the definition of $\A_E(X)$ and $\A_E^0(X)$ does not
depend on the choice of the extension $F$ over which $X$ is well-defined.

\begin{prop}
\label{funcunram}
Let $X$ be a standard subset defined over $E$.
Let $F$ be a finite extension of $E$. Then $\A_F(X) = F\otimes_E\A_E(X)$, 
and $\OO_F\otimes_{\OO_E}\A_E^0(X) \subset \A^0_F(X)$, with $\A_F^0(X)$
finite over $\OO_F\otimes_{\OO_E}\A_E^0(X)$.
If $F/E$ is unramified, then this inclusion is an isomorphism.
\end{prop}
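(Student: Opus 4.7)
The plan is to fix a finite Galois extension $L/E$ containing $F$ and over which $X$ is well-defined, and set $G = \Gal(L/E)$ and $H = \Gal(L/F)$. By the definitions in this section, $\A_E(X) = \A_L(X)^G$, $\A_F(X) = \A_L(X)^H$, $\A_E^0(X) = \A_L^0(X)^G$ and $\A_F^0(X) = \A_L^0(X)^H$, so everything reduces to invariant theory inside $\A_L(X)$, whose explicit description is provided by Proposition \ref{ringfuncs}.

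First I would establish the Galois descent isomorphism $L \otimes_E \A_E(X) \isom \A_L(X)$. Because $\sigma(X) = X$ for every $\sigma \in G$, the action $(\sigma f)(x) = \sigma(f(\sigma^{-1}x))$ is $L$-semi-linear and isometric for the sup-norm on $\A_L(X)$. For such a continuous semi-linear representation of a finite group on an $L$-Banach space, the infinite-dimensional version of Hilbert 90 yields $L \otimes_E V^G \isom V$; alternatively this can be extracted directly from the presentation of $\A_L(X)$ given by Proposition \ref{ringfuncs}, by grouping each $G$-orbit of the topological basis functions and replacing it by a $G$-invariant combination. Taking $H$-invariants and using that $\A_E(X)$ is already pointwise fixed by $H \subset G$, we obtain
\[
\A_F(X) = \A_L(X)^H = (L \otimes_E \A_E(X))^H = L^H \otimes_E \A_E(X) = F \otimes_E \A_E(X),
\]
which proves the first assertion.

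Next, the inclusion $\O_F \otimes_{\O_E} \A_E^0(X) \subset \A_F^0(X)$ is immediate: every simple tensor $\lambda \otimes f$ with $\lambda \in \O_F$ and $f \in \A_E^0(X)$ has sup-norm at most $|\lambda|\cdot\|f\|_X \leq 1$, and the sup-norm is sub-multiplicative. For the finiteness statement, I would choose an $\O_E$-basis $\alpha_1,\dots,\alpha_n$ of $\O_F$ (which exists because $\O_F$ is torsion-free and finitely generated over the discrete valuation ring $\O_E$), and its trace-dual basis $\alpha_1^*,\dots,\alpha_n^* \in F$ with respect to $\operatorname{Tr}_{F/E}$. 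By the first part, every $f \in \A_F(X)$ writes uniquely as $f = \sum_i f_i \alpha_i$ with $f_i = \operatorname{Tr}_{F/E}(\alpha_i^* f) \in \A_E(X)$; since $|\operatorname{Tr}_{F/E}(y)| \leq |y|$ in the ultrametric sense, this gives the pointwise bound $|f_i(x)| \leq |\alpha_i^*|\,|f(x)|$ and hence $\|f_i\|_X \leq |\alpha_i^*|\,\|f\|_X$. Choose $N \geq 0$ with $|\pi_E^N \alpha_i^*| \leq 1$ for all $i$; then for $f \in \A_F^0(X)$ we have $\pi_E^N f_i \in \A_E^0(X)$ for every $i$, whence $\pi_E^N f \in \bigoplus_i \alpha_i \A_E^0(X) = \O_F \otimes_{\O_E} \A_E^0(X) =: R$. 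So $\A_F^0(X)$ sits as an $R$-submodule of the cyclic $R$-module $\pi_E^{-N}R \cong R/\pi_E^N R$, and finite generation follows from the noetherianity of $R$ (which is finite over the noetherian quasi-affinoid ring $\A_E^0(X)$).

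Finally, in the unramified case I would take $\alpha_1,\dots,\alpha_n$ to be lifts of a $k_E$-basis of $k_F$. The Gram matrix $(\operatorname{Tr}_{F/E}(\alpha_i\alpha_j))$ then reduces modulo $\pi_E$ to the Gram matrix of the non-degenerate trace form of the separable extension $k_F/k_E$, hence is invertible in $M_n(\O_E)$, so the $\alpha_i^*$ lie in $\O_F$. Consequently $N=0$ works in the previous paragraph, and the inclusion becomes the equality $\A_F^0(X) = \O_F \otimes_{\O_E} \A_E^0(X)$. The main obstacle I foresee is executing the Galois descent step rigorously: for an $L$-Banach space of infinite dimension one must either invoke a descent theorem for continuous semi-linear representations, or build the $G$-invariant basis by hand from the description of Proposition \ref{ringfuncs}, keeping track of how $G$ permutes the centers and radii parametrizing the basis functions. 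I expect the concrete route to be cleanest because it simultaneously makes the identification $\A_E^0(X) = \A_L^0(X)^G$ manifestly compatible with the norm estimates used above.
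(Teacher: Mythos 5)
Your argument is correct, and at bottom it runs on the same engine as the paper's proof, just organized differently. The paper proves $\A_F(X)=F\otimes_E\A_E(X)$ directly for an arbitrary finite extension $F/E$ by exhibiting the inverse map by hand: with a basis $(e_i)$ of $F/E$ and its dual basis $(u_i)$ for $\tr_{F/E}$, it sends $f\mapsto \sum_i e_i\otimes t_i(f)$ where $t_i(f)=\sum_{\sigma\in G_E/G_F}\sigma(u_i f)$ --- i.e.\ exactly the trace-averaging proof of Galois descent, carried out inside $\A_L(X)$ without ever naming descent. You instead descend once from a splitting field $L$ and then take $\Gal(L/F)$-invariants, using that invariants commute with $-\otimes_E\A_E(X)$ since the group acts only through $L$; this is clean, handles non-Galois $F/E$ automatically, and the "main obstacle" you flag is not really one: no Banach-space or continuity input is needed, since classical Galois descent (Speiser / faithfully flat descent) holds for semilinear actions on $L$-vector spaces of arbitrary dimension, and its standard proof is precisely the trace argument the paper writes out. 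Your treatment of the integral statement is actually more explicit than the paper's: the paper only observes that $\phi$ restricts to $\phi^0$ and that in the unramified case the dual basis can be taken in $\O_F$ (the same \'etaleness-of-$\O_F/\O_E$ point you make), whereas you spell out the finiteness of $\A_F^0(X)$ over $\O_F\otimes_{\O_E}\A_E^0(X)$ via a denominator bound on the trace-dual basis plus noetherianity. One small correction: the pointwise inequality $|f_i(x)|\le |\alpha_i^*|\,|f(x)|$ is not literally true, because $f_i(x)=\sum_{\sigma\in G_E/G_F}\sigma(\alpha_i^*)\,(\sigma f)(x)$ involves values of $f$ at the conjugate points $\sigma^{-1}x$; but the sup-norm bound $\|f_i\|_X\le |\alpha_i^*|\,\|f\|_X$, which is all you use, does hold because the twisted action is isometric ($\sigma$ permutes $X$ and the Galois action preserves absolute values), so the finiteness and the unramified equality go through as you describe.
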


\begin{proof}
When $X$ is well-defined over $E$, the formulas of Proposition
\ref{ringfuncs} give the isomorphisms $\A_F(X) = F\otimes_E\A_E(X)$ and
$\OO_F\otimes_{\OO_E}\A_E^0(X) = \A^0_F(X)$ (even if $F/E$ is ramified). 

We now treat the general case.
We define a map $\phi: F\otimes_E\A_E(X) \to \A_F(X)$ by $\phi(a\otimes
f) = af$. Let us describe the inverse $\psi$ of $\phi$.
Let $Q = G_E/G_F$. If $a$ is in $F$ and $f\in \A_F(X)$, $\sigma(a)$ and
$\sigma(f)$ are well-defined for $\sigma\in Q$ as $a$ and $f$ are
invariant by $G_F$. Moreover, for $a\in F$, we have that $\tr_{F/E}(a) =
\sum_{\sigma\in Q}\sigma(a)$.

Let $(e_1,\dots,e_n)$ be a basis of $F$ over $E$, and $(u_1,\dots,u_n)
\in F^n$ be the dual basis with respect to $\tr_{F/E}$, that is,
$\tr_{F/E}(e_iu_j) = \delta_{i,j}$. We see that for $\sigma\in
G_E$, we have $\sum_{i=1}^ne_i\sigma(u_i) = 1$ if $\sigma\in G_F$, and
$0$ otherwise.

For $f\in \A_F(X)$, we set $t_i(f) = \sum_{\sigma\in
Q}\sigma(u_if)$. Let $\psi(f) = \sum_{i=1}^ne_i\otimes t_i(f)$. 
Let us check that $\psi$ is the inverse of $\phi$.
Let $f\in \A_F(X)$, and $f' = \phi(\psi(f))$. Then $f' =
\sum_ie_i\sum_Q\sigma(u_i)\sigma(f) =
\sum_Q\sigma(f)\left(\sum_ie_i\sigma(u_i)\right)$, so $f' = f$.
Let $f\in \A_E(X)$, and $a\in F$. Let $g = \phi(a\otimes f)$.
Then $t_i(g) = \tr_{F/E}(au_i)f$, as $\sigma(f) = f$ for all $\sigma\in
Q$. So $\psi(g) = \sum_i e_i\otimes \tr_{F/E}(au_i)f =
\left(\sum_ie_i\tr_{F/E}(au_i)\right)\otimes f$ as $\tr_{F/E}(au_i)\in E$. Then
we check that $\sum_ie_i\tr_{F/E}(au_i) = a$, so $\psi(\phi(a\otimes f))
= a\otimes f$.
So we see that $\psi$ is the inverse map of $\phi$, so $\phi$ is an
isomorphism.

We see that $\phi$ induces a map $\phi^0$ from $\OO_F\otimes_{\OO_E}\A_E^0(X)$ 
to $\A^0_F(X)$. When $F/E$ is unramified, we can
choose $(e_i)$ and $(u_i)$ to be in $\OO_F$, and in this case the
restriction $\psi^0$ of $\psi$ to $\A^0_F(X)$ maps into
$\OO_F\otimes_{\OO_E}\A_E^0(X)$, and so $\psi^0$ is the inverse map of
$\phi^0$, and so $\phi^0$ is an isomorphism.
\end{proof}

\subsubsection{Some algebraic results}

Let $X$ be a standard subset of $\PP^1(\bar\Q_p)$ that is defined over $E$ for some
finite extension $E$ of $\Q_p$. Let $F$ be a finite extension of $E$. 
We say that $X$ is irreducible over $F$
if it cannot be written as a finite disjoint union of standard subsets
of $\PP^1(\bar\Q_p)$ that are defined over $F$.
There exists a unique decomposition of $X$ as a finite disjoint union of
standard subsets of $\PP^1(\bar\Q_p)$ that are irreducible over $F$.
A standard subset is connected if and only if it is irreducible over any field of
definition.

\begin{lemm}
\label{islocal}
Let $X$ be a connected standard subset of $\PP^1(\bar\Q_p)$ defined over
$E$. Then $\A_E(X)$ is a
domain, and $\A^0_E(X)$ is a local ring 
which has the same residue field as $E$.
\end{lemm}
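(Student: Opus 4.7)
The plan is to pass to a finite Galois extension $F/E$ over which $X$ is well-defined, establish both properties for $\A_F(X)$ and $\A_F^0(X)$, and descend to $E$ by Galois invariance using Proposition~\ref{funcunram}. Concretely, $\A_E(X) = \A_F(X)^{\Gal(F/E)}$ and $\A_E^0(X) = \A_F^0(X)^{\Gal(F/E)}$. Once we know $\A_F(X)$ is a domain and $\A_F^0(X)$ is local with unique maximal ideal $\mathfrak{M}$ and residue field $k_F$, the descent is formal: $\mathfrak{M}$ is automatically $\Gal(F/E)$-stable; any Galois-invariant element of $\A_F^0(X)\setminus\mathfrak{M}$ is a unit whose inverse is also Galois-invariant, so $\mathfrak{M}\cap\A_E^0(X)$ is the unique maximal ideal of $\A_E^0(X)$; its residue field injects into $k_F^{\Gal(F/E)}=k_E$ and contains the image of $\O_E$, giving equality with $k_E$. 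The domain property for $\A_E(X)$ is inherited from that of $\A_F(X)$.

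To prove that $\A_F(X)$ is a domain, exhibit a small open disk $D\subset X$: any point $p\in X$ has strictly positive distance from each of the removed closed disks $D_j$, so an open disk of small enough radius around $p$ lies in $X$. Restriction to $D$ gives a map $\A_F(X)\to \A_F(D)$, and the target embeds into the formal power series ring $F[[t]]$ in a uniformizing parameter, hence is a domain. The restriction map is injective by the identity theorem for rigid analytic functions on the connected smooth (hence irreducible) space $X$: a function vanishing on an admissible open subset must vanish identically. Thus $\A_F(X)$ is a domain, and the same holds for its subring $\A_F^0(X)$.

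To show $\A_F^0(X)$ is local with residue field $k_F$, first reduce to the bounded case by applying a homography if $X$ is unbounded. Using the notation $u_0=(x-a_0)/t_0$ and $u_j=t_j/(x-a_j)$ for $j\geq 1$ from Proposition~\ref{ringfuncs}, a direct partial-fraction computation yields $u_0 u_j = (t_j/t_0) + ((a_j-a_0)/t_0)u_j$ for $j\geq 1$, and $u_i u_j = (t_j/(a_i-a_j))u_i - (t_i/(a_i-a_j))u_j$ for $i,j\geq 1$ with $i\neq j$. The hypothesis $D_j\subset D_0$ gives $|t_j/t_0|<1$ and $|(a_j-a_0)/t_0|<1$, and the pairwise disjointness of the $D_j$ gives $|t_i/(a_i-a_j)|<1$ and $|t_j/(a_i-a_j)|<1$, so every coefficient appearing above lies in $\mathfrak{m}_F$. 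Combined with the fact that Proposition~\ref{ringfuncs} identifies $\A_F^0(X)$ with the free $\O_F$-module on the basis $\{1\}\cup\{u_0^i:i\geq 1\}\cup\{u_j^i:j\geq 1,i\geq 1\}$, this yields a $k_F$-algebra isomorphism $\A_F^0(X)/\mathfrak{m}_F \cong k_F[[\bar u_0,\dots,\bar u_n]]/(\bar u_i\bar u_j : i\neq j)$, a local ring with maximal ideal $(\bar u_0,\dots,\bar u_n)$ and residue field $k_F$. Since $\A_F^0(X)$ is $\pi_F$-adically complete (coefficients vary in the complete ring $\O_F$), it follows that $\A_F^0(X)$ is itself local with residue field $k_F$. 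The main technical point is this explicit partial-fraction calculation together with the verification that the decomposed basis of Proposition~\ref{ringfuncs} gives the stated structure on the special fiber.
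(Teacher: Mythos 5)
Your proof is correct and follows essentially the same route as the paper: pass to a finite Galois extension $F$ over which $X$ is well-defined, read off the structure of $\A_F^0(X)$ from the explicit description in Proposition \ref{ringfuncs}, and descend to $E$ by taking $\Gal(F/E)$-invariants as in Proposition \ref{funcunram}. The paper is simply terser, noting that the maximal ideal of $\A_F^0(X)$ is the set of functions with $|f(x)|<1$ for all $x\in X$, whereas you compute the special fibre explicitly by partial fractions and invoke the rigid-analytic identity theorem (together with connectedness of the associated rigid space, a standard fact you assert rather than prove) for the domain statement.
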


\begin{proof}
Let $F$ be a finite Galois extension of $E$ such that $X$ is well-defined
over $F$. The result for $\A_F^0(X)$ holds from the description given in
Proposition \ref{ringfuncs}, and the result for $\A_E^0(X)$ follows from the fact
that it is equal to $\A_F^0(X)^{\Gal(F/E)}$ and the results of
Proposition \ref{funcunram}.
Note that the maximal ideal is the set of functions $f$ such 
that $|f(x)| < 1$ for all $x$ in $X$, that is, the functions $f$ that are
topologically nilpotent.
\end{proof}

\begin{lemm}
\label{irrcomp}
Let $X$ be a standard subset that is defined and irreducible over $E$, and let $X =
\cup_{i=1}^rX_i$ its decomposition in a finite union of connected
standard subsets. Let $F$ be the field of definition of $X_1$ over $E$.
Then the restriction map $\A^0(X) \to \A^0(X_1)$ induces an $\OO_E$-linear 
isomorphism $\A^0_E(X) \to \A^0_F(X_1)$. 
\end{lemm}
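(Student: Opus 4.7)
The plan is a straightforward Galois descent argument. Let me choose an auxiliary finite Galois extension $F'/E$ such that $F\subset F'$ and $X$ is well-defined over $F'$; this exists because the field of definition $F$ of $X_1$ over $E$ is a finite extension of $E$, and we may take a common finite Galois extension over which all the disks defining $X$ are well-defined. Over $F'$ the set $X$ decomposes as a disjoint union of its connected components, so
\[
\A^0_{F'}(X) = \bigoplus_{i=1}^r \A^0_{F'}(X_i),
\]
and by the definition from Section \ref{geometry}, $\A^0_E(X) = \A^0_{F'}(X)^{\Gal(F'/E)}$ and $\A^0_F(X_1) = \A^0_{F'}(X_1)^{\Gal(F'/F)}$.

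Set $H = \Gal(F'/E)$. I will first observe that $H$ acts on the decomposition above by permuting the summands: for $\sigma\in H$ and $f\in\A^0_{F'}(X_i)$, the element $\sigma f$ defined by $(\sigma f)(x) = \sigma(f(\sigma^{-1}x))$ lies in $\A^0_{F'}(\sigma(X_i))$. The irreducibility of $X$ over $E$ is exactly the statement that $H$ acts transitively on $\{X_1,\dots,X_r\}$, while the definition of the field of definition gives that the stabilizer of $X_1$ is $\Gal(F'/F)$. Choose coset representatives $\sigma_1=1,\sigma_2,\dots,\sigma_r$ of $\Gal(F'/F)$ in $H$ with $\sigma_i(X_1)=X_i$.

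Now I would unwind what $H$-invariance of $f=(f_1,\dots,f_r)\in\bigoplus_i\A^0_{F'}(X_i)$ means: for each $\sigma\in H$ it forces $\sigma(f_i)=f_{\pi_\sigma(i)}$, where $\pi_\sigma$ is the induced permutation of indices. Applied to $i=1$ and $\sigma\in\Gal(F'/F)$ this says $f_1$ is fixed by $\Gal(F'/F)$, i.e.\ $f_1\in\A^0_F(X_1)$; applied to $\sigma=\sigma_i$ this determines $f_i=\sigma_i(f_1)$. Conversely, given any $f_1\in\A^0_F(X_1)$, I define $f_i:=\sigma_i(f_1)$; this is independent of the coset representative because $f_1$ is $\Gal(F'/F)$-invariant, and one checks that the resulting tuple is $H$-invariant: for $\rho\in H$, write $\rho\sigma_i = \sigma_{\pi_\rho(i)}\tau$ with $\tau\in\Gal(F'/F)$, so that $\rho(f_i)=(\rho\sigma_i)(f_1)=\sigma_{\pi_\rho(i)}(f_1)=f_{\pi_\rho(i)}$. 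Therefore the projection $f\mapsto f_1$ is an $\O_E$-linear bijection from $\A^0_E(X)$ onto $\A^0_F(X_1)$, and it coincides with the restriction map $\A^0(X)\to\A^0(X_1)$.

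There is no real obstacle in this argument: the only points that deserve care are the compatibility of the two definitions of invariants with the choice of $F'$ (which follows from Proposition \ref{funcunram} and the fact that the definition of $\A^0_E$ does not depend on the auxiliary $F'$), and the bookkeeping identifying the stabilizer of $X_1$ in $\Gal(F'/E)$ with $\Gal(F'/F)$; both are routine given the framework already set up in Section \ref{geometry}.
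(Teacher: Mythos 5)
Your proposal is correct and follows essentially the same route as the paper: transitivity of the Galois action on the connected components with stabilizer corresponding to $F$, so that an invariant function is determined by (and can be reconstructed from) its restriction to $X_1$. The only cosmetic difference is that you phrase everything through a fixed finite Galois extension $F'/E$ rather than the action of $G_E$ directly, which is harmless since the definition of $\A^0_E(X)$ already proceeds through such an extension.
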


Note in particular that: $[F:E]$ is the number of connected components of
$X$, and the isomorphism class of $\A_F^0(X_1)$ as an $\OO_E$-algebra does
not depend on the choice of $X_1$.

\begin{proof}
The group $G_E$ acts transitively on the set of the
$(X_i)$ as $X$ is irreducible, and $G_F$ is the stabilizer of $X_1$. 
We fix a system $(\sigma_i)$ of representatives of $G_E/G_F$, numbered so
that $\sigma_i(X_1) = X_i$ for all $i$.

Let $f$ be an element of $\A^0_E(X)$. First note that $f$ is invariant
under the action of $G_F$, as it is by definition invariant under the
action of $G_E$, so $f_{|X_1}$ is in $\A_F^0(X_1)$. Moreover,
we have that for all $x\in X_i$, 
$$f(x) = 
\sigma_i((\sigma_i^{-1}f)(\sigma_i^{-1}(x)) 
= \sigma_i(f_{|X_1}(\sigma_i^{-1}x))
$$ 
So $f_{|X_i}$ is
entirely determined by $f_{|X_1}$, so the restriction map is injective,
and moreover for any $f\in \A_F^0(X_1)$ the formula above defines an
element of $\A_E^0(X)$, so the restriction map is bijective.
\end{proof}

\begin{coro}
\label{funcringirr}
If $X$ is defined and irreducible over $E$, then $\A_E(X)$ is a domain, and
$\A^0_E(X)$ is a local ring.
\end{coro}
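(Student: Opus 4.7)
The plan is to reduce both claims to the case of a single connected standard subset, for which the corresponding statements have already been established in the preceding lemma, by using Lemma~\ref{irrcomp}. Let $X = \cup_{i=1}^r X_i$ be the decomposition of $X$ into connected components, and let $F$ be the field of definition of $X_1$ over $E$. Lemma~\ref{irrcomp} provides an $\O_E$-linear ring isomorphism $\A^0_E(X) \isom \A^0_F(X_1)$ via restriction to $X_1$. Since $X_1$ is a connected standard subset defined over $F$, the preceding lemma asserts that $\A^0_F(X_1)$ is a local ring (with residue field $k_F$) and that $\A_F(X_1)$ is a domain. Being local is a purely ring-theoretic property, so the first assertion transfers across the isomorphism: $\A^0_E(X)$ is local.

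For the domain statement, the strategy is to pass from the $\A^0$ setting to the $\A$ setting by inverting a uniformizer. Any element $f \in \A_E(X)$ is bounded, so $\pi_E^n f \in \A^0_E(X)$ for $n$ large enough, giving $\A_E(X) = \A^0_E(X)[1/\pi_E]$; similarly $\A_F(X_1) = \A^0_F(X_1)[1/\pi_F]$. Since $\pi_E$ and a suitable power of $\pi_F$ differ by a unit in $\O_F$, inverting $\pi_E$ in $\A^0_F(X_1)$ yields the same ring as inverting $\pi_F$. Localizing the $\O_E$-algebra isomorphism of Lemma~\ref{irrcomp} therefore produces a ring isomorphism $\A_E(X) \isom \A_F(X_1)$, and the domain property of the right-hand side transfers to the left-hand side.

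The only point requiring any attention is the verification that the two localizations of $\A^0_F(X_1)$ coincide, a routine check. Otherwise both claims follow formally from Lemma~\ref{irrcomp} and the connected case, and I do not anticipate a genuine obstacle.
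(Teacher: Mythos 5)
Your proposal is correct and follows the same route as the paper: both reduce to the connected case via the isomorphism $\A^0_E(X)\isom\A^0_F(X_1)$ of Lemma~\ref{irrcomp} and the lemma for connected standard subsets. Your explicit verification that the domain statement transfers by inverting the uniformizer (using $\A_E(X)=\A^0_E(X)[1/\pi_E]$ and $\A_F(X_1)=\A^0_F(X_1)[1/\pi_F]$) just spells out what the paper leaves implicit.
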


\begin{proof}
We apply Lemma \ref{irrcomp}: $\A^0_E(X)$ is isomorphic as a ring to
$\A_F^0(X_1)$, which is local.
\end{proof}

\begin{defi}
If $X$ is defined and irreducible over $E$, we denote by $k_{X,E}$ the
residue field of $\A_E^0(X)$.
\end{defi}

By construction, $k_{X,E}$ is a finite extension of $k_E$. In the notation
of Lemma \ref{irrcomp}, we have $k_{X,E} = k_{X_1,F}$, and by Lemma
\ref{islocal}, $k_{X_1,F} = k_F$ as $X_1$ is connected.

\begin{ex}
\label{exresfield}
Let $a \in \Q_{p^2}$ such that $v_p(a) = 0$, and $\bar{a}$ is not in
$\F_p$. Let $a'$ be its Galois conjugate, so that the disks $D =
D(a,1)^-$ and $D' = D(a',1)^-$ are disjoint. Let $X$ be the union of $D$
and $D'$. Then $X$ is defined and irreducible over $\Q_p$, although it is not
connected. Moreover, $\A^0_{\Q_p}(X) = \A_{\Q_{p^2}}^0(D)$ is isomorphic
to $\Z_{p^2}[[w]]$ (where $w$ corresponds to $x-a$), so $k_{X,\Q_p} =
\F_{p^2}$.
\end{ex}

\subsection{Some maps from quasi-affinoid spaces to the unit disk}
\label{somemaps}

\begin{theo}
\label{immap}
Let $\X$ be a normal, Zariski geometrically connected quasi-affinoid
space over some finite extension of $\Q_p$, $\D$ be the closed unit disk,
and $f : \X \to \D$ a rigid analytic map that is an open immersion. Then
the image $f(\X)$ of $\X$ is a connected $R$-subset of $\D$, and $f$ is
an isomorphism from $\X$ to its image.
\end{theo}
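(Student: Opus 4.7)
The plan is to reduce the theorem to the structural results on quasi-affinoid subspaces of the closed unit disk recalled earlier in this section.

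First I would observe that the second assertion, that $f$ is an isomorphism from $X$ onto its image, is essentially built into the meaning of \emph{open immersion}: such an $f$ factors as an isomorphism from $X$ onto an admissible open subspace $f(X)$ of $D$. So the only real content is to identify $f(X) \subset D$ as a connected $R$-subset.

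Next I would argue that $f(X)$ is an $R$-subdomain of $D$. Since $f$ is an open immersion and $X$ is a normal quasi-affinoid space, $f(X)$, endowed with the rigid structure transported from $X$ via $f$, is an admissible open quasi-affinoid subspace of $D$; here the normality hypothesis on $X$ is needed so that $X$ is faithfully recovered from its ring of bounded analytic functions (Proposition \ref{propdJ}(2)), guaranteeing the existence of a well-behaved quasi-affinoid structure on the image. The classification of such subspaces in \cite{LR00} then identifies $f(X)$ as an $R$-subdomain of $D$. Combining this with Proposition \ref{descrRdom} (which relies on Lemma \ref{Rspecial}, i.e.\ Theorem 4.5 of \cite{LR96}), we conclude that $f(X)$ is an $R$-subset of $D$, i.e.\ a finite disjoint union of connected $R$-subsets.

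Finally I would use the Zariski geometric connectedness of $X$ to reduce the number of connected components of $f(X)$ to one. Since the isomorphism $f \colon X \isom f(X)$ preserves Zariski geometric connectedness, and since the decomposition of an $R$-subset into connected $R$-subsets corresponds to a disjoint decomposition into non-empty admissible open-and-closed quasi-affinoid subspaces (each being itself an $R$-subdomain, as one checks from the definition), this decomposition must be trivial. Hence $f(X)$ is a connected $R$-subset, as desired.

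The main obstacle I anticipate is the precise invocation of the Lipshitz--Robinson structure theorem that produces the $R$-subdomain description of $f(X)$: one must check that an admissible open quasi-affinoid subspace of the unit disk — not merely an arbitrary admissible open — is automatically an $R$-subdomain, and that the normality hypothesis on $X$ enters at this step to ensure the rigid space $f(X)$ is indeed such a quasi-affinoid subspace of $D$ (rather than merely a set-theoretic image). Once this point is secured, the remainder of the argument is a direct combination of Proposition \ref{descrRdom} with the connectedness hypothesis.
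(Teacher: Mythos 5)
There is a genuine gap, and it is located exactly where you declare the statement ``essentially built into the meaning of open immersion.'' In this paper an open immersion is \emph{not} assumed to factor as an isomorphism onto an admissible open subspace carrying the structure induced from $D$: the remark immediately following the theorem gives the counterexample in which $X$ is the disjoint union of the open unit disk and the unit circle and $f$ is the natural map to the closed unit disk. That $f$ is an open immersion and is bijective onto $D$, yet it is not an isomorphism onto its image, because the covering of $D$ by the open disk and the circle is not admissible. So the assertion ``$f$ is an isomorphism from $X$ to $f(X)$'' (with $f(X)$ viewed as a subspace of $D$, i.e.\ with its canonical quasi-affinoid structure once it is shown to be an $R$-subset) is precisely the hard content of the theorem, and it is here that the Zariski geometric connectedness of $X$ is really used — not merely, as in your last step, to cut the number of connected components of the image down to one. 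Your device of ``transporting'' the rigid structure from $X$ to $f(X)$ does not help: being an admissible open, or an $R$-subdomain, of $D$ is a property of the subset with the structure induced from $D$, and the counterexample shows the transported structure can genuinely differ.

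Concretely, the paper's proof first applies Lemma \ref{injfini} (resting on the quasi-finiteness structure theorem of Lipshitz–Robinson, \cite[Theorem 6.1.2]{LR00}, together with the fact that a finite open immersion is an isomorphism) to get a finite covering $(Y_i)$ of $Y=f(X)$ by $R$-subdomains over each of which $f$ is an isomorphism; combined with Lemma \ref{Rspecial} and connectedness this already identifies $Y$ as a connected $R$-subset — this part is close in spirit to your second step. But the bulk of the argument then constructs an \emph{admissible} affinoid covering of $Y$ refining the $Y_i$, using Liu's theorem to find a connected finite union $Z$ of affinoid subdomains of $X$ whose image hits prescribed points of $Y$, and verifying admissibility by hand; only then does local isomorphy over an admissible covering yield the global isomorphism $X \isom Y$. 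Your proposal skips this admissibility argument entirely, so as written it proves nothing beyond what already fails in the disconnected example.
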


Let us first recall what is known in the affinoid case. Let $\X$ and $\Y$ be
affinoid spaces, and let $f : \X \to \Y$ be a rigid analytic map which is
an open immersion. Then by \cite[Corollary 8.2/4]{BGRa}, the image of $f$
in $\Y$ is an affinoid subdomain of $\Y$ and $f$ is an isomorphism from
$\X$ to its image. 

But this does not hold in the quasi-affinoid case without extra
hypotheses, as illustrated by the following example: let $\X$ be the
disjoint union of the open unit disk and the unit circle, and $i$ the
natural map from $\X$ to the closed unit disk $\D$. Then $i$ is an open
immersion and is bijective, but is not an isomorphism (as $\X$ is not
connected, whereas the closed unit disk is).

We need some lemmas in order to prove Theorem \ref{immap}.

\begin{lemm}
\label{immbijadm}
Let $\X$ be a quasi-affinoid space and $\Y$ be a rigid space, both
defined over some finite extension of $\Q_p$, and let $f : \X \to \Y$ be
a rigid analytic map which is a surjective open immersion. 

Assume that there exists a covering $(\Y_i)$ of $\Y$
by affinoid subdomains, such that each $f^{-1}(\Y_i)$ is an affinoid
subdomain of $\X$, and $(\Y_i)$ is an admissible covering of $\Y$ (that
is, any affinoid subdomain $\Y'$ of $\Y$ can be covered by a finite
number of $\Y_i$).  Then $f$ is an isomorphism from $\X$ to $\Y$.
\end{lemm}

\begin{proof}
We need to construct the inverse $g : \Y \to \X$. It is enough to
construct $g' : \Y' \to \X$ satisfying $f \circ g' = \id$ for each
affinoid subdomain $\Y'$ of $\Y$ (as these are necessarily compatible and
glue to form $g$). Fix such a $\Y'$. Then it is covered by some $\Y_i$
for $i$ in some finite set $I$. Set $\X_i = f^{-1}(\Y_i)$, so that $\X_i$
is affinoid. As $\X$ is quasi-affinoid, there exists
some affinoid subdomain $\X'$ of $\X$ containing all the $\X_i$ for $i
\in I$. Note that we have, for each $i \in I$, a map $g_i : \Y_i \to
\X_i$ which is the inverse of the restriction of $f$ to $\X_i$, and these
are compatible. So they glue to form the function $g' : \Y' \to \X'$ by the
admissibility condition and Tate's Acyclicity Theorem (see
\cite[Corollary 8.2/3]{BGRa}).
\end{proof}

Looking back at the inclusion $i : \X \to \D$ as above, we see that $i$
is a surjective open immersion, but there does not exist a covering of
$\D$ satisfying the conditions of Lemma \ref{immbijadm}. Indeed, let $\Y$
be a connected affinoid subdomain of $\D$, then $i^{-1}(\Y)$ is affinoid
if and only if $\Y$ is either contained in the unit circle or in the open
unit disk. But it is not possible to have an admissible covering of $\D$
by affinoids satisfying this condition.

\begin{coro}
\label{injfiniiso}
Let $\X$ and $\Y$ quasi-affinoid spaces, both
defined over some finite extension of $\Q_p$, and let $f : \X \to \Y$ be
a rigid analytic map which is finite and an open immersion. 
Assume that $\Y$ is connected and $\X$ is non-empty, then $f$ is an isomorphism.
\end{coro}

\begin{proof}
Assume first that $\Y$ is affinoid. Then so is $\X$ as $f$ is finite,
and so $f$ is an isomorphism by \cite[Corollary 8.2/4]{BGRa}. In general,
$\Y$ has an admissible covering $(\Y_i)$ by connected affinoid
subdomains. Let $\X_i = f^{-1}(\Y_i)$, then each $\X_i$ is affinoid as $f$
is finite. Moreover, for each $i$, either $\X_i$ is empty or $f$ induces
an isomorphism between $\X_i$ and $\Y_i$. By connectedness of $\Y$ and
the fact that $\X$ is non-empty, we get that $f(\X_i) = \Y_i$ for all $i$
and in particular $f$ is surjective. So the conditions of Lemma
\ref{immbijadm} are satisfied.
\end{proof}

\begin{lemm}
\label{injfini}
Let $\X$ and $\Y$ be quasi-affinoid rigid spaces, $f : \X \to \Y$ a
quasi-affinoid map. Assume that $f$ is an open immersion.
There exists a finite
covering $(\Y_i)$ of $\Y$ by connected $R$-subdomains such that for each $i$, 
either $f^{-1}(\Y_i)$ is empty, or $f$
induces an isomorphism from $f^{-1}(\Y_i)$ to $\Y_i$.
\end{lemm}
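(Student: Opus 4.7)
My strategy is to produce local $R$-subdomain neighborhoods witnessing the required property, then pass to a finite cover by a compactness argument, then refine to connected components.

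First, for each point $y \in Y$, I would construct a (not necessarily connected) $R$-subdomain $V_y \subset Y$ containing $y$ such that $f^{-1}(V_y)$ is either empty or mapped isomorphically onto $V_y$ by $f$. When $y$ lies in the image $f(X)$, the open immersion hypothesis says exactly that, in a neighborhood of the unique preimage $x = f^{-1}(y)$, the map $f$ identifies an admissible open of $X$ with an admissible open of $Y$; since $R$-subdomains (quasi-rational subdomains, defined by inequalities $|f_i| \leq |h|$, $|g_j| < |h|$) form a basis, one can shrink to an $R$-subdomain neighborhood of $y$ over which $f$ is an isomorphism. When $y \notin f(X)$, I would first apply the previous paragraph to cover $f(X)$ by $R$-subdomains $(V_i)$, and then cover the complement $Y \setminus \bigcup V_i$ by $R$-subdomains disjoint from $f(X)$, using the fact that the complement in $Y$ of a single quasi-rational subdomain is itself a finite union of quasi-rational subdomains (by negating one inequality at a time, i.e.\ splitting $|f_i|>|h|$ or $|g_j|\geq|h|$ into quasi-rational pieces).

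Second, I would reduce the resulting covering to a finite one. Quasi-affinoid spaces are quasi-compact with respect to admissible $R$-subdomain covers --- this follows from the noetherianity of the quasi-affinoid algebra $\A(Y)$ together with Berthelot's construction of the associated rigid space (\S\ref{dJconstr}). Hence a finite subcover can be extracted.

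Third, each such $V_i$ can be written as a finite disjoint union of connected $R$-subdomains: indeed, any $R$-subdomain of $\D$ is an $R$-subset by Proposition \ref{descrRdom}, hence decomposes into connected components that are themselves connected $R$-subsets (via Lemma \ref{interstd}), and the analogous decomposition for $R$-subdomains of a general quasi-affinoid space $Y$ follows by pulling back from an ambient disk after a suitable embedding, or by a direct combinatorial argument on the defining inequalities. Replacing each $V_i$ by its connected components then gives the required cover.

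The main obstacle will be the case $y \notin f(X)$: one must argue that the complement in $Y$ of the admissible open $f(X)$ can actually be covered by $R$-subdomains disjoint from $f(X)$, rather than merely by admissible opens of $Y$. The cleanest route is to first realize $f(X)$ itself as a finite union of $R$-subdomains of $Y$ (using Step 1 applied to points of $f(X)$ combined with quasi-compactness of $X$), and then to write the complement of a finite union of $R$-subdomains as a finite union of $R$-subdomains by iteratively inverting the defining inequalities.
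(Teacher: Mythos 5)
Your plan stands or falls on the finiteness of the covering, and that is exactly where it breaks. In Step 2 you assert that quasi-affinoid spaces are quasi-compact with respect to admissible covers by $R$-subdomains, deducing this from noetherianity of $\A(Y)$ and Berthelot's construction. This is false for quasi-affinoid spaces of open type, which are precisely the relevant case here: the rigid space attached to $R_{0,1} = \O_F[[y]]\otimes_{\O_F}F$ is the open unit disk, and its admissible covering by the quasi-rational subdomains $\{x,\ |y(x)^n| \leq |\pi_F|\}$, $n\geq 1$, has no finite subcover; noetherianity of the algebra gives no topological quasi-compactness of the Berthelot generic fiber. The same defect reappears at the end of your proposal, where "quasi-compactness of $X$" is invoked to write $f(X)$ as a finite union of $R$-subdomains of $Y$. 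In addition, Step 1 quietly uses two further unproved facts: that $R$-subdomains form a basis of the $G$-topology of a quasi-affinoid space (so that an admissible open neighbourhood of $y$ inside the admissible open $f(X)$ can be shrunk to an $R$-subdomain containing $y$), and that the complement of a quasi-rational subdomain is again a finite union of quasi-rational subdomains (negating an inequality does not obviously preserve the unit-ideal condition in the definition). None of these is available in the generality needed, so the pointwise-plus-compactness strategy cannot be completed as written.

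The finiteness asserted in the lemma is an algebraic, not a topological, phenomenon, and the paper obtains it from a global structure theorem rather than from local neighbourhoods: since an open immersion is in particular quasi-finite, \cite[Theorem 6.1.2]{LR00} yields directly a finite covering $(Y_i)$ of $Y$ by $R$-subdomains such that each induced map $f^{-1}(Y_i)\to Y_i$ is finite; after refining so that each $Y_i$ is connected, a map that is both finite and an open immersion onto a connected target is an isomorphism, which is checked on an admissible covering of $Y_i$ by connected affinoid subdomains using \cite[Corollary 8.4/4]{BGRa}. To repair your argument you would need to substitute such a finiteness theorem for the compactness step; the local constructions alone cannot produce a finite cover.
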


\begin{proof}
As $f$ is an open immersion, it is in particular quasi-finite. So we can apply
\cite[Theorem 6.1.2]{LR00}: there exists a finite covering $(\Y_i)$ of $\Y$
by $R$-subdomains such that $f$ induces a finite map $f_i$ from $\X_i =
f^{-1}(\Y_i)$ to $\Y_i$. We can assume that each $\Y_i$ is connected. 
By Corollary \ref{injfiniiso}, for each $i$ we have that either $\X_i$ is
empty or $f_i$ is an isomorphism.
\end{proof}

\begin{proof}[Proof of Theorem \ref{immap}]
Let $f : \X \to \D$ be as in the statement of the Theorem.  First observe
that $f$ is a quasi-affinoid map from $\X$ to $\D$, as it is a bounded
analytic function on $\X$ and $\X$ is normal.  By Lemma \ref{injfini},
there exists a finite covering $(\Y_i)$ of $\D$ by $R$-subdomains of $D$
such that for all $i$, either $f^{-1}(\Y_i)$ is empty or $f$ induces an
isomorphism from $f^{-1}(\Y_i)$ to $\Y_i$. By Lemma \ref{Rspecial}, we
can assume that each $Y_i = \Y_i(\bar\Q_p)$ is a special subset of
$\bar\Q_p$.  We see $\X$ and $f$ as defined on some finite extension $F$
of $\Q_p$ that is large enough so that each of the $Y_i$ are well-defined
over $F$.  Let $Y$ be the union of the $Y_i$ for those $i$ such that
$f^{-1}(\Y_i)$ is not empty.  We see that $Y$ is a finite union of
$R$-subsets of $\bar\Q_p$, and is equal to $f(\X(\bar\Q_p))$. As $\X$ is
connected, so is $Y$, and so $Y$ is in fact a connected $R$-subset of
$\bar\Q_p$ and so is the set of points of a quasi-affinoid subdomain $\Y$
of $\D$.

We now want to prove that $f$ induces an isomorphism between $\X$ and
$\Y$. We want to apply Lemma \ref{immbijadm}, and so we want to construct
an appropriate covering of $\Y$. It is the same to work with
quasi-affinoid subdomains or their sets of points, so from now on we work
with subsets of $\bar\Q_p$.

We write the family $(Y_i)$ as $(S_i) \cup (A_i)$, where $S_i$ are
subsets of the form (2) of Definition \ref{defspecial} 
(and hence affinoid), and $A_i$ are subsets of
the form (1). We can cover each $A_i = \{ x, r < |x-a| < r'\}$ by a
family of affinoid subsets $A_{i,\eta} = \{ x, r/\eta \leq |x-a| \leq
r'\eta\}$ for $\eta>1$, $\eta\in \sqrt{|F^\times|}$, $\eta$ close enough
to $1$.  So we get a covering of $Y$ by affinoid subsets, that is, the
$S_i$ and the $A_{i,\eta}$. Their inverse images in $\X$ are affinoid, as
each of them is contained in one of the $Y_i$.

This covering is not necessarily admissible, so we add
some other affinoid subsets of $Y$ in order to get an admissible
covering. We know that if $\z$ is an affinoid subset of $\X$, then
$f(\z)$ is an affinoid subdomain of $\D$ and $f$ induces an isomorphism
between $\z$ and $f(\z)$ by \cite[Corollary 8.2/4]{BGRa}.
Let $\mathcal{C}$ be the covering of $Y$ by the union of families of
elements $(S_i)$, $(A_{i,\eta})$, and all the sets $f(\z)(\bar\Q_p)$ for
$\z$ an affinoid subset of $X$. We want to show that $\mathcal{C}$ is an
admissible covering of $Y$. Then it will satisfy the conditions of Lemma
\ref{immbijadm} and so the conclusion will follow.

Write $Y$ as $D(a_0,r_0) \setminus \cup_{i=1}^mD(a_i,r_i)$, where each of
the disks is rational and either open or closed and $a_0\in Y$. Let
$\eta>1$, $\eta\in\sqrt{|F^\times|}$. We set
$r_{0,\eta} = r_0$ if $D(a_0,r_0)$ is closed and $r_0/\eta$ otherwise, and
for $i>0$ let $r_{i,\eta} = r_i$ if $D(a_i,r_i)$ is open and $r_{i,\eta} =
r_i\eta$ otherwise. Let $Y_\eta = D(a_0,r_{0,\eta})^+ \setminus
\cup_{i=1}^mD(a_i,r_{i,\eta})^-$, so that $Y_\eta$ is an affinoid contained in
$Y$ (for $\eta$ close enough to $1$), and the family $(Y_\eta)$ forms an admissible
covering of $Y$. So it is enough to show that each $Y_\eta$ can be covered
by a finite number of elements of $\mathcal{C}$.

For each $0 \leq i \leq m$, let $b_i \in Y$ be such that $|a_i-b_i| =
r_{i,\eta}$. Let $c_i$ be an element of $A_i$ for each $i$. Writing $A_i$
as $\{r < |x-a| < r'\}$, we choose some $c'_i$ in $Y \cap D(a,r)^+$ if it
is not empty.  By \cite{Liu}, as $\X$ is connected, there is a connected
subset $Z$ of $\X$ that is a finite union of affinoid subdomains of
$\X$, such that $f(Z)$ contains $a_0$ and each of the $b_i$,
$c_i$ and $c'_i$.  Let $Z' = f(Z)$. Then it is a finite union
of elements of $\mathcal{C}$, and a finite union of connected closed
$R$-subsets, as it is a finite union of images of
affinoid subsets of $\X$. As $Z$ is connected, so is $Z'$, so it is 
a connected closed $R$-subset by Lemma \ref{interstd}.
By construction, there is a finite number of open disks $(D_i)$ that do
not meet $Z'$ such that $D_i \subset Y$ and $Y_\eta$ is contained in $Z'
\cup (\cup_i D_i)$.

So it suffices to show that each $D_i$ can be covered by a finite number
of elements of $\mathcal{C}$. If $D_i$ does not meet any $A_j$, then it
is covered by the elements of $\mathcal{C}$ of the form $S_j$. If $D_i$
meets $A_j$, then as $D_i$ does not contain $c_j$ (nor $c_j'$), then $D_i
\subset A_j$ and so $D_i$ is covered by $A_{j,t}$ for some $t>0$.
\end{proof}

\begin{coro}
\label{qaffstandard}
Let $\X$ be a normal rigid space that is quasi-affinoid space of open type
over some finite extension $E$ of $\Q_p$.  Let $\D$ be the rigid closed
unit disk.
Let $f : \X \to \D$ be a rigid analytic map over $E$ that
is an open immersion.  Let $Y = f(\X)(\bar\Q_p)$.  Then
$Y$ is an $R$-subset of open type defined over $E$, and if $\X$ is
geometrically Zariski connected then $Y$ is a connected $R$-subset.
Moreover, $f$ induces an $E$-algebra isomorphism between $\A_E(Y)$ and
$\A_E(\X)$, and between $\A_E^0(Y)$ and $\A_E^0(\X)$.
\end{coro}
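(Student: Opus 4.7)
The plan is to reduce everything to Theorem \ref{immap}, after passing to a finite extension so that $X$ splits into its geometric connected components, and then to descend back to $E$ by Galois invariance.

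First, I would choose a finite Galois extension $F/E$ large enough so that the base change $X_F$ decomposes into its geometrically Zariski connected components $X_1,\dots,X_r$, each of which is normal and still a quasi-affinoid space of open type over $F$. Apply Theorem \ref{immap} to each composition $X_i \hookrightarrow X_F \xrightarrow{f} \D$ (which is still an open immersion since $f$ is one): for each $i$ the image $Y_i := f(X_i)$ is a connected $R$-subset of $D$ and $f$ induces an isomorphism of quasi-affinoid spaces $X_i \isom Y_i$. In particular each $Y_i$ inherits from $X_i$ the property of being of open type, so by Proposition \ref{isopentype} each $Y_i$ is a connected $R$-subset of open type, i.e.\ a bounded connected standard subset of $\bar\Q_p$ well-defined over $F$.

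Next, since $f$ is an open immersion it is injective on points, so the $Y_i$ are pairwise disjoint; hence $Y_F := f(X_F) = \bigsqcup_i Y_i$ is a standard subset of $\P^1(\bar\Q_p)$ well-defined over $F$. The Galois group $G_E$ acts on $X_F$ permuting the $X_i$, and since $f$ is defined over $E$ the action carries $Y_i$ to $Y_{\sigma(i)}$. Therefore $Y = Y_F$ is $G_E$-stable, so $Y$ is a standard subset defined over $E$. If moreover $X$ is itself geometrically Zariski connected then $r=1$, so $Y$ is connected (in which case the $G_E$-action is by Galois permutations of the disks defining the unique connected piece, consistent with Definition \ref{standard}).

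For the last statement, Theorem \ref{immap} gives $F$-algebra isomorphisms $\A_F(Y_i) \isom \A_F(X_i)$ and $\A_F^0(Y_i) \isom \A_F^0(X_i)$ induced by $f$. Taking the direct sum over $i$ yields $\A_F(Y) \isom \A_F(X)$ and $\A_F^0(Y) \isom \A_F^0(X)$ as $F$-algebras, compatibly with the $G_E/G_F$-action. Passing to $\Gal(F/E)$-invariants, and using Proposition \ref{funcunram} together with the definition of $\A_E(\cdot)$ and $\A_E^0(\cdot)$ as $\Gal(F/E)$-invariants of the $F$-versions, gives the desired $E$-algebra isomorphisms.

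I expect the only delicate point to be the base-change/descent step: one must verify that the splitting of $X_F$ into geometrically connected components is compatible with the quasi-affinoid structure (so that Theorem \ref{immap} applies componentwise and the resulting open-type property is preserved), and that the $G_E$-action matches the Galois descent built into the definitions of $\A_E$ and $\A_E^0$. Once these compatibilities are checked, the remaining pieces are immediate from Theorem \ref{immap}, Proposition \ref{isopentype}, and Proposition \ref{funcunram}.
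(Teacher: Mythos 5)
Your proposal is correct and follows essentially the same route as the paper: pass to a Galois extension $F/E$ splitting the geometric Zariski connected components, apply Theorem \ref{immap} and Proposition \ref{isopentype} componentwise, note the images are disjoint by injectivity of $f$, and descend by taking $\Gal(F/E)$-invariants of the resulting $F$-algebra isomorphism, which is exactly how $\A_E$ and $\A_E^0$ are defined. The descent compatibilities you flag as delicate are handled just as you suggest, via the $\Gal(F/E)$-equivariance of $f^\#$.
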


\begin{proof}
Let $F$ be an finite extension of $E$ that is large enough so that each
geometric Zariski connected component is defined over $F$, and $F/E$ is
Galois.  

Write $\X$ as a disjoint union of $\X_i$ where each $\X_i$ is geometrically
Zariski connected. Let $f_i$ be the restriction of $f$ to $\X_i$, it is
still an open immersion, and is defined over $F$. We apply Theorem
\ref{immap} to $f_i$: $f_i$ induces an isomorphism between $\X_i$ and its
image $f(\X_i) = \Y_i$. In particular, $\A_F(\Y_i)$ and $\A_F(\X_i)$ are
isomorphic by the map $f_i^\#$. As $\X$ is of open type, so is $\X_i$ and
hence so is $\Y_i$.  By Proposition \ref{isopentype}, this implies that
$Y_i = \Y_i(\bar\Q_p)$ is a connected $R$-subset of open type. Moreover, the
$Y_i$ are disjoint as $f$ is injective. Let $Y$ be the disjoint union of
the $Y_i$.

So we get an $F$-algebra isomorphism $f^\#$ between $\A_F(Y) =
\oplus_{i=1}^n\A_F(Y_i)$ and $\A_F(\X)$, which is equal to
$\oplus_{i=1}^n\A_F(\X_i)$. 
As $\X$ is defined over $E$ and $f$ is an $E$-morphism, we see that $\Y$ is
defined over $E$.
We have an action of $\Gal(F/E)$ on both sides, and $f^\#$ is
$\Gal(F/E)$-equivariant. So $f^\#$ induces an isomorphism between the
$\Gal(F/E)$ invariants on both sides, hence the result.  
\end{proof}

\section{Complexity of standard subsets}
\label{complexity}

\subsection{Algebraic complexity of a standard subset over a field of definition}

\subsubsection{Definition}

Recall that we defined $\e$ in Section \ref{HSspecial}.

\begin{defi}
\label{defalgcomp}
Let $X$ be a standard subset of $\PP^1(\bar\Q_p)$ that is defined over $E$. If $X$
is irreducible over $E$, we define the complexity of $X$ over $E$ to be:
$$c_E(X) = [k_{X,E}:k_E]\e_{\OO_E}(\A^0_E(X))$$
In general, let
$X = \cup_{i=1}^rX_i$ be the decomposition of $X$ as a disjoint union of
standard subsets that are defined and irreducible over $E$. We define the
complexity of $X$ over $E$ to be $c_E(X) = \sum_{i=1}^rc_E(X_i)$.
\end{defi}

The above definition makes sense
as $\A^0_E(X)$ is a complete noetherian local $\OO_E$-algebra if $X$ is
irreducible over $E$ by Corollary \ref{funcringirr}.

Note that in particular if $X$ is connected then $c_E(X) =
\e_{\OO_E}(\A^0_E(X))$ as $k_{X,E} = k_E$ in this case.

\subsubsection{Some general results on algebraic complexity}

We now give explicit formulas for the complexity. It is enough to give
such formulas for subsets $X$ that are irreducible over $E$.

\begin{prop}
\label{ccomponent}
In the situation of Lemma \ref{irrcomp}, we have
$c_E(X) = [F:E]c_F(X_1)$.
\end{prop}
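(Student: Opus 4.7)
The plan is to unwind both definitions and combine the two key preceding results: Lemma \ref{irrcomp}, which identifies $\A^0_E(X)$ with $\A^0_F(X_1)$, and Proposition \ref{changeringram}, which relates the Hilbert--Samuel multiplicity before and after a base change from $\O_E$ to $\O_F$. Since everything is in place, the proof should be essentially a formal manipulation.

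First I would handle the right-hand side: because $X_1$ is a connected standard subset, we have $k_{X_1,F} = k_F$, so $[k_{X_1,F}:k_F] = 1$ and hence $c_F(X_1) = \e_{\O_F}(\A^0_F(X_1))$. So the identity to prove reduces to
\[
[k_{X,E}:k_E]\,\e_{\O_E}(\A^0_E(X)) = [F:E]\,\e_{\O_F}(\A^0_F(X_1)).
\]

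Next I would invoke Lemma \ref{irrcomp}: the restriction map gives an $\O_E$-algebra isomorphism $\A^0_E(X) \isom \A^0_F(X_1)$ (it is clearly a ring map, so the $\O_E$-linear isomorphism is in fact an $\O_E$-algebra isomorphism). This identifies the residue field $k_{X,E}$ of $\A^0_E(X)$ with the residue field of $\A^0_F(X_1)$, which is $k_F$ (again because $X_1$ is connected over $F$, so $k_{X_1,F}=k_F$). Hence $[k_{X,E}:k_E] = [k_F:k_E] = f_{F/E}$, and moreover $\e_{\O_E}(\A^0_E(X)) = \e_{\O_E}(\A^0_F(X_1))$.

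Finally I would apply Proposition \ref{changeringram} to the local noetherian $\O_F$-algebra $\A^0_F(X_1)$, which yields
\[
\e_{\O_E}(\A^0_F(X_1)) = e_{F/E}\,\e_{\O_F}(\A^0_F(X_1)).
\]
Combining the previous two displays and using $[F:E] = e_{F/E}f_{F/E}$ gives
\[
c_E(X) = f_{F/E}\cdot e_{F/E}\cdot \e_{\O_F}(\A^0_F(X_1)) = [F:E]\,c_F(X_1).
\]
There is no serious obstacle; the only thing to verify carefully is that the isomorphism of Lemma \ref{irrcomp} is indeed compatible with the $\O_E$-algebra structure (clear, since restriction of functions is a ring map) and that one may legitimately regard $\A^0_F(X_1)$ as an $\O_F$-algebra of the same dimension as when viewed over $\O_E$, so that Proposition \ref{changeringram} applies directly.
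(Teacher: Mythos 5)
Your proof is correct and follows essentially the same route as the paper: identify $\A^0_E(X)$ with $\A^0_F(X_1)$ via Lemma \ref{irrcomp}, note $k_{X,E}=k_{X_1,F}=k_F$, and then apply Proposition \ref{changeringram} together with $[F:E]=e_{F/E}f_{F/E}$. Your extra care about the restriction map being an $\O_E$-algebra (not merely $\O_E$-linear) isomorphism is a reasonable point, but it is the same argument the paper uses implicitly.
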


Note that $c_F(X_1)$ does not depend on the choice of $X_1$ among the connected
components.

\begin{proof}
Let $e_{F/E}$ be the ramification degree of $F/E$.
We have that $\A_F^0(X_1) = \A_E^0(X)$ as $\OO_E$-algebras, 
and $k_{X,E} = k_{X_1,F} = k_F$.
So $c_E(X) =
[k_F:k_E]\e_{\OO_E}(\A_E^0(X)) = [k_F:k_E]\e_{\OO_E}(\A_F^0(X_1))$ which is equal to
$[k_F:k_E]e_{F/E}\e_{\OO_F}(\A_F^0(X_1)) = [F:E]c_F(X_1)$ by Proposition
\ref{changeringram}.
\end{proof}

\begin{prop}
\label{changefieldconnected}
Let $X$ be a connected standard subset defined over $E$, and $F$ a
finite extension of $E$. Then $c_E(X) \geq c_F(X)$ with equality when
$F/E$ is unramified.
\end{prop}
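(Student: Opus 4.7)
Since $X$ is connected, it is irreducible over any field of definition, so the complexity formula simplifies: $c_E(X) = \e_{\O_E}(\A_E^0(X))$ because $k_{X,E} = k_E$, and similarly $c_F(X) = \e_{\O_F}(\A_F^0(X))$. The plan is to compare these two Hilbert-Samuel multiplicities by inserting the intermediate ring
$$ A' \;:=\; \O_F \otimes_{\O_E} \A_E^0(X). $$
By Proposition \ref{funcunram} we have the inclusions $A' \subset \A_F^0(X) \subset \A_F(X)$, with $\A_F^0(X)$ finite over $A'$, and the inclusion $A' = \A_F^0(X)$ in the unramified case.

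First I would verify that $A'$ is a complete noetherian local $\O_F$-algebra that is a domain with residue field $k_F$. The domain property follows from the fact that $A'$ embeds into $\A_F(X) = F \otimes_E \A_E(X)$, which is a domain by the lemma preceding Corollary \ref{funcringirr} applied over $F$ (since connected implies irreducible over $F$). Locality follows because $A'$ is finite over the complete noetherian local ring $\A_E^0(X)$ (via the finite free extension $\O_E \to \O_F$), hence decomposes as a finite product of local rings, and this product reduces to a single factor since $A'$ is a domain. A direct computation $A'/m_{\A_E^0(X)}A' = \O_F \otimes_{\O_E} k_E = \O_F/\pi_E\O_F$ gives that the residue field is $k_F$.

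Next, I would apply Lemma \ref{changeringany} to $A = \A_E^0(X)$ with base change $\O_E \to \O_F$ to obtain
$$ \e_{\O_E}(\A_E^0(X)) \;=\; \e_{\O_F}(A'). $$
Then I would apply Proposition \ref{multsmaller} with ring $A'$ and finite overring $B = \A_F^0(X) \subset A'[1/\pi_F] = \A_F(X)$. Since both $A'$ and $B$ have residue field $k_F$ (for $B$ this is $k_{X,F} = k_F$ from irreducibility of $X$ over $F$), the residue-field-degree factor is $1$, giving
$$ \e_{\O_F}(A') \;\geq\; \e_{\O_F}(\A_F^0(X)). $$
Chaining these yields $c_E(X) \geq c_F(X)$.

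For the equality statement when $F/E$ is unramified, Proposition \ref{funcunram} upgrades the inclusion $A' \subset \A_F^0(X)$ to an equality, so the second step becomes an equality as well. The main subtlety I anticipate is the verification that $A'$ genuinely is local with residue field $k_F$ and that it is a domain so that Proposition \ref{multsmaller} applies; once these hypotheses are in place, the rest is a direct combination of the two change-of-ring results from Section \ref{commalg}.
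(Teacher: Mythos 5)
Your proposal is correct and takes essentially the same approach as the paper: the paper's proof also works through the intermediate ring $\O_F\otimes_{\O_E}\A_E^0(X)$, identifying its multiplicity with $c_E(X)$ via Lemma \ref{changeringany} and then comparing with $\e(\A_F^0(X))$ via Propositions \ref{funcunram} and \ref{multsmaller}, with equality in the unramified case. Your extra verifications (that $\O_F\otimes_{\O_E}\A_E^0(X)$ is a local domain with residue field $k_F$, so the degree factor in Proposition \ref{multsmaller} is $1$) are correct fill-ins of steps the paper leaves implicit.
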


\begin{proof}
From Lemma \ref{changeringany} we see that
$\e(\OO_F\otimes_{\OO_E}\A_E^0(X)) = \e(\A_E^0(X)) = c_E(X)$,
and from Propositions \ref{funcunram} and \ref{multsmaller} we see that
$\e(\OO_F\otimes_{\OO_E}\A_E^0(X)) \geq \e(\A_F^0(X))$ with equality when
$F/E$ is unramified.
\end{proof}

\begin{prop}
\label{changefieldany}
Let $X$ be a standard subset defined over $E$, and $F$ a
finite extension of $E$. Then $c_E(X) \geq c_F(X)$ with equality when
$F/E$ is unramified.
\end{prop}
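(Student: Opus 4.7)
The plan is to reduce to the case where $X$ is irreducible over $E$ and then combine Propositions \ref{ccomponent} and \ref{changefieldconnected}, keeping track of the way the $F$-irreducible decomposition of $X$ refines the $E$-irreducible one.

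First I would write $X = \bigsqcup_i X_i$ as the disjoint union of its $E$-irreducible components, so $c_E(X) = \sum_i c_E(X_i)$ by definition. Since any $F$-irreducible piece of $X$ lies in a single $G_E$-orbit of connected components (because $G_F \subset G_E$), the $F$-decomposition refines the $E$-decomposition, and therefore $c_F(X) = \sum_i c_F(X_i)$. It thus suffices to prove the inequality for a fixed $X$ irreducible over $E$. Under that assumption, let $C_1,\dots,C_r$ be the connected components of $X$ (a single $G_E$-orbit), and let $L$ be the field of definition of $C_1$ over $E$, so $[L:E] = r$ by the remark following Lemma \ref{irrcomp}. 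Proposition \ref{ccomponent} then gives $c_E(X) = r \cdot c_L(C_1)$.

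Next I would decompose $X$ over $F$ as $X = \bigsqcup_\alpha Y_\alpha$ with each $Y_\alpha$ irreducible over $F$, and set $P_\alpha = \{\ell : C_\ell \subset Y_\alpha\}$, so that the $P_\alpha$ partition $\{1,\dots,r\}$. Pick $\ell_\alpha \in P_\alpha$; let $L'_\alpha$ be the field of definition of $C_{\ell_\alpha}$ over $E$ (an $E$-conjugate of $L$), and observe that the field of definition of $C_{\ell_\alpha}$ over $F$ equals $M_\alpha = F \cdot L'_\alpha$, with $[M_\alpha : F] = |P_\alpha|$. Applying Proposition \ref{ccomponent} to $Y_\alpha$ over $F$ yields $c_F(Y_\alpha) = |P_\alpha| \cdot c_{M_\alpha}(C_{\ell_\alpha})$. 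The $G_E$-equivariance of the formation of the rings $\A^0$ identifies $\A^0_{L'_\alpha}(C_{\ell_\alpha})$ with $\A^0_L(C_1)$ as local $\O_E$-algebras, hence $c_{L'_\alpha}(C_{\ell_\alpha}) = c_L(C_1)$. Proposition \ref{changefieldconnected} applied to the extension $L'_\alpha \subset M_\alpha$ then gives $c_{M_\alpha}(C_{\ell_\alpha}) \leq c_L(C_1)$, with equality whenever $M_\alpha/L'_\alpha$ is unramified, which is automatic if $F/E$ is unramified. Summing over $\alpha$:
$$c_F(X) \;=\; \sum_\alpha |P_\alpha| \, c_{M_\alpha}(C_{\ell_\alpha}) \;\leq\; \Bigl(\sum_\alpha |P_\alpha|\Bigr) c_L(C_1) \;=\; r \cdot c_L(C_1) \;=\; c_E(X),$$
with equality in the unramified case.

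The only delicate step is the Galois-equivariance identification $c_{L'_\alpha}(C_{\ell_\alpha}) = c_L(C_1)$: everything else is formal bookkeeping on top of the connected case handled by Proposition \ref{changefieldconnected}. Verifying this identification reduces to the functoriality (under the $G_E$-action) of the construction of the quasi-affinoid space attached to a standard subset, together with the compatibility of Hilbert--Samuel multiplicities with isomorphisms of local $\O_E$-algebras.
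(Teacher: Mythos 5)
Your proof is correct and follows essentially the same route as the paper's: reduce by additivity to $X$ irreducible over $E$, apply Proposition \ref{ccomponent} over both $E$ and $F$ (with the field of definition of a component over $F$ being the compositum with $F$), invoke Proposition \ref{changefieldconnected} on each orbit representative, and sum using the fact that the orbit sizes add up to the number of connected components. The only presentational difference is that you spell out the Galois-conjugation identification $c_{L'_\alpha}(C_{\ell_\alpha}) = c_L(C_1)$, which the paper absorbs into the remark that $c_{E_j}(X_j)$ is independent of $j$.
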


\begin{proof}
By additivity of the complexity we can assume that $X$ is irreducible
over $E$. 
Write $X = \cup_{i=1}^nX_i$ where each $X_i$ is connected.
Let $E_i$ be the field of definition of $X_i$ over $E$, so that 
$c_E(X) = nc_{E_i}(X_i)$ for all $i$.
Then $FE_i$ is
the field of definition of $X_i$ over $F$.
Suppose that
the action of $G_F$ on the set of the irreducible components of $X$ has
$r$ orbits, with representatives say $X_1,\dots,X_r$.
Then 
$c_F(X) = \sum_{j=1}^r[FE_j:F]c_{FE_j}(X_j)$. We have that 
$c_{FE_j}(X_j) \leq c_{E_j}(X_j)$ by Proposition
\ref{changefieldconnected}, and $c_{E_j}(X_j)$ is independent of $j$, and
equal to $(1/n)c_E(X)$. Moreover, $[FE_j:F]$ is the cardinality of the
orbit of $X_j$, so $\sum_{j=1}^r[FE_j:F] = n$. Finally we get that
$c_F(X) \leq c_E(X)$, with equality if and only if 
$c_{FE_j}(X_j) = c_{E_j}(X_j)$ for all $j$, which happens in particular
if $F/E$ is unramified.
\end{proof}

\subsubsection{Does $c_E(X)$ characterize $\A_E^0(X)$?}
\label{recover}

We ask the following question: let $X$ be defined and irreducible over
$E$. Let $R \subset \A_E^0(X)$ be a local, noetherian, complete,
$\OO_E$-flat
$\OO_E$-subalgebra of $\A_E^0(X)$, such that $R[1/p] = \A_E(X)$. Suppose
moreover that $R$ and $\A_E^0(X)$ both have residue field $k_E$, and $\e(R)
= \e(\A_E^0(X))$, that is $\e(R) = c_E(X)$. Do we have $R = \A_E^0(X)$ ?

It follows from \cite[Lemme 5.1.8]{BM} that the equality holds if
$c_E(X) = 1$, and in this case both rings are isomorphic to $\OO_E[[x]]$,
and $X$ is a disk of the form $\{x,|x-a|<|b|\}$ for some $a,b\in E$.

But as soon as $c_E(X) >1$ there are counterexamples. We give a few, with
$E = \Q_p$.
\begin{enumerate}

\item
Let $X = \{x,0<v_p(x)<1\}$. 
Then $\A_{\Q_p}^0(X)$ is isomorphic to   
$\Z_p[[x,y]]/(xy-p)$. Let $R$ be the closure of the subring generated by 
$px$, $py$ and $x-y$. 
Here $\e(R) = c_{\Q_p}(X) = 2$.

\item
Let $X = \{x,v_p(x)>1/2\}$. 
Then $\A_{\Q_p}^0(X)$ is isomorphic to   
$\Z_p[[x,y]]/(x^2-py)$.
Let $R$ be the closure of the subring generated by 
$y$ and $px$.
Here $\e(R) = c_{\Q_p}(X) = 2$.

\item 
Let $X = \{x,|x-\pi|<|\pi|\}$ where $\pi^p=p$. 
Then $\A_{\Q_p}^0(X)$ is isomorphic to   
$\Z_p[[x,y]]/(x^p-p(y+1))$.
Let $R$ be the closure of the subring generated by 
$y$ and $px$.
Here $\e(R) = c_{\Q_p}(X) = p$.

\end{enumerate}

\subsection{Computations of the algebraic complexity in some special cases}

\subsubsection{Preliminaries}

If $P \in E[x]$, and $a\in \bar{\Q}_p$, let $P_a(x)=P(x+a) \in \bar{\Q}_p[x]$.

\begin{lemm}
\label{NPopen}
Let $D$ be an open disk defined over $E$, let $s$ be the smallest degree
over $E$ of an element in $D$. Let $a$ be an element of $D$ of degree $s$
over $E$.
Let $\lambda\in \R$ be such that $D = \{x, v_E(x-a) > \lambda\}$. 

Let $P\in E[x]_{<s}$, and write $P_a(x) = \sum_{i=0}^{s-1}b_ix^i$. 
Then: $v_E(b_i) \geq v_E(b_0) - i\lambda$ for all $i$.
In particular,
if $v_E(b_0) \geq 0$, then $v_E(b_i) \geq -i\lambda$ for all $i>0$, and
if $v_E(b_0) > 0$, then $v_E(b_i) > -i\lambda$ for all $i>0$.
\end{lemm}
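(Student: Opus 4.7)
The plan is to factor $P_a$ over $\bar\Q_p$ and exploit the fact that the roots of $P$ cannot lie in $D$. The key observation will be that every root $\beta$ of $P$ in $\bar\Q_p$ satisfies $[E(\beta):E] \leq \deg P \leq s-1 < s$; by the minimality of $s$ in the hypothesis, such a $\beta$ cannot lie in $D$, so $v_E(\beta - a) \leq \lambda$.

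With this in hand, I would write $P_a(X) = b_d\prod_{j=1}^d(X-\gamma_j)$ in $\bar\Q_p[X]$, where $d = \deg P_a \leq s-1$ and $\gamma_j = \beta_j - a$ ranges over the translates by $-a$ of the roots of $P$. The previous paragraph gives $v_E(\gamma_j) \leq \lambda$ for every $j$, hence $v_E(\gamma_j^{-1}) \geq -\lambda$. Vieta's formulas yield $b_0 = (-1)^d b_d\prod_j\gamma_j$ and $b_i = (-1)^{d-i}b_d\,e_{d-i}(\gamma_1,\dots,\gamma_d)$ for $0 \leq i \leq d$, so
\[
\frac{b_i}{b_0} = \pm\sum_{|T|=i}\prod_{j\in T}\gamma_j^{-1}.
\]
The ultrametric inequality applied to this sum, together with the bound $v_E(\gamma_j^{-1}) \geq -\lambda$, then gives $v_E(b_i)-v_E(b_0) \geq -i\lambda$, which is the main estimate. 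For $i > d$ we have $b_i = 0$ and the inequality is automatic; the case $P=0$ is trivial.

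The two ``in particular'' statements are immediate consequences of the main inequality: if $v_E(b_0)\geq 0$, then $v_E(b_i) \geq v_E(b_0) - i\lambda \geq -i\lambda$, and when $v_E(b_0) > 0$ the same chain becomes strict at its last step, giving $v_E(b_i) > -i\lambda$ for every $i>0$. The only substantive ingredient is the first step---using minimality of $s$ to force $\beta_j \notin D$---so I do not expect any real obstacle beyond keeping track of signs in Vieta's formulas.
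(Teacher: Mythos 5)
Your proof is correct and rests on the same key idea as the paper's: since any root of $P$ has degree at most $\deg P \leq s-1$ over $E$, minimality of $s$ forces every root to lie outside $D$, i.e. $v_E(\beta_j-a)\leq\lambda$. The paper converts this into the coefficient bound via the Newton polygon of $P_a$ (arguing by contradiction), while you do the equivalent computation directly with Vieta's formulas; this is only a difference in bookkeeping, not in substance.
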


\begin{proof}
Consider the Newton polygon of $P_a$: if the conclusion of the Lemma is
not satisfied, then it has at least one slope $\mu$ which is $<
-\lambda$. So $P_a$ has a root $y$ of valuation $-\mu > \lambda$. Let $b
= a+y$, then $b$ is a root of $P$, so of degree $< s$ over $E$. On the
other hand, $v_E(b-a) = v_E(y) > \lambda$ so $b$ is in $D$, which contradicts
the definition of $s$.
\end{proof}

A similar proof shows:

\begin{lemm}
\label{NPclosed}
Let $D$ be a closed disk defined over $E$, let $s$ be the smallest degree
over $E$ of an element in $D$. Let $a$ be in $D$ of degree $s$ over $E$.
Let $\lambda\in \R$ be such that $D = \{x, v_E(x-a) \geq \lambda\}$. 

Let $P\in E[x]_{<s}$, and write $P_a(x) = \sum_{i=0}^{s-1}b_ix^i$. 
Then: $v_E(b_i) > v_E(b_0) - i\lambda$ for all $i>0$.
In particular,
if $v_E(b_0) \geq 0$, then $v_E(b_i) > -i\lambda$ for all $i>0$.
\end{lemm}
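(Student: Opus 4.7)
The plan is to mirror the Newton polygon argument used for Lemma \ref{NPopen}, but with care about strict versus non-strict inequalities, since the closed-disk condition $v_E(x-a) \geq \lambda$ is a weaker constraint on roots than the open-disk condition $v_E(x-a) > \lambda$. Exactly because the ``forbidden region'' for roots is larger in the closed case, the lower bound we can extract on the coefficients $b_i$ becomes strictly stronger.

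More precisely, I would argue by contradiction: suppose there exists some index $i>0$ with $v_E(b_i) \leq v_E(b_0) - i\lambda$. The Newton polygon of $P_a$ is the lower convex hull of the points $(j, v_E(b_j))$ for $0 \leq j \leq s-1$, and the assumption means that the segment from $(0, v_E(b_0))$ to $(i, v_E(b_i))$ has slope at most $-\lambda$. Hence the Newton polygon itself possesses at least one slope $\mu \leq -\lambda$ somewhere in the portion above $[0,i]$, and this produces a root $y \in \bar\Q_p$ of $P_a$ with $v_E(y) = -\mu \geq \lambda$.

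Now set $b = a+y$. Then $b$ is a root of $P \in E[X]_{<s}$, hence $[E(b):E] < s$. On the other hand $v_E(b-a) = v_E(y) \geq \lambda$, so $b \in D$. This contradicts the minimality of $s$ among degrees of points of $D$, completing the proof of the inequality $v_E(b_i) > v_E(b_0) - i\lambda$ for all $i>0$. The ``in particular'' clause is then obtained by specializing to $v_E(b_0) \geq 0$.

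The only subtlety, and where the proof genuinely differs from that of Lemma \ref{NPopen}, is tracking the inequalities: in the open case a slope strictly less than $-\lambda$ was needed to get a root in $D$, so the contrapositive yielded only a non-strict bound on $v_E(b_i)$; here a slope at most $-\lambda$ already places the root inside the closed disk $D$, which upgrades the contrapositive to a strict bound. No other step is expected to pose a real obstacle — the Newton polygon argument is entirely standard once this inequality bookkeeping is right.
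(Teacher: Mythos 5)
Your proof is correct and is essentially the paper's own argument: the paper proves Lemma \ref{NPclosed} by the remark ``a similar proof shows,'' meaning exactly the Newton polygon argument of Lemma \ref{NPopen} with the inequality bookkeeping adjusted for the closed disk, which is what you carry out. The only implicit point (in both your write-up and the paper's) is that $b_0 = P(a) \neq 0$ for nonzero $P$, which holds since $a$ has degree $s$ over $E$ while $\deg P < s$.
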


\begin{defi}
\label{defregular}
Let $L/\Q_p$ be a finite extension. Let $f\in\OO_L[[w]]$, $f =\sum_{i\geq
0}f_iw^i$. We say that $f$ is regular of degree $n$ if
$f_n\in\OO_L^\times$ and $f_m\in\m_L$ for all $m<n$. 
\end{defi}

\begin{defi}
\label{valdom}
Let $L/\Q_p$ be a finite extension. Let $f\in\OO_L[[w]]$, $f =\sum_{i\geq
0}f_iw^i$. 
We define the valuation of $f$ as $v_E(f) = \min_iv_E(f_i)$, and the
leading term of $f$ as $w^i$ for the smallest $i$ such that $v_E(f) =
v_E(f_i)$.  In particular, $f$ is regular of degree $n$ if and only if
$v_E(f) = 0$ and the leading term of $f$ is $w^n$.
\end{defi}

We recall the following result (see for example \cite[Proposition
7.2]{Wash}):

\begin{lemm}[Weierstrass Division Theorem]
\label{division}
Let $f\in\OO_L[[w]]$ that is regular of degree $n$, and $g\in\OO_L[[w]]$. Then
there exists a unique pair $(q,r)$ with $q\in\OO_L[[w]]$, $r \in
\OO_L[w]_{<n}$ and $g = qf+r$.
\end{lemm}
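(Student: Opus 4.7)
The plan is to prove this by the standard successive-approximation argument, exploiting the fact that regularity of $f$ of degree $n$ produces a convenient decomposition modulo $\m_L$.

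First I would decompose $f$. Write $f = v + T^n u$, where $v = \sum_{i=0}^{n-1} f_i T^i$ has all coefficients in $\m_L$ (by regularity of $f$) and $u = \sum_{i\geq 0} f_{n+i} T^i$ has constant term $f_n\in\O_L^\times$, so $u$ is a unit in $\O_L[[T]]$. Next introduce two $\O_L$-linear operators $\sigma,\tau:\O_L[[T]]\to\O_L[[T]]$ by $\sigma(\sum a_iT^i)=\sum_{i<n}a_iT^i$ and $\tau(\sum a_iT^i)=\sum_{i\geq 0}a_{n+i}T^i$, so that every $h\in\O_L[[T]]$ satisfies $h=\sigma(h)+T^n\tau(h)$.

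The equation $g=qf+r$ with $r\in\O_L[T]_{<n}$ is then equivalent, upon applying $\tau$ (using $\tau(T^nqu)=qu$ and $\tau(r)=0$), to $qu=\tau(g)-\tau(qv)$, i.e.\ to the fixed-point equation $q=\Psi(q)$, where $\Psi(q):=u^{-1}\bigl(\tau(g)-\tau(qv)\bigr)$. So the problem reduces to finding a fixed point of $\Psi$ on $\O_L[[T]]$.

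For existence I would iterate: set $q_0=0$ and $q_{k+1}=\Psi(q_k)$. Since every coefficient of $v$ lies in $\m_L$, multiplication by $v$ sends $\pi_L^k\O_L[[T]]$ into $\pi_L^{k+1}\O_L[[T]]$; combined with $\Psi(q)-\Psi(q')=-u^{-1}\tau((q-q')v)$, a short induction gives $q_{k+1}-q_k\in\pi_L^k\O_L[[T]]$. Hence $(q_k)$ converges coefficient-wise to some $q\in\O_L[[T]]$, which satisfies $q=\Psi(q)$ by continuity; setting $r=g-qf$ and verifying $\tau(r)=0$ yields the decomposition with $r\in\O_L[T]_{<n}$.

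Uniqueness is the cleanest part. Suppose $qf+r=q'f+r'$, so $(q-q')f=r'-r$. If the difference is nonzero, let $k$ be the largest integer with $q-q'\in\pi_L^k\O_L[[T]]$. Reducing $\pi_L^{-k}(q-q')f=\pi_L^{-k}(r'-r)$ modulo $\m_L$ gives an identity $\bar h\cdot \bar u\,T^n=\bar s$ in $k_L[[T]]$ with $\bar h\neq 0$ and $\deg \bar s<n$; since $\bar u$ is a unit of $k_L[[T]]$, this forces $\bar h=0$, contradicting the choice of $k$. The main thing to be careful about is the construction of $\Psi$ and the verification that the iteration converges coefficient-wise in $\O_L[[T]]$; everything else is bookkeeping.
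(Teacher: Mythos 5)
Your proof is correct: the reduction to the fixed-point equation $q=\Psi(q)$, the $\pi_L$-adic contraction argument (using that $v$ has coefficients in $\m_L$ and $u$ is a unit), and the uniqueness step via reduction modulo $\m_L$ all go through. The paper itself gives no proof — it cites \cite[Proposition 7.2]{Wash} — and your successive-approximation argument is essentially the standard proof found in that reference, so there is nothing to reconcile.
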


Let $X$ be a connected standard subset defined over $E$. Then we have the
easy but useful result:

\begin{lemm}
\label{reducnorm}
Let $f \in \A_E^0(X)$. Then $f$ reduces to $0$ in $\A_E^0(X)/(\pi_E)$ if
and only if $\| f \|_X \leq |\pi_E|$. The image of $f$ in
$\A_E^0(X)/(\pi_E)$ is nilpotent if and only if $\| f \|_X < 1$.
\end{lemm}

\subsubsection{Open disks}

We want to give the general formula for the complexity of a disk. We
start with some examples.
We see that there are two kinds of difficulties: one from the radius that
is not necessarily the norm of an element of $E$, and one from the fact
that the disk does not necessarily contain an element in $E$.

\begin{ex}
\label{disk1}
Let $a$, $b$ be in $E$ with $b \neq 0$. Let $D$ be the disk
$\{x, |x-a| < |b|\}$. Then $c_E(D) = 1$. Indeed, $\A_E^0(D)$ is
isomorphic to $\OO_E[[w]]$, where $w$ corresponds to the function
$(x-a)/b$, so $\A_E^0(D)/(\pi_E) = k_E[[w]]$.
\end{ex}

\begin{ex}
\label{disk2}
Let $D$ be the disk $\{x, v_p(x) > 1/2 \}$. Then $c_{\Q_p}(D) = 2$.
Indeed, $\A_{\Q_p}^0(D)$ is isomorphic to $\Z_p[[w,t]]/(t^2-pw)$, where
$w$ corresponds to the function $x^2/p$ and $t$ to the function $x$. So
$\A_{\Q_p}^0(D)/(p)$ is isomorphic to $\F_p[[w,t]]/(t^2)$.
\end{ex}

\begin{ex}
\label{disk3}
Let $D = \{x,|x-\pi|<|\pi|\}$ where $\pi^p=p$. 
Then $c_{\Q_p}(D) = p$.
Indeed
$\A_{\Q_p}^0(D)$ is isomorphic to   
$\Z_p[[t,w]]/(t^p-p(w+1))$, where $t$ is the function $x$ and $w$ is the function $(x^p-p)/p$.
\end{ex}

\begin{prop}
\label{multdisk}
Let $D$ be an open disc of radius $r\in p^\Q$ defined over $E$.
Let $s$ be the smallest ramification degree of $E(a)/E$ for $a\in D$.
Let $t$ be the smallest positive integer such that $r^{st}\in
|E(a)^\times|$.
Then $c_E(D) = st$.
\end{prop}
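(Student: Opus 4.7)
The plan is to construct explicit generators for $\A_E^0(D)$ and compute the Hilbert-Samuel multiplicity of the special fiber directly via Lemma \ref{computee}.

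First, by Proposition \ref{changefieldany} the complexity is invariant under unramified base extension, so using Corollary \ref{corBen} we may replace $E$ by $E \cdot (E(a) \cap E^{nr})$ for a suitable $a \in D$. After this reduction $E(a)/E$ is totally ramified of degree $s$; after translating $x$ by an element of $\O_E$ we may further assume $\bar a = 0$ in the residue field, so the minimal polynomial $\mu(X) \in \O_E[X]$ of $a$ satisfies $\bar\mu(X) = X^s$ in $k_E[X]$. Setting $\alpha = v_E(t_D)$ (so $r = |\pi_E|^\alpha$) and $m = \lfloor st\alpha \rfloor$, I introduce $y := \mu(x)^t/\pi_E^m$. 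Since all $G_E$-conjugates of $a$ lie in $D$, we have $|\mu(x)| < r^s$ on $D$, hence $|y| < |\pi_E|^{st\alpha - m} \le 1$; thus $y \in \A_E^0(D)$.

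Next I claim the natural map $R := \O_E[[Y]][X]/(\mu(X)^t - \pi_E^m Y) \to \A_E^0(D)$ sending $X \mapsto x, Y \mapsto y$ is an isomorphism. Injectivity follows from degree considerations: viewed as elements of $\A_F^0(D) = \O_F[[u]]$ for $F/E$ a Galois extension over which $D$ is well-defined (with $u = (x-a)/t_D$), the monomials $x^i y^j$ for $0 \le i < st$ and $j \ge 0$ have pairwise distinct leading degrees $i + stj$ in $u$. For surjectivity, I describe $\A_E^0(D)$ as the Galois-invariant subring of $\A_F^0(D)$ using Proposition \ref{ringfuncs}, and identify $x$ and $y$ as the natural $G_E$-invariants associated to the Galois orbit of $u$; every invariant power series then reduces, by a Weierstrass-type division argument, to an expression $\sum_{i=0}^{st-1} a_i(y) x^i$ with $a_i \in \O_E[[Y]]$.

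The multiplicity is then direct. Since $\bar\mu(X) = X^s$, the special fiber is $R/\pi_E R = k_E[[Y]][X]/(X^{st})$, a local ring of dimension $1$ with maximal ideal $\m = (X, Y)$. In this ring $X^{st} = 0$ is nilpotent and $Y$ is a nonzerodivisor satisfying $Y \m^n = \m^{n+1}$ for all sufficiently large $n$, so Lemma \ref{computee} yields
\[
\e_{\O_E}(R) = \dim_{k_E} R/(\pi_E, Y) = \dim_{k_E} k_E[X]/(X^{st}) = st.
\]
Hence $c_E(D) = \e_{\O_E}(\A_E^0(D)) = st$.

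The main difficulty is the identification $R = \A_E^0(D)$ in the second paragraph above. The Galois-invariant computation is clean when $st\alpha \in \Z$, since in that case $y$ is a Weierstrass polynomial of degree $st$ in $u$ and the structure is controlled by Weierstrass preparation. When $st\alpha \notin \Z$, however, the leading coefficient of $y$ as a power series in $u$ has positive valuation and $y$ fails to be Weierstrass, which requires a more delicate argument to confirm that $x$ and $y$ together still generate every $G_E$-invariant element of $\O_F[[u]]$ as an $\O_E$-algebra.
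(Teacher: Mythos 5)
Your central claim --- that the map $\O_E[[Y]][X]/(\mu(X)^t-\pi_E^mY)\to\A_E^0(D)$ is an isomorphism --- is not merely left open by your final paragraph; it is false in general, and the caveat mislocates the difficulty, since the failure is not confined to the case $st\alpha\notin\Z$. Take $E=\Q_p$ and $D=\{x,\ v_p(x)>2/3\}$, so $a=0$, $\mu=X$, $s=1$, $t=3$, and (in your notation) $st\alpha=2\in\Z$, $m=2$, $y=x^3/p^2$, which is indeed regular of degree $3$ as a series in $u$. The function $x^2/p$ has sup norm at most $p^{-1/3}<1$ on $D$, hence lies in $\A_E^0(D)$; but writing any element of your ring as $\sum_{j\ge 0}\sum_{i=0}^{2}a_{ij}x^iy^j$ with $a_{ij}\in\O_E$ and comparing coefficients of $x^2$ forces that coefficient to lie in $\O_E$, so $x^2/p$ is not in the image. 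The point is that Weierstrass division over $\O_L[[u]]$ is not the issue: after taking $\Gal(L/E)$-invariants, the remainders are polynomials $\sum b_i(X-a)^i$ of degree $<st$ subject only to $v_E(b_i)\ge -i\lambda$, so their coefficients are allowed bounded \emph{negative} valuation, and such elements need not be $\O_E$-combinations of $x^iy^j$. This is precisely why the paper's proof adjoins, besides the regular element $\alpha=R\mu^t$ (note that your $y=\mu^t/\pi_E^m$ differs from it when $st\lambda\notin\Z$), the whole family of generators $U_{i,j}\mu^j$ coming from $\O_E$-bases of the modules $\E_j\simeq F_{-js\lambda}$, and then shows these extra generators produce a nilpotent ideal modulo $\pi_E$ before invoking Lemma \ref{computee}.

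Even if you retreat to what is actually true --- your ring $R$ is an $\O_E$-subalgebra of $\A_E^0(D)$ with $R[1/p]=\A_E(D)$ and $\A_E^0(D)$ finite over $R$ --- the computation $\e(R)=st$ only gives the inequality $c_E(D)\le st$, via Proposition \ref{multsmaller}; the multiplicity can strictly drop when passing to a larger ring inside $R[1/p]$ (see Remark \ref{exdiffe} and Paragraph \ref{recover}), so the lower bound $c_E(D)\ge st$, which is the substantive half of the statement, is not obtained at all. (A smaller point: your injectivity argument is also off as written, since for $a\ne 0$ the series of $x^i$ in $u$ has nonzero constant term, so the monomials $x^iy^j$ do not have leading $u$-degrees $i+stj$.) Repairing the argument essentially forces you to work with the full invariant ring and its larger generating set, i.e.\ to reproduce the description of $\A_E^0(D)$ used in the paper's proof.
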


\begin{proof}
There are two steps in the proof: the first is to find a description of
$\A_E^0(D)$, and the second to use this description to show that
$\A_E^0(D)/(\pi_E)$ satisfies the conditions of Corollary
\ref{computeebis} and apply this to compute $c_E(D)$.

\smallskip

\noindent {\bf Step 1:}
Let $a \in D$ be as in the statement. As the complexity does not change
by unramified extensions by Proposition \ref{changefieldany}, 
we can enlarge $E$ so that $E(a)/E$ is totally
ramified. Let $\mu$ be the minimal polynomial of $a$ over $E$, so that
$\mu$ has degree $s$. Write $F = E(a)$.
For $\nu\in \Q$, let $F_\nu$ be the set $\{x\in F, v_E(x) \geq \nu\}$ (so
that $F_0 = \OO_F$).

Let $\lambda$ be such that $D = \{x, v_E(x-a) > \lambda\}$.
Let also $\rho\in F$ such that $v_E(\rho) = st\lambda$, which is
possible by the definition of $t$. When $s>1$, we see that $v_E(a) \leq
\lambda$ (otherwise $0 \in D$), and if $a'$ is another root of $\mu$ then
$v_E(a-a') > \lambda$ as $D$ is defined over $E$.

For $n \in \Z$, let $\E_n$ be the subset of $E[x]_{<s}$ of polynomials that can be
written as $\sum_{i=0}^{s-1}b_i(x-a)^i$ with $v_E(b_i) \geq -(i+ns)\lambda$.
Note that by Lemma \ref{NPopen}, $\E_n$ is the
set of polynomials in $E[x]_{<s}$ with $v_E(b_0) \geq -ns\lambda$. 
In fact $\E_n$ is in bijection with the set $F_{-ns\lambda}$ by $P
\mapsto P(a)$, as any element of $F$ can be written uniquely as $P(a)$
for some $P\in E[x]_{<s}$.
Note that $\rho^{-1}\in F_{-st\lambda}$. We fix $R\in\E_t$ the unique
polynomial such that $R(a) = \rho^{-1}$. We set $\alpha = R\mu^t$. 

Let $L$ be a Galois extension of $E$ containing $F$ and an element $\xi$
such that $v_E(\xi) = \lambda$.
Then $\A_L^0(D)$ is isomorphic to $\OO_L[[w]]$, with $w$
corresponding to $(x-a)/\xi$.
We consider now $\alpha$ as a polynomial in $w = (x-a)/\xi$. Then an easy
computation shows that $\alpha \in \OO_L[w]$, and it is a polynomial of
degree at most $st+s-1$ which is regular of degree $st$ in the sense of
Definition \ref{defregular} when seen as an element of
$\A_L^0(D) =\OO_L[[w]]$.

Let $\E'$ be the subset of $E[x]_{<st}$ of polynomials that can be
written as $\sum_{i=0}^{st-1}b_i(x-a)^i$ with $v_E(b_i) \geq -i\lambda$.
Then 
$$
\A_E^0(D) = \{\sum_{n\geq 0}P_n\alpha^n, P_n\in
\E'\}
$$
and any element of $\A_E^0(D)$ can be written uniquely in such a
way. Indeed: Let $f\in\A_E^0(D)$, which we see as an element of $\A_L^0(D) =
\OO_L[[w]]$. Applying repeatedly the Weierstrass Division Theorem, $f$ can
be written uniquely as $\sum_{n\geq 0} P_n\alpha^n$ with
$P_n\in\OO_L[w]_{<st}$. The fact that $f$ is in $\A_E^0(D)$ means that $f$
is invariant under $\Gal(L/E)$. As $\alpha$ itself is invariant under this
group, this means that each $P_n$ is invariant, and so $P_n\in \E'$
(where we see $\E'\subset \OO_L[w]_{<st}$ by $w = (x-a)/\xi$).

\bigskip

\noindent {\bf Step 2:}
We now want to check to conditions of Corollary \ref{computeebis}.
We observe that $\E' = \bigoplus_{j=0}^{t-1}\mu^j\E_j$.
For $0\leq i < t$, let $(u_{i,j})_{1\leq i \leq s}$ be a basis of $\E_j$
as an $\OO_E$-module, where we take $u_{1,0} = 1$, and $v_E(u_{i,0}(a)) > 0$
for $i>1$. We can satisfy this condition as taking a basis of $\E_0$
is the same as taking a basis of $\OO_F$ over $\OO_E$, and $F$ is totally
ramified over $E$. We also observe that for $j >0$, we have
$v_E(u_{i,j}(a)) > -js\lambda$ by definition of $t$.

Write $y_{i,j} = u_{i,j}\mu^j$ and $z = \alpha$ (note that $y_{1,0}=1$). 
Then $\A_E^0(D)$ is a quotient of $\OO_E[[y_{i,j},z]]$, hence the ring
$A = \A_E^0(D)/(\pi_E)$ is a quotient of $k_E[[y_{i,j},z]]$. 
Let $\bar{y}_{i,j}$, $\bar{z}$ be the images of $y_{i,j}$, $z$ in $A$. 
Let $I$ be the ideal generated by the $\bar{y}_{i,j}$ for
$(i,j) \neq (1,0)$. Then the maximal ideal $\m$ of $A$ is generated by
$I$ and $\bar{z}$.

We show first that $I$ is nilpotent. We see $y_{i,j}$ as an element of
$\A_L^0(D) = \OO_L[[w]]$, then $\|y_{i,j}\|_X = \max_na_n$ where $y_{i,j}
= \sum_{n\geq 0}a_nw^n$. So we see that for $(i,j) \neq (1,0)$, we have
that $\|y_{i,j}\|_X < 1$, and so $y_{i,j}$ is nilpotent by Lemma
\ref{reducnorm}. 

Let us see now that $A$ has no $\bar{z}$-torsion. As before, we see
$\A_E^0(X)$ as a subalgebra of $\OO_L[[w]]$. From the existence of this
inclusion, we see that the norm on $\A_E^0(X)$ is actually
multiplicative. As $\|z\|_X= 1$, we deduce that $\|zf\|_X = \|f\|_X$ for
all $f \in \A_E^0(X)$, and so $A$ has no $\bar{z}$-torsion.

We deduce that the conditions of Corollary \ref{computeebis} are
satisfied. 
So $e(A) = \dim_kA/(\bar{z})$, and we see easily that $1$ and 
the $\bar{y}_{i,j}$, $1\leq i \leq s$ and $0 \leq i < t$, $(i,j) \neq
(1,0)$, form a $k$-basis of $A/(\bar{z})$.
\end{proof}

\subsubsection{Holes}

\begin{prop}
\label{multhole}
Let $X = \PP^1(\bar\Q_p) \setminus T$ where 
$T = \cup_{i=1}^N D_i$ is a $G_E$-orbit of closed disks of positive radius
$r\in p^\Q$, with each disk defined over a totally ramified extension of $E$.
Let $K$ be the field of definition of $D_1$.
Let $s$ be the smallest ramification degree of $K(a)/K$ for $a\in D_1$.
Let $t$ be the smallest positive integer such that $r^{st} \in |E(a)^\times|$. 
Assume that $K(a)/E$ is totally ramified.
Then $c_E(X) = Nst$.
\end{prop}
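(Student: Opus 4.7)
The plan is to adapt the proof of Proposition \ref{multdisk} to the multi-hole setting: I construct an element $\alpha\in\A_E^0(X)$ that is regular of degree $st$ at each hole, and then apply Corollary \ref{computeebis} to compute $e(\A_E^0(X)/\pi_E)$.

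By Proposition \ref{changefieldany}, replacing $E$ by a finite unramified extension preserves $c_E(X)$, so we arrange that a chosen finite Galois extension $L/E$ over which $X$ is well-defined is totally ramified. Fix $a\in D_1$ with $K(a)/K$ totally ramified of degree $s$, and for each $j$ choose $\sigma_j \in G_E$ with $\sigma_j(D_1) = D_j$; set $a_j = \sigma_j(a)$ and $t_j = \sigma_j(t_1)$ with $|t_1| = r$, enlarging $L$ so that $L\supset E(a,t_1)$. By Proposition \ref{ringfuncs},
\[
\A_L^0(X) \;=\; \O_L \oplus \bigoplus_{j=1}^N y_j\O_L[[y_j]], \qquad y_j = t_j/(x-a_j).
\]

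Let $\mu\in K[X]$ be the minimal polynomial of $a$ over $K$, of degree $s$, and $\mu_j = \sigma_j(\mu)$. Let $\rho\in E(a)$ with $v_E(\rho) = st\,v_E(t_1)$ (which exists by the hypothesis $r^{st}\in |E(a)^\times|$), and $R\in K[X]_{<s}$ with $R(a) = \rho^{-1}$. Following the proof of Proposition \ref{multdisk}, the candidate $\alpha$ is the $G_E$-orbit sum
\[
\alpha \;=\; \sum_{j=1}^N \frac{R_j\,\mu_j^t}{\prod_{i\neq j}\mu_i^t}, \qquad R_j = \sigma_j(R),
\]
which is manifestly $G_E$-invariant; by the valuation conditions on $\rho$ and the disjointness of the $D_i$, $\alpha$ is bounded by $1$ on $X$ and its $y_j$-expansion near $D_j$ has the form $\alpha \equiv u_j\,y_j^{st}\pmod{\pi_L}$ with $u_j \in \O_L^\times$. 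Set $A = \A_E^0(X)/\pi_E$, a local $k_E$-algebra of dimension $1$, and let $z$ be the image of $\alpha$; since $\A_E^0(X)$ is a domain by Corollary \ref{funcringirr}, $A$ has no $z$-torsion. I would exhibit a nilpotent ideal $I\subset A$ with $\m_A = (z) + I$: the nilpotency follows from the partial fraction identities $(x-a_i)^{-1}(x-a_j)^{-1} = (a_i-a_j)^{-1}[(x-a_i)^{-1} - (x-a_j)^{-1}]$ together with the bound $|a_i-a_j|>r$, which forces products of off-leading Galois-symmetrized combinations to acquire an extra factor of $\pi_L$ and thus vanish in $A$ (the case $N=2$, $s=t=1$ is illustrative: $y_1 y_2 = (\sqrt p/2)(y_1-y_2)$, so the square of $\sqrt p(y_1-y_2)\in\A_E^0(X)$ is divisible by $p$). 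Then Corollary \ref{computeebis} yields $e(A) = \dim_{k_E} A/(z)$.

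Finally, $\dim_{k_E} A/(z) = Nst$ is obtained via an explicit basis: mirroring the decomposition $\E' = \bigoplus_{j=0}^{t-1}\mu^j \E_j$ of the proof of Proposition \ref{multdisk}, each hole contributes an $st$-dimensional subspace (spanned by the images of $G_E$-invariant expressions formed from $U_{i,k}\mu_j^k$), and these $N$ subspaces remain linearly independent in $A/(z)$, yielding $Nst$ basis elements. The main obstacle will be the explicit construction of $\alpha$ with the required regularity uniformly at every hole --- one must verify that each leading coefficient $u_j$ is a unit modulo $\pi_L$, which may require the initial unramified extension of $E$ --- together with the precise identification of the nilpotent ideal $I$. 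The essential bookkeeping involves tracking how the $N$-fold $G_E$-symmetrization interacts with the partial fraction expansions linking the $y_j$, a step whose difficulty grows with $N$.
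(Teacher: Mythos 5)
Your overall strategy mirrors the paper's (produce a distinguished element $z$ in the special fiber $A=\A_E^0(X)/\pi_E$, show $\m_A=(z)+I$ with $I$ nilpotent and $A$ without $z$-torsion, apply Corollary \ref{computeebis}, and count $\dim_{k_E}A/(z)=Nst$), but two steps fail as written. First, the candidate $\alpha=\sum_j R_j\mu_j^t/\prod_{i\neq j}\mu_i^t$ with $R(a)=\rho^{-1}$ is not the correct transposition of Proposition \ref{multdisk} to the hole setting: already for $N=1$ it degenerates to the polynomial $R\mu^t$, which has a pole at $\infty\in X$ and so is not even a bounded function on $X$ (the same happens whenever $s-1>(N-2)st$); and near a hole $D_j$ its principal part comes from the terms $j'\neq j$, whose leading coefficient involves $R_{j'}$ (of valuation $-st\lambda$, where $D_1=\{x,v_E(x-a)\geq\lambda\}$) and the distances $a_j-a_{j'}$, about which one only knows $v_E(a_j-a_{j'})<\lambda$. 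For instance with $N=2$, $s=t=1$ the coefficient of $y_1$ has valuation $v_E(a_1-a_2)-2\lambda$, which is uncontrolled (and negative when $\lambda>0$), so $\alpha$ is in general neither in $\A_E^0(X)$ nor regular of degree $st$ with unit leading coefficient. The correct element has $\mu^t$ in the denominator: the paper takes $R\in K[x]_{<s}$ with $R(a)=\rho$, $v_E(\rho)=st\lambda$, and uses the image $z$ of $\tr(R/\mu^t)=\sum_j\sigma_j(R)/\sigma_j(\mu)^t$; even then the cross terms coming from poles in different disks are only small (controlled by $\lambda-v_E(a_i-a_j)>0$), not zero, and proving that $\tr$ of the $m$-th power agrees with $z^m$ modulo the nilpotent ideal $I$ is a genuine argument rather than a formality.

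Second, the assertion that $A$ has no $z$-torsion ``since $\A_E^0(X)$ is a domain'' is a non sequitur: $A$ is the special fiber, it contains the nonzero nilpotent ideal $I$, and what Corollary \ref{computeebis} requires is injectivity of multiplication by $z$ on $A$ itself, which does not follow from integrality of $\A_E^0(X)$. This is one of the delicate points of the paper's proof: it is established by writing every element of $\A_E^0(X)$ uniquely in terms of the basis $\tr(\alpha^\ell U_{i,j}V^n)$ (with $\alpha$ a uniformizer of $K$) and tracking valuations and leading terms in the variable $Y$ to see that multiplication by $z$ preserves the presence of a unit coefficient. Relatedly, the paper tames the ``$N$-fold symmetrization bookkeeping'' you worry about by first describing $\A_K^0(\P^1(\bar\Q_p)\setminus D_1)$ over the field of definition $K$ of $D_1$ (a single hole, with $[K:E]=N$), and only then obtaining $\A_E^0(X)$ as $\O_E$-combinations of $\tr_{K/E}$ of such functions; your plan of working simultaneously at all $N$ holes and asserting that the $N$ contributed subspaces stay independent in $A/(z)$ would need an explicit normal form of this kind to be carried out.
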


When $N = 1$, that is, when $T$ is a disk, then the formula and the proof
are similar to what happens in Proposition \ref{multdisk}.  But there are
additional difficulties when there is not only one hole, but a whole
Galois orbit of them, that is, when $N > 1$. 

\begin{proof}[Proof of Proposition \ref{multhole}]
We divide the proof in several steps.

\noindent {\bf Step 1:} We first give a description of the ring of functions in the
case where $N=1$, that is, when there is only one hole.
Write $X' = \PP^1(\bar\Q_p)\setminus D_1$, so that $X'$ is defined over
$K$.  We will give a description of the ring $\A_K^0(X')$, forgetting $D$
and $E$ for the moment.  This computation is similar to the computation
in Proposition \ref{multdisk}, although complicated by the fact that we
work with rational fractions and not only with polynomials.

Let $a \in D_1$ as in the statement of the Proposition. Note that $[K:E]
= N$.  Let $F = E(a)$. Note that $K \subset F$ so $E(a) = K(a)$. By
hypothesis, $F/E$ is totally ramified.  We write $[F:K] = s$.
Write $D_1$ as the set $\{x, v_E(x-a)\geq \lambda\}$ for some $\lambda\in
\Q$.  Let $\mu$ be the minimal polynomial of $a$ over $K$, so that $\mu$
has degree $s$. Let also $\rho\in F$ be such that $v_E(\rho) = st\lambda$,
which is possible by the definition of $t$.

Let $R$ be the unique element of $K[x]_{<s}$ such that $R(a) = \rho$.
Note that when we write $R(x) = \sum b_i(x-a)^i$, we have $v_E(b_i) >
(st-i)\lambda$ for all $i>0$ by Lemma \ref{NPclosed}.
Set $v = R/\mu^t$, and 
for $n\geq 1$, set $\alpha_n =
\rho\mu^{-t}v^{n-1}$.

Let $L$ be an extension of $E$ containing $a$ and an element $\xi$ such
that $v_E(\xi) = \lambda$,
and which is Galois over $E$.
Note that $\A_L^0(X')$ is isomorphic to $\OO_L[[w]]$, with $w$
corresponding to the function $\xi/(x-a)$. In this isomorphism,
observe that
$\alpha_n$ is regular of degree $nst$
and is divisible by $w^{st}$, 
and
$v = R\mu^{-t} = \rho^{-1}R\alpha_1$ is regular of degree $st$, and
divisible by $w^{st-s+1}$.

Let $f = wg \in w\A_L^0(X')$. Then by applying Lemma \ref{division} repeatedly
we can write $w^{st-1}f =
w^{st}g$ as $\sum_{n\geq 1}P_n(w)\alpha_n$ for $P_n\in\OO_L[w]_{<st}$
(there is no remainder as $w^{st}$ and $\alpha_1$ differ by a unit).
So $f = \sum_{n\geq 1}w^{1-st}P_n(w)\alpha_n$. 
So any element of $\A_L^0(X')$ can be written uniquely as 
$f = a_0 + \sum_{n\geq 1}\rho w^{1-st}P_n(w)\mu^{-t}v^{n-1}$, where $a_0
\in \OO_L$ and $P_n(w) \in \OO_L[w]_{<st}$.

We want to know when such an element is in $\A_K(X')$. As $v$ and
$\mu^{-t}$ are in
$\A_K(X')$, we see that it is the case if and only if $a_0 \in \OO_K$ and
$\rho w^{1-st}P_n(w)$ is invariant under the action of $\Gal(L/K)$.
Note that $\rho w^{1-st}P_n(w)$ in actually in $L[x]_{<st}$, so it is
invariant by $\Gal(L/K)$ if and only if it is in $K[x]$.
Let $\E'$ the set of elements $Q\in K[x]_{<st}$ such that when we write
$Q(x) = \sum_{i\geq 0}b_i(x-a)^i$, we have $v_E(b_i) \geq (st-i)\lambda$.
Then we see that $\E'$ is exactly the set of elements of $K[x]_{<st}$
that are of the form $\rho w^{1-st}P(w)$ for some $P\in \OO_L[w]_{<st}$.

Then we have shown that:

$$
\A_K^0(X') = \left\{a_0 + \sum_{n\geq 0}
\frac{Q_n(x)}{\mu(x)^t}v^n,\;
a_0\in\OO_K,Q_n\in\E'\right\}
$$
where $v = R/\mu^t$.

We make the following observation: if $f \in \A_K^0(X')$ is written as
$a_0 + \sum_{n\geq 0} \frac{Q_n(x)}{\mu(x)^t}v^n$ with
$a_0\in\OO_K$, $Q_n\in\E'$, then 
\begin{equation}
\label{normmax}
\|f\|_{X'} = \max(|a_0|,\max_n\|Q_n\mu^{-t}\|_{X'}).
\end{equation}
We can see $f$ as being in $\OO_L[[w]]$ and reason in terms of $v_E(f)$.
We have that $v_E(v) = 0$ as $v$ is regular of degree $st$.
Moreover, writing $Q_n\mu^{-t}$ as $\rho^{-1}Q_n\alpha_1$, we see that the
leading term of $Q_n\mu^{-t}$ is $w^{st}$. Using this, we see easily that
$v_E(f) = \min(v_E(a_0),\min_n(v_E(Q_n\mu^{-t})))$ which gives the result.

\bigskip
\noindent{\bf Step 2:} We introduce the tools that allow us to go from the
description of $\A_K^0(X')$ to the description of $\A_E^0(X)$.

Let $Q = \{\sigma_1,\dots,\sigma_N\}$ be a system of representatives in
$G_E$ of
$G_E/G_K$, numbered so that $\sigma_i D_1 = D_i$ (so we take $\sigma_1 =
\id$). 
Recall that $w = \xi/(x-a)$. Let $\xi_i$, $a_i$ be conjugates of $\xi$ and
$a_i$ by $\sigma_i$, and let $w_i = \xi_i/(x-a_i)$ (so that $w_i = w$).
If $f \in \OO_L[[w]] = \A_L^0(X') \subset \A_L^0(X)$, with $f = \sum
f_nw^n$, we denote by $\tr f
\in \A_L^0(X)$ the element $\sum_{i=1}^N\sum_n\sigma_i(f_n)w_i^n$.
Note that if $f \in \A_K(X')$, then $\tr f \in \A_E(X)$ and 
$\A_E^0(X) = \{a + \tr f,a\in \OO_E, f\in \A_K^0(X')\}$. 
We can actually make this more precise: let $\A_E^0(X)_0$ and
$\A_K^0(X')_0$ the subspaces of $\A_E^0(X)$ and $\A_K^0(X')$ of functions
with no constant term (see Remark \ref{cstterm}). 
Then $\tr$ induces a bijection between $\A_K^0(X')_0$ and $\A_E^0(X)_0$.
Moreover if $f \in \A_K^0(X')_0$ then $\|f\|_{X'} = \|\tr f\|_X$, as can
be seen from Proposition \ref{ringfuncs}.

Note that $\A_K^0(X') \subset \OO_L[[w]]$, and this injection is
multiplicative and preserves the norm. So we also get an injection
$\A_E^0(X)_0 \to \OO_L[[w]]$, which preserves the norm as noted earlier.
But this is not multiplicative, as in general $\tr(fg) \neq (\tr f)(\tr g)$.
However, we have for all $f$, $g$ in $w\OO_L[[w]]$:
\begin{equation}
\label{normtrace}
\|\tr(fg)-(\tr f)(\tr g)\|_X < \|fg\|_{X'}.
\end{equation} 
In particular, we have that $\|\tr(fg)\|_X = \|(\tr f)(\tr g)\|_X$.

Let us prove inequality (\ref{normtrace}).
Write $f = \sum_nf_nw^n$, and $g = \sum_ng_nw^n$. Then 
$(\tr f)(\tr g)$ is of the form $\tr(u)$ for some $u$ in $w\OO_L[[w]]$.
We can assume that $\|f\|_{X'} = \|g\|_{X'} = 1$, so that $\|fg|_{X'} =
1$. 
Let $w'$ be one of the $w_i$ for $i>1$.
Consider $w^n{w'}^m$ for some integers $n$, $m > 0$. It can be written as
a sum of an element of $w\OO_L[[w]]$ and an element of $w'\OO_L[[w']]$, and
we want to understand the term in $w\OO_L[[w]]$. Note that for $n>0$,
$m>0$ we can write $w^n{w'}^m = \sum_{i=1}^n\alpha_iw^i +
\sum_{i=1}^m\beta_i{w'}^i$, with $\alpha_i \in \OO_L$ and $\beta_i \in
\OO_L$ and $|\alpha_i|<1$ and $|\beta_i|<1$ for all $i$ as $|\xi| <
|a-a'|$.  So we see that all the terms contributing to $fg-u$ have norm
$< 1$.  This proves inequality (\ref{normtrace}).

\bigskip
\noindent{\bf Step 3:} We give a description of $\A_E^0(X)$. Combining
the description of $\A_K^0(X')$ in Step 1 and using the trace we get:

\begin{equation}
\label{descrringhole1}
\A_E^0(X) = \left\{ a_0 +
\sum_{n\geq 0}\tr \left(\frac{Q_n}{\mu^t}v^n \right), \;
a_0\in\OO_E, Q_n\in\E'
\right\}
\end{equation}
and elements of $\A_E^0(X)$ can be written uniquely in such a way.
We want to change this description to something more convenient. 
Let $z = \tr v$. Then:

\begin{equation}
\label{descrringhole2}
\A_E^0(X) = \left\{ a_0 +
\sum_{n\geq 0}\tr \left(\frac{Q_n}{\mu^t} \right)z^n, \;
a_0\in\OO_E, Q_n\in\E'
\right\}.
\end{equation}

In order to prove this, we transform an element written as in Formula
(\ref{descrringhole1}) into an element written as in Formula
(\ref{descrringhole2}) by successive approximation, using the inequality
(\ref{normtrace}) and the formula (\ref{normmax}) for the norm.

\bigskip
\noindent{\bf Step 4:}
We now give a set of generators of $\A_E^0(X)$ as a complete
$\OO_E$-algebra, which will be useful in the next steps.

We start by giving a basis of the $\OO_K$-module $\E'$.  
For $0\leq j < t$,
let $\E_j$ be the subset of $K[x]_{<s}$ of polynomials that can be
written as $\sum_{i=0}^{s-1}b_i(x-a)^i$ with $b_i\in F$, 
$v_E(b_i) \geq (s(t-j)-i)\lambda$. Note that by Lemma \ref{NPclosed}, $\E_j$ is
the subset of elements of $K[x]_{<s}$ with $v_E(b_0) \geq s(t-j)\lambda$, and
if $P\in \E_j$ then for all $i>0$, $v_E(b_i) > (s(t-j)-i)\lambda$. Moreover,
$\E_j$ is in bijection with the set 
$$
F_{s(t-j)\lambda} = \{b\in F, v_E(b) \geq s(t-j)\lambda\}
$$ 
by $P \mapsto P(a)$. Indeed, if $b\in F$, it can be written
uniquely as $b = P(a)$ for some $P\in K[x]_{<s}$ as $F = K(a)$.
Note that by definition, for $0<j<t$, $F_{s(t-j)\lambda}$ does not contain
an element of valuation $s(t-j)\lambda$.
We note that $\E' = \oplus_{j=0}^{t-1}\mu^j\E_j$.
We define bases for the $\E_j$ as $\OO_K$-modules as follows: fix $\delta_j$ in
$F_{s(t-j)\lambda}$ of minimal valuation (take $\delta_0=1$, and note
that $v_E(\delta_j)>s(t-j)\lambda$ if $j\neq 0$). Let
$\varpi$ be a uniformizer of $F$, so that $(1,\varpi,\dots,\varpi^{s-1})$
is a basis of $\OO_F$ as an $\OO_K$-module. Then let $Q_{i,j}\in\E_j$ be
the polynomial such that $Q_{i,j}(a) = \delta_j\varpi^{i-1}$ for $1\leq i
\leq s$.
So we deduce a basis
$(P_{i,j})_{0\leq j < t,1\leq i \leq s}$ of $\E'$ as an $\OO_K$-module by
taking $P_{i,j} = Q_{i,j}\mu^j$.

Finally let $u_{i,j} = P_{i,j}/\mu^t \in \A_K^0(X')$, so that $v = R/\mu^t =
u_{1,0}$. Let $\alpha$ be a
uniformizer of $K$, so that $\OO_K = \OO_E[\alpha]$ (recall that $K$ is a
totally ramified extension of degree $N$ of $E$). 
Let $y_{i,j,\ell} = \tr(\alpha^\ell u_{i,j})$. Then $\A_E^0(X)$ is
generated by $z$ and the $y_{i,j,\ell}$, and more precisely:

\begin{equation}
\label{descrringhole3}
\A_E^0(X) =
\{ a_0+\sum_{n\geq
0}\left(\sum_{i=1}^s\sum_{j=0}^{t-1}\sum_{\ell=0}^{N-1}
a_{i,j,\ell,n}y_{i,j,\ell}\right)z^n, 
a_0\in\OO_E, a_{i,j,\ell,n}\in\OO_E \;\forall i,j,\ell\}.
\end{equation}

\bigskip
\noindent{\bf Step 5:}
Let now $A = \A_E^0(X)/(\pi_E)$. We now show that the hypotheses of
Corollary \ref{computeebis} are satisfied by $A$.
Denote by $\bar{y}_{i,j,\ell}$ the image of $y_{i,j,\ell}$ in $A$, and by
$\bar{z}$ the image of $z$ (observe that $y_{1,0,0} = z$).
Let $I$ be the ideal of $A$ generated by the $\bar{y}_{i,j,\ell}$ for
$(i,j,\ell) \neq (1,0,0)$. Then it is clear from Formula
(\ref{descrringhole3})
that the maximal ideal of $A$ is generated by $I$ and $\bar{z}$.

Then $I$ is a nilpotent ideal. Indeed, consider $f$ one of the elements
$y_{i,j,\ell}$, that is, $f = \tr \alpha^{\ell}u_{i,j}$.  We see $f$
as an element of $\A_L^0(X)$. When we write $\alpha^{\ell}u_{i,j}$ as
an element of $\OO_L[[w]]$, with $w = \xi/(x-a)$ as before, we see that in
fact it is in $\pi_L\OO_L[[w]]$, as either $\ell>0$ or $(i,j) \neq (1,0)$.
So $f$ is in $\pi_L\A_L^0(X)$. By Lemma
\ref{reducnorm}, this means that the image of $f$ in $A$ is nilpotent.
So $I$ is nilpotent.

Let us show that $A$ has no $\bar{z}$-torsion. 
Let $f \in A$ which is not a unit. Then $f = \bar{g}$ for some $g \in
\A_E^0(X)$ that can be written as $\tr(h)$ for some $h \in \A_K^0(X')$.
We compute: $\|zg\|_X = \|(tr v)(tr h)\|_X = \|tr(vh)\|_X$ (by the
Formula (\ref{normtrace})), so finally $\|zg\|_X = \|vh\|_{X'} = \|v\|_{X'}\|h\|_{X'}$.
Moreover, $\|v\|_{X'} = 1$, so $\|zg\|_X = \|h\|_{X'} = \|g\|_X$. By
Lemma \ref{reducnorm}, this means that $\bar{z}f \neq 0$ if $f \neq 0$.

\smallskip

So finally we are in the conditions of Corollary \ref{computeebis}.

\bigskip
\noindent{\bf Step 6} We compute the dimension of $A/(\bar{z})$, which is
the complexity we are looking for by Corollary \ref{computeebis}.
It is clear from (\ref{descrringhole3}) that $\dim_kA/(\bar{z}) \leq
Nst$, as $A/(\bar{z})$ is generated as a $k$-vector space by $1$ and the
$\bar{y}_{i,j,\ell}$ for $(i,j,\ell) \neq (1,0,0)$. Let us show that it
is in fact an equality.

Let $x = \mu + \sum_{(i,j,\ell)\neq
(1,0,0)}\lambda_{i,j,\ell}\bar{y}_{i,j,\ell}$ in $A$ that reduces to $0$
in $A/(\bar{z})$, and let us show that all the coefficients are in fact
$0$. First, $\mu = 0$ otherwise $x$ in a unit in $A$. Lift each
$\lambda_{i,j,\ell}$ to some $a_{i,j,\ell} \in \OO_E$.
Let $f = \sum_{(i,j,\ell)
\neq (1,0,0)}a_{i,j,\ell}\alpha^\ell u_{i,j}$, so that $x = \bar{\tr f}$,
and assume that $f \neq 0$.

The fact that $x$ reduces to $0$ in $A/(\bar{z})$ means that there exists some $g\in
\A_E^0(X)$ such that $\|\tr f - zg\|_X \leq |\pi_E|$. Then 
$g$ is in the maximal ideal of $\A_E^0(X)$ (it cannot be a unit as
$\bar{\tr f}$ is nilpotent in $A$ but $\bar{z}$ is not). So we can take
$g$ to be of the form $\tr h$ for some $h \in \A_K^0(X')_0$. Let us
compare $\tr f$ and $(\tr v)(\tr h)$: they are both in $\A_E^0(X)_0$ so 
we see them in $\OO_L[[w]] = \A_K^0(X')$.

We compute easily that the valuation of $\alpha^\ell u_{i,j}$ is $\ell
v_E(\alpha) + (i-1)v_E(\varpi) + v_E(\delta_j)$ and the leading term is
$w^{s(t-j)}$. So we can determine $j$ from the leading term. Note
also that $v_E(\alpha) = 1/N$, $v_E(\varpi) = 1/sN$. As $0 \leq \ell <N$
and $0\leq i-1<s$, we see that for a given $j$, the valuations of
$\alpha^\ell u_{i,j}$ and $\alpha^{\ell'} u_{i',j}$ are not equal
modulo $\Z$ except if $i=i'$ and $\ell=\ell'$. This means that in $f$
there are no cancellations, and in particular the leading term of $f$ is
$w^{s(t-j)}$ for some $j< t$. On the other hand, the leading term of $vh$
is $w^n$ for some $n > st$. This contradicts the fact that
$\|\tr f - (\tr v)(\tr h)\| \leq |\pi_E|$.

So finally $e(A) = \dim_kA/(\bar{z}) = Nst$.
\end{proof}

\subsubsection{Additivity formula}

We know want to compute the complexity of any connected standard subset
defined over $E$. Using the fact that the complexity is invariant under
unramified extension of the definition field, we see that Proposition
\ref{additivityc}, combined with Propositions \ref{multdisk} and
\ref{multhole}, gives us a way do this computation.

\begin{prop}
\label{additivityc}
Let $X$ be a connected standard subset defined over $E$. Assume that $X$
is of the form $Y \setminus T$, where $Y$ is either $\PP^1(\bar\Q_p)$ or a
disk defined over $E$, $T= \cup_{i=1}^mT_i$ where each
$T_i$ is a disjoint union of closed disks $D_{i,j}$ such that the $T_i$
are pairwise disjoint, with each defined and irreducible over $E$,
contained in $Y$, and each $D_{i,j}$ is well-defined over an extension
of $E$ that is totally ramified over $E$.
Then 
$c_E(X) = c_E(Y) + \sum_{i=1}^mc_E(\PP^1(\bar\Q_p)\setminus T_i)$ if $Y$ is a disk,
and 
$c_E(X) = \sum_{i=1}^mc_E(\PP^1(\bar\Q_p)\setminus T_i)$ if $Y =
\PP^1(\bar\Q_p)$.
\end{prop}

\begin{proof}
Let $X_i = \PP^1(\bar\Q_p)\setminus T_i$ for $1 \leq i \leq m$, and set
$X_0 = Y$ if $Y$ is a disk. Then $X = \cap_i X_i$, and each $X_i$ is
defined over $E$, and if $i \geq 1$ then $X_i$ is of the form of the
subsets studied in Proposition \ref{multhole}.

Using the description of the ring of functions in Proposition
\ref{ringfuncs}, we see that
for each $i$, we can write $\A_E^0(X_i) = \OO_E \oplus \M_i$ for some
submodule $\M_i$, where $\OO_E$ is the subring of constant functions (note
that if $i \geq 1$ we can choose $\M_i$ canonically by taking the
functions that are zero at infinity). Then we have a natural injection
$\A_E^0(X_i) \to \A_E^0(X)$ for all $i$, such that
$\A_E^0(X) = 
\OO_E \oplus (\oplus_i \M_i)$ by Proposition \ref{ringfuncs}.
Let $f \in \A_E^0(X)$, and write $f$ in this decomposition. Then $f$ has
a non-zero component on $\M_i$ if and only $f$ has a pole in $T_i$.
Let $A_i = \A_E^0(X_i)/(\pi_E)$, and $M_i = \M_i/(\pi_E)$. 
Then each $A_i$ contains an element $z_i$
as in Lemma \ref{additivitye}: it is the element called $\bar{z}$ in
Propositions \ref{multdisk} (for $i=0$, if $X$ is bounded) and
\ref{multhole} (for $i \geq 1$). 

Let $A = \A_E^0(X)/(\pi_E)$. Then $A = k \oplus (\oplus_i V_i)$ where $V_i$
is the image of $M_i$, and $A$ is
nearly the sum of the $A_i$'s as in Definition \ref{nearlysum}. 
In order to compute the multiplicity of $A$, we want to apply Lemma
\ref{additivitye}. So we need to prove: for all $i \neq j$, there exist
some integers $N$ and $t$, with $t<N$, such that $V_i^nV_j \subset
V_i^{n-t}$ for all $n > N$. It is clear that $V_iV_j \subset V_i+V_j$. So
we can assume without loss of generality that $m \leq 2$.

We will treat only the case where $i=1$, $j=2$. The case where $i$ or $j$
is equal to $0$ (which can occur only when $X$ is bounded) is similar.
For simplicity, we will assume from now on that $T_1$ and $T_2$ are
actually connected, that is, each is a single closed disk $D_i$ defined
over $E$. The general case needs no new ideas but requires more
complicated notation.

We first describe a little the ring $\A_E^0(X)$.
We fix a finite Galois extension $L$ of $E$ such that $X$ is well-defined
over $L$. Let $t_0=e_{L/E}$.  So for $i = 1,2$ we write $D_i =
D(a_i,|\xi_i|)^{+}$, with $a_i$ and $\xi_i$ in $L$.  Note that
$|\xi_i/(a_i-a_j)| < 1$ if $\{i,j\} = \{1,2\}$, so $v_L(\xi_i/(a_i-a_j)) \geq
1$.  Let $y_i = \xi_i/(x-a_i)$ for $i=1,2$.
Then $\A_E^0(X_i) \subset \OO_L[[y_i]] = \A_L^0(X_i)$.  If $h\in
\A_E^0(X_i) \cap \pi_L^{t_0}\OO_L[[y_i]]$, then $h$ is in
$\pi_E\A_E^0(X_i)$.

We have a decomposition $\A_E^0(X) = \OO_E \oplus \M_1 \oplus \M_2$ as
before. We denote by $\alpha_i$ the projection to $\M_i$ in this
decomposition. We also have a decomposition of $A = \A_E^0(X)/(\pi_E)$ as
$k \oplus V_1 \oplus V_2$, and we denote by $\bar\alpha_i$ the map to
$V_i$ which is the composition of reduction modulo $\pi_E$ and
projection to $V_i$. The maps $\alpha_i$ extend to the decomposition
$\A_L^0(X) = \OO_L \oplus \M_{1,L} \oplus \M_{2,L}$.

Denote by $z_1$ the element that was called $z$ in the proof of
Proposition \ref{multhole} applied to $X_1$ (which is also the element
called $v$, as we are in the case where $N=1$), and denote by
$\tau$ the integer that was denoted by $st$.  Then in $\OO_L[[y_1]]$,
$z_1$ is equal to $\pi_Lh+ y_1^\tau u$ for some $h \in \OO_L[y_1]_{<\tau}$
and $u\in\OO_L[[y_1]]^\times$.  For $m \geq 0$, write $z_1^m= \sum_{j\geq
0}c_{m,j}y_1^j$ with $c_{m,j}\in\OO_L$. Then we have that $v_L(c_{m,j})
\geq m-j/\tau$. On the other hand, we can write $y_1^{m\tau} = \sum_{i\geq
0}q_iz_1^i$ with $q_i \in \pi_L^{\max(0,m-i)}\OO_L[[t_1]]$.

Let $\bar{z}_1$ be the image of $z_1$
in $A_1$. Then as in the proof of Proposition \ref{multhole}, $V_1$ is
generated by $\bar{z}_1$ and a nilpotent ideal $I$ of $A_1$. Let $t_1$ be an
integer such that $I^{t_1}=0$. Then any element of $V_1^n$ for $n$ large
enough is a multiple of $z_1^{n-t_1}$. 

Fix some $f\in \M_1$ such that its image in $V_1$ is in $V_1^n$, and
$g\in \M_2$. As we are interested only in working in $A$, we can assume
that $f$ is divisible by $z_1^{n-t_1}$.
So when we write $f$ (seen as an element of $\A_L^0(X_1)$) 
as $\sum_j{f_j}y_1^j$, we have $v_L(f_j) \geq n-t_1-j/\tau$. 

We have that $fg \in \M_1\oplus\M_2$, so its image in
$A$ is in $V_1\oplus V_2$. We want to show that in this decomposition,
the projection $\bar\alpha_2(fg)$ of $fg$ to $V_2$ is zero, and the
projection $\bar\alpha_1(fg)$ to $V_1$ is contained in $V_1^{n-t}$ (for some
$t$ independent of $n$ to be determined).

We see easily that for all integers $a,b$, we can write
$y_1^ay_2^b = \sum_{i=1}^a\lambda_{a,b,i}{y_1}^i +
\sum_{i=1}^b\mu_{a,b,i}y_2^i$ with $\lambda_{a,b,i}$ and $\mu_{a,b,i}$ in
$\OO_L$, and $v_L(\lambda_{a,b,i}) \geq a+b-i$ and $v_L(\mu_{a,b,i})
\geq a+b-i$.

We study first $\alpha_1(fg)$ in $\A_L^0(X)$. We have $\alpha_1(fg) = \sum_{j\geq
0}f_j\alpha_1(y_1^jg)$. As
$v_L(f_j) \geq n-t_1-j/\tau$, all terms $f_j\alpha_1(y_1^jg)$ for $j \leq
(n-t_0-t_1)\tau$ contribute elements that are in $\pi_L^{t_0}\OO_L[[y_1]]$.
Consider now $\alpha_1(y_1^jg)$ for $j>(n-t_0-t_1)\tau$. It contributes
to $y_1^i$ with a coefficent of valuation $\geq j-i$. So all terms in
$y_1^i$ with $i \leq (n-t_0-t_1)\tau-t_0$ are in
$\pi_L^{t_0}\OO_L[[y_1]]$. So we see that $\alpha_1(fg)$ is in 
$(\pi_L^{t_0}\OO_L[[y_1]]+y_1^{(n-t_2)\tau}\OO_L[[y_1]])\cap \A_E^0(X_1)$
for $t_2=t_1+2t_0$. We have that $y_1^{(n-t_2)\tau} = \sum_i q_iz_1^i$
with $q_i \in \pi_L^{\max(0,(n-t_2-i)\tau)}\OO_L[[y_1]]$. So finally, 
$\alpha_1(fg) \in (\pi_L^{t_0}\OO_L[[y_1]]+z_1^{(n-t_3)}\OO_L[[y_1]])\cap
\A_E^0(X_1)$ for $t_3=t_2+t_0$. From this we deduce that
$\bar\alpha_1(fg)$ is a multiple of $\bar{z}_1^{n-t_3}$, and so is in
$V_1^{n-t_3}$.

We see also that if $n \geq 2t_0+t_1$, then $\alpha_2(fg)$ goes to $0$
in $V_2$.

So we get the result we wanted by taking $t=t_3$ and any
$N>\max(t_3,2t_0+t_1)$.
\end{proof}

\subsection{Combinatorial complexity of a standard subset with respect to
a field}
\label{combcomp}

We give another definition of complexity of a standard subset. It is
defined in more cases than the algebraic complexity, as we do not require
$X$ to be defined over $E$ to define the complexity of $X$ with respect
to $E$.

\subsubsection{Definition}
\label{defcombcomp}

Let $X$ be a standard subset, and $E$ be a finite extension
of $\Q_p$. We define an integer $\gamma_E(X)$ which we call combinatorial
complexity of $X$.

Let $D$ be a disk (open or closed). Let $F$ be the field of
definition of $D$ over $E$. Let $s$ be the smallest integer such that there exists
an extension $K$ of $F$, with $e_{K/F} = s$, and $K \cap D \neq
\emptyset$. Let $t$ be the smallest positive integer such that $D$ can be written as
$\{x, stv_E(x-a) \geq v_E(b)\}$ or as $\{x, stv_E(x-a)>v_E(b)\}$ for elements $a,b$
in $K$. Then we set $\gamma_E(D) = st$. 
We also set $\gamma_E(\PP^1(\bar\Q_p))=0$.

Then if $X$ is a standard subset, we define $\gamma_E(X)$ to be the sum
of the combinatorial complexities of its defining disks. That is:
If $X$ is a connected standard subset, it can be written uniquely as
$D_{0} \setminus \cup_{j=1}^{n}D_{j}$ with $D_0$ an open disk or
$D_0=\PP^1(\bar\Q_p)$, $D_j$ a closed disk for
$j>0$, and the $D_j$ are disjoint for $j>0$. We set $\gamma_E(X) =
\sum_{j=0}^n \gamma_E(D_j)$.
If $X$ be a standard subset, we can write uniquely $X =
\cup_{i=1}^sX_i$ where $X_i$ is a connected standard subset and the $X_i$
are disjoint.  Then we set $\gamma_E(X) =
\sum_{i=1}^s\gamma_E(X_{i})$. 

\subsubsection{Some properties of the combinatorial complexity}

\begin{lemm}
\label{changefieldanygamma}
Let $X$ be a standard subset.
Let $F/E$ be a finite extension. Then $\gamma_E(X) \geq
\gamma_F(X)$, with equality when $F/E$ is unramified, or when $F$ is
contained in the field of definition of $X$.
\end{lemm}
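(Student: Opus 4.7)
By additivity of $\gamma_E$ over the canonical decomposition of a standard subset into connected components, and of each connected component into its constituent disks (a decomposition that does not depend on the base field), it suffices to prove $\gamma_E(D)\geq\gamma_F(D)$ for a single disk $D$. Given such a $D$, let $E'$ and $F' = F\cdot E'$ denote its fields of definition over $E$ and over $F$, set $m := e_{F'/E'}$, and let $\rho := v_{E'}(r)$ where $r$ is the radius of $D$. Unwinding the definition and using that $v_{F'} = m\cdot v_{E'}$ on $\bar\Q_p$, a direct computation yields
\[
\gamma_E(D) = \operatorname{lcm}\bigl(s_E,\ \operatorname{denom}(s_E\rho)\bigr)
\qquad\text{and}\qquad
\gamma_F(D) = \operatorname{lcm}\bigl(s_F,\ \operatorname{denom}(s_F m\rho)\bigr).
\]

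The heart of the argument is two divisibilities. First, choosing $K/E'$ totally ramified of degree $s_E$ meeting $D$ (which exists by Corollary \ref{corBen}), the composite $KF'/F'$ has degree and hence ramification at most $s_E$ and still meets $D$, so $s_F\leq s_E$. Since $s_E,s_F$ are powers of $p$ by Theorem \ref{illdeftheo}, this gives $s_F\mid s_E$. Second, choosing $K'/F'$ totally ramified of degree $s_F$ meeting $D$, the extension $K'/E'$ has ramification $s_Fm$ and meets $D$, so Corollary \ref{mindeg} gives $v_p(s_E)\leq v_p(s_Fm)$, and hence $q := s_E/s_F$ divides $m$. I expect this divisibility $q\mid m$ to be the main obstacle; the naive bound $q\leq m$ alone would not suffice.

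With these divisibilities I would prove the stronger divisibility $\gamma_F(D)\mid\gamma_E(D)$ by comparing $\ell$-adic valuations prime by prime. For $\ell\neq p$ both $s_E,s_F$ are units and the comparison reduces to $v_\ell(\operatorname{denom}(m\rho))\leq v_\ell(\operatorname{denom}(\rho))$, which is immediate from $v_\ell(m)\geq 0$. For $\ell=p$, writing $a=v_p(s_E)$, $a'=v_p(s_F)$ and $c=v_p(\rho)$, the required inequality
\[
\max(a,-a-c) \;\geq\; \max\bigl(a',\,-a'-v_p(m)-c\bigr)
\]
is verified by a short case split on the sign of $c+2a$, the key input in each branch being precisely $v_p(q)\leq v_p(m)$ from the previous step. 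Summing over all disks of $X$ yields $\gamma_F(X)\leq\gamma_E(X)$.

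The equality cases reduce to showing $m=1$ and $s_F=s_E$ disk by disk. If $F/E$ is unramified, then $F'/E'$ is unramified and hence $m=1$, while the linear disjointness of a totally ramified $K/E'$ with the unramified $F'/E'$ gives $e_{KF'/F'}=s_E$, whence $s_F=s_E$. If $F$ is contained in the field of definition of $X$, one deduces disk by disk that $F\subseteq E'$, so that $F' = F\cdot E' = E'$; then $m=1$ and $s_F=s_E$ are automatic. In both cases one obtains $\gamma_E(D)=\gamma_F(D)$ for every disk $D$, and the equality for $X$ follows by additivity.
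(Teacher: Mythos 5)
Your reduction to a single disk is the same as the paper's, and your explicit disk formula is correct: unwinding Definition \ref{defcombcomp} gives $\gamma_E(D)=s_E\cdot\den(s_E^2\rho)$, which indeed equals $\operatorname{lcm}(s_E,\den(s_E\rho))$, and you have correctly isolated the real content, namely $v_p(s_E)\le v_p(s_F)+v_p(m)$ rather than the naive $s_E\le s_Fm$ (the paper's proof stops at "clear from the definition", so this verification is genuinely worth writing out). Three local repairs are needed. First, Corollary \ref{corBen} does not provide a totally ramified extension of $E'$ of degree $s_E$ meeting $D$; it provides $a\in D$ with $e_{E'(a)/E'}=s_E$ and $[E'(a):E']$ a $p$-power, and the totally ramified refinement is essentially Conjecture 2 of \cite{Ben}, proved in this paper only for $p=2$ (Theorem \ref{illdeftheo2}). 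Fortunately you never need it: with $K=E'(a)$ and $K_0$ its maximal unramified subextension, $K_0F'/F'$ is unramified and $[KF':K_0F']\le[K:K_0]=s_E$, so $e_{KF'/F'}\le s_E$; likewise in the second step any $K'/F'$ with $e_{K'/F'}=s_F$ has $e_{K'/E'}=s_Fm$. Second, Corollary \ref{mindeg} bounds the minimal \emph{degree}, not the minimal ramification degree; to get $v_p(s_E)\le v_p(s_Fm)$ you must apply it over the maximal unramified extension of $E'$, exactly as in the proof of Corollary \ref{corBen}. Third, in the unramified equality case your composite argument only re-derives $s_F\le s_E$; the direction you need, $s_F\ge s_E$, is the easy one: any $L/F'$ meeting $D$ has $e_{L/E'}=e_{L/F'}$ since $F'/E'$ is unramified, so minimality of $s_E$ forces $e_{L/F'}\ge s_E$.

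The genuine gap is the second equality case. The deduction "$F$ contained in the field of definition of $X$ implies, disk by disk, $F\subseteq E'$" is false: the $G_E$-stabilizer of an individual disk occurring in $X$ need not stabilize $X$, so a disk can have a much smaller field of definition than $X$ does. Concretely, for odd $p$ take $E=\Q_p$ and $X=\{x:v_p(x)>1/2\}\sqcup D(\sqrt p,p^{-10})^-$: the field of definition of $X$ is $F=\Q_p(\sqrt p)$, yet the first disk is defined over $\Q_p$ and has $\gamma_{\Q_p}=2$ but $\gamma_F=1$ (compare the paper's Paragraph \ref{recover}), so $\gamma_E(X)=3>2=\gamma_F(X)$ even though $F$ equals the field of definition of $X$. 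So this clause cannot be obtained disk by disk in the stated generality; what your computation does prove is the version that is actually needed: equality whenever the $G_E$-stabilizer of every disk occurring in $X$ is contained in $G_F$ — for instance $X$ a single disk, or, as in Proposition \ref{gcomponent}, $F$ the field of definition over $E$ of a connected component of an $E$-defined standard subset, where disjointness of the components forces that containment. The paper's own proof treats this clause as immediate, so you are mirroring its gloss, but as written your step does not go through and the hypothesis has to be strengthened (or the argument restricted) along the lines above.
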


\begin{proof}
It suffices to show that $\gamma_E(D) \geq \gamma_F(D)$, with equality
when $F/E$ is unramified, for any disk $D$
(open or closed), and then it is clear from the definition.
\end{proof}

\begin{prop}
\label{gcomponent}
Let $X$ be a standard subset defined and irreducible over $E$, 
and write $X = \cup_{i=1}^s X_i$ its decomposition
in connected standard subsets. Let $E_1$ be the field of definition of
$X_1$ over $E$. Then $\gamma_E(X) = [E_1:E]\gamma_{E_1}(X_1)$.
\end{prop}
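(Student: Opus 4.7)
The plan is to write $\gamma_E(X)$ as a sum over its connected components, use Galois conjugacy to reduce each summand to $\gamma_E(X_1)$, and then invoke Lemma~\ref{changefieldanygamma} to pass from the base field $E$ to the field of definition $E_1$ of $X_1$.

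Since $X$ is irreducible over $E$, the group $G_E$ permutes the connected components $\{X_1,\dots,X_s\}$ transitively, and the stabilizer of $X_1$ is $G_{E_1}$ by definition of $E_1$. By orbit--stabilizer one has $s=[E_1:E]$, and one can write $X_i=\sigma_iX_1$ for a system of coset representatives $\sigma_i$ of $G_E/G_{E_1}$. From the definition of $\gamma_E$ on a finite disjoint union of connected standard subsets, together with the observation that each $\sigma_i\in G_E$ is an automorphism of $\bar\Q_p$ preserving $v_E$ and sending every disk in the decomposition of $X_1$ to another disk with the same combinatorial data (radius, field of definition over $E$, minimal ramification index $s$, invariant $t$), one gets $\gamma_E(X_i)=\gamma_E(X_1)$ for all $i$, hence $\gamma_E(X)=[E_1:E]\,\gamma_E(X_1)$. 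Next, I apply Lemma~\ref{changefieldanygamma} to $X_1$ with base extension $E\to E_1$: since $E_1$ is the field of definition of $X_1$ over $E$, the equality case of the lemma applies and yields $\gamma_E(X_1)=\gamma_{E_1}(X_1)$. Combining the two identities gives the claim.

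The main obstacle is verifying the equality case of Lemma~\ref{changefieldanygamma} in the present setting, which, reducing disk by disk as in its proof, amounts to showing that for each disk $D$ appearing in the canonical decomposition $X_1=D_0\setminus\bigcup_{j\ge 1}D_j$, the field of definition of $D$ over $E$ already contains $E_1$; equivalently, any $\sigma\in G_E$ fixing $D$ must also stabilize $X_1$. The unbounded case is trivial, since then $E_1=E$. In the bounded case $D\subseteq D_0$ gives $D\subseteq D_0\cap\sigma D_0$, so by ultrametric nesting and finiteness of the Galois orbit of $D_0$ we get $\sigma D_0=D_0$. Then $\sigma X_1$ is another connected component of $X$ sharing its outer open disk with $X_1$, and I would rule this out by the geometric observation that two distinct connected components of a standard subset cannot share the same outer open disk: otherwise their collections of inner closed disks would have to jointly exhaust the outer open disk, contradicting the fact that an open disk in $\C_p$ cannot be covered by finitely many closed disks strictly contained in it. Hence $\sigma\in G_{E_1}$, as required.
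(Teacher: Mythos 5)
Your proof is correct and follows essentially the same route as the paper: transitivity of the $G_E$-action on the connected components (forced by irreducibility over $E$), conjugation-invariance of the disk data giving $\gamma_E(X_i)=\gamma_E(X_1)$, and orbit--stabilizer giving $s=[E_1:E]$. The only place you go beyond the paper is in verifying the base-change equality $\gamma_E(X_1)=\gamma_{E_1}(X_1)$ disk by disk --- showing that any $\sigma\in G_E$ stabilizing a disk of the canonical decomposition of $X_1$ must stabilize $X_1$ itself --- and this verification is sound; the paper absorbs this point into Lemma \ref{changefieldanygamma} and the conjugation step without comment.
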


\begin{proof}
We have $\gamma_E(X) = \sum_{i=1}^s\gamma_E(X_i) = \sum_{i=1}^s\gamma_{E_i}(X_i)$.
Observe first that $\gamma_{E_i}(X_i)$ does not depend on $i$. Indeed,
for all $i$ there exists $\sigma \in G_E$ such that $\sigma(X_1) = X_i$
and $\sigma(E_1) = E_i$. Such a $\sigma$ transforms an equation
$\{x,v_E(x-a) \geq v_E(b)\}$ (or $\{x,v_E(x-a) > v_E(b)\}$) of a defining
disk of $X_1$ to an equation defining the corresponding disk
in $X_i$. 
Moreover, $s = [E_1:E]$, as $G_E$ acts transitively on the set of $X_i$
because we have assumed $X$ to be irreducible over $E$. 
\end{proof}

\subsection{Comparison of complexities}

The important result is that the two definitions of complexity
actually coincide when both are defined. 

\begin{theo}
\label{samecompl}
Let $X$ be a standard subset defined over $E$. 
Then $c_E(X) = \gamma_E(X)$.
\end{theo}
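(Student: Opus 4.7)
The plan is to reduce the equality $c_E(X) = \gamma_E(X)$ to the two base cases already treated in this section---a single open disk (Proposition \ref{multdisk}) and the complement of a $G_E$-orbit of closed disks (Proposition \ref{multhole})---and then match the integers produced by the two definitions in each base case.

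First, I would reduce to the case where $X$ is a connected standard subset defined over $E$. Decomposing $X$ into its pieces irreducible over $E$, both complexities are additive by construction, and for an irreducible piece Propositions \ref{ccomponent} and \ref{gcomponent} transfer the value of the complexity in the same way to a single connected component viewed over its own field of definition. So one may assume $X$ is connected and defined over $E$. Since both $c_E$ and $\gamma_E$ are unchanged by unramified base change (Proposition \ref{changefieldany} and Lemma \ref{changefieldanygamma}), I may further enlarge $E$ by a finite unramified extension so that the field of definition of every $G_E$-orbit of closed holes of $X$ is a totally ramified extension of $E$; this is possible because there are only finitely many orbits to consider and each field of definition has a maximal unramified subextension.

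Second, under these hypotheses the additivity formula of Proposition \ref{additivityc} applies: writing $X = D_0 \setminus T$ and decomposing the hole set as $T = \bigsqcup_i T_i$ into $G_E$-orbits, it gives
\[
c_E(X) = c_E(D_0) + \sum_i c_E\bigl(\P^1(\bar\Q_p)\setminus T_i\bigr),
\]
while the definition of $\gamma_E$ yields
\[
\gamma_E(X) = \gamma_E(D_0) + \sum_i \gamma_E\bigl(\P^1(\bar\Q_p)\setminus T_i\bigr),
\]
since $\gamma_E(\P^1(\bar\Q_p)\setminus T_i) = \sum_{D\in T_i}\gamma_E(D)$ and Galois-conjugate disks have equal combinatorial complexity. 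It therefore suffices to prove $c_E = \gamma_E$ in the two base cases.

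Third, Propositions \ref{multdisk} and \ref{multhole} compute the algebraic complexity in each base case as an explicit product of integers ($st$ for a single open disk, $Nst$ for a hole orbit of size $N$), where $s$ is the minimal ramification degree of an extension of the base field (or of the field of definition of a hole) containing a point of the relevant disk, and $t$ is the smallest positive integer witnessing a divisibility condition on the radius. These are exactly the integers assigned by the definition of $\gamma_E$: the identification of $s$ with the minimal ramification degree of \emph{any} extension of the field of definition that meets the disk uses Corollary \ref{corBen}, and the identification of $t$ is a direct comparison of the value groups of the relevant fields. The substantive content of the argument has already been absorbed into the additivity formula \ref{additivityc} together with the two explicit computations; the main point that remains, and the principal thing to verify carefully, is this final matching of the parameters $s$ and $t$ in the base cases.
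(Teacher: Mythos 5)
Your proposal follows essentially the same route as the paper: reduce to a connected subset over $E$ via Propositions \ref{ccomponent} and \ref{gcomponent}, pass to an unramified extension so that Proposition \ref{additivityc} applies, and then match $c_E$ and $\gamma_E$ on the two base cases computed in Propositions \ref{multdisk} and \ref{multhole}. The final matching of the parameters $s$ and $t$ is indeed immediate (the invocation of Corollary \ref{corBen} is not even needed, since the minimal ramification degree over $E$ of a point of the disk coincides tautologically with the minimal ramification degree of an extension meeting the disk, and the value-group condition defining $t$ depends only on that ramification degree), which is exactly how the paper concludes.
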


\begin{proof}
We can assume that $X$ is irreducible over $E$, as both multiplicities
are additive with respect to irreducible standard subsets.

Write now $X = \cup X_i$ where the $X_i$ are connected standard
subsets, and let $E_i$ be the field of definition of $X_i$.
Then $c_E(X) = [E:E_1]c_{E_1}(X_1)$ by Proposition \ref{ccomponent}, and
$\gamma_E(X) = [E:E_1]\gamma_{E_1}(X_1)$ by Proposition \ref{gcomponent}.

So we can assume that $X$ is a connected standard subset defined over
$E$.
Note that $c_E(X) = c_{E'}(X)$ and $\gamma_E(X)=\gamma_{E'}(X)$ for any
finite unramified extension $E'/E$ by Propositions \ref{changefieldany}
and \ref{changefieldanygamma}.
So we can enlarge $E$ if needed to an
unramified extension, and we can assume that we have written
$X = D \setminus \cup Y_i$ satisfying the hypotheses of Proposition
\ref{additivityc}. So we have $c_E(X) = c_E(D) + \sum_i c_E(\PP^1(\bar\Q_p)\setminus
Y_i)$ by Proposition \ref{additivityc}, and the analogous result for
$\gamma_E$ follows from the definition. So we need only prove the
equality for these standard subsets.

Let $D$ be a disk defined over $E$, of the form $\{x,v_E(x-a)>\lambda\}$.
Let $s$ be the minimal ramification degree of an extension $F$ of $E$
such that $F \cap D \neq \emptyset$, and $t>0$ be the smallest integer
such that $st\lambda\in (1/s)\Z$. Then $c_E(D) = \gamma_E(D) = st$.
For $c_E(D)$ it follows from Proposition \ref{multdisk}, 
and for $\gamma_E(D)$ it is the definition.
So we get that $c_E(D) = \gamma_E(D)$.

Let now $X = \PP^1(\bar\Q_p)\setminus T$, where $T$ is defined and irreducible over
$E$, and $T = \cup_{i=1}^ND_i$ where the $D_i$ are disjoint closed disks
defined over a totally ramified extension of $E$. We have $\gamma_E(X) =
\sum\gamma_E(D_i) = N\gamma_E(D_1)$ as the $D_i$ are $G_E$-conjugates.
Let $F$ be the field of definition of $D_1$. Then $\gamma_E(X) =
N\gamma_F(D_1) = N\gamma_F(\PP^1(\bar\Q_p)\setminus D_1)$. On the other hand, it
follows from Propostion \ref{multhole} that 
$c_E(X) = Nc_F(\PP^1(\bar\Q_p)\setminus D_1)$. 
Now the proof that $\gamma_F(\PP^1(\bar\Q_p)\setminus D_1) = c_F(\PP^1(\bar\Q_p)\setminus D_1)$
is the same as in the case of a disk. So finally $c_E(X) = \gamma_E(X)$.
\end{proof}

From now on we only write $c_E$ to denote either $c_E$ or $\gamma_E$ (so we can
consider $c_E(X)$ even for $X$ that is not defined over $E$, or for $X$ a
disjoint union of closed disks).

\begin{coro}
\label{multnbcomp}
The complexity of $X$ is at least equal to the number defining disks of
$X$. It is at least equal to the number of connected
components of $X$.
\end{coro}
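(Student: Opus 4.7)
The plan is to reduce to a statement about a single connected standard subset and then read the inequality directly off the combinatorial definition. By the convention adopted immediately before the corollary, we may replace $c_E$ with the combinatorial complexity $\gamma_E$; this is licit for any standard subset $X$ and any finite extension $E$, whether or not $X$ is defined over $E$. By Theorem \ref{samecompl}, this agrees with the algebraic complexity in all cases where the latter is defined.

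First I would write $X = \bigsqcup_{i=1}^{k} X_i$ as its (unique) decomposition into connected standard subsets; the existence and uniqueness of such a decomposition is noted right after Definition \ref{standard}. By the very definition of $\gamma_E$ in Section \ref{defcombcomp}, we have
\[
\gamma_E(X) \;=\; \sum_{i=1}^{k} \gamma_E(X_i),
\]
so it suffices to verify that $\gamma_E(Y) \geq 1$ for any connected standard subset $Y$.

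For this last step I would invoke the definition of $\gamma_E$ on a connected standard subset: writing $Y = D_0 \setminus \bigcup_{j=1}^{n} D_j$ (with $D_0$ either an open disk or $\P^1(\bar\Q_p)$ and each $D_j$ for $j>0$ a closed disk), one has $\gamma_E(Y) = \sum_{j=0}^{n} \gamma_E(D_j)$. For any (open or closed) disk $D$, the quantity $\gamma_E(D) = st$ is defined as a product of two \emph{positive} integers, so $\gamma_E(D) \geq 1$. Hence as soon as at least one genuine disk appears in the description of $Y$, we get $\gamma_E(Y) \geq 1$. This is the case in both branches of Definition \ref{standard}: if $Y$ is bounded then $D_0$ contributes; if $Y$ is unbounded then $n \geq 1$ and any $D_j$ with $j > 0$ contributes.

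There is no real obstacle: the only substantive input is the observation that $\gamma_E(D) \geq 1$ for every disk, which is immediate from the positivity of $s$ and $t$. Combining the two displayed inequalities gives $c_E(X) = \gamma_E(X) = \sum_{i=1}^k \gamma_E(X_i) \geq k$, which is the claim.
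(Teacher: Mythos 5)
Your argument is correct and is precisely the one the paper leaves implicit: the corollary is stated without proof right after Theorem \ref{samecompl} because, passing to the combinatorial complexity, $\gamma_E$ is by definition additive over connected components and each component contributes at least one disk $D$ with $\gamma_E(D)=st\geq 1$. The only caveat, shared with the paper's own statement, is the degenerate case $X=\P^1(\bar\Q_p)$ (where $\gamma_E=0$), which your unbounded case tacitly excludes by assuming $n\geq 1$, just as the paper does elsewhere when it requires $X\neq\P^1(\bar\Q_p)$.
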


\subsection{Finding a standard subset from a finite set of points}

\subsubsection{Approximations of a standard subset}
\label{approx}

Let $X = \cup_{n=1}^N(D_{n,0}  \setminus \cup_{i=1}^{m_n}D_{n,i})$ be a
bounded standard subset, where the $D_{n,0}  \setminus \cup_{i=1}^{m_n}D_{n,i}$
form the decomposition of $X$ as a disjoint union of connected standard
subsets, so that the disks $D_{i,j}$ are the defining disks of $X$.

For $J \subset \{1,\dots,N\}$ and $I_n \subset \{1,\dots,m_n\}$
for $n\in J$, we set $Y_{J,I} = \cup_{n\in J}(D_{n,0} \setminus
\cup_{i\in I_n}D_{n,i})$. This is a standard subset with $c_E(Y_{J,I})
\leq c_E(X)$ and equality if and only if $X = Y_{J,I}$. 
Such standard subsets are called approximations of $X$.

For a bounded connected standard subset $X$, written as
$D(a,r)^-\setminus \Delta$ for some finite union of closed disks
$\Delta$, we define its outer part as $D(a,r)^-$.  If $X$ is any bounded
standard subset, we define its outer part as the union of the outer parts
of its connected components. Note that if $X$ is defined over a field
$E$, then so is its outer part $X'$, and $X'$ is an approximation of $X$,
and it contains $X$.

We make similar definitions for unbounded standard subsets.  If $X$ is an
unbounded standard subset, then we define its outer part to be
$\PP^1(\bar\Q_p)$.

\subsubsection{Main results}

\begin{theo}
\label{algo}
Let $X$ be a standard subset defined over $E$. Let $m$
be an integer such that $c_E(X) \leq m$.
Then there exists a finite set $\E$ of finite extensions of $E$,
depending only on $E$ and $m$, such
that $X$ is entirely determined by the sets $X \cap F$ for all extensions
$F\in \E$.
\end{theo}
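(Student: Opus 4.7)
The plan is to take $\E$ to be the (finite) collection of all finite extensions of $E$ inside $\bar\Q_p$ of degree at most $N(m)$, for an explicit $N(m)$ depending only on $m$; this is finite because $\Q_p$, and hence $E$, has only finitely many extensions of any given degree. I would then show that the data $X\cap F$ for $F\in\E$ determines $X$ uniquely among standard subsets defined over $E$ of complexity at most $m$.

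The first step is to extract structural bounds from $c_E(X)\leq m$. By Theorem \ref{samecompl}, $\gamma_E(X) = c_E(X)\leq m$. Decomposing $X$ into subsets irreducible over $E$ and then into connected components, and applying Propositions \ref{ccomponent} and \ref{gcomponent}, the field of definition of each connected component of $X$ has degree at most $m$ over $E$. Within each component, written in the form of Proposition \ref{additivityc} as an open disk minus a finite union of Galois orbits of closed disks, the field of definition of each individual closed disk has degree at most $m$ over the component's field of definition, and hence degree at most $m^2$ over $E$. Moreover every disk $D$ appearing in the decomposition satisfies $\gamma_{F'}(D)\leq m$ over its own field of definition $F'$.

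The second step applies Theorem \ref{illdeftheo} disk by disk. If $\gamma_{F'}(D) = st\leq m$ and $s=p^k$ is the minimal ramification degree of a point of $D$ over $F'$, then $p^k\leq m$, and Theorem \ref{illdeftheo} produces an extension $K/F'$ with $[K:F']\leq \max(1,p^{2k-1})$ and $K\cap D\neq\emptyset$. Combined with the bound $[F':E]\leq m^2$, every non-empty disk appearing in a standard subset over $E$ of complexity $\leq m$ contains a point lying in some extension of $E$ of degree at most $m^2\cdot\max(1,m^2/p)$. Taking $N(m)$ to be any convenient bound above this quantity, the collection $\E$ has the property that every such disk meets some $F\in\E$.

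The final step is uniqueness: if $X_1, X_2$ are standard subsets over $E$ with $c_E(X_i)\leq m$ and $X_1\cap F = X_2\cap F$ for every $F\in\E$, then $X_1=X_2$. Otherwise the symmetric difference $X_1\triangle X_2$ is non-empty, and the plan is to extract from it an open rational disk $D$ whose field of definition has degree at most $m^2$ over $E$ and whose combinatorial complexity over that field is bounded by a function of $m$; by the previous step, $D$ would contain a point of some $F\in\E$, contradicting the hypothesis. The main obstacle is precisely this extraction, since $X_1\triangle X_2$ is not itself a standard subset. My strategy would be to use the ultrametric dichotomy (two rational disks are either nested or disjoint) to run a finite case analysis on how the outer open disks and inner closed holes of the components of $X_1$ and $X_2$ meet pairwise: components whose outer open disks are disjoint contribute their full outer disk to the symmetric difference; components with equal outer disks reduce the problem to comparing inner holes, which are themselves rational disks of complexity $\leq m$; and strictly nested outer disks yield an open annular region in $X_1\triangle X_2$, inside which one produces a rational open disk of controlled complexity centered at a point of minimal degree in the annulus.
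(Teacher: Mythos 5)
Your Steps 1 and 2 are essentially sound (the bound on the fields of definition of the individual disks, and the existence of a bounded-degree point in each single disk via Theorem \ref{illdeftheo}, parallel Lemma \ref{pointindisk}), but the heart of the theorem is your final uniqueness step, and there is a genuine gap there. What you need is: if $X_1 \neq X_2$ are standard subsets defined over $E$ with complexity at most $m$, then some point of degree bounded in terms of $E$ and $m$ lies in exactly one of them. Your reduction of this to ``extract an open rational disk of bounded field of definition and bounded complexity inside $X_1 \triangle X_2$, then apply Theorem \ref{illdeftheo}'' is precisely where the work lies, and the sketched case analysis does not carry it out. The pieces of $X_1 \triangle X_2$ are in general neither disks nor annuli: the holes of the two sets interleave, entire connected components of one set can sit inside holes of the other, and one is led to regions of the form (closed disk) minus (several closed disks and several connected standard subsets). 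For such regions Theorem \ref{illdeftheo}, which concerns a single disk, says nothing: a low-degree point must also avoid the removed disks, and producing one requires the circle/residue-field counting arguments and the control of the denominators of the radii by the complexity (Lemmas \ref{fixedvaluationdisk}, \ref{circle}, \ref{minuscircle}), i.e.\ essentially Propositions \ref{pointexistsin} and \ref{pointexistsout}, which are the technical core of the paper's proof and are neither invoked nor reproved in your argument. Moreover, the claim that the extracted disk has field of definition of degree at most $m^2$ over $E$ and bounded complexity is asserted rather than proved; as phrased (``centered at a point of minimal degree in the annulus'') it presupposes a bound on the minimal degree of a point of that annular region, which is again an instance of the statement to be proved, and the annular region may itself be punctured by holes of the other set.

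For comparison, the paper never forms the symmetric difference of two arbitrary standard subsets: it reconstructs $X$ through an increasing sequence of approximations $X_i$ defined over $E$ (Section \ref{algoproof}), writing $X = (X_i \setminus T_i) \cup Z_i$ with $T_i$ a disjoint union of closed disks and $Z_i$ a standard subset of complexity at most $m - c_E(X_i)$; a discrepancy is then detected either by a bounded-degree point of the standard set $Z_i$ (Proposition \ref{pointexistsin}) or by a bounded-degree point outside the standard set $(\P^1(\bar\Q_p)\setminus T_i)\cup Z_i$ (Proposition \ref{pointexistsout}), and the circular parts are computed via the annulus and circle lemmas (Lemmas \ref{whereZ}--\ref{border}, using \ref{denomannulus}). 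Your uniqueness formulation is legitimate in principle, provided you state explicitly that $X$ is determined among standard subsets defined over $E$ of complexity at most $m$ (the paper's argument is stronger in that it actually reconstructs $X$); but to complete it you would either have to prove analogues of Propositions \ref{pointexistsin} and \ref{pointexistsout} for the boolean regions above, or reduce those regions to the situations covered by these propositions, and neither is done in the proposal.
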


We can actually take the set $\E$ to be the set of all extensions of $E$
of degree at most $N$ for some $N$ depending only on $E$ and $m$.
This Theorem will be proved in Paragraph \ref{algoproof}, after we establish some
preliminary results.

\begin{coro}
\label{algoeps}
Let $X$ be a standard subset of $D(0,1)^-$ defined over $E$. Let $m$
be an integer such that $c_E(X) \leq m$.
Let $\eps>0$ be such that for all $x\in X$, $D(x,\eps)^- \subset X$, and for all
$x\not\in X$, $D(x,\eps)^- \cap X = \emptyset$. Then there exists a
finite subset $\mathcal{P}$ of $D(0,1)^-$, depending only on $E$, $m$, and
$\eps$, such that $X$ is entirely determined
by $X \cap \mathcal{P}$.
\end{coro}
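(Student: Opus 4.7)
The plan is to combine Theorem \ref{algo} with a finite $\eps$-net argument in each of the finitely many extensions produced by that theorem.

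First I would invoke Theorem \ref{algo} to obtain a finite collection $\mathcal{E} = \mathcal{E}(E,m)$ of finite extensions of $E$ such that $X$ is entirely determined by the data of the sets $X \cap F$ for $F \in \mathcal{E}$. (Recall that $\mathcal{E}$ may be taken to consist of all extensions of $E$ of degree at most some $N = N(E,m)$, so it indeed depends only on $E$ and $m$.) Since $X \subset D(0,1)^-$, for each $F \in \mathcal{E}$ we have $X \cap F \subset F \cap D(0,1)^- = \mathfrak{m}_F$, so it suffices to recover $X \cap \mathfrak{m}_F$ for every $F \in \mathcal{E}$.

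Next, for each $F \in \mathcal{E}$ the maximal ideal $\mathfrak{m}_F$ is compact (it is a bounded closed subset of the locally compact field $F$). Therefore there is a finite subset $\mathcal{P}_F \subset \mathfrak{m}_F$ such that the open disks $D(p,\eps)^-$ for $p \in \mathcal{P}_F$ cover $\mathfrak{m}_F$. I fix such a $\mathcal{P}_F$ once and for all (depending only on $F$ and $\eps$), and set
\[
\mathcal{P} \;=\; \bigcup_{F \in \mathcal{E}} \mathcal{P}_F \;\subset\; D(0,1)^-.
\]
This is a finite set whose definition depends only on $E$, $m$ and $\eps$.

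The key step is to show that $X \cap \mathcal{P}$ determines $X \cap \mathfrak{m}_F$ for each $F \in \mathcal{E}$. Given $x \in \mathfrak{m}_F$, pick $p \in \mathcal{P}_F$ with $x \in D(p,\eps)^-$. The local constancy hypothesis says that the indicator function of $X$ is constant on every open disk of radius $\eps$ contained in $D(0,1)^-$: indeed, if $p \in X$ then $D(p,\eps)^- \subset X$, so $x \in X$; while if $p \notin X$ then $D(p,\eps)^- \cap X = \emptyset$, so $x \notin X$. Hence $x \in X \iff p \in X \iff p \in X \cap \mathcal{P}$, and so $X \cap \mathfrak{m}_F$ is determined. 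Feeding these sets $X \cap F = X \cap \mathfrak{m}_F$ back into Theorem \ref{algo} recovers $X$ itself. No step is really an obstacle here; the only thing to verify is compactness of $\mathfrak{m}_F$, which is immediate, and the matching of the two hypotheses on the two sides of the $\eps$-separation condition.
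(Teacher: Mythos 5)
Your argument is correct and coincides with the paper's proof: both take the extensions of degree at most $N$ furnished by Theorem \ref{algo}, cover $F \cap D(0,1)^-$ (compact) by finitely many open disks of radius $\eps$, pick one point per disk, and use the $\eps$-local-constancy hypothesis to recover each $X \cap F$ from $X \cap \mathcal{P}$. The only difference is that you spell out the final verification ($x \in X \iff p \in X$) which the paper leaves implicit.
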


\begin{proof}
Let $\E$ be the set of extensions of $E$ given by Theorem \ref{algo}. For
each extension $F$ of $E$ which is in $\E$, the set $F \cap D(0,1)^-$ can
be covered by a finite number of open disks of radius $\eps$, and we
define a finite set $\mathcal{P}_F$ by taking an element in each of these
disks.  Then $X \cap F$ can be entirely determined from $X \cap
\mathcal{P}_F$.
We set $\mathcal{P}$ to be the union of the sets $\mathcal{P}_F$ for the
extensions $F$ of $E$ that are in $\E$. This is a finite set, as $\E$ is
finite, and $X$ is determined by $X \cap \mathcal{P}$, as it is
determined by the intersections $X \cap F$ for $F \in \E$ by Theorem
\ref{algo}.
\end{proof}

\begin{rema}
As is clear from the proof, the set $\mathcal{P}$ can be huge. 
However in practice for a given $X$ we need only test points in a very small
proportion of this subset.
\end{rema}

\subsubsection{Notation}

Let $c \in \bar\Q_p$.
If $a<b$ are rational numbers, denote by $A_c(a,b)$ the annulus $\{x,
b<v_E(x-c)<a\}$. If $a$ is a rational number, denote by $C_c(a)$ the circle $\{x,
v_E(x-c) = a\}$. 

If $t \in \Q$, let $\den(t)$ be the
denominator of $t$, that is, the smallest integer $d$ such that $t \in
(1/d)\Z$. 
Note that $[E(x):E] \geq \den(v_E(x))$.

\subsubsection{Preliminaries}

\begin{lemm}
\label{mincompl}
Let $x,z \in \bar\Q_p$, with $\den(v_E(x-z)) > te_{E(x)/E}$ for some integer
$t$. Let $D$ be a rational disk (open or closed) containing $z$
but not $x$. Then $c_E(D) > t$.
\end{lemm}

\begin{proof}
It is enough to prove that $c_{E(x)}(D) \geq t$, as $c_E(D) \geq
c_{E(x)}(D)$. Let $D$ be such a disk. As $D$ does not contain $x$, we
have $D \subset C_x(\lambda)$ where $\lambda = v_E(x-z)$. So for all $y
\in D$, $\den(v_{E(x)}(y-z)) = \den(v_{E(x)}(x-z)) > t$, which implies
that $e_{E(x,y)/E(x)} > t$. By the definition of combinatorial complexity, we deduce
that $c_{E(x)}(D) > t$.
\end{proof} 

Fix an integer $B$.
We say that $\lambda \in \Q$ has a large denominator (with respect to
$B$) if $\den(\lambda)
> B$, and a small denominator otherwise. The set of elements of $\Q$ with
small denominator can be enumerated as a strictly increasing sequence
$(t_i)_{i\in \Z}$.

\begin{coro}
\label{annulusin}
Let $x \in \bar\Q_p$, $m\in \Z$, and $X$ be a standard subset defined over $E$ with
$c_E(X) \leq m$. Let $B \geq me_{E(x)/E}$, and define the sequence
$(t_i)$ of rationals that have a small denominator with respect to $B$.
Let $i \in \Z$,
and let $D$ be a defining disk of $X$ (open or closed). Then either
$A_x(t_i,t_{i+1}) \cap D = \emptyset$, or $A_x(t_i,t_{i+1}) \subset D$
(and then $x \in D$).
\end{coro}

\begin{proof}
Assume that $A_x(t_i,t_{i+1}) \cap D$ is not empty, and let $z
\in\bar\Q_p$ be an element of this set. Then $\den(v_E(x-z)) >
B$, so in particular $\den(v_E(x-z)) > me_{E(x)/E}$. By Lemma
\ref{mincompl}, either $c_E(D) > m$ or $x \in D$. As the first is
impossible because $c_E(X) \leq m$, we get that $x \in D$. 
Assume that $D$ is a closed disk (the case of an open disk being
similar).
So $D$ is a set of the form $\{y,v_E(x-y) \geq t\}$ for some $t\in \Q$,
or equivalently of the form $\{y,v_{E(x)}(x-y) \geq t'\}$ for $t' =
te_{E(x)/E}$.
We have $c_{E(x)}(D) = \den(t')$ and $c_{E(x)}(D) \leq c_E(D) \leq
c_E(X) \leq m$, so $\den(t') \leq m$, and so $t$ has a small
denominator and hence is one of the $t_j$. As $D$ contains an element of
$A_x(t_i,t_{i+1})$, we see that $t_j \leq t_i$ and so
$A_x(t_i,t_{i+1})\subset D$.
\end{proof}

\begin{prop}
\label{pointexistsin}
Let $E$ be a finite extension of $\Q_p$. There exists a function $\psi_E$
such that for any bounded standard subset $X$ defined over $E$, 
if $c_E(X) \leq m$, then there exists an extension $F$ of $E$ with $[F:E]
\leq \psi_E(m)$ and $X \cap F \neq \emptyset$.
\end{prop}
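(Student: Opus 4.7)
The plan is to reduce to a connected bounded standard subset $X = D_0 \setminus \bigcup_{i=1}^n D_i$ defined over $E$, use Theorem \ref{illdeftheo} to find a point of small degree in the outer open disk $D_0$, and then enlarge the extension so as to avoid all the holes $D_i$.

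\emph{Reductions.} By Corollary \ref{multnbcomp}, $X$ has at most $c_E(X) \leq m$ connected components, and $G_E$ acts transitively on the connected components of each irreducible (over $E$) component of $X$. Hence at least one connected component $X_1$ of $X$ has field of definition $E_1/E$ of degree $\leq m$, and by Proposition \ref{changefieldany} we have $c_{E_1}(X_1) \leq c_E(X) \leq m$. Replacing $(E,X)$ by $(E_1,X_1)$ (at the cost of a factor $m$ and a maximum over extensions $E_1/E$ of degree $\leq m$ in the final bound), we may assume $X$ is connected and defined over $E$. If $X$ is unbounded then $E \setminus \bigcup_i D_i \neq \emptyset$, since $E$ is unbounded while each closed disk $D_i$ is bounded, and we are done. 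Otherwise $X = D_0 \setminus \bigcup_{i=1}^n D_i$ with $n \leq m$ and $D_0$ an open disk defined over $E$.

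\emph{A point in $D_0$.} Since $D_0$ is an approximation of $X$ we have $c_E(D_0) \leq m$. By Proposition \ref{multdisk}, $c_E(D_0) = st$ where $p^s$ is the minimal ramification degree of an extension of $E$ meeting $D_0$, so in particular $p^s \leq m$. Theorem \ref{illdeftheo} then yields an extension $F_0/E$ with $[F_0:E] \leq \max(1,p^{2s-1})$ and some $b \in F_0 \cap D_0$.

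\emph{Avoiding the holes.} Enlarge $F_0$ to a finite extension $F$ over which every hole $D_i$ is individually defined, with its radius in $|F^\times|$, and such that the closed $F$-disk $F \cap D_0$ centered at $b$ has radius strictly greater than every hole radius. That $[F:E]$ can be bounded in terms of $m$ follows from combining Proposition \ref{additivityc} (applied after an initial unramified enlargement so that its hypotheses are satisfied) with Proposition \ref{multhole}: the bound $c_E(X) \leq m$ controls, for each $G_E$-orbit $T_j$ of holes, the product of the orbit size, the ramification of the field of definition of a representative, and the denominator of the radius. After a further unramified enlargement making the residue field of $F$ of cardinality $> n$, the $n$ closed $F$-subdisks $F \cap D_i$ cannot cover $F \cap D_0$, so any $F$-point of the complement lies in $X \cap F$.

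\emph{Main obstacle.} The main technical difficulty is the third step: extracting from the single inequality $c_E(X) \leq m$ simultaneous quantitative bounds on the field of definition of each individual hole, the denominators of the hole radii, and the ramification slack separating the radius of $D_0$ from the radii of the $D_i$. This requires careful bookkeeping with the multiplicative decomposition of complexity provided by Propositions \ref{additivityc} and \ref{multhole}, and for $p=2$ one can sharpen constants using Theorem \ref{illdeftheo2}.
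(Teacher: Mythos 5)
Your proposal is correct in outline, and its first half coincides with the paper's proof: both reduce to a connected component considered over its field of definition (of degree at most $m$ over $E$, since the complexity bounds the number of components) and then produce a point $b$ of the outer disk $D_0$ in an extension of bounded degree via Theorem \ref{illdeftheo} (Lemma \ref{pointindisk} in the paper). The second half is genuinely different. The paper never controls the holes individually: after normalizing so that the found point is $0$, it only uses the outer part $D(0,r)^+$ of the single hole containing $0$ (whose radius denominator $s\leq m-1$ is controlled by the complexity), passes to an extension of degree $2s$ containing an element of absolute value $\sqrt{r}$, and then finds, on the circle of radius $\sqrt{r}$, a point missed by the remaining holes by combining Lemma \ref{circle} with an unramified extension whose residue field has more than $m-2$ elements; this yields a bound of size roughly $2(m-1)\psi_E^0(m)(1+\lfloor\log_q(m-1)\rfloor)$. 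You instead make every hole well-defined over a single field $F$ and run a pigeonhole on the residue sub-disks of the closed $F$-disk around $b$; this works, but the deferred bookkeeping is real and the resulting $F$ is a compositum over up to $m$ holes, so your $\psi_E$ is exponential in $m$ (acceptable, since only finiteness is asserted). Two points in that bookkeeping should be made explicit. First, the bound on each hole's field of definition comes most directly from the fact that there are at most $m$ holes (each disk contributes at least $1$ to $\gamma_E$), so each $G_E$-orbit, hence each field of definition, has degree at most $m$; Propositions \ref{additivityc} and \ref{multhole} are not the natural tool here, since their totally-ramified hypotheses force an extra unramified enlargement, and to obtain an actual center of each hole in bounded degree you must invoke Theorem \ref{illdeftheo} again for each hole, not only for $D_0$. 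Second, your pigeonhole needs an element $r'\in|F^\times|$ with $\max_i r_i<r'<r_0$ \emph{strictly}: strictness is what confines each $F\cap D_i$ to a single residue sub-disk, and it is not automatic even when all radii lie in $|F^\times|$; it does hold because the complexity bounds the denominators of $v_E(r_0)$ and of all $v_E(r_i)$, so the gaps are bounded below and a further ramified extension of bounded degree suffices. Note finally that your covering argument uses neither $b\notin\bigcup_i D_i$ nor the holes being well-defined over $F$ (only the radius comparison and $q_F>n$), so the construction of $F$ could be trimmed accordingly; the paper's route buys a much smaller bound, while yours avoids the $\sqrt{r}$-circle device at the cost of heavier field bookkeeping.
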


\begin{lemm}
\label{pointindisk}
Let $E$ be a finite extension of $\Q_p$. There exists a function $\psi^0_E$
such that for any open or closed disk $D$ of $\bar\Q_p$ defined over $E$, 
if $c_E(D) \leq m$ then there exists an extension $F$ of $E$ with $[F:E]
\leq \psi^0_E(m)$ and $D \cap F \neq \emptyset$ and the radius of $D$ is
in $|F^\times|$. 
For $m<p^2$ or $p=2$ we can take $\psi^0_E(m) = m$ and consider only
extensions $F/E$ that
are totally ramified.
\end{lemm}

\begin{proof}
We write the proof for $D$ open, the proof for $D$ closed being nearly
identical.

Let $s$ be the minimal ramification degree of an extension $K$ of $E$
with $K \cap D \neq \emptyset$, and fix $a \in K \cap D$. Let $t$ be
the smallest positive integer such that $D$ can be written as
$\{x,stv_E(x-a) > v_E(b)\}$ for an element $b\in K$. So by definition
$c_E(D) = st$. 
By Theorem \ref{illdeftheo}, there exists an extension $K$ of $E$
with $e_{K/E} = s$ and $[K:E] \leq s^2$ and $K \cap D \neq \emptyset$.
Then if $F$ is a totally ramified extension of degree $t$ of $K$, then
$F$ satisfies the conditions, and we have $[F:E] \leq s^2t$. As $st \leq
m$, this means that we can take $\psi^0_E(m) = m^2$.

Note that $s$ is a power of $p$ by Theorem \ref{illdeftheo}, and $s\leq
m$. So if $m<p^2$ then $s=1$ or $s=p$ so we can take $[K:E] \leq s$ and
$K/E$ totally ramified instead of $[K:E] \leq s^2$, and so we can take
$[F:E] \leq m$. 

When $p=2$ the result comes from applying Theorem \ref{illdeftheo2}
instead of Theorem \ref{illdeftheo}.
\end{proof}

\begin{proof}[Proof of Proposition \ref{pointexistsin}]
We show first that there exists a function $\psi_E^1$ such that for all
$X$ a standard connected subset defined over $E$ with $c_E(X) \leq m$,
there exists an extension $L$ of $E$ with $[L:E]
\leq \psi^1_E(m)$ and $X \cap L \neq \emptyset$.

\smallskip

Consider first the case where $X$ is of the form $D(0,1)^-\setminus Y$,
with $Y$ a disjoint union of closed disks.
Then: either $0\not\in Y$, in which case $0 \in X$, so $E\cap
X \neq\emptyset$ and there is nothing more to do, or $0\in Y$. We assume
from now on that $0 \in Y$. Then $m > 1$ and we can write $Y$ as
$D(0,|a|)^+ \cup Z$, with $Z$ a union of disjoint closed disks.
By the additivity formula for complexity, we have that
$c_E(\PP^1(\bar\Q_p)\setminus D(0,|a|)^+) +
c_E(\PP^1(\bar\Q_p)\setminus Z) \leq m-1$. 
Let $\lambda \in \Q^\times$ with $\den(\lambda) \geq m$. Then $Z \cap
C_0(\lambda) = \emptyset$, by Lemma \ref{mincompl} and the
fact that $c_E(\PP^1(\bar\Q_p)\setminus Z) < m$.
Let $s = \den(v_E(|a|))$. Then by definition of the combinatorial
complexity, we have that $c_E(\PP^1(\bar\Q_p)\setminus D(0,|a|)^+) = s$,
so we know that $s < m$, and in particular $v_E(a) > 1/m$.  So we see
that $C_0(1/2m) \subset X$ by the two previous remarks.  Let $L$ be a
totally ramified extension of $E$ of degree $2m$, then $L \cap C_0(1/2m)
\neq \emptyset$, and so $L \cap X \neq \emptyset$. So there exists an
extension of $E$ of degree at most $2m$ such that $X$ has points in this
extension.

\smallskip

Consider now the case where $X$ is of the form $D \setminus Y$, but $D$
is not necessarily $D(0,1)^-$ anymore. By Lemma \ref{pointindisk}, there
exists an extension $F$ of $E$ of degree at most $\psi_E^0(m)$ such that
$D$ contains a point in $F$ and has a radius in $|F^\times|$. Moreover,
$c_F(X) \leq c_E(X) \leq m$. By doing some affine transformation defined
over $F$, we can reduce to the case where $D = D(0,1)^-$, so we see that
$X$ contains a point in some extension $L$ of $F$ with $[L:F] \leq 2m$,
and so $X$ contains a point in $L$ with $[L:E] \leq \psi_E^1(m)$, where
$\psi_E^1(m) = 2m\psi_E^0(m)$.

\smallskip

Now we go back to the general case, where $X$ is not necesssarily
connected.  Write $X$ as a disjoint union of irreducible components
over $E$. Each of them has complexity at most $m$, and it is enough to
find a point in one of them. So we can assume that $X$ is irreducible
over $E$.

Suppose now that $X$ is irreducible over $E$: write $X = \cup_{i=1}^sX_i$ where
the $X_i$ form a $G_E$-orbit. Let $F$ be the field of definition of
$X_1$, and $s = [F:E]$. Then $c_E(X) = sc_F(X_1)$, so $c_F(X_1) \leq m' =
\lfloor m/s \rfloor$. There exists an extension $L$ of $F$ of degree at
most $\psi_F^1(m')$ such that $K \cap X_1 \neq \emptyset$. As $L$ is an
extension of $E$ of degree at most $s\psi_F^1(m')$, we see that we can take
$\psi_E(m) = \sup_{1 \leq s \leq m}\sup_{[F:E]=s}s\psi_F^1(\lfloor m/s
\rfloor)$, which is finite as $E$ has only a finite number of extensions
of a given degree.
\end{proof}

By inverting the role of closed disks and open disks, we obtain the
following statement:

\begin{prop}
\label{pointexistsout}
Let $E$ be a finite extension of $\Q_p$. There exists a function $\phi_E$
such that for any standard subset $X$ defined over $E$ and different from
$\PP^1(\bar\Q_p)$, if $c_E(X) \leq m$, then there exists an extension $F$
of $E$ with $[F:E] \leq \phi_E(m)$ and there exists an element of $F$
that does not belong to $X$.
\end{prop}

Following the proofs above, we see that we can actually take $\psi_E^0(m)
= m^2$, $\psi_E^1(m) = 2m^3$, and $\psi_E(m) = \phi_E(m) = 2m^3$.

\subsubsection{Proof of Theorem \ref{algo}}
\label{algoproof}

To help with the understanding of the method, we will explain the steps
on the following example (for $p>2$): let $X = (D_{1,0}\setminus D_{1,1}) \cup
(D_{2,0} \setminus D_{2,1}) \cup D_{3,0} \cup D_{4,0}$ where
$D_{1,0} = D(0,1)^-$, $D_{1,1} = D(0,|p|)^+$, 
$D_{2,0} = D(0,|p|)^-$, $D_{2,1} = D(0,|p^2|)^+$,
$D_{3,0} = D(1/\sqrt{p},1)^-$,  
$D_{3,0} = D(-1/\sqrt{p},1)^-$. Here $X$ is defined over $\Q_p$ and
$c_{\Q_p}(X) = 6$ so we can take any $m \geq 6$.

We assume first that we know that $X$ is bounded.

Write $X$ as $\cup_{n=1}^N(D_{n,0}  \setminus \cup_{i=1}^{m_n}D_{n,i})$
where the $D_{n,0} \setminus \cup_{i=1}^{m_n}D_{n,i}$ form the
decomposition of $X$ as a disjoint union of connected standard subset (in
particular $D_{n,0}$ is open and $D_{n,i}$ is closed for $i>0$).  We
number the disks so that the $D_{n,0}$ for $0 \leq n \leq M$ are maximal,
that is, they are not included in any other defining disk of $X$, and the
$D_{n,0}$ for $M < n \leq N$ are all included in another defining disk of
$X$. In this way, the outer part of $X$ is $\cup_{n=1}^MD_{n,0}$.
In the example the outer part of $X$ is $D(0,1)^- \cup D(1/\sqrt{p},1)^-
\cup D(-1/\sqrt{p},1)^-$.

Let $\E$ be the set of extensions of $E$ of degree at most
$2m^2\max(\psi_E(m),\phi_E(m))^3$, where $c_E(X) \leq m$. Let
$\mathcal{P} = \cup_{F\in \E}F$. We have to show that $X$ can be
recovered from the knowledge of $X \cap \mathcal{P}$.

We work by constructing a sequence $(X_i)$ of approximations of $X$, such
that each $X_i$ is defined over $E$ and is an approximation of $X_{i+1}$
and $c_E(X_{i+1}) > c_E(X_i)$, so that at some point $X_i = X$ and we
stop.

\medskip

We first describe how to solve the following problem: given some fixed
$x\in X$, with $[E(x):E] \leq \psi_E(m)$, find the largest defining disk
$D$ of $X$ containing $x$ (note that $D$ is necessarily open).

Let the sequence $(t_i)$ be as before Corollary \ref{annulusin}, with $B =
m\psi_E(m)$.
For each $i \in \Z$, let $\lambda_i = (t_i + t_{i+1})/2$. By construction
$\lambda_i$ has a large denominator, but $\den(\lambda_i) \leq
2m^2\psi_E(m)^2$. Choose some
$z_i \in \mathcal{P}$ such that $v_E(x-z_i) = \lambda_i$. This is possible as
we can choose $z_i$ in a totally ramified extension of $E(x)$ of degree
at most $2m^2\psi_E(m)^2$. Then: 

\begin{lemm}
\label{largestdefdisk}
Let $i \in \Z$ be the smallest element such that $z_i \in X$.
The largest defining disk of $X$ containing $x$ is the disk $D = \{z,
v_E(x-z) > t_i\}$.
\end{lemm}

\begin{proof}
Let $D$ be the largest defining disk of $X$ containing $x$.  
Then $z_i \in D$. Otherwise, $z_i$ is contained in some (open) defining disk of
$X$ that does not contain $x$, which contradicts Corollary \ref{annulusin}.

We can write $D$ as $\{z, v_E(x-z) > t\}$ for some $t \in \Q$. Moreover
$t = t_j$ for some $j$, as $c_E(D) \leq m$. Then $t \leq t_i$ as $D$ contains $z_i$ so $j
\leq i$.

Let us show now that $j=i$. If $j<i$ then by definition of $i$, $z_j
\not\in X$. As $z_j\in D$, it means that $z_j \in D'$ for some closed
defining set of $X$ contained in $D$. 
But then $A_x(t_j,t_{j+1}) \subset D'$ by Corollary
\ref{annulusin}, so $D'$ is of the form $\{y, v_E(y-x) \geq s\}$ for some
$s \leq t_j$, which contradicts the fact that $D' \subset D$.
\end{proof}

\medskip

Next we describe how to solve the following problem: given some fixed
$x\not\in X$ but $x$ in the outer part of $X$, with $[E(x):E] \leq
\phi_E(m)$, find the largest defining disk $D$ of $X$ containing $x$
(note that $D$ is necessarily closed).

Let the sequence $(t_i)$ be as before Corollary \ref{annulusin}, with $B =
m\phi_E(m)$. Then $D$ is of the form $\{z,v_E(x-z)\geq t_i\}$ for some
$i$, as $c_{E(x)}(D) \leq m$. As $x$ is in the outer part of $X$, there
exists a largest defining disk $D'$ of $X$ containing $x$. Let $i_0\in
\Z$ be the integer such that $D' = \{z,v_E(x-z) > t_{i_0}\}$.
For each $i$, we can find an element $z_i$ in $C_x(t_i)$, with
$[E(x,z_i):E(x)] \leq m^2\phi_E(m)$, such that the $G_{E(x)}$-orbit of
$z_i$ contains at least $m$ elements $z_i^{(1)} = z_i,\dots,z_i^{(m)}$
satisfying $v_E(z_i^{(j)}-z_i^{(\ell)}) = t_i$ for all $j \neq \ell$.
We can find such a $z_i$ as follows: first find $y_i$ in a totally
ramified extension of $E(x)$ of degree at most $m\phi_E(m)$ such that
$v_E(x-y_i) = t_i$. Next, find $u_i$ generating the unramified extension of
$E(x)$ of degree $m$ and such that $|u_i| = 1$ and the $G_{E(x)}$-conjugates
of $u_i$ have distinct reductions modulo $p$.  Let $z_i = u_iy_i$. Then
$z_i$ satisfies the property we want, as it has $m$
$G_{E(x,y_i)}$-conjugates $(z_i^{(\ell)})_{1\leq \ell\leq m}$ 
and they satisfy the property 
about $v_E(z_i^{(j)}-z_i^{(\ell)})$.

\begin{lemm}
\label{largestdefdisk2}
Let $i \in \Z$ be the smallest element $> i_0$ such that $z_i \not\in
X$. Then $D = \{z, v_E(x-z) \geq t_i\}$.  
\end{lemm}

\begin{proof}
Let $j\in \Z$, $j > i_0$ be such that $z_j \not\in X$. As $X$ is
$G_E$-stable, this means that $z_j^{(\ell)}$ is not in $X$ for all $1
\leq \ell \leq m$.
As each $z_j^{(\ell)}$ is in the outer part of $X$ (in fact in $D'$), it
means that each $z_j^{(\ell)}$ is contained in some closed defining
$D_\ell$ disk of $X$ contained in $D'$. Then in fact there exists a
closed defining disk of $X$ containing $x$ and all the $z_j^{(\ell)}$ for
$1 \leq \ell \leq m$. Indeed, if a disk contains two of the
$z_j^{(\ell)}$ it contains all of them and also $x$, due to the condition
on the $v_E(z_j^{(\ell)}-z_j^{(\ell')})$.  So if there is not a closed
defining disk containing all the $z_j^{(\ell)}$, then the disks $D_\ell$
are all distinct, which gives that $c_E(X) > m$. So in particular: $z_j$
is contained in $D$.

On the other hand, assume that $D = \{z,v_E(x-z) \geq t_j\}$ for some
$j$. Necessarily $j > i_0$ as $D \subset D'$. 
Then $z_j \not\in X$: if $z_j$ is in $X$, then so is $z_j^{(\ell)}$
for all $1 \leq \ell \leq m$. So each $z_j^{(\ell)}$ is contained in an
open defining disk of $X$ contained in $D$, and so as before there exists
some open defining disk of $X$ containing $z_j$ and $x$ and contained in
$D$. But this is impossible as $v_E(x-z_j) = t_j$.
\end{proof}

\medskip

We show now how to find the outer part $X_1$ of $X$, that is, $X_1 =
\cup_{n=0}^MD_{n,0}$ in the notation of the beginning of the proof. 
Start with $X_1 = \emptyset$.

\begin{enumerate}
\item
find some $x \in X \cap \mathcal{P}$ that is not in $X_1$, if there is
one. If there is not, then $X_1$ is the outer part of $X$.

\item
find the largest defining (open) disk $D$ of $X$ that contains $x$, using
Lemma \ref{largestdefdisk}. Add
to $X_1$ the $G_E$-orbit of $D$. Go back to the first step.

\end{enumerate}

For the first step, Proposition \ref{pointexistsin} ensures that if $X$
is not contained in $X_1$, we can find some element of $X \setminus X_1$
that is also in $\mathcal{P}$.  For the second step, note that by
construction the $G_E$-orbit of $D$ is disjoint from $X_1$, and during
the construction the set $X_1$ is always defined over $G_E$.

In the example: note that $X \cap \Q_p = \emptyset$, so we find points in
$X \cap \Q_p(\sqrt{p})$. The fact that $1/\sqrt{p} \in X$ gives us the
defining disk $D_{3,0} = D(1/\sqrt{p},1)^-$, and the fact that $X$
is defined over $\Q_p$ gives the other defining disk $D_{4,0} =
D(-1/\sqrt{p},1)^-$. The fact that $\sqrt{p} \in X$ gives us the defining
disk $D_{1,0} = D(0,1)^-$. At this point we have the outer part $X_1 =
D_{1,0} \cup D_{3,0} \cup D_{4,0}$.

\medskip

We now want to find $X_2 = \cup_{n=0}^M(D_{n,0}\setminus
\cup_{i=0}^{m_n}D_{n,i}$). Note that $X_2$ is defined over $E$. 

The method as it is very similar
to the method to find $X_1$. For each $D_{n,0}$, $n \leq M$, find if
there is an element $x$ that is in $X_1$ but not in $X$. If such an
element exists, we can take it with $[E(x):E] \leq \phi_E(m)$ by
Proposition \ref{pointexistsout}. Then we find the largest (closed)
defining disk of $X$ containing $x$ and contained in $D_{n,0}$ using 
Lemma \ref{largestdefdisk2}.

In the example $X_2 = (D_{1,0}\setminus D_{1,1}) \cup D_{3,0} \cup
D_{4,0}$. The fact that $p$ is in $D_{1,0}$ but not in $X$ gives us the
defining disk $D_{1,1}$.

\medskip

Once $X_2$ is found, we have the decomposition $X = X_2 \cup X'$ with $X' =
\cup_{n=M+1}^N(D_{n,0}\setminus \cup_{i=1}^{m_n}D_{n,i})$, and $X_2$ and
$X'$ are both approximations of $X$. 
In particular, we have that $c_E(X') =
c_E(X) - c_E(X_2)$. Let $m' = m-c_E(X_2)$. If $m' = 0$ then $X_2 = X$.
Otherwise, we must find $X'$, given that $X'$ is defined over $E$ and
$c_E(X') \leq m'$.  Moreover, as we know $X_2$ entirely, if we know $X
\cap \mathcal{P}$ then we also know $X' \cap \mathcal{P}$.
Applying the same steps as before, we can find an approximation of $X'$
and so work recursively. 

In the example, $X' = D_{2,0}\setminus D_{2,1}$, and as $c_{\Q_p}(X_2) =
4$ we get $m' = m-4$, and we need to find $X'$. 

\medskip

Finally, we want to remove the hypothesis that we know that $X$ is
bounded. First, we can determine whether $X$ is bounded by
considering only $X \cap \Q_p$. If $X$ is bounded apply the algorithm
described. If $X$ is not bounded, then set $X_1 = X$, and then apply the
algorithm starting at the step where we determine $X_2$.

\section{Application to potentially semi-stable deformation rings}
\label{potcrys}

\subsection{Definition of the potentially semi-stable deformation rings}
\label{kisinrings}

We recall the definition and some properties of the rings defined by
Kisin in \cite{Kis08} (see also \cite{Kis10}).

Let $\rho : G_{\Q_p} \to \GL_2(\bar\Q_p)$ be a potentially semi-stable
representation. Then we know from \cite{Fo} that we can attach to $\rho$
a Weil-Deligne representation $\WD(\rho)$, that is, a smooth
representation $\sigma$ of the Weil group $W_{\Q_p}$ with values in  $\GL_2(\bar\Q_p)$, and a
$\bar\Q_p$-linear, nilpotent
endomorphism $N$ of $\bar\Q_p^2$ such that $N\sigma(x) = p^{\deg
x}\sigma(x)N$ for all $x \in W_{\Q_p}$.  We say that $\sigma$ is the
extended type of $\rho$, and $\sigma_{|I_{\Q_p}}$ the inertial type of
$\rho$, where $I_{\Q_p}$ is the inertia subgroup of $W_{\Q_p}$.

Kisin defines deformation rings that parametrize potentially semi-stable
representations with fixed (distinct) Hodge-Tate weights and a fixed
inertial type. However, this is not entirely adapted to our purposes: we
would like each of these families of representations to be classified by
one parameter (see Theorem \ref{parameter}). This is not the case for the
rings defined by Kisin: for example, if we take the trivial inertial
type, the deformation ring classifies a family of crystalline
representations, and a family of semi-stable, non-crystalline
representations, and we cannot classify all of these with a single
parameter. So we introduce a refinement of Kisin's rings, where in some
cases we will consider deformations with a fixed extended type instead,
and use a refinement of Kisin's rings defined in \cite{Roz15}.

\subsubsection{Definition of the Galois types}

We make the following definition:
\begin{defi}
\label{deftype}
A Galois type of dimension $2$ is one of the following representations
with values in $\GL_2(\bar\Q_p)$:
\begin{enumerate}
\item
a scalar smooth representation $\tau =  \chi \oplus \chi$ of $I_{\Q_p}$,
such that $\chi$ extends to a character of $W_{\Q_p}$.

\item
a smooth representation $\tau = \chi_1 \oplus \chi_2$ of $I_{\Q_p}$,
where both $\chi_1$ and $\chi_2$ extend to characters of $W_{\Q_p}$, and
$\chi_1 \neq \chi_2$.

\item
if $p>2$,
a smooth representation $\tau = \chi_1 \oplus \chi_2$ of $W_{\Q_p}$, such
that $\chi_1$ and $\chi_2$ have the same restriction to inertia, and
$\chi_1(F) = p\chi_2(F)$ for any Frobenius element $F$ in $W_{\Q_p}$.

\item
if $p>2$,
a smooth irreducible representation $\tau$ of $W_{\Q_p}$.

\end{enumerate}
\end{defi}

We call Galois types of the form (1) and (2) inertial types, and those
of the forms (3) and (4) discrete series extended types.
If $\rho$ is a potentially semi-stable
representation of $G_{\Q_p}$ of dimension $2$ and $p>2$, then we know
from the classification of $2$-dimension smooth representations of
$W_{\Q_p}$ that either its
inertial type is isomorphic to a Galois type of the form (1) or (2), or
its extended type is isomorphic to a Galois type of the form (3) or
(4) (if $p=2$ there are other possibilities).
Of course the possibilities are not mutually exclusive, as a
representation that has its extended type of the form (3) also has its
inertial type of the form (1), but we will define different deformation rings
using these Galois types.
Note that if the Galois type of $\rho$ is of the form (2) or (4)
then it is potentially crystalline (that is, the endomorphism $N$ of the
Weil-Deligne representation is zero), and that if $\rho$ is potentially
semi-stable but not potentially crystalline (that is, $N \neq 0$) then
its Galois type is of the form (3).

\subsubsection{Definition of the deformation rings}

\begin{defi}
\label{defdata}
A deformation data $(k,\tau,\bar\rho,\psi)$ is the data of:
\begin{enumerate}
\item
an integer $k \geq 2$.

\item
a Galois type $\tau$.

\item
a continuous representation $\bar\rho$ of $G_{\Q_p}$ of dimension $2$, with trivial
endomorphisms, over some finite extension $\F$ of $\F_p$.

\item
a continuous character $\psi: G_{\Q_p} \to \bar\Q_p^\times$ lifting
$\det\bar\rho$ such that $\psi$ and $\cyc^{k-1}\det\tau$ coincide.

\end{enumerate}

If the type $\tau$ is a discrete series extended type, we will assume
that $p>2$.
\end{defi}

Let $(k,\tau,\bar\rho,\psi)$ be a deformation data, and let $E$ be a
finite extension of $\Q_p$ over which $\tau$ and $\psi$ are defined, and
such that its residue field contains $\F$.  Let $R(\bar\rho)$ be the
universal deformation ring of $\bar\rho$ over $\OO_E$, it is a local
noetherian complete $\OO_E$-algebra. Let $R^\psi(\bar\rho)$ the quotient
of $R(\bar\rho)$ that parametrizes deformations of determinant $\psi$.
Then Kisin in \cite{Kis08} defines deformation rings
$R^\psi(k,\tau,\bar\rho)$ that are quotients of $R^\psi(\bar\rho)$. We
will also use a refinement of these rings introduced in \cite{Roz15},
which are better for our purposes in view of Theorem \ref{parameter}.  

If the Galois type $\tau$ is an inertial type, we denote by
$R^\psi(k,\tau,\bar\rho)$ the ring classifying potentially crystalline
representations with Hodge-Tate weights $(0,k-1)$, inertial type $\tau$,
determinant $\psi$ with reduction isomorphic to $\bar\rho$, as defined by
Kisin in \cite{Kis08}.  

If the Galois type $\tau$ is a discrete series extended type, we denote
by $R^\psi(k,\tau,\bar\rho)$ the $\OO_E$-algebra which is a quotient of
$R^\psi(\bar\rho)$, classifying potentially semi-stable representations
with Hodge-Tate weights $(0,k-1)$, extended type $\tau$, determinant
$\psi$ with reduction isomorphic to $\bar\rho$ defined in
\cite[2.3.3]{Roz15}. 

We know that $R^\psi(k,\tau,\bar\rho)$ is a complete flat local
$\OO_E$-algebra (in particular it has no $p$-torsion), such that $\spec
R^\psi(k,\tau,\bar\rho)[1/p]$ is formally smooth of dimension $1$.

The characterizing property of these potentially semi-stable deformation
rings is the following: There is a bijection between the maximal ideals
of $R^\psi(k,\tau,\bar\rho)[1/p]$ and the set of isomorphism classes of
lifts $\rho$ of $\bar\rho$ of determinant $\psi$, potentially crystalline
of inertial type $\tau$ (resp.  potentially semi-stable of extended type
$\tau$), and Hodge-Tate weights $0$ and $k-1$. 
In this
bijection, a maximal ideal $x$, corresponding to a finite extension $E_x$
of $E$, corresponds to a representation
$\rho_x : G_{\Q_p} \to \GL_2(E_x)$ such that there exists a lattice
giving the reduction $\bar\rho$ (as we consider only representations
$\bar\rho$ that have trivial endomorphisms, the lattice is unique up to
homothety if it exists, so there is no need to specify it).

The Breuil-Mézard conjecture gives us some information about these rings
(\cite{BM}, proved in \cite{Kis09}, \cite{Pas15}, \cite{Pas16}; and
\cite{Roz15} for the cases of discrete series extended type): 

\begin{theo}
\label{BM}
Let $\bar\rho$ be a continuous representation of $G_{\Q_p}$ of dimension
$2$, with trivial endomorphisms. If $p=3$, assume that $\bar\rho$ is not
a twist of an extension of $1$ by $\omega$, 
and let $(k,\tau,\bar\rho,\psi)$ be a deformation data.
Then there is an explicit integer
$\mua(k,\tau,\bar\rho)$ such that
$\e(R^\psi(k,\tau,\bar\rho)/(\pi_E)) = \mua(k,\tau,\bar\rho)$.
\end{theo}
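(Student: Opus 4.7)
The plan is to first define the automorphic multiplicity $\mua(k,\tau,\bar\rho)$ combinatorially, and then invoke the appropriate case of the Breuil--M\'ezard conjecture from the references cited immediately before the statement.

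First, I would recall the definition of $\mua(k,\tau,\bar\rho)$. To the pair $(k,\tau)$ one associates a locally algebraic representation $\sigma(k,\tau)$ of $\GL_2(\Z_p)$ over $E$: for an inertial type this is the classical $\sigma(\tau)\otimes\sym^{k-2}E^2$ built from Henniart's inertial local Langlands correspondence, while for a discrete series extended type one uses the variant of this construction from \cite{Roz15} which also records the relevant Frobenius eigenvalues, so that it sees the full extended type rather than only its restriction to inertia. Fix a $\GL_2(\Z_p)$-stable $\O_E$-lattice in $\sigma(k,\tau)$, reduce modulo $\pi_E$ and semisimplify; write the result as $\bigoplus_{\bar\sigma} \bar\sigma^{n_{\bar\sigma}(k,\tau)}$ with $\bar\sigma$ ranging over the Serre weights. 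Then set $\mua(k,\tau,\bar\rho) = \sum_{\bar\sigma} n_{\bar\sigma}(k,\tau)\, m_{\bar\sigma}(\bar\rho)$, where the integers $m_{\bar\sigma}(\bar\rho)$ are the multiplicities attached to $\bar\rho$ as predicted in \cite{BM}.

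Second, I would split according to whether $\tau$ is an inertial type or a discrete series extended type. For inertial types the equality $\e(R^\psi(k,\tau,\bar\rho)/\pi_E) = \mua(k,\tau,\bar\rho)$ is the original Breuil--M\'ezard conjecture: many cases (notably $p\geq 5$) are proved in \cite{Kis09} by global patching and the Taylor--Wiles--Kisin method, and the remaining cases (under the excluded situation at $p=3$) are treated by Paskunas in \cite{Pas15, Pas16} through the $p$-adic local Langlands correspondence for $\GL_2(\Q_p)$. For discrete series extended types the analogous statement is proved in \cite{Roz15}, adapting Paskunas's strategy to the extended-type deformation rings considered here. The hypothesis at $p=3$ excludes precisely the case where the relevant block in the $p$-adic Langlands category fails to satisfy the homological properties required for Paskunas's argument.

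The content is essentially a pointer to the literature; the main obstacle is not mathematical but bookkeeping. One must check that the definition of $R^\psi(k,\tau,\bar\rho)$ used in this paper (in particular the refinement from \cite{Roz15} for discrete series extended types, which distinguishes extended types sharing the same inertial type) coincides with the ring whose Hilbert--Samuel multiplicity is computed in each cited reference, and that the combinatorial $\mua(k,\tau,\bar\rho)$ defined above matches the quantity appearing on the automorphic side of the cited theorems. Once this compatibility is unwound, the statement follows by assembling the cases.
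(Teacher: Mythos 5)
Your proposal is correct and is essentially what the paper does: the paper offers no independent proof of this statement, but simply cites it as the known Breuil--M\'ezard conjecture (with $\mua$ the standard automorphic multiplicity from the introduction of \cite{BM}), proved in \cite{Kis09}, \cite{Pas15}, \cite{Pas16} for inertial types and in \cite{Roz15} for discrete series extended types. Your combinatorial definition of $\mua(k,\tau,\bar\rho)$ and the case split by type of $\tau$ match the intended reading, so the only remaining work is the compatibility bookkeeping you already identify.
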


For our purposes, what is important to know about 
$\mua(k,\tau,\bar\rho)$ is that it can be easily computed in
a combinatorial way, in terms of $\bar\rho$, $k$ and $\tau$. 
For more details on the formula for this integer see
the introduction of \cite{BM}.

\begin{defi}
\label{defgood}
We will say that a representation $\bar\rho$ with trivial endomorphisms
is good if it satisfies the hypothesis of Theorem \ref{BM}, that is, if
$p=3$ then $\bar\rho$ is not a twist of an extension of $1$ by $\omega$.
\end{defi}

Note that the condition of trivial endomorphisms implies that $\bar\rho$
is not reducible with scalar semi-simplification.

\subsection{Rigid spaces attached to deformation rings}
\label{rigiddefring}

As $R^\psi(k,\tau,\bar\rho)$ is a complete noetherian $\OO_E$-algebra, the
$E$-algebra $R^\psi(k,\tau,\bar\rho)[1/p]$ is an $E$-quasi-affinoid
algebra of open type as in Paragraph \ref{qaffalg}, and
$R^\psi(k,\tau,\bar\rho)$ is an $\OO_E$-model of it.  We denote by
$\X^{\psi}(k,\tau,\bar\rho)$ the rigid space attached to
$R^\psi(k,\tau,\bar\rho)[1/p]$ by the construction of Berthelot as
recalled in Paragraph \ref{dJconstr}.

Let $\fp_1,\dots,\fp_n$ the minimal prime ideals of
$R^\psi(k,\tau,\bar\rho)$, and let $R_i = R^\psi(k,\tau,\bar\rho)/\fp_i$. 
As $R^\psi(k,\tau,\bar\rho)$ has no $p$-torsion by construction, the set
of ideals $(\fp_i)$ is in bijection with the set of minimal prime ideals
$(\fp_i')$ of $R^\psi(k,\tau,\bar\rho)[1/p]$, with $R_i[1/p] =
R^\psi(k,\tau,\bar\rho)[1/p]/\fp_i'$. Let $\X_i$ be the rigid space
attached to $R_i[1/p]$, then $\X^{\psi}(k,\tau,\bar\rho) =
\cup_{i=1}^n\X_i$, and each $\X_i$ is an $E$-quasi-affinoid space of open
type.

Let $R_i^0$ be the integral closure of $R_i$ in $R_i[1/p]$, so that $R_i
\subset R_i^0 \subset R_i[1/p]$ and $R_i^0$ is finite over $R_i$.
As $R_i[1/p]$ is formally smooth, it is normal, hence so is $R_i^0$.
Hence we see that $R_i^0$ is equal to the ring
$\Gamma(\X_i,\OO_{X_i}^0)$ of analytic functions on $\X_i$ that are
bounded by $1$, that $R_i[1/p]$ is equal to the ring of bounded
analytic functions on $\X_i$. We deduce that
$R^\psi(k,\tau,\bar\rho)[1/p]$ is equal to the ring
$\A_E(\X^{\psi}(k,\tau,\bar\rho))$ and $\oplus_iR_i^0$ is equal to its
subring $\A_E^0(\X^{\psi}(k,\tau,\bar\rho))$.

\subsection{Results}

\subsubsection{Parameters on deformation spaces}

\begin{theo}
\label{parameter}
For all deformation data $(k,\tau,\bar\rho,\psi)$, there exist
a finite extension $E = E(k,\tau,\bar\rho,\psi)$ of $\Q_p$ such that
$\X^{\psi}(k,\tau,\bar\rho)$ is defined over $E$, and 
an analytic function $\lambda_{(k,\tau,\bar\rho,\psi)} :
\X^{\psi}(k,\tau,\bar\rho) \to
\PP^{1,rig}_E$ defined over $E$,
satisfying the following condition:
for all $\bar\rho$ and $\bar\rho'$, and $(k,\tau,\psi)$ such that
$(k,\tau,\bar\rho,\psi)$ and $(k,\tau,\bar\rho',\psi)$ are deformation
data, then $\lambda_{(k,\tau,\bar\rho,\psi)}(x) =
\lambda_{(k,\tau,\bar\rho',\psi)}(x')$ if and only if $x$ and $x'$
correspond to isomorphic representations.

In particular, each $\lambda_{(k,\tau,\bar\rho,\psi)}$ is 
injective on $\X^{\psi}(k,\tau,\bar\rho)(\bar\Q_p)$, and if there exist
$x$ and $x'$ such that $\lambda_{(k,\tau,\bar\rho,\psi)}(x) =
\lambda_{(k,\tau,\bar\rho',\psi)}(x')$ then $\bar\rho$ and $\bar\rho'$
have the same semi-simplification.
\end{theo}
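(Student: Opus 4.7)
The plan is to define $\lambda_{(k,\tau,\bar\rho,\psi)}$ as an analytic invariant extracted from a universal family of filtered $(\varphi,N,G_F)$-modules on $\X^{\psi}(k,\tau,\bar\rho)$, where $F$ is a finite Galois extension of $\Q_p$ over which every lift of type $\tau$ becomes semi-stable. Kisin's constructions via Breuil--Kisin modules (\cite{Kis08} in the potentially crystalline case, \cite{Roz15} in the discrete series case) exhibit such a family analytically over $\spec R^\psi(k,\tau,\bar\rho)[1/p]$; pulling back along Berthelot's construction from Paragraph \ref{dJconstr} then produces a family of filtered $(\varphi,N,G_F)$-modules over the rigid space $\X^{\psi}(k,\tau,\bar\rho)$, from which $\lambda$ will be read off.

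The construction of $\lambda$ splits according to the classification in Definition \ref{deftype}. For the inertial types (cases (1) and (2)), the underlying Weil--Deligne representation of a lift is free to vary, and $\lambda$ can be taken to be a suitably normalized trace of $\varphi$ on $\Dcr{}(\rho_x \otimes \chi^{-1})$ for an auxiliary character $\chi$ killing $\tau_{|I_{\Q_p}}$; in the trivial-type crystalline case this recovers the classical $a_p$. For the discrete series extended types (cases (3) and (4)), $\tau$ rigidifies the Weil--Deligne representation, and the remaining degree of freedom lives in the Hodge filtration: since the non-trivial filtration step of $\Dst{}(\rho_x)$ is a line in a $2$-dimensional space, it defines a canonical point of $\P^{1,rig}_E$, which we take to be $\lambda(x)$. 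Analyticity is then essentially automatic: the Frobenius trace is an element of $\A_E(\X^{\psi}(k,\tau,\bar\rho))$, while a locally direct summand of rank $1$ in a locally free rank-$2$ module defines a morphism to $\P^{1,rig}_E$ via its Plücker coordinates.

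Injectivity on geometric points and compatibility across different choices of $\bar\rho$ will follow from Fontaine's equivalence between potentially semi-stable representations and weakly admissible filtered $(\varphi,N,G_F)$-modules: two lifts $x$ and $x'$ with identical $(k,\tau,\psi)$ are isomorphic if and only if their filtered modules are, and by the case analysis $\lambda$ is a complete invariant once $\tau$ is fixed. Since the recipe for $\lambda$ refers only to the Hodge-theoretic data of the lift and not to $\bar\rho$, compatibility across distinct residual representations is automatic.

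The main obstacle will be uniformity: one must check that the case-by-case recipes glue into a single analytic morphism with no forbidden poles, and in particular that the construction in cases (3) and (4) extends across loci where the underlying $\varphi$-module degenerates; this is precisely why the target is $\P^{1,rig}_E$ rather than $\A^{1,rig}_E$, so that such degenerations are absorbed as points at infinity. A secondary technical point arises in case (4), where the Weil representation is induced from a quadratic extension of $\Q_p$; one handles it by first defining $\lambda$ over that extension and then descending by Galois invariance, after enlarging the base field $E$ appropriately and invoking Proposition \ref{funcunram}.
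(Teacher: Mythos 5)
Your overall architecture matches the paper's (a case split along Definition \ref{deftype}, with analyticity extracted from the families of filtered modules of Theorems \ref{modulest} and \ref{moduledR}), but two of the four cases have genuine gaps. In case (2) your recipe does not work as stated: when $\chi_1\neq\chi_2$ on inertia there is no character $\chi$ whose twist kills $\tau_{|I_{\Q_p}}$, and the fallback you are implicitly using --- the trace of $\phi$ on $\Dcr{F}(\rho_x)$ over the splitting field --- is not a complete invariant. With $\psi$ fixed the trace only determines the unordered pair $\{\alpha,\beta\}$ of Frobenius eigenvalues, whereas by the Ghate--M\'ezard classification the modules $D_{\alpha,\beta}$ and $D_{\beta,\alpha}$ are non-isomorphic: what matters is which eigenvalue sits on the $\chi_1$-isotypic line. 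So the parameter has to be the eigenvalue $\alpha$ itself, and to see that $\alpha$ lies in $R^{\psi}(k,\tau,\bar\rho)[1/p]$ one must first diagonalize the descent-data action of $\Gal(F/\Q_p)$ on the family Zariski-locally (Proposition \ref{diago}); this is precisely the content of Proposition \ref{crystabfam}, and it is missing from your proposal. Without it your claimed injectivity ``once $\tau$ is fixed'' fails already across two deformation spaces containing $D_{\alpha,\beta}$ and $D_{\beta,\alpha}$ respectively.

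In case (4) the phrase ``the non-trivial filtration step is a line in a $2$-dimensional space, hence a point of $\P^{1,rig}_E$'' hides the real difficulty: that $2$-dimensional space varies with $x$, and to obtain a morphism to a \emph{fixed} $\P^1_E$ you must canonically trivialize the family. The paper does this by rigidifying the family of Weil representations, using that $\tau$ is absolutely irreducible: the family with constant trace is Zariski-locally isomorphic to the constant family (Theorem \ref{familyrepr}, via Rouquier's pseudo-character theorem), the identification is unique up to scalars (harmless on $\P^1$), and then a Hilbert~90 statement in families (Proposition \ref{h90fam}) is needed so that the $\GF$-stable filtration line is generated by a $\GF$-invariant vector and hence gives a point of $\P(V_\tau)$ with $V_\tau=\DdRO^{\GF}$ defined over $E$. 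Your proposed remedy (viewing $\tau$ as induced from a quadratic extension and ``descending by Galois invariance'' with Proposition \ref{funcunram}) addresses neither issue: the descent problem concerns the $\GF$-action on the de Rham module, not the coefficient field, and Proposition \ref{funcunram} is about rings of functions on standard subsets. Finally, in case (3) you correctly identify the degeneration at crystalline points as the main obstacle but do not resolve it; note that there the ``filtration line'' recipe is not even well-defined as a point of $\P^1$ (when $N=0$ the $\phi$-eigenbasis is only canonical up to independent rescalings), so the local argument of Proposition \ref{Linv} --- showing that $N$ and $h=\det(f,\phi(f))$ never vanish simultaneously because no $\rho_x$ is split reducible, and writing $\L$ locally as $-b/a$ or as $1/\lambda$ --- is genuinely needed rather than a routine gluing check.
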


The existence of the functions $\lambda_{(k,\tau,\bar\rho,\psi)}$ will be
proved as Propositions \ref{paramcrys}, \ref{crystabfam}, \ref{Linv}, and
\ref{Lgenan}, with an explanation of the choice of the field
$E(k,\tau,\bar\rho,\psi)$. 

\begin{coro}
\label{imparameter}
In the conditions of Theorem \ref{parameter},
the map $\lambda_{(k,\tau,\bar\rho,\psi)}$ defines an open immersion of
analytic spaces. The image of $\X^{\psi}(k,\tau,\bar\rho)(\bar\Q_p)$ by
$\lambda_{(k,\tau,\bar\rho,\psi)}$ is a standard subset
$X^{\psi}(k,\tau,\bar\rho)$ of
$\PP^1(\bar\Q_p)$ that is defined over $E(k,\tau,\bar\rho,\psi)$.
Moreover we have that 
$\A^0_{E(k,\tau,\bar\rho,\psi)}(\X^{\psi}(k,\tau,\bar\rho)) = 
\A^0_{E(k,\tau,\bar\rho,\psi)}(X^{\psi}(k,\tau,\bar\rho))$.
\end{coro}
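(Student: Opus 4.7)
The strategy is to apply Corollary \ref{qaffstandard}, which produces exactly the desired conclusion (image is a standard subset defined over the relevant field, with the map inducing an isomorphism on rings of bounded-by-one functions), provided one can present $\lambda = \lambda_{(k,\tau,\bar\rho,\psi)}$, restricted to each component, as an open immersion into a closed disk.

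The main obstacle is to promote the pointwise injectivity of $\lambda$ furnished by Theorem \ref{parameter} to the statement that $\lambda$ is an open immersion of rigid spaces. Both source and target are smooth of dimension $1$ over $E$: $\X^{\psi}(k,\tau,\bar\rho)$ is formally smooth of dimension $1$ by the discussion in Paragraph \ref{kisinrings}. I would first show that $\lambda$ is \'etale at every geometric point. If the differential were to vanish at some $x \in \X^{\psi}(k,\tau,\bar\rho)(\bar\Q_p)$, then in formal local coordinates the induced map on completed local rings would be of the form $t \mapsto t^n$ with $n\geq 2$, so on a small rigid disk neighborhood of $x$ the map $\lambda$ would be $n$-to-$1$. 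Since $\bar\Q_p$ contains all $n$-th roots of unity, this produces distinct $\bar\Q_p$-points with the same image, contradicting Theorem \ref{parameter}. An \'etale morphism of rigid spaces that is injective on $\bar\Q_p$-points is an open immersion.

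Next, I would reduce the target from $\P^{1,rig}_E$ to a closed disk. Let $F$ be a finite Galois extension of $E$ large enough to split $\X^{\psi}(k,\tau,\bar\rho)$ into geometrically Zariski connected components $\X_i$. Each $\X_i$ is quasi-affinoid of open type, so its image under the open immersion $\lambda$ is a proper open subset of $\P^{1,rig}$ and misses some $\bar\Q_p$-point $y_i$; enlarging $F$ if needed, we may take $y_i \in \P^1(F)$. A homography over $F$ sending $y_i$ to $\infty$ and rescaling yields an open immersion $\X_i \inj \D$. Corollary \ref{qaffstandard} applied component-by-component produces bounded connected standard subsets $Y_i$ defined over $F$ with $F$-algebra isomorphisms $\A_F^0(\X_i) \isom \A_F^0(Y_i)$, and undoing the homographies identifies the $Y_i$ with the images $\lambda(\X_i(\bar\Q_p))$ inside $\P^1(\bar\Q_p)$.

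Assembling these, the image $X^{\psi}(k,\tau,\bar\rho) := \lambda(\X^{\psi}(k,\tau,\bar\rho)(\bar\Q_p))$ is the disjoint union of the $Y_i$ (disjointness from injectivity of $\lambda$), hence a standard subset of $\P^1(\bar\Q_p)$; it is defined over $E$ because $\lambda$ is, by Galois descent from $F$. The equality $\A^0_E(\X^{\psi}(k,\tau,\bar\rho)) = \A^0_E(X^{\psi}(k,\tau,\bar\rho))$ then follows from the component-wise isomorphisms over $F$ combined with Proposition \ref{funcunram} to descend from $F$ to $E$.
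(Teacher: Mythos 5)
There is a genuine gap at the step where you pass from ``the image misses a point $y_i$'' to ``a homography sending $y_i$ to $\infty$ and rescaling yields an open immersion $\X_i \inj \D$''. Missing a single $\bar\Q_p$-point does not make the image bounded after the homography: for that you need the image to avoid an entire open disk around $y_i$, i.e.\ you need $u\circ\lambda$ to be a \emph{bounded} analytic function on $\X_i$, and this is precisely the nontrivial point. Theorem \ref{parameter} only gives $\lambda$ as a map to $\P^{1,rig}$ (in the semi-stable and supersingular extended-type cases it is genuinely $\P^1$-valued, not bounded in any obvious coordinate), and both Theorem \ref{immap} and Corollary \ref{qaffstandard} require a map into a closed disk, hence boundedness. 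The paper has to work for this: it first notes boundedness is automatic for inertial types (the parameter $a_p$, resp.\ $\alpha$, has bounded valuation), and in general it invokes the Breuil--M\'ezard conjecture to produce infinitely many $\bar\rho'$ with $X^{\psi}(k,\tau,\bar\rho')\neq\emptyset$; since such an image is open it contains an open disk, and since images attached to $\bar\rho'$ with a different semi-simplification are disjoint from $X$ (by the compatibility clause in Theorem \ref{parameter}), one obtains an open disk $D(a,r)^-$ disjoint from $X$, and the homography sending $a$ to $\infty$ then makes $u\circ\lambda$ bounded. Your argument supplies no substitute for this input, so the reduction to the closed disk, and hence the application of Corollary \ref{qaffstandard}, is not justified.

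Two smaller remarks. First, your \'etaleness argument for the open immersion is fine and is the same idea as the paper's appeal to the nonvanishing of the derivative of an injective analytic function. Second, the final descent from $F$ to $E$ via Proposition \ref{funcunram} is the wrong tool when $F/E$ is ramified: at the integral level that proposition only gives a finite inclusion $\O_F\otimes_{\O_E}\A^0_E \subset \A^0_F$, with equality for unramified extensions. The correct mechanism is Galois invariance of the isomorphism induced by $\lambda$ (which is defined over $E$), and this is already packaged in Corollary \ref{qaffstandard}, which is stated over $E$ and handles the non-connected case directly; once boundedness is available one should apply it in one stroke rather than component-by-component with an ad hoc descent.
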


\begin{proof}
Let $\X$ be a rigid analytic space that is smooth of dimension $1$, and
$f : \X \to \mathbb{P}^{1,rig}$ a rigid map that induces an injective map 
$\X(\bar\Q_p) \to \PP^1(\bar\Q_p)$. Then $f$ is an open immersion. Indeed,
this follows from the well-known fact that
an analytic function $f$ from some open disk $D$ to
$\bar\Q_p$ that is injective on $\bar\Q_p$-points satisfies $f'(x) \neq 0$ for all $x\in D$.
Now we apply this to $\X = \X^{\psi}(k,\tau,\bar\rho)$ and $f = \lambda_{(k,\tau,\bar\rho,\psi)}$.
We write $\lambda$ for $\lambda_{(k,\tau,\bar\rho,\psi)}$.
Let $X = X^\psi(k,\tau,\bar\rho)$ be the image of $\X(\bar\Q_p)$ by $\lambda$.
It is clear that $X$ is defined over $E$.

Assume first that $X$ is contained in some bounded subset of $\bar\Q_p$
(this is automatic when $\tau$ is an inertial type, see Paragraphs
\ref{cryssect} and \ref{crystabsect}).
Then $\lambda$
is an analytic open immersion from the quasi-affinoid space $\X$ to some
quasi-affinoid space $\D$ attached to an open disk in $\mathbb{A}^{1,rig}$.
By Corollary \ref{qaffstandard}, $X$ is a bounded standard subset of
$\PP^1(\bar\Q_p)$, and $\lambda$ induces an isomorphism between $\A_E(X)$ and
$\A_E(\X)$, and between $\A_E^0(X)$ and $\A_E^0(\X)$.

We do not assume anymore that $X$ is contained in some bounded subset of
$\bar\Q_p$.
By the Breuil-Mézard conjecture, 
there is an infinite number of $\bar\rho'$ with trivial
endomorphisms such that 
$X' = X^\psi(k,\tau,\bar\rho')$ is non-empty. 
For such a $\bar\rho'$, $X'$ contains a disk $D(a,r)^-$ for some $r>0$ as
it is open. 
For any $\bar\rho'$ with trivial endormophisms such that its
semi-simplification is not the same as the semi-simplification of
$\bar\rho$, we have that the intersection of $X$ and 
$X'$ is empty. So there exists some $a\in \PP^1(\bar\Q_p)$ and
$r>0$ such that $D(a,r)^- \cap X = \emptyset$. Let $u$ be
the rational function $u(x) = 1/(x-a)$, so that it sends $a$ to $\infty$,
then $u(X)$ is a bounded subset of $\PP^1(\bar\Q_p)$.  This means that
$u\circ \lambda$ is a bounded analytic function on $\X$.  So we can apply
the same reasoning as before to show that $u(X)$ is a bounded standard
subset of $\PP^1(\bar\Q_p)$, and so $X$ is a standard subset of
$\PP^1(\bar\Q_p)$.  \end{proof}

\subsubsection{Complexity bounds}

Recall that we denote by $X^{\psi}(k,\tau,\bar\rho)$ the subset
$\lambda_{(k,\tau,\bar\rho,\psi)}(\X^{\psi}(k,\tau,\bar\rho)(\bar\Q_p))$
of $\PP^1(\bar\Q_p)$.
Now we give more information on the sets 
$X^{\psi}(k,\tau,\bar\rho)$.

\begin{theo}
\label{bound}
Let $(k,\tau,\bar\rho,\psi)$ be a deformation data.
Then $X^{\psi}(k,\tau,\bar{\rho})$ is a standard subset of
$\PP^1(\bar\Q_p)$, 
defined over $E = E(k,\tau,\bar\rho,\psi)$, with 
$c_E(X^{\psi}(k,\tau,\bar{\rho})) \leq
e(R^{\psi}(k,\tau,\bar\rho)/(\pi_E))$. In particular, 
$c_E(X^{\psi}(k,\tau,\bar\rho)) \leq
\mua(k,\tau,\bar\rho)$ if $\bar\rho$ is good.
\end{theo}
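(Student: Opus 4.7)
The first claim, that $X^\psi(k,\tau,\bar\rho)$ is a standard subset of $\P^1(\bar\Q_p)$ defined over $E$, is exactly Corollary \ref{imparameter}, so nothing is needed there. For the complexity bound, my plan is to identify $\A^0_E(X^\psi(k,\tau,\bar\rho))$ with the direct sum of the normalizations of the quotients of $R := R^\psi(k,\tau,\bar\rho)$ by its minimal primes, then combine the algebraic formula for $c_E$ from Theorem \ref{samecompl} with Proposition \ref{multsmaller} applied to each factor, and finally invoke additivity of Hilbert--Samuel multiplicities.

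In more detail, let $\fp_1,\dots,\fp_n$ be the minimal primes of $R$, set $R_i = R/\fp_i$, and let $R_i^0$ be the integral closure of $R_i$ in $R_i[1/p]$. Since $R[1/p]$ is formally smooth (hence reduced) and $R$ is $\O_E$-flat, $R$ itself is reduced, so each $R_i$ is a domain. Corollary \ref{imparameter} together with the description in Section \ref{rigiddefring} gives a ring isomorphism $\A^0_E(X^\psi(k,\tau,\bar\rho)) \cong \bigoplus_i R_i^0$, and under this identification the $E$-irreducible decomposition of $X^\psi(k,\tau,\bar\rho)$ matches the direct sum decomposition; each summand $R_i^0$ equals $\A^0_E(X_i^*)$ for a unique $E$-irreducible component $X_i^*$, with residue field $k_{X_i^*,E}$ equal to the residue field of $R_i^0$. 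Applying Definition \ref{defalgcomp} and Theorem \ref{samecompl}, this yields
$$c_E(X^\psi(k,\tau,\bar\rho)) = \sum_{i=1}^n [k_{R_i^0}:k_E]\,\e(R_i^0).$$
Since $R$ has residue field $k_E$ (by the choice of $E$), so does each $R_i$, and Proposition \ref{multsmaller} applied to the inclusion $R_i \subset R_i^0 \subset R_i[1/p]$ gives the bound $[k_{R_i^0}:k_E]\,\e(R_i^0) \leq \e(R_i)$ for every $i$.

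What remains is the additivity inequality $\sum_i \e(R_i) \leq \e(R)$, which I expect to be the main obstacle. The plan is to use the short exact sequence $0 \to R \to \bigoplus_i R_i \to C \to 0$ of $R$-modules (which exists because $R$ is reduced), where the cokernel $C$ is supported on the intersections of distinct components and so has Krull dimension strictly smaller than $\dim R = 2$. Reducing modulo $\pi_E$ and invoking the classical additivity formula for Hilbert--Samuel multiplicities (\cite[Chapter 13]{Mat}), using that $R$ is equidimensional of dimension two (because $R[1/p]$ is equidimensional of dimension one and $R$ is $\O_E$-flat) and that $\pi_E$ is a non-zerodivisor on $R$ and on each $R_i$, should yield the desired inequality. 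The delicate point is passing between $\e(R) = e(R/\pi_E)$ and the sum of the $\e(R_i)$, which is clean once $\pi_E$ is known to behave as a superficial element; this is automatic from Cohen--Macaulayness of Kisin's rings, or can be bypassed by a direct filtration argument on $R/\pi_E$ whose graded pieces are the $R_i/\pi_E$ and an extra piece of strictly smaller dimension.

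Combining the three inequalities gives $c_E(X^\psi(k,\tau,\bar\rho)) \leq \e(R) = e(R^\psi(k,\tau,\bar\rho)/\pi_E)$, which is the main bound; the final statement for good $\bar\rho$ is then immediate from the Breuil--Mézard equality recalled in Theorem \ref{BM}.
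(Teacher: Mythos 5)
Up to the last step, your argument is the paper's proof almost verbatim: the decomposition $R=R^{\psi}(k,\tau,\bar\rho)$ by its minimal primes $\fp_i$, the identification $\A^0_E(X_i)=R_i^0$ coming from Corollary \ref{imparameter} and Section \ref{rigiddefring}, the formula $c_E(X_i)=[k_{X_i,E}:k_E]\,\e(R_i^0)$ from Definition \ref{defalgcomp} (Theorem \ref{samecompl} is not actually needed here), the bound $c_E(X_i)\leq \e(R_i)$ from Proposition \ref{multsmaller}, and Theorem \ref{BM} at the end. The one point where you depart from the paper is the additivity $\sum_i\e(R_i)\leq\e(R)$: the paper does not prove this but quotes it, as an equality, from \cite[Lemme 5.1.6]{BM}. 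So the "main obstacle" you identify is in fact a citable known result; the issue is that your sketched proof of it does not work as stated.

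Concretely: your exact sequence $0\to R\to\oplus_i R_i\to C\to 0$ with $\dim C<\dim R=2$ gives, by additivity of multiplicities in dimension $2$, the equality $e(R)=\sum_i e(R_i)$ for the two-dimensional multiplicities. But the quantities in the theorem are the special-fibre multiplicities $\e(R)=e(R/\pi_E)$ and $\e(R_i)=e(R_i/\pi_E)$, computed in dimension $1$, and there $C$ is not negligible: $C$ is $\pi_E$-power torsion (since $R[1/p]=\prod_i R_i[1/p]$) but can perfectly well have dimension $1$, the same as $R/\pi_E$ (two components meeting along a curve of the special fibre, e.g. $R=\Z_p[[x,y]]/(x(x-p))$, where $C\cong\F_p[[y]]$). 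Reducing mod $\pi_E$ gives the four-term sequence $0\to C[\pi_E]\to R/\pi_E\to\oplus_i R_i/\pi_E\to C/\pi_EC\to 0$, so $\e(R)-\sum_i\e(R_i)=e_1(C[\pi_E])-e_1(C/\pi_EC)$, and what saves the day is not a dimension count but a cancellation: localizing at each one-dimensional prime $\fq\supset(\pi_E)$ of $R$, the module $C_\fq$ has finite length, hence $\len(C_\fq[\pi_E])=\len(C_\fq/\pi_EC_\fq)$, and the associativity formula then yields $e_1(C[\pi_E])=e_1(C/\pi_EC)$, i.e. the equality of \cite[Lemme 5.1.6]{BM}. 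Your proposed shortcuts do not repair this: Cohen--Macaulayness (not known for these rings, and not used by the paper) does not make $\pi_E$ superficial, and even superficiality for $R$ would not suffice, since you would need $\e(R_i)=e(R_i)$ for every $i$, which fails in general --- the hypersurface $\Z_p[[x,y]]/(\pi x-y^3)$ is a Cohen--Macaulay flat two-dimensional domain with $e=2$ but $\e=3$. The inequalities that do hold for free, $e(R_i/\pi_E)\geq e(R_i)$ and $e(R/\pi_E)\geq e(R)$, point in the wrong direction. So either cite \cite[Lemme 5.1.6]{BM} as the paper does, or carry out the snake-lemma-plus-localization argument above; as written, the superficiality/"strictly smaller dimension" step is a genuine gap.
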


\begin{rema}
Note that the right-hand side of the inequality does not depend on the
choice of $E$, whereas the left-hand side can get smaller when $E$ has
more ramification. In particular, to get a statement as strong as
possible we want to take $E$ with as little ramification as possible. 
\end{rema}

\begin{proof}
Let $\fp_1,\dots,\fp_n$ be the minimal prime ideals of $R^{\psi}(k,\tau,\bar\rho)$,
$R_i = R^{\psi}(k,\tau,\bar\rho)/\fp_i$ and $R_i^0$ be the integral closure of $R_i$
in $R_i[1/p]$ as in Section \ref{rigiddefring}. Let $\X_i$ be the rigid
space attached to $R_i[1/p]$, then $X^{\psi}(k,\tau,\bar\rho)$ is the disjoint union
of the $X_i = \lambda(\X_i(\bar\Q_p))$, and each of the $X_i$ is a standard subset
of $\PP^1(\bar\Q_p)$ which is defined over $E$. Then $\A^0_E(X_i) = R_i^0$, so
$c_E(X_i) = [k_{X_i,E}:k_E]\e(R_i^0)$ by definition. Note that
$k_{X_i,E}$ is the residue field of $R_i^0$, while $k_E$ is the residue
field of $R_i$. So by Proposition \ref{multsmaller}, we have
$c_E(X_i) \leq \e(R_i)$. So we get
$c_E(X^{\psi}(k,\tau,\bar\rho)) \leq \sum_{i=1}^n\e(R_i)$.
Finally, $\sum_{i=1}^n\e(R_i) = \e(R^{\psi}(k,\tau,\bar\rho))$ by \cite[Lemme
5.1.6]{BM}.
\end{proof}

Note that in the proof above, the decomposition $X^{\psi}(k,\tau,\bar\rho) =
\cup_iX_i$ is the decomposition of $X^{\psi}(k,\tau,\bar\rho)$ in standard subsets
that are defined and irreducible over $E$.
So we also have the following result:

\begin{prop}
\label{defring1/p}
Let $X^{\psi}(k,\tau,\bar\rho) = \cup_i X_i$ the decomposition of
$X^{\psi}(k,\tau,\bar\rho)$ in
standard subsets that are defined and irreducible over $E$. Then
$R^{\psi}(k,\tau,\bar\rho)[1/p] = \oplus_i\A_E(X_i)$.
\end{prop}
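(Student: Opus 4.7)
The plan is to identify the left-hand side with the ring of bounded rigid functions on $\X^{\psi}(k,\tau,\bar\rho)$, transport this across the parameter $\lambda_{(k,\tau,\bar\rho,\psi)}$ to the standard subset $X^{\psi}(k,\tau,\bar\rho)$, and then read off the claimed decomposition from the definitions of $\A_E$ on standard subsets.

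First, I would invoke Section \ref{rigiddefring}, which gives the canonical identification
\[
R^{\psi}(k,\tau,\bar\rho)[1/p] \;=\; \A_E\bigl(\X^{\psi}(k,\tau,\bar\rho)\bigr),
\]
obtained from the Berthelot construction and the fact that each $R_i[1/p]$ is normal, so that its normal $\O_E$-model $R_i^0$ coincides with the bounded functions on $\X_i$. Next, Corollary \ref{imparameter} furnishes an isomorphism $\A^0_E(\X^{\psi}(k,\tau,\bar\rho)) \isom \A^0_E(X^{\psi}(k,\tau,\bar\rho))$ induced by $\lambda_{(k,\tau,\bar\rho,\psi)}$. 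Inverting $\pi_E$ on both sides yields an $E$-algebra isomorphism
\[
\A_E\bigl(\X^{\psi}(k,\tau,\bar\rho)\bigr) \;\isom\; \A_E\bigl(X^{\psi}(k,\tau,\bar\rho)\bigr).
\]

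It then remains to show that, if $X = X^{\psi}(k,\tau,\bar\rho) = \cup_i X_i$ is the decomposition into standard subsets defined and irreducible over $E$, then
\[
\A_E(X) \;=\; \bigoplus_i \A_E(X_i).
\]
To this end, pick a finite Galois extension $F/E$ over which $X$ is well-defined. Then $X = \cup_j Y_j$ decomposes as a disjoint union of connected standard subsets, all well-defined over $F$, and by the definition of $\A_F$ for a standard subset,
\[
\A_F(X) \;=\; \bigoplus_j \A_F(Y_j).
\]
The group $\Gal(F/E)$ acts on $\A_F(X)$ by the formula $(\sigma f)(x) = \sigma(f(\sigma^{-1}x))$ and it permutes the $Y_j$; by definition of irreducibility over $E$, the $\Gal(F/E)$-orbits on the set of $Y_j$ are precisely the partitions $\{Y_j : Y_j \subset X_i\}$. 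Grouping the direct sum above according to these orbits and taking $\Gal(F/E)$-invariants gives
\[
\A_E(X) = \A_F(X)^{\Gal(F/E)} = \bigoplus_i \Bigl(\bigoplus_{Y_j \subset X_i}\A_F(Y_j)\Bigr)^{\Gal(F/E)} = \bigoplus_i \A_E(X_i),
\]
where the last equality is again the definition of $\A_E(X_i)$ as $\A_F(X_i)^{\Gal(F/E)}$. Combining the three isomorphisms yields the proposition.

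The only mildly delicate point is the compatibility between the decomposition into $E$-irreducible components (which is the one appearing in the statement and in Theorem \ref{bound}) and the naturally available decomposition of $\A_F$ into connected components over a splitting field $F$; but once $F$ is fixed this is a formal Galois descent argument, and no further input from the deformation theory is needed beyond what is already encoded in Corollary \ref{imparameter}.
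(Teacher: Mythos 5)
Your proof is correct, and it reaches the statement by a slightly different route than the paper. The paper treats the proposition as an immediate byproduct of the proof of Theorem \ref{bound}: there one starts from the algebraic decomposition $R^{\psi}(k,\tau,\bar\rho)[1/p]=\oplus_i R_i[1/p]$ over the minimal primes, notes that each $\X_i$ is carried by $\lambda$ onto a standard subset $X_i$ with $\A^0_E(X_i)=R_i^0$, hence $R_i[1/p]=\A_E(X_i)$, and then remarks that these $X_i$ are precisely the components of $X^{\psi}(k,\tau,\bar\rho)$ defined and irreducible over $E$ (each summand being a domain). You instead work globally on the function-theoretic side: you identify the whole ring with $\A_E(X^{\psi}(k,\tau,\bar\rho))$ via Section \ref{rigiddefring} and Corollary \ref{imparameter}, and then split $\A_E(X)=\oplus_i\A_E(X_i)$ by Galois descent from a splitting field $F$, grouping the connected components into $\Gal(F/E)$-orbits; this last step is essentially definitional in the paper's framework (it is the same mechanism as Lemma \ref{irrcomp}). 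Both arguments rest on the same key input, namely that $\lambda$ is an open immersion and Corollaries \ref{qaffstandard} and \ref{imparameter}; your version has the small advantage of not needing to know in advance that the minimal-prime components coincide with the $E$-irreducible standard components (this matching falls out, since each $\A_E(X_i)$ is a domain), whereas the paper's version records that matching explicitly. The only point worth making explicit in your step two is that inverting $\pi_E$ really does recover $\A_E$ from $\A^0_E$ on both sides: on the deformation side this is because $\A^0_E(\X^{\psi}(k,\tau,\bar\rho))=\oplus_i R_i^0$ with $R_i^0[1/p]=R_i[1/p]$ (Section \ref{rigiddefring}, using normality and Proposition \ref{propdJ}), and on the standard-subset side because a bounded Galois-invariant function becomes bounded by $1$ after multiplication by a power of $\pi_E$; both facts are already in the paper, so your argument is complete.
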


Finally, we have the following result:

\begin{theo}
\label{algogen}
Let $(k,\tau,\bar\rho,\psi)$ be a deformation data, and assume that
$\bar\rho$ is good.
There
exists a finite set $\E$ of finite extensions of $E =
E(k,\tau,\bar\rho,\psi)$,
depending only on $\mua(k,\tau,\bar\rho)$,
such that
$X^{\psi}(k,\tau,\bar\rho)$ is
determined by the sets $X^{\psi}(k,\tau,\bar\rho) \cap F$ for $F\in \E$.
\end{theo}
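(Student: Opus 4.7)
The plan is to combine two results that have already been established: Theorem~\ref{bound}, which bounds the combinatorial complexity of $X^{\psi}(k,\tau,\bar\rho)$ in terms of the Breuil--Mézard multiplicity, and Theorem~\ref{algo}, which says that a standard subset of bounded complexity over $E$ is determined by its intersections with a finite, effectively-described family of finite extensions of $E$. The synthesis is essentially mechanical.

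First I would apply Theorem~\ref{bound}: the assumption that $\bar\rho$ is good yields that $X := X^{\psi}(k,\tau,\bar\rho)$ is a standard subset of $\P^1(\bar\Q_p)$ defined over $E := E(k,\tau,\bar\rho,\psi)$, with $c_E(X) \leq m$, where $m := \mua(k,\tau,\bar\rho)$. Then I would feed $X$ and $m$ into Theorem~\ref{algo}, which directly produces a finite set $\E$ of finite extensions of $E$, depending only on $E$ and $m$, such that $X$ is determined by the intersections $X \cap F$ for $F \in \E$. Since $E$ is part of the fixed deformation data, the recipe producing $\E$ given $E$ depends only on the combinatorial quantity $\mua(k,\tau,\bar\rho)$, as claimed.

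The one bookkeeping point is that Theorem~\ref{algo} is phrased for standard subsets of $\bar\Q_p$, whereas $X$ may contain the point at infinity under the chosen parameter $\lambda_{(k,\tau,\bar\rho,\psi)}$. There are two natural ways to handle this: either observe that the construction of Section~\ref{algoproof} treats $0$ and $\infty$ symmetrically and so extends to standard subsets of $\P^1(\bar\Q_p)$; or use the trick from the proof of Corollary~\ref{imparameter}. Namely, by the Breuil--Mézard conjecture (Theorem~\ref{BM}) there exist infinitely many good $\bar\rho'$ with nonempty $X^{\psi}(k,\tau,\bar\rho')$, each disjoint from $X$, so one can pick a point $a$ in some such $X^{\psi}(k,\tau,\bar\rho')$ of degree over $E$ bounded in terms of $m$, then compose $\lambda_{(k,\tau,\bar\rho,\psi)}$ with a homography defined over $E(a)$ sending $a$ to $\infty$, and apply Theorem~\ref{algo} to the resulting bounded standard subset. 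The intersections with fields in $\E$ translate back to intersections for the original $X$ after enlarging each field in $\E$ to contain the coefficients of the homography, and these enlargements are also of bounded degree.

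The substantive mathematics is entirely in Theorems~\ref{bound} and~\ref{algo}; there is no real obstacle in this deduction, just the unbounded-set bookkeeping noted above.
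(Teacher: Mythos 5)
Your proposal is correct and is essentially the paper's own proof, which is the one-line combination of Theorem~\ref{bound} (itself resting on Theorem~\ref{BM}) with Theorem~\ref{algo} applied with $m = \mua(k,\tau,\bar\rho)$. The bookkeeping about $\infty$ that you raise is not an issue the paper dwells on, since the construction in Section~\ref{algoproof} already treats standard subsets of $\P^1(\bar\Q_p)$ (it handles $0$ and $\infty$ symmetrically), though your alternative workaround via a homography in the spirit of Corollary~\ref{imparameter} would also be fine.
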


\begin{proof}
This is a consequence of Theorem \ref{BM} and Theorem \ref{algo}, where we take
$m = \mua(k,\tau,\bar\rho)$.
\end{proof}

\subsection{The case of crystalline deformation rings}
\label{crysrings}

We are interested here in the case of the deformation ring of crystalline
representations, that is, we take $\tau$ to be the trivial representation.
This case is of particular interest as we are able to deduce additional
information.

In this case $R^{\psi}(k,\text{triv},\bar\rho)$ is zero unless $\psi$ is
a twist of $\cyc^{k-1}$ by an unramified character.
Note that $R^{\psi}(k,\text{triv},\bar\rho)$ and
$R^{\psi'}(k,\text{triv},\bar\rho)$ are isomorphic as long as
$\psi/\psi'$ is an unramified character with trivial reduction modulo
$p$. So without loss of generality we will assume from now on that $\psi
= \cyc^{k-1}$ and $\det\bar\rho = \omega^{k-1}$.

We denote by $R(k,\bar\rho)$ the ring
$R^{\cyc^{k-1}}(k,\text{triv},\bar\rho)$. It parametrizes the set of
crystalline lifts of $\bar\rho$ with determinant $\cyc^{k-1}$ and
Hodge-Tate weights $0$ and $k-1$. We also write
$X(k,\bar\rho)$ for $X^{\cyc^{k-1}}(k,\text{triv},\bar\rho)$.
and 
$\mua(k,\bar\rho)$ for $\mua(k,\text{triv},\bar\rho)$.

Let $\F$ be the extension of $\F_p$ over which $\bar\rho$ is defined (so
$\F = \F_p$ when $\bar\rho$ is irreducible), and $E(\bar\rho)$ the
unramified extension of $\Q_p$ with residue field $\F$ (so
$E(\bar\rho)=\Q_p$ when $\bar\rho$ is irreducible). Then $R(k,\bar\rho)$
is an $\OO_{E(\bar\rho)}$-algebra with residue field $\F$.

\subsubsection{Classification of filtered $\phi$-modules}

For $a_p \in \bar\Z_p$ and $F$ a finite extension of $\Q_p$ containing
$a_p$, we define a filtered $\phi$-module $D_{k,a_p}$ as follows:
\begin{align*}
& D_{k,a_p} = Fe_1\oplus Fe_2 \\
& \phi(e_1) = p^{k-1}e_2, \quad  \phi(e_2) = -e_1+a_pe_2 \\
& \Fil^iD_{k,a_p} = D_{k,a_p} \text{ if }i\leq 0 \\
& \Fil^iD_{k,a_p} = Fe_1 \text{ if }1 \leq i\leq k-1 \\
& \Fil^iD_{k,a_p} = 0\text{ if }i \geq k
\end{align*}

Denote by $V_{k,a_p}$ the crystalline representation such that
$D_{cris}(V_{k,a_p}^*) = D_{k,a_p}$. Then: $V_{k,a_p}$ has Hodge-Tate
weights $(0,k-1)$ and determinant $\cyc^{k-1}$. Moreover, $V_{k,a_p}$ is
irreducible if $v_p(a_p)>0$, and a reducible non-split extension of an
unramified character by the product of an unramified character by
$\cyc^{k-1}$ if $v_p(a_p)=0$.
We have the following well-known result:

\begin{lemm}
\label{isomclass}
Let $V$ be a crystalline representation with Hodge-Tate
weights $(0,k-1)$ and determinant $\cyc^{k-1}$. If $V$ is irreducible
there exists a unique $a_p\in \m_{\bar\Z_p}$ such that $V$ is isomorphic
to $V_{k,a_p}$. If $V$ is reducible non-split there exists a unique
$a_p\in\bar\Z_p^\times$ such that $V$ is isomorphic
to $V_{k,a_p}$. 
\end{lemm}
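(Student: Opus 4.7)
The plan is to use Fontaine's equivalence between crystalline representations of $G_{\Q_p}$ and weakly admissible filtered $\phi$-modules: setting $D = D_{cris}(V^*)$, the determinant hypothesis translates to $\det\phi = p^{k-1}$ on $D$, and so by Cayley--Hamilton $\phi^2 - a\phi + p^{k-1} = 0$ with $a := \tr\phi$. Weak admissibility applied to a $\phi$-eigenline forces its eigenvalue to have nonnegative $p$-adic valuation, whence $a \in \bar\Z_p$. Setting $a_p := a$, the goal is to produce an explicit isomorphism $D \simeq D_{k,a_p}$ of filtered $\phi$-modules, which under Fontaine's equivalence gives $V \simeq V_{k,a_p}$.

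The main step is: pick any nonzero $e_1 \in \Fil^1 D$ (recall $\dim_F \Fil^1 D = 1$) and show that $\{e_1, \phi(e_1)\}$ is a basis of $D$. If not, then $\phi(e_1) = \lambda e_1$ for some $\lambda$, so $L := F e_1$ is a $\phi$-stable line with $L \subset \Fil^1 D$, giving $t_H(L) = k-1$. Weak admissibility of $D$ forces $v_p(\lambda) = t_N(L) \geq k-1$, and combined with $v_p(\lambda) + v_p(\mu) = k-1$ (plus the analogous lower bound $v_p(\mu) \geq 0$ from the other eigenline) this yields $v_p(\lambda) = k-1$ and $v_p(\mu) = 0$. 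A direct check shows that the other eigenline $L' = F f$ does not meet $\Fil^1 D = L$, so $\Fil^1 L' = 0$ and $t_H(L') = 0 = t_N(L')$; hence $L'$ is weakly admissible too, and the identity $\Fil^1 L \oplus \Fil^1 L' = L \oplus 0 = \Fil^1 D$ means $D = L \oplus L'$ as filtered $\phi$-modules. This splits $V$ as a direct sum of two characters, contradicting both the irreducibility and the non-split hypothesis. Therefore $\{e_1, \phi(e_1)\}$ is a basis; putting $e_2 := p^{1-k}\phi(e_1)$ and using Cayley--Hamilton gives $\phi(e_2) = -e_1 + a_p e_2$, and $\Fil^1 D = F e_1$ by choice of $e_1$. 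This exhibits the required isomorphism $D \simeq D_{k,a_p}$.

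Uniqueness is then immediate, since $a_p = \tr\phi$ is an invariant of the isomorphism class of $V$ via the functor $D_{cris}$. To distinguish the two cases in the statement, one computes inside $D_{k,a_p}$ that no $\phi$-eigenvector lies in $\Fil^1 D_{k,a_p} = F e_1$ (an eigenvector $\alpha e_1 + \beta e_2$ with $\beta = 0$ would force $\lambda = 0$, impossible as $\lambda\mu = p^{k-1}$), so each $\phi$-eigenline $L_\nu$ has $t_H(L_\nu) = 0$ and is weakly admissible iff $v_p(\nu) = 0$. Since the slopes of $\phi$ are nonnegative and sum to $k-1$, such an eigenline exists iff $v_p(a_p) = 0$; in that case the two eigenlines span $D$ as a $\phi$-module but their induced filtrations both vanish while $\Fil^1 D \neq 0$, so the extension is non-split. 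Thus $V_{k,a_p}$ is irreducible iff $a_p \in \m_{\bar\Z_p}$, and reducible non-split iff $a_p \in \bar\Z_p^\times$, completing the proof. The principal obstacle is the existence step: one must rule out the pathological configuration where $\Fil^1 D$ is itself $\phi$-stable, which is precisely what the weak-admissibility analysis of the second paragraph accomplishes.
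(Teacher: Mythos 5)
Your proof is correct. The paper does not actually prove this lemma --- it is quoted as a well-known result --- and your argument is the standard one behind that citation: the classification of rank-$2$ weakly admissible filtered $\phi$-modules, with the key point, exactly as you isolate it, being that weak admissibility forbids $\Fil^1 D$ from being $\phi$-stable unless $D$ (hence $V$) splits as a direct sum of two rank-one objects, which the irreducible and reducible non-split hypotheses both exclude. The only step you gloss over is the possibility of a repeated eigenvalue (where ``the other eigenline'' need not exist), but there $v_p(\lambda)=(k-1)/2<k-1$ already contradicts the bound $v_p(\lambda)\geq k-1$ coming from weak admissibility, so the case closes immediately; with that remark added, the existence, the dichotomy $v_p(a_p)>0$ versus $v_p(a_p)=0$, and the uniqueness via $a_p=\tr\phi$ are all complete.
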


\subsubsection{The parameter $a_p$}

We show in Proposition \ref{paramcrys} that the parameter $a_p$ actually
defines a rigid analytic function. This is the function that plays the
role of $\lambda$ of Theorem \ref{parameter} for crystalline
representations.


From Theorem \ref{parameter} we can already deduce some results. It is a
well-known
conjecture (see \cite[Conjecture 4.1.1]{BGslope}) that if $p>2$, $k$ is
even, and $v(a_p)\not\in \Z$ then $\bar{V}^{ss}_{k,a_p}$ is irreducible.
From this we get:

\begin{prop}
\label{annulusconj}
Let $p>2$, $k$ even, $n\in \Z_{\geq 0}$. If the conjecture above is true, then
there is an irreducible representation $\bar\rho$ (depending on $n$, $k$)
such that the set
$\{x,n<v_p(x)<n+1\}$ is contained in $X(k,\bar\rho)$.
\end{prop}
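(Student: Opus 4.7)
The plan is to combine the conjecture with the finiteness of the possible $\bar\rho$ and the rigid-analytic connectedness of $A$.

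First, I would observe that $A = \{x : n < v_p(x) < n+1\}$ equals $D(0, p^{-n})^- \setminus D(0, p^{-n-1})^+$, so it is a connected standard subset of $\P^1(\bar\Q_p)$ defined over $\Q_p$ in the sense of Definition \ref{standard}. Next, every $a_p \in A$ satisfies $v_p(a_p) \in (n, n+1)$, so $v_p(a_p) \notin \Z$, and the conjecture gives that $\bar V_{k,a_p}^{ss}$ is irreducible.

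Then I would use that the set of isomorphism classes of irreducible $\bar\rho : G_{\Q_p} \to \GL_2(\bar\F_p)$ with determinant $\omega^{k-1}$ is finite. Indeed, such a $\bar\rho$ has trivial wild inertia, its restriction to tame inertia is a sum of two distinct fundamental characters of level $2$ whose product is forced by $\omega^{k-1}|_{I_{\Q_p}}$, and the remaining unramified data is fixed up to one of finitely many unramified quadratic twists compatible with the determinant. Calling $\bar\rho_1,\dots,\bar\rho_r$ the irreducible candidates with $X(k,\bar\rho_i) \cap A \neq \emptyset$, this yields a finite disjoint decomposition
\[
A \;=\; \bigsqcup_{i=1}^{r} \bigl(A \cap X(k, \bar\rho_i)\bigr).
\]
By Theorem \ref{bound}, each $X(k, \bar\rho_i)$ is a standard subset defined over $\Q_p$ (since the residue field of an irreducible $\bar\rho$ is $\F_p$, so $E(k,\text{triv},\bar\rho,\cyc^{k-1}) = \Q_p$). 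Using the non-archimedean fact that any two rational disks in $\bar\Q_p$ are either disjoint or nested, a case analysis shows that each $A \cap X(k,\bar\rho_i)$ is again a standard subset.

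Finally, I would conclude $r=1$ by the rigid-analytic connectedness of $A$. By Corollary \ref{funcringirr}, the ring $\A_{\Q_p}(A)$ is a domain. On the other hand, the decomposition above gives
\[
\A_{\Q_p}(A) \;=\; \bigoplus_{i=1}^{r} \A_{\Q_p}\bigl(A \cap X(k, \bar\rho_i)\bigr),
\]
which carries a non-trivial idempotent as soon as more than one summand is nonzero. This contradicts the domain property unless $r = 1$, and then $A \subset X(k, \bar\rho)$ for that single irreducible $\bar\rho$, as desired.

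The main obstacle is the bookkeeping in the fourth step: for each connected component $D_0 \setminus \bigcup_\ell D_\ell$ of $X(k, \bar\rho_i)$, one must classify how $D_0$ and the holes $D_\ell$ sit relative to the outer disk $D(0, p^{-n})^-$ and the inner hole $D(0, p^{-n-1})^+$ of $A$, then check that each resulting intersection assembles into a standard subset. All remaining steps are direct applications of the results of Sections \ref{geometry}--\ref{potcrys}.
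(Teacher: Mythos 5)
Your overall strategy coincides with the paper's proof: cover the annulus $A$ by the finitely many sets $A\cap X(k,\bar\rho)$ with $\bar\rho$ irreducible (the conjecture guarantees these cover, and irreducibility plus the fixed determinant $\omega^{k-1}$ gives finiteness and $E(\bar\rho)=\Q_p$), note that this exhibits $A$ as a finite disjoint union of standard subsets, and conclude from connectedness of $A$ that one piece is all of $A$. Steps (1)--(4) of your write-up are correct and are exactly the paper's (implicit) reasoning; the use of Lemma \ref{interstd} for the intersections and of Theorem \ref{bound} for standardness is the right way to make them precise (for the identification of ``reduction equal to $\bar\rho_i$'' with membership in $X(k,\bar\rho_i)$ you are implicitly using Proposition \ref{sameimage}, which is harmless since the $\bar\rho_i$ are irreducible).

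The one place that needs attention is your final step. From the set-theoretic partition $A=\bigsqcup_i\bigl(A\cap X(k,\bar\rho_i)\bigr)$ you assert $\A_{\Q_p}(A)=\bigoplus_i \A_{\Q_p}\bigl(A\cap X(k,\bar\rho_i)\bigr)$, but a partition of a rigid space by admissible opens does not formally yield such a decomposition of the ring of bounded functions: the paper's own cautionary example after Theorem \ref{immap} (the closed unit disk as the disjoint union of the open unit disk and the unit circle) shows the analogous identity can fail, even though each piece is an admissible open and the map is bijective. The identity is true in your situation precisely because every piece is of open type, and this is the crux of the whole proposition rather than a formality. Two ways to close the gap with the paper's own tools: either invoke the uniqueness of the decomposition of a standard subset into connected standard subsets (the remark after Definition \ref{standard}), which is what the paper's one-line proof uses and which gives the conclusion at once, with no ring argument needed; or apply Corollary \ref{qaffstandard} to the open immersion of the open-type quasi-affinoid space $\bigsqcup_i\bigl(A\cap X(k,\bar\rho_i)\bigr)$ into a closed disk, whose image is $A$, to obtain the isomorphism $\A(A)\cong\bigoplus_i\A\bigl(A\cap X(k,\bar\rho_i)\bigr)$, after which your idempotent-versus-domain argument (Corollary \ref{funcringirr}) does conclude. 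With either reference supplied, your proof is complete and is essentially the paper's.
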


\begin{proof}
If the conjecture holds, then the set $C = \{x,n<v_p(x)<n+1\}$ is the
union of the $C \cap X(k,\bar\rho)$ for $\bar\rho$ irreducible. So we
have written $C$ as a finite disjoint union of standard subsets, which
means that one of these subsets is equal to $C$.
\end{proof}

\subsubsection{Reduction and semi-simplification}

We know want to show that the case of crystalline deformation rings is
accessible to numerical computations. However we must change slightly our
setting: indeed, we can compute numerically only the semi-simplified
reduction of $V_{k,a_p}$. So we need to express the result of Theorem
\ref{bound} in terms of semi-simple representations instead of in terms
of representations with trivial endomorphisms.

Let $\bar{r}$ be a semi-simple representation of $G_{\Q_p}$ with values
in $\GL_2(\bar\F_p)$. We define $Y(k,\bar{r})$ to be the set $\{a_p\in
D(0,1)^-, \bar{V}_{k,a_p}^{ss} = \bar{r}\}$. 
Let $\bar{\rho}$ be a representation of $G_{\Q_p}$ with trivial
endomorphisms with semi-simplification isomorphic to $\bar{r}$. 
Let $X'(k,\bar\rho) = X(k,\bar\rho) \cap D(0,1)^-$. This
means we are only interested in elements in $X(k,\bar\rho)$ that
correspond to irreducible representations $V_{k,x}$.
Then we have that $X'(k,\bar\rho) \subset Y(k,\bar{r})$. 
We want to know when this is an equality.

\begin{defi}
We say that a representation $\bar\rho$ with trivial endomorphisms is
nice if either $\bar\rho$ is irreducible, or $\bar\rho$ is a non-split
extension of $\alpha$ by $\beta$ where $\beta/\alpha \not\in
\{1,\omega\}$. 

We say that a semi-simple representation $\bar{r}$ is nice if $\bar{r}$ is not scalar,
and in addition when $p=3$ if $\bar{r}$ is not of the form $\alpha \oplus
\beta$ with $\alpha/\beta \neq \omega$. 
\end{defi}

Note that any $\bar\rho$ with trivial endomorphisms that is nice is also
good (in the sense of Definition \ref{defgood}), hence
satisfies the hypotheses of Theorem \ref{BM}. Moreover, its
semi-simplification is a nice semi-simple representation.
If $\bar{r}$ is semi-simple and 
nice, then there exists a nice $\bar\rho$ with trivial endomorphisms
such that $\bar\rho^{ss} = \bar{r}$, so we have $Y(k,\bar{r}) =
X'(k,\bar\rho)$. Note that we can choose such a $\bar\rho$ so that in
addition, $E(\bar\rho) = E(\bar{r})$.

\begin{prop}
\label{sameimage}
Let $\bar\rho$ be a nice representation with trivial endomorphisms.
Then $X'(k,\bar\rho) = Y(k,\bar\rho^{ss})$.
\end{prop}

\begin{proof}
The result is clear when $\bar\rho$ is irreducible.
Recall that $\dim \Ext^1(\alpha,\beta) > 1$ if and only if 
$\beta/\alpha \in \{1,\omega\}$. Suppose that 
$\bar\rho$ is an
extension of $\alpha$ by $\beta$ where $\beta/\alpha \not\in
\{1,\omega\}$.
Let $x\in Y(k,\bar\rho^{ss})$. By Ribet's Lemma, there exists a $G_{\Q_p}$-invariant 
lattice $T \subset V_{k,x}$ such that $\bar{T}$ is a non-split extension
of $\alpha$ by $\beta$, and so is isomorphic to $\bar\rho$. This means that
$x\in X'(k,\bar{\rho})$.
\end{proof}

We know some information about the difference between $X(k,\bar\rho)$ and
$X'(k,\bar\rho)$:

\begin{prop}
\label{unitcircle}
Let $\bar\rho$ be a representation of $G_{\Q_p}$  with trivial
endomorphisms.

If $\bar\rho$ is not an extension of $\unr(u)$ by 
$\unr(u^{-1})\omega^n$ for some $n$ which is equal to $k-1$ modulo $p-1$,
and $u\in\bar\F_p^\times$,
then
$X(k,\bar{\rho}) \subset D(0,1)^-$. 
If $\bar\rho$ is an extension of
$\unr(u)$ by $\unr(u^{-1})\omega^n$ for some $u\in\bar\F_p^\times$ and $0 \leq n
< p-1$, and $n = k-1$ modulo $p-1$, and $u\not\in\{\pm 1\}$ if $n=0$ or
$n=1$,
then $X(k,\bar\rho) \cap \{x,|x|=1\}$ is the disk $\{x,\bar{x}=u\}$.
\end{prop}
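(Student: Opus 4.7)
The plan is to analyse $V_{k,a_p}$ directly from its crystalline description $D_{k,a_p}$ for $a_p$ on the closed unit circle, and compare the resulting reduction with $\bar\rho$. First I would rule out $v_p(a_p)<0$ by checking weak admissibility: the characteristic polynomial of $\phi$ on $D_{k,a_p}$ is $X^2-a_pX+p^{k-1}$, which has a root $\alpha$ of valuation $v_p(a_p)<0$. The corresponding $\phi$-eigenline is distinct from $\Fil^{k-1}D_{k,a_p}=Fe_1$ (since $\phi(e_1)=p^{k-1}e_2$ shows that $e_1$ is not a $\phi$-eigenvector), so it is a $\phi$-stable sub-line with Hodge slope $0$ but Newton slope $v_p(\alpha)<0$, violating weak admissibility. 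Hence $V_{k,a_p}$ does not exist for $v_p(a_p)<0$ and $X(k,\bar\rho)\subset D(0,1)^+$; the proposition reduces to a statement about the boundary circle $|a_p|=1$.

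For $v_p(a_p)=0$, let $\alpha$ be the unit root of $X^2-a_pX+p^{k-1}$, so that $\bar\alpha=\bar a_p$ and $\beta=p^{k-1}/\alpha$ is the non-unit root. The $\phi$-eigenline spanned by an eigenvector for $\alpha$ carries the trivial filtration and is therefore a weakly admissible sub filtered $\phi$-module; under the crystalline equivalence (with the paper's convention $\operatorname{D}_{\text{crys}}(V_{k,a_p}^*)=D_{k,a_p}$) it corresponds to an unramified quotient of $V_{k,a_p}$. A short calculation gives the short exact sequence
\begin{equation*}
0\to\cyc^{k-1}\unr(\alpha^{-1})\to V_{k,a_p}\to\unr(\alpha)\to 0,
\end{equation*}
which reduces modulo $p$ to exhibit $\bar V_{k,a_p}$ as an extension of $\unr(\bar a_p)$ by $\omega^{k-1}\unr(\bar a_p^{-1})$, with the quotient being specifically $\unr(\bar a_p)$.

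The first statement is then immediate: if $\bar\rho^{ss}$ is not of the form $\unr(u)\oplus\omega^{k-1}\unr(u^{-1})$ for any $u$, then $\bar V_{k,a_p}^{ss}\neq\bar\rho^{ss}$ for every $a_p$ on the unit circle, so $X(k,\bar\rho)\cap\{|x|=1\}=\emptyset$ and $X(k,\bar\rho)\subset D(0,1)^-$. For the second statement, suppose $\bar\rho$ is a non-split extension of $\unr(u)$ by $\omega^n\unr(u^{-1})$ with $n\equiv k-1\pmod{p-1}$ and the stated exclusion on $u$. If $\bar a_p=u$ then the two constituent characters of $\bar V_{k,a_p}^{ss}$ match those of $\bar\rho^{ss}$, so the entire disk $\{\bar x=u\}$ lies in $X(k,\bar\rho)$. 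Conversely, for $|a_p|=1$ lying in $X(k,\bar\rho)$, matching constituents forces $\bar a_p=u$: when $\omega^{k-1}$ is nontrivial on inertia (i.e.\ $n\neq 0$) the two constituents $\unr(\bar a_p)$ and $\omega^{k-1}\unr(\bar a_p^{-1})$ have distinct inertial types and the matching is immediate; when $n=0$ the exclusion $u\neq\pm 1$ ensures that $u$ and $u^{-1}$ are genuinely distinct, and the fact that $\bar V_{k,a_p}$ has $\unr(\bar a_p)$ (and not $\unr(\bar a_p^{-1})$) as its quotient rules out $\bar a_p=u^{-1}$. The main obstacle is precisely this last distinction for $n\in\{0,1\}$: it requires treating $\bar\rho$ as a specific non-split extension rather than merely its semi-simplification, and the stated exclusions on $u$ are exactly what guarantees that the refined matching of quotient-vs-sub cuts the ambiguity down to $\bar a_p=u$.
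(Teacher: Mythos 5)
Your identification of $V_{k,a_p}$ for $|a_p|=1$ as the non-split crystalline extension of $\unr(\alpha)$ by $\unr(\alpha^{-1})\cyc^{k-1}$ with $\bar\alpha=\bar a_p$, and the remark that any lattice reduction inherits this orientation (unramified quotient, twisted sub), is exactly the paper's starting point. The gap is in what membership in $X(k,\bar\rho)$ means here: in this section $X(k,\bar\rho)$ is the image of the deformation space, i.e.\ the set of $a_p$ such that $V_{k,a_p}$ contains a lattice whose reduction is isomorphic to the specific representation $\bar\rho$ (the set defined by semi-simplified reduction is $Y(k,\bar r)$, introduced only afterwards). Consequently your inclusion $\{x,\bar x=u\}\subset X(k,\bar\rho)$, justified solely by matching the constituents of the semi-simplifications, does not prove membership. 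What is needed is: (a) a lattice $T\subset V_{k,a_p}$ with $\bar T$ non-split (this exists because $V_{k,a_p}$ is itself non-split: scale the extension class into integral cohomology so that it remains nonzero modulo $\pi$); (b) your orientation fact, so that $\bar T$ is a non-split extension of $\unr(u)$ by $\unr(u^{-1})\omega^{k-1}$; and crucially (c) the fact that under the hypotheses on $(n,u)$ there is a \emph{unique} non-split extension of $\unr(u)$ by $\unr(u^{-1})\omega^n$ up to isomorphism, i.e.\ $\dim\Ext^1(\unr(u),\unr(u^{-1})\omega^n)=1$, so that $\bar T\simeq\bar\rho$. Point (c) is where the excluded case $n=1$, $u=\pm1$ actually enters: your closing paragraph attributes that exclusion to the $u$ versus $u^{-1}$ ambiguity, but for $n=1$ the inertial characters already separate the constituents; the exclusion is needed because otherwise the $\Ext^1$ is $2$-dimensional and the lattice reductions need not hit the particular class $\bar\rho$, so the disk would not be contained in $X(k,\bar\rho)$.

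There is a second, smaller gap in the first statement. The hypothesis is that $\bar\rho$ itself is not an extension of $\unr(u)$ by $\unr(u^{-1})\omega^n$; when $k-1\not\equiv 0 \pmod{p-1}$ this includes representations $\bar\rho$ whose semi-simplification \emph{is} of the form $\unr(u)\oplus\unr(u^{-1})\omega^{k-1}$ but which are non-split extensions in the opposite direction (unramified sub, twisted quotient). Your ``immediate'' argument, which only compares semi-simplifications, does not cover these; they are handled precisely by the orientation fact you proved, since any non-split lattice reduction of $V_{k,a_p}$ on the unit circle has $\unr(\bar a_p^{-1})\omega^{k-1}$ as its unique subcharacter and so cannot be isomorphic to such a $\bar\rho$. (The weak-admissibility discussion ruling out $v_p(a_p)<0$ is harmless but unnecessary, since the parametrization already confines $a_p$ to $D(0,1)^+$.)
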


\begin{proof}
For $a_p \in \bar\Z_p^\times$, the representation $V_{k,a_p}$ is the
unique crystalline 
non-split extension of $\unr(u)$ by $\unr(u^{-1})\cyc^{k-1}$, where
$u\in\bar\Z_p^\times$ and $u$ and $u^{-1}p ^{k-1}$ are the roots of
$X^2-a_pX+p^{k-1}$. In particular, for any invariant lattice $T \subset
V_{k,a_p}$ such that $\bar{T}$ is non-split, we get that $\bar{T}$ is an
extension of $\unr(\bar{u})$ by $\unr(\bar{u}^{-1})\omega^{k-1}$.
So $X(k,\bar\rho)$ does not meet $\{x,|x|=1\}$ unless $\bar\rho$ has the
specific form given.
Moreover, $\bar{u} = \bar{a_p}$. So $X(k,\bar\rho) \cap \{x,|x|=1\}
\subset\{x,\bar{x}=u\}$. If $\bar\rho$ is an extension of
$\unr(u)$ by $\unr(u^{-1})\omega^n$ for some $u\in\bar\F_p$ and $0 \leq n
< p-1$, the conditions on $(n,u)$ imply there is a unique non-split
extension of $\unr(u)$ by $\unr(u^{-1})\omega^n$, and so 
$X(k,\bar\rho) \cap \{x,|x|=1\} = \{x,\bar{x}=u\}$
\end{proof}

\begin{coro}
\label{boundbis}
Let $\bar\rho$ be a representation with trivial endomorphisms.
Let $X'(k,\bar\rho) = X(k,\bar\rho) \cap D(0,1)^-$.  If $\bar\rho$ is not an extension $\unr(u)$ by 
$\unr(u^{-1})\omega^n$ for some $n$ which is equal to $k-1$ modulo $p-1$,
then $X'(k,\bar\rho) = X(k,\bar\rho)$ and $c_E(X'(k,\bar\rho)) \leq \e(R(k,\bar\rho))$.
If $\bar\rho$ is an extension $\unr(u)$ by 
$\unr(u^{-1})\omega^n$ for some $n$ which is equal to $k-1$ modulo $p-1$,
and $u\not\in\{\pm 1\}$ if $n=0$ or $n=1$,
then $c_E(X'(k,\bar\rho)) \leq \e(R(k,\bar\rho)) -1$.
\end{coro}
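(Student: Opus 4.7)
The plan is to combine the bound from Theorem \ref{bound} with the precise description of the unit-circle part of $X(k,\bar\rho)$ given in Proposition \ref{unitcircle}. In the crystalline setting with $\tau$ trivial and $\psi = \cyc^{k-1}$, Theorem \ref{bound} specializes to the estimate $c_E(X(k,\bar\rho)) \leq \e(R(k,\bar\rho))$, so the task is really to relate $c_E(X(k,\bar\rho))$ and $c_E(X'(k,\bar\rho))$ under the two hypotheses.

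For the first assertion, I would invoke Proposition \ref{unitcircle} directly: if $\bar\rho$ is not of the excluded shape, then $X(k,\bar\rho) \cap \{x,|x|=1\} = \emptyset$, so $X(k,\bar\rho) \subset D(0,1)^-$ and hence $X'(k,\bar\rho) = X(k,\bar\rho)$. The bound then follows immediately from Theorem \ref{bound}.

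For the second assertion, Proposition \ref{unitcircle} identifies $X(k,\bar\rho)\cap\{x,|x|=1\}$ with the open disk $\Delta = \{x,\bar x = u\} = D(\tilde u,1)^-$, where $\tilde u \in E$ is the Teichm\"uller lift of $u$ (which lies in $E$ because $u$ lives in the residue field over which $\bar\rho$ is defined, and this residue field is contained in $k_E$ by the convention fixing $E$ in Section \ref{crysrings}). Since $X'(k,\bar\rho) \subset D(0,1)^-$ and $\Delta \subset \{x,|x|=1\}$ are disjoint, they are distinct connected components of $X(k,\bar\rho)$, and by the additivity of $c_E$ over disjoint unions of standard subsets one has
\[
c_E(X(k,\bar\rho)) = c_E(X'(k,\bar\rho)) + c_E(\Delta).
\]
It then remains to compute $c_E(\Delta) = 1$, which is immediate from Proposition \ref{multdisk} (or directly from the combinatorial definition of $\gamma_E$): one has $s = 1$ since $\tilde u \in E$, and $t = 1$ since the radius is $1 \in |E^\times|$. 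Combining these with Theorem \ref{bound} gives $c_E(X'(k,\bar\rho)) = c_E(X(k,\bar\rho)) - 1 \leq \e(R(k,\bar\rho)) - 1$.

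There is no genuine obstacle here; the proof is essentially a bookkeeping exercise once Theorem \ref{bound} and Proposition \ref{unitcircle} are in hand. The only point to check carefully is that $\tilde u \in E$ so that $\Delta$ really is well-defined over $E$ with $c_E(\Delta) = 1$, and this is built into the choice of $E$ as the unramified extension of $\Q_p$ whose residue field is the field of definition of $\bar\rho$.
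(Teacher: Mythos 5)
Your proof is correct and follows the same route as the paper: the first case is immediate from Proposition \ref{unitcircle}, and the second case decomposes $X(k,\bar\rho)$ into $X'(k,\bar\rho)$ and the unit-circle part, uses additivity of $c_E$ over this $E$-rational disjoint decomposition, and subtracts the complexity of the residue disk, which is $1$. The only difference is cosmetic: you spell out the computation $c_E(\{x,\bar x=u\})=1$ (via $s=t=1$ and $\tilde u\in E$), which the paper leaves implicit.
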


\begin{proof}
The first part is clear by Proposition \ref{unitcircle}. 

For the second part, we can write
$X(k,\bar\rho)$ as a disjoint union of $X'(k,\bar\rho)$ and
$X^+(k,\bar\rho) = X(k,\bar\rho) \cap \{x,|x|=1\}$, and both are standard
subsets defined over $E$, so
$c_E(X(k,\bar\rho)) = c_E(X'(k,\bar\rho)) + c_E(X^+(k,\bar\rho))$.
By Proposition \ref{unitcircle}, $c_E(X^+(k,\bar\rho))=1$ under the
hypotheses, hence the result.
\end{proof}

\subsubsection{Local constancy results}

We recall the following results:

\begin{prop}
\label{locconst}
Let $a_p\in\m_{\bar\Z_p}$. If $a_p \neq 0$, then for all $a_p'$ such that 
$v_p(a_p-a_p') > 2v_p(a_p) + \lfloor p(k-1)/(p-1)^2\rfloor$, we have
$\bar{V}_{k,a_p}^{ss} \simeq \bar{V}_{k,a_p'}^{ss}$. moreover,
$\bar{V}_{k,a_p}^{ss}
\simeq \bar{V}_{k,0}^{ss}$ for all $a_p$ with $v_p(a_p) > \lfloor (k-2)/(p-1)
\rfloor$.
\end{prop}

\begin{proof}
The result for $a_p \neq 0$ is Theorem A of \cite{Ber12}. The result for
$a_p=0$ is the main result of \cite{BLZ}.
\end{proof}

\subsubsection{Computation of $Y(k,\bar{r})$}

We explain now how we can compute numerically the sets $Y(k,\bar{r})$ for
$\bar{r}$  semi-simple and nice (and hence the sets $X(k,\bar\rho)$ for $\bar\rho$ with
nice semi-simplification).

From Corollary \ref{boundbis} we deduce (using the fact that a nice
representation with trivial endomorphisms is good and so satisfies the
hypotheses of Theorem \ref{BM}):

\begin{prop}
\label{boundss}
Suppose that $\bar{r}$ is a nice semi-simple representation, 
and let $\bar\rho$ be a nice representation with trivial endomorphisms with
$\bar\rho^{ss} = \bar{r}$. Then $Y(k,\bar{r})$ is a standard subset of
$D(0,1)^-$ defined over $E = E(\bar{r})$, with $c_E(Y(k,\bar{r})) \leq
\mua(k,\bar\rho)$. Moreover if $\bar\rho$ is an extension of an
unramified character by another character then 
$c_E(Y(k,\bar{r})) \leq \mua(k,\bar\rho)-1$.
\end{prop}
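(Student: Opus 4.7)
The plan is to assemble Proposition \ref{boundss} directly from the earlier results: Proposition \ref{sameimage} identifies $Y(k,\bar r)$ with $X'(k,\bar\rho)$, Corollary \ref{boundbis} bounds the complexity of $X'(k,\bar\rho)$ in terms of the Hilbert–Samuel multiplicity $\e(R(k,\bar\rho))$, and Theorem \ref{BM} (the Breuil--Mézard conjecture) identifies that multiplicity with $\mua(k,\bar\rho)$. No new geometric or Galois-theoretic input is needed; the point is just to match up the hypotheses.

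First I would fix $\bar\rho$ nice (with trivial endomorphisms) such that $\bar\rho^{ss}\simeq \bar r$, and moreover chosen so that $E(\bar\rho)=E(\bar r)=E$. Such a $\bar\rho$ exists because $\bar r$ is nice: if $\bar r$ is irreducible one takes $\bar\rho=\bar r$, and if $\bar r=\alpha\oplus\beta$ the niceness hypothesis on $\bar r$ excludes the cases where $\beta/\alpha\in\{1,\omega\}$, so a non-split extension of $\alpha$ by $\beta$ (existing by dimension count of $\Ext^1$) is automatically nice. Being nice, $\bar\rho$ is also good, so Theorem \ref{BM} applies to it. Proposition \ref{sameimage} then gives $Y(k,\bar r)=X'(k,\bar\rho)$, which settles the claim that $Y(k,\bar r)$ is a standard subset of $D(0,1)^-$ defined over $E$: $X(k,\bar\rho)$ is standard and defined over $E$ by Theorem \ref{bound} (with $\tau$ trivial), and $X'(k,\bar\rho)=X(k,\bar\rho)\cap D(0,1)^-$ is the intersection with a rational open disk defined over $\Q_p$, hence still standard and defined over $E$.

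Next I would apply Corollary \ref{boundbis}. The dichotomy there is whether or not $\bar\rho$ is an extension of $\unr(u)$ by $\unr(u^{-1})\omega^n$ with $n\equiv k-1\pmod{p-1}$. Here I would observe that, because $\det\bar\rho=\omega^{k-1}$, the condition "$\bar\rho$ is an extension of an unramified character by another character" is equivalent to $\bar\rho$ being an extension of $\unr(u)$ by $\unr(u^{-1})\omega^{k-1}$ for some $u\in\bar\F_p^\times$, and writing $k-1=n+(p-1)m$ with $0\le n<p-1$ we have $\unr(u^{-1})\omega^{k-1}=\unr(u^{-1})\omega^n$; niceness of $\bar\rho$ forces $u\notin\{\pm 1\}$ when $n\in\{0,1\}$, so the hypotheses of the second half of Corollary \ref{boundbis} are exactly met. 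Consequently $c_E(Y(k,\bar r))\le \e(R(k,\bar\rho))$ in general, and $c_E(Y(k,\bar r))\le \e(R(k,\bar\rho))-1$ under the extra hypothesis of the proposition.

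Finally, Theorem \ref{BM} applied to $(k,\mathrm{triv},\bar\rho,\cyc^{k-1})$ gives $\e(R(k,\bar\rho))=\e(R(k,\bar\rho)/\pi_E)=\mua(k,\bar\rho)$, and substituting yields the two claimed bounds. There is no real obstacle to this argument; the only delicate point is the bookkeeping that matches "extension of an unramified character by another character" (together with niceness) to the precise form appearing in Proposition \ref{unitcircle} and Corollary \ref{boundbis}, and that verifies that the field $E$ coming from $\bar r$ coincides with the field over which $X(k,\bar\rho)$ is known to be defined.
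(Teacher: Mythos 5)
Your argument is correct and is essentially the paper's own: Proposition \ref{boundss} is presented there as an immediate consequence of Proposition \ref{sameimage}, Corollary \ref{boundbis} and Theorem \ref{BM}, and your bookkeeping (using $\det\bar\rho=\omega^{k-1}$ and niceness of $\bar\rho$ to exclude $u\in\{\pm 1\}$ when $n\in\{0,1\}$) matches the hypotheses of Corollary \ref{boundbis} exactly as intended. One harmless slip: niceness of $\bar r$ excludes only the scalar case (and the ratio $\omega^{\pm 1}$ when $p=3$), not $\beta/\alpha=\omega$ for $p\geq 5$, so the auxiliary nice extension with $E(\bar\rho)=E(\bar r)$ must be taken in the appropriate direction; since the statement already supplies a nice $\bar\rho$, this does not affect the argument.
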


Theorem \ref{algogen} specializes here to:

\begin{theo}
\label{algocris}
Let $\bar{r}$ be a nice semi-simple representation. 
Then there
exists a finite set $\E$ of finite extensions of $E = E(\bar{r})$,
depending only on $k$ and $\bar{r}$, such that $Y(k,\bar{r})$ is
determined by the sets $Y(k,\bar{r}) \cap F$ for $F\in \E$.
\end{theo}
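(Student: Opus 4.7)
The plan is to combine Proposition \ref{boundss} with the general combinatorial recovery result Theorem \ref{algo}, essentially specializing Theorem \ref{algogen} to the crystalline setting while reinterpreting in terms of semisimple reductions.

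First, since $\bar r$ is nice, I would choose a nice representation $\bar\rho$ with trivial endomorphisms such that $\bar\rho^{ss} = \bar r$ and such that $E(\bar\rho) = E(\bar r) = E$. The existence and the matching of fields of definition were noted right after the definition of ``nice''. Being nice, $\bar\rho$ is in particular good, so the Breuil--Mézard bound applies and $\mua(k,\bar\rho)$ is a well-defined integer depending only on $k$ and $\bar\rho$ (and thus, modulo the choice above, only on $k$ and $\bar r$).

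Next, I would invoke Proposition \ref{sameimage} to identify $Y(k,\bar r)$ with $X'(k,\bar\rho) = X(k,\bar\rho) \cap D(0,1)^{-}$: the hypotheses there (either $\bar\rho$ irreducible, or a non-split extension of $\alpha$ by $\beta$ with $\beta/\alpha \notin \{1,\omega\}$) are exactly what ``nice'' guarantees. Then Proposition \ref{boundss} tells us that $Y(k,\bar r)$ is a standard subset defined over $E$ with
\[
c_E(Y(k,\bar r)) \leq \mua(k,\bar\rho).
\]
Thus we have an explicit integer $m$, depending only on $k$ and $\bar r$, bounding the complexity of $Y(k,\bar r)$.

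Finally, I would apply Theorem \ref{algo}: any standard subset of $\bar\Q_p$ defined over $E$ whose complexity over $E$ is bounded by $m$ is determined by its intersections with the elements of a finite set $\E$ of finite extensions of $E$, where $\E$ depends only on $E$ and $m$. Taking $m = \mua(k,\bar\rho)$, this set $\E$ depends only on $k$ and $\bar r$, and $Y(k,\bar r)$ is recovered from the data $\bigl(Y(k,\bar r) \cap F\bigr)_{F \in \E}$. There is no substantive obstacle here: the main content has already been established in Theorem \ref{algo} (the combinatorial recovery) and Proposition \ref{boundss} (the crystalline complexity bound via Breuil--Mézard); this statement is their immediate combination.
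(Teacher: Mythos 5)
Your proposal is correct and follows the same route as the paper: the paper's proof is exactly the combination of Theorem \ref{algo} (applied over $E(\bar r)$) with the complexity bound of Proposition \ref{boundss}, which already incorporates the passage from $\bar r$ to a nice $\bar\rho$ via Proposition \ref{sameimage}. Your extra invocation of Proposition \ref{sameimage} is harmless but redundant, and the paper additionally notes one may take the slightly sharper bound $\mua(k,\bar\rho)-1$ in the reducible case, which does not affect correctness.
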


\begin{proof}
This is Corollary \ref{algo}, where we take for $E$ the field
$E(\bar{r})$, and for $m$ the bound given by Proposition \ref{boundss},
that is $m = \mua(k,\bar\rho)$ or $\mua(k,\bar\rho)-1$ where $\bar\rho$
is some nice representation with $\bar\rho^{ss}=\bar{r}$.
\end{proof}

\begin{theo}
\label{algoepscris}
Let $\bar{r}$ be a nice semi-simple representation. 
Then there
exists a finite set of points $\p \subset D(0,1)^-$, depending only on
$k$ and $\bar{r}$, such that $Y(k,\bar{r})$ is determined by
$Y(k,\bar{r}) \cap \p$.
\end{theo}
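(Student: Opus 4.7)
The plan is to verify the two hypotheses of Corollary \ref{algoeps} for the standard subset $X = Y(k,\bar{r}) \subset D(0,1)^-$: (a) a complexity bound $c_E(Y(k,\bar{r})) \leq m$ with $m$ depending only on $k$ and $\bar{r}$, and (b) a local constancy radius $\epsilon > 0$ depending only on $k$ and $\bar{r}$. Part (a) is immediate from Proposition \ref{boundss}: I take $E = E(\bar{r})$, fix any nice $\bar\rho$ with $\bar\rho^{ss} = \bar{r}$ (which exists since $\bar{r}$ is nice), and set $m = \mua(k,\bar\rho)$ (or $\mua(k,\bar\rho) - 1$ in the unramified-extension case). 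The substance of the proof lies in producing the radius $\epsilon$ in (b).

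For (b), the idea is to split $D(0,1)^-$ at the threshold $N = \lfloor (k-2)/(p-1)\rfloor$ supplied by Proposition \ref{locconst} and combine both clauses of that proposition. Set $C = \lfloor p(k-1)/(p-1)^2 \rfloor$ and try $\epsilon = p^{-(2N + C + 1)}$, which depends only on $p$ and $k$, and satisfies $\epsilon < p^{-N} < 1$. On the inner region $\{a_p \in D(0,1)^- : v_p(a_p) > N\}$ the second clause of Proposition \ref{locconst} already forces $\bar V_{k,a_p}^{ss} \simeq \bar V_{k,0}^{ss}$ to be constant; moreover $\epsilon < p^{-N}$ guarantees that every $a_p' \in D(a_p,\epsilon)^-$ remains in this region, so the reduction is constant on $D(a_p,\epsilon)^-$. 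On the outer region $\{a_p \in D(0,1)^- : v_p(a_p) \leq N\}$, for $a_p' \in D(a_p,\epsilon)^-$ one has $v_p(a_p - a_p') > 2N + C \geq 2 v_p(a_p) + C$, so the first clause of Proposition \ref{locconst} gives $\bar V_{k,a_p}^{ss} \simeq \bar V_{k,a_p'}^{ss}$.

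Combining the two cases yields the local constancy condition of Corollary \ref{algoeps} for every point in $D(0,1)^-$: if $a_p \in Y(k,\bar{r})$ then $D(a_p,\epsilon)^- \subset Y(k,\bar{r})$, and if $a_p \in D(0,1)^- \setminus Y(k,\bar{r})$ then $D(a_p,\epsilon)^- \cap Y(k,\bar{r}) = \emptyset$. For points $x \notin D(0,1)^-$ the condition is automatic, because $\epsilon < 1$ together with the ultrametric inequality forces every $a_p' \in D(x,\epsilon)^-$ to satisfy $|a_p'| = |x| \geq 1$, so $D(x,\epsilon)^- \cap Y(k,\bar{r}) = \emptyset$. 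With hypotheses (a) and (b) both in place and all the parameters $E$, $m$, $\epsilon$ depending only on $k$ and $\bar{r}$, Corollary \ref{algoeps} directly supplies the finite set $\p \subset D(0,1)^-$ such that $Y(k,\bar{r})$ is determined by $Y(k,\bar{r}) \cap \p$.

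I do not foresee a serious obstacle beyond careful bookkeeping of the two regimes in Proposition \ref{locconst} and the selection of a single $\epsilon$ small enough to handle both regimes simultaneously; the genuinely deep inputs (the complexity bound from the Breuil--Mézard conjecture, the combinatorial-to-points reduction of Corollary \ref{algoeps}, and the local constancy result itself) are already available as quoted theorems, so the present theorem is essentially an assembly of these three ingredients.
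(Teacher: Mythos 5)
Your proof is correct and follows essentially the same route as the paper: invoke Corollary \ref{algoeps} with $E=E(\bar r)$, the bound $m$ from Proposition \ref{boundss}, and a radius of local constancy extracted from Proposition \ref{locconst}. The paper simply quotes a single valuation $\lfloor 3p(k-1)/(p-1)^2\rfloor$ for $\eps$ where you spell out the two regimes of Proposition \ref{locconst} and take $\eps=p^{-(2N+C+1)}$, which is an equally valid (and more explicitly justified) choice.
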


\begin{proof}
This is Corollary \ref{algoeps}, where we take for $E$ the field
$E(\bar{r})$, for $m$ the bound given by Proposition \ref{boundss}, and for
$\eps$ we can take the norm of an element of valuation 
$\lfloor 3p(k-1)/(p-1)^2\rfloor$ 
by Proposition \ref{locconst}.
\end{proof}

\begin{coro}
\label{algocrisbis}
Let $\bar\rho$ be a nice representation with trivial endomorphisms.
Then there
exists a finite set of points $\p \subset D(0,1)^-$, depending only on
$k$ and $\bar\rho^{ss}$, such that $X(k,\bar\rho)$ is determined by
$X(k,\bar\rho) \cap \p$.
\end{coro}

\begin{proof}
Let $\bar{r} = \bar\rho^{ss}$. Then $\bar{r}$ is a nice semi-simple
representation, so we can apply Theorem \ref{algoepscris} to compute
$Y(k,\bar{r}) = X(k,\bar\rho) \cap D(0,1)^-$, and Proposition
\ref{unitcircle} to determine the rest of $X(k,\bar\rho)$.
\end{proof}

As a consequence, we see that if we are able to compute
$\bar{V}^{ss}_{k,a_p}$ for given $p$, $k$, $a_p$, then we can compute
$Y(k,\bar{r})$ for $\bar{r}$ nice in a finite number of such computations, 
bounded in terms of $E(\bar{r})$ and $k$. We give some examples of such
computations in Section \ref{examples}.

We give a last application of these results:
It follows from the formula giving $\mua(k,\bar\rho)$ that there exists
an integer $m(k)$, depending only on $k$, such that $\mua(k,\bar\rho)
\leq m(k)$ for all $\bar\rho$. The optimal value for $m(k)$ is of the
order of $4k/p^2$ when $k$ is large.

In general, the value of $\bar{V}_{k,a_p}^{ss}$ depends on more
information than just the valuation of $a_p$. But there are some cases
where it depends only on $v_p(a_p)$:

\begin{coro}
Fix $k$, and let
$m$ be an integer such that $m \geq \e(R(k,\bar\rho))$ for all nice
$\bar\rho$ with trivial endomorphisms. Let $a$ and $b$ be rational
numbers such that for all rational $c$ between $a$ and $b$, the
denominator of $c$ is strictly larger than $m$. Then either for all $a_p$
with $a<v_p(a_p)<b$, $\bar{V}_{k,a_p}^{ss}$ is not nice, or
$\bar{V}_{k,a_p}^{ss}$ is constant on the annulus $A_0(a,b)$.

In particular, let $c \in \Q$ with denominator strictly larger than $m$.
Then either for all $a_p$
with $v_p(a_p)=c$, $\bar{V}_{k,a_p}^{ss}$ is not nice, or
$\bar{V}_{k,a_p}^{ss}$ is constant on the circle $C_0(c)$.
\end{coro}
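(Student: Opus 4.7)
The plan is to apply Corollary \ref{annulus} to each of the sets $Y(k,\bar{r})$ for nice semi-simple $\bar{r}$, leveraging the complexity bound provided by Proposition \ref{boundss}. Fix any nice semi-simple $\bar{r}$ and a nice $\bar\rho$ with $\bar\rho^{ss} = \bar{r}$, so that $Y(k,\bar{r}) = X'(k,\bar\rho)$ by Proposition \ref{sameimage}. The field $E = E(\bar{r})$ is an unramified extension of $\Q_p$, since $\bar{r}$ takes values in $\GL_2$ of a finite extension of $\F_p$; in particular $v_E$ agrees with $v_p$ on $\bar\Q_p$, so the annulus $A(a,b)$ and the denominator condition on intermediate rationals are the same whether read over $\Q_p$ or over $E$. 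By Proposition \ref{boundss}, $Y(k,\bar{r})$ is then a standard subset defined over $E$ with $c_E(Y(k,\bar{r})) \leq \mua(k,\bar\rho) \leq m$.

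With this setup in hand, I would invoke Corollary \ref{annulus} for the standard subset $Y(k,\bar{r})$ and the annulus $A(a,b)$ to obtain the dichotomy: either $A(a,b) \cap Y(k,\bar{r}) = \emptyset$, or $A(a,b) \subset Y(k,\bar{r})$. Now suppose there exists some $a_p \in A(a,b)$ with $\bar{V}_{k,a_p}^{ss} = \bar{r}$ nice; then $a_p \in Y(k,\bar{r}) \cap A(a,b)$, which is therefore non-empty, and the dichotomy forces $A(a,b) \subset Y(k,\bar{r})$. This means every $a_p' \in A(a,b)$ satisfies $\bar{V}_{k,a_p'}^{ss} = \bar{r}$, which is the second alternative of the corollary. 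If, on the other hand, no $a_p \in A(a,b)$ produces a nice semi-simplification, then we are in the first alternative by definition. Either way, the claim holds.

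For the ``in particular'' part, given $c \in \Q$ with $\den(c) > m$, I would choose rationals $a < c < b$ such that every rational in $(a,b)$ has denominator strictly greater than $m$. Such $a, b$ exist because the set $\{q \in \Q : \den(q) \leq m\}$ is contained in $\frac{1}{\operatorname{lcm}(1,\ldots,m)}\Z$ and is hence discrete, so $c$ lies in an open gap of this set. The main part of the corollary then applies to this choice of $a, b$, giving the conclusion on $A(a,b)$ and therefore on $C(c) \subset A(a,b)$. The substantive content is entirely in Proposition \ref{boundss} and Corollary \ref{annulus}, so the only potential obstacle here is a minor bookkeeping check, namely that $E(\bar{r})/\Q_p$ is unramified so that the denominator hypothesis is unaffected by the change of base field; this is immediate from the fact that $\bar{r}$ is defined over a finite extension of $\F_p$.
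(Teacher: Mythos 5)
Your proof is correct and follows essentially the same route as the paper: you bound $c_E(Y(k,\bar r))$ by $m$ via Proposition \ref{boundss} (together with Proposition \ref{sameimage}), note that $E(\bar r)/\Q_p$ is unramified so the denominator condition is unchanged, and then apply Corollary \ref{annulus} to get the dichotomy, exactly as in the paper's argument. The only additions are the (correct) explicit treatment of the ``in particular'' statement and the identification $Y(k,\bar r)=X'(k,\bar\rho)$, which the paper leaves implicit.
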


Note that if $p>3$ and $k$ is even, $\bar{V}_{k,a_p}^{ss}$ is always
nice.

\begin{proof}
Suppose that there exists at least an $a_p$ in $A_0(a,b)$ such that $\bar{r}
= \bar{V}_{k,a_p}^{ss}$ is nice. Then $c_E(Y(k,\bar{r})) \leq m$ for $E =
E(\bar\rho)$ which is an unramifed extension of $\Q_p$. So we can apply
Corollary \ref{annulusin}: the annulus $A_0(a,b)$ is a subset of
$Y(k,\bar{r})$.
\end{proof}

\section{Numerical examples}
\label{examples}

We give some numerical examples for the deformations rings of crystalline
representations.
We have computed some examples of $X(k,\bar\rho)$ using Theorem
\ref{algoepscris} and a computer program written in SAGE
(\cite{sage}) that implements the algorithm
described in \cite{Roz16}. We also used the fact that
$\bar{V}_{k,a_p}^{ss}$ is known for $v_p(a_p) < 2$ in all cases for $p
\geq 5$, by the results of \cite{BG09,BG13,GG15,BG15,BGR,GR}, which
reduces the number of computations that are necessary to determine
$X(k,\bar\rho)$.

We make the following remark: let $\bar\rho$ be a
representation such that $\bar\rho\otimes\unr(-1)$ is isomorphic to
$\bar\rho$. Then $X(k,\bar\rho)$ is invariant by $x \mapsto -x$. Indeed,
$V_{k,-a_p}$ is isomorphic to $V_{k,a_p}\otimes\unr(-1)$. This applies
in particular when $\bar\rho$ is irreducible.

\subsection{Observations for $p=5$}

We have computed $X(k,\bar\rho)$ for $p=5$, $k$ even, $k \leq 102$, or $k$
odd and $k \leq 47$, and
$\bar\rho$ irreducible (so in this case we have $E(\bar{\rho}) =
\Q_p$).

We summarize here some observations from these computations:
\begin{enumerate}
\item
in each case, we have $\bar{V}_{k,a_p}^{ss} =\bar{V}_{k,0}^{ss}$ for all
$a_p$ with $v_p(a_p) > \lfloor (k-2)/(p+1) \rfloor$, and not only
$v_p(a_p) > \lfloor (k-2)/(p-1) \rfloor$ which is the value predicted by \cite{BLZ}.

\item
in each case, we have $c_{\Q_p}(X(k,\bar\rho)) = \e(R(k,\bar\rho))$, that
is, the inequality of Proposition \ref{boundss} is an equality.

\item
each defining disk $D$ of a $X(k,\bar\rho)$ has $\gamma_{\Q_p}(D) = 1$. 

\item
each defining disk $D$ of a $X(k,\bar\rho)$ is defined over an extension
of $\Q_p$ of degree at most $2$, which is unramified if $k$ is even and
totally ramified if $k$ is odd.

\item
for each defining disk $D$ of a $X(k,\bar\rho)$, either $0 \in D$, or 
$D$ is included in the set $\{x, v_p(x) = n\}$ for some $n\in\Z_{\geq 0}$ if
$k$ is even, and in the set $\{x, v_p(x) = n+1/2\}$ for some
$n\in\Z_{\geq 0}$ if $k$ is odd.

\end{enumerate}

It would be interesting to know which of these properties hold in
general. Property (1) is expected to be in fact true for all $p$ and
$k$, but nothing is known about the other properties.
We comment further on Property (2) in Section \ref{conjecture}.

\subsection{Some detailed examples}
\label{detailled_examples}

Let $p=5$.
Let $\bar{r}_0 = \ind\omega_2$ and $\bar{r}_1 = \ind\omega_2^3$, and for
all $n$, $\bar{r}(n) = \bar{r}\otimes\omega^n$. We describe a few
examples of sets $X(k,\bar{r})$. In each case, the sets given contain all
the values of $a_p$ for which $\bar{V}_{k,a_p}^{ss}$ is irreducible.
We also give the generic fibers of the deformation rings.

\subsubsection{The case $k=26$}

We get that: 
\begin{itemize}

\item
$$
X(26,\bar{r}_0) = \{x, v_p(x) < 2\} \cup \{x, v_p(x) > 2\}
$$
with $c_{\Q_p}(X(26,\bar{r}_0)) = 3$, 
and $R(26,\bar{r}_0)[1/p] = 
\left(\Z_p[[X]]\otimes\Q_p\right)
\times 
\left(\Z_p[[X,Y]]/(XY-p)\otimes\Q_p\right)$.

\item
$$
X(26,\bar{r}_0(2)) = \{x, v_p(x-a)>3\} \cup \{x, v_p(x+a) > 3\}
$$ 
where $a = 4\cdot 5^2$,
with $c_{\Q_p}(X(26,\bar{r}_0(2))) = 2$,
and $R(26,\bar{r}_0(2))[1/p] = 
\left(\Z_p[[X]]\otimes\Q_p\right)^2$.

\item
$$
X(26,\bar{r}_1(1)) = \{x, 2 < v_p(x-a) < 3\} \cup \{x, 2 < v_p(x+a)
< 3\}
$$ 
with $c_{\Q_p}(X(26,\bar{r}_1(1))) = 4$,
and $R(26,\bar{r}_1(1))[1/p] = 
\left(\Z_p[[X,Y]]/(XY-p)\otimes\Q_p\right)^2$.

\end{itemize}

Here we see an example where the geometry begins to be a little
complicated, 
with annuli that do not have $0$ as a center.

\subsubsection{The case $k=28$}
\label{ex28}

We get that: 
\begin{itemize}

\item
$$
X(28,\bar{r}_1) =  \{x, v_p(x) > 2\text{ and }
v_p(x-a)<4\text{ and }v_p(x+a)<4\}\cup \{x, 0 < v_p(x) < 1\}
$$
where $a = 4\cdot 5^3 + 5^4$,
with $c_{\Q_p}(X(28,\bar{r}_1)) = 5$, 
and we get that 
\begin{eqnarray*}
& R(28,\bar{r}_1)[1/p] = 
 \left(\Z_p[[X,Y]]/(XY-p)\otimes\Q_p\right)
\times \\
& \left(\Z_p[[X,Y,Z]]/(XY-p^2-(a/p^2)Y,XZ-p^2+(a/p^2)Z,YZ-(p^4/2a)(Y-Z))\otimes\Q_p\right).
\end{eqnarray*}

\item
$$
X(28,\bar{r}_0(1)) = \{x, 1 < v_p(x) < 2\}
$$
with $c_{\Q_p}(X(28,\bar{r}_0(1))) = 2$,
and $R(28,\bar{r}_0(1))[1/p] = 
\left(\Z_p[[X,Y]]/(XY-p)\otimes\Q_p\right)$.

\item
$$
X(28,\bar{r}_0(3)) = \{x, v_p(x-a) > 4\} \cup \{x, v_p(x+a) > 4\}
$$ 
with $c_{\Q_p}(X(28,\bar{r}_0(3))) = 2$,
and $R(28,\bar{r}_0(3))[1/p] = 
\left(\Z_p[[X]])\otimes\Q_p\right)^2$.
\end{itemize}

Here we see an example 
with an irreducible component that has complexity $3$.

\subsubsection{The case $k=30$}
\label{ex30}

We get that: 
\begin{itemize}

\item
$$
X(30,\bar{r}_0) = \{x, 0 < v_p(x) < 1\} \cup \{x, v_p(x) > 4\}
$$
with $c_{\Q_p}(X(30,\bar{r}_0)) = 3$,
and $R(30,\bar{r}_0)[1/p] = 
\left(\Z_p[[X]]\otimes\Q_p\right)
\times 
\left(\Z_p[[X,Y]]/(XY-p)\otimes\Q_p\right)$.

\item
$$
X(30,\bar{r}_0(2)) = \{x, v_p(x-a)>3\} \cup \{x, v_p(x+a) > 3\}
$$ 
where $a = 5^3\cdot\sqrt{3}$,
with $c_{\Q_p}(X(30,\bar{r}_0(2))) = 2$,
and $R(30,\bar{r}_0(2))[1/p] = 
\left(\Z_{p^2}[[X]]\otimes\Q_{p^2}\right)$.

\item
$$
X(30,\bar{r}_1(1)) = \{x, 1 < v_p(x) < 3\} \cup \{x, 3 < v_p(x)
< 4\}
$$ 
with $c_{\Q_p}(X(30,\bar{r}_1(1))) = 4$,
and 
$$
R(30,\bar{r}_1(1))[1/p] = 
\left(\Z_p[[X,Y]]/(XY-p)\otimes\Q_p\right)
\times
\left(\Z_p[[X,Y]]/(XY-p^2)\otimes\Q_p\right).
$$

\end{itemize}

The interesting part here is $X(30,\bar{r}_0(2))$: we see that
$\A_{\Q_p}^0(X(30,\bar{r}_0(2)))$, which is a domain, has residue field $\F_{p^2}$, whereas 
$R(30,\bar{r}_0(2))$ has residue field $\F_p$. So 
$R(30,\bar{r}_0(2)) \neq \A_{\Q_p}^0(X(30,\bar{r}_0(2)))$.

\subsection{Criteria for non-normality}
\label{nonnormal}

Recall the notation of Section \ref{rigiddefring}. Then we see, by
Proposition \ref{defring1/p}, that if we know $X(k,\bar\rho)$ then we
know $R(k,\bar\rho)[1/p] = \oplus_i R_i[1/p] = \oplus_i\A_E(X_i)$.
We can ask whether we can recover each $R_i$, that is, if $R_i =
\A^0_E(X_i)$, or equivalently if $R_i = R_i^0$ for all $i$ (the
description of $X(k,\bar\rho)$ gives no indication about how the $R_i$
glue together so we cannot hope for complete information on
$R(k,\bar\rho)$ anyway if it is not irreducible). We do not
expect this to hold, as this would mean that each of the $R_i$ is a
normal ring. So we can ask instead, how can we recognize when $R_i$ is
not $R_i^0$?

A first criterion is when they have different residue fields, as in the
example of $R(30,\bar{r}_0(2))$ in Paragraph \ref{ex30}. 
Another criterion is when $R_i$ and $R_i^0$ have the same residue field
(a situation that we can always obtain by replacing $E$ by an unramified
extension, which does not change the complexities), but $\e(R_i^0) <
\e(R_i)$. This is a situation that does not seem to arise often, see
Section \ref{conjecture}.

We give a last, more subtle criterion. Let $X_i$ be one of the components
of $X(k,\bar\rho)$, and assume that each of the disks that appears in the
description of $X_i$ is defined over $\Q_p$, and has complexity $1$. In
this case, a closer look at the proof of Proposition \ref{additivityc}
show that $\spec(\A^0_{\Q_p}(X_i)/p)$ has exactly $c_{\Q_p}(X_i)$ distinct irreducible
components. 
On the other hand, the geometric version of the Breuil-Mézard conjecture,
proved in \cite{EG}, shows that if $\bar\rho$ is irreducible then
$\spec(R(k,\bar\rho)/p)$ has at most two irreducible components (which
can have large multiplicity), and so $\spec(R_i/p)$ also has at most two
irreducible components. So if $c_{\Q_p}(X_i) >2$ then we certainly
have that $R_i \neq R_i^0$. This happens for example for the second
irreducible component of $X(28,\bar{r}_1)$.
It would be interesting in this case to understand how the irreducible
components of $\spec(R_i^0/p)$ map to the irreducible components of
$\spec(R_i/p)$.

\subsection{Complexity and multiplicity}
\label{conjecture}

An interesting result coming from our computations is the following: for
$p= 5$, for all irreducible representation $\bar\rho$, for all $k \leq
47$ and all even $k \leq 102$, we have that 
$c_{\Q_p}(X(k,\bar\rho)) = \e(R(k,\bar\rho))$, 
instead of simply the inequality $c_{\Q_p}(X(k,\bar\rho)) \leq  \e(R(k,\bar\rho))$. 
Given this, it is tempting to make the following conjecture:
For all $p>2$, for all $k \geq 2$ and for all irreducible $\bar\rho$, we
have that $c_{\Q_p}(X(k,\bar\rho)) = \e(R(k,\bar\rho))$.

Note that this equality between complexity and multiplicity does not
necessarily hold when $\bar\rho$ is reducible. 
Consider the following example: let $p=5$ and $k=16$. Then we can
compute that $X(16,\bar{r}_1)$ is the set $\{x, v(x) >0, v(x) \neq 1\}$.
So the set of $a_p$ for which the reduction is reducible is contained in
the set of $a_p$ with $v(a_p)= 0$ or $v(a_p)=1$.
For $v(a_p)=0$, the reduction is of the form $\omega^3\oplus 1$ when
restricted to inertia.
The reduction for the values of $a_p$ with $v(a_p)=1$ is entirely
computed in \cite{BGR}, from which we get that for $\lambda \in
\bar\F_p^\times$, the (semi-simplified) reduction is
$\unr(\lambda)\omega^2\oplus\unr(\lambda^{-1})\omega$ for exactly the
values of $a_p$ of valuation $1$ for which $\lambda =
2(\bar{a_p/p}-\bar{p/a_p})$. So for each $\lambda$, we get that
$X(16,\unr(\lambda)\omega^2\oplus\unr(\lambda^{-1})\omega)$ is the union
of two disks, and has complexity $2$, except for $\lambda = \pm 1$, where
this is just one disk, and has complexity $1$. On the other hand, for any
$\lambda\in\bar\F_p^\times$ we have
$\e(R(16,\unr(\lambda)\omega^2\oplus\unr(\lambda^{-1})\omega))=2$.
So we do not always have the equality of multiplicity and complexity in
the case where $\bar\rho$ is reducible.
However, it may be true
that for all $p>2$, for all $k \geq 2$, there is only a finite number of
reducible (nice) representations $\bar\rho$ for which the equality does
not hold.

We can also reformulate this equality in a different way: recall the
notation of Section \ref{rigiddefring}. So $R(k,\bar\rho)$ has a family
of quotients $R_i$ that are integral domains, and $\e(R(k,\bar\rho)) =
\sum_i \e(R_i)$. On the other hand, $c_{\Q_p}(X(k,\bar\rho)) = \sum_i
[k_{R_i^0}:\F_p]\e(R_i^0)$ where $k_{R_i^0}$ is the residue field of
$R_i^0$. The equality between complexity and multiplicity can be
reformulated as saying that for all $i$, $\e(R_i) =
[k_{R_i^0}:\F_p]\e(R_i^0)$. Written in this way without any reference to
the sets $X(k,\bar\rho)$, the equality can be
generalized to any potentially semi-stable deformation ring, including
those that are of dimension larger
than $1$, such as the deformation rings classifying
representations of dimension $>2$ or representations of
$G_K$ for some finite extension $K/\Q_p$. 

\section{Parameters classifying potentially semi-stable representations}
\label{paramsect}

This Section is devoted to the proof of Theorem \ref{parameter}. We start
with some preliminaries, and then give the proof for the various cases
starting in Paragraph \ref{cryssect}.

\subsection{Results on Weil representations}
\label{WDrep}

\subsubsection{Field of definition}

Let $W_{\Q_p}$ be the Weil group of $\Q_p$. A Weil representation is a
representation of $W_{\Q_p}$ with coefficients in $\bar\Q_p$
that is trivial on an open subgroup of $I_{\Q_p}$.

Let $\tau$ be a Weil representation. The field of definition of $\tau$,
denoted by $E(\tau)$, is
the subfield of $\bar\Q_p$ generated by the $\tr\tau(x)$, $x\in
W_{\Q_p}$. This is a finite extension of $\Q_p$, as a Weil representation
factors through a finitely generated group.

Let $E$ be a finite extension of $\Q_p$. We say that $\tau$ is realizable
over $E$ if there is a representation $\tau': W_{\Q_p} \to \GL_n(E)$ that
is isomorphic to $\tau$. Then we have:

\begin{lemm}
\label{unramW}
Let $\tau$ be an irreducible Weil representation. 
Then there exists a finite unramified extension $E$ of $E(\tau)$ such
that $\tau$ is realizable over $E$.
\end{lemm}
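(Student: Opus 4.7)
The plan is a Galois-descent / Brauer-class argument. Set $F := E(\tau)$. First I observe that $F$ is a finite extension of $\Q_p$: indeed $\tau$ factors through $W_{\Q_p}/U$, where $U$ is the largest open subgroup of $I_{\Q_p}$ contained in $\ker\tau$ (automatically normal in $W_{\Q_p}$), and this quotient is finitely generated by a Frobenius lift together with the finite group $I_{\Q_p}/U$. Hence the image of $\tau$ is finitely generated and $F$ is generated over $\Q_p$ by the traces of a finite generating set, in particular $F$ is a finite extension of $\Q_p$ with a well-defined Brauer group.

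Next I would pick a finite Galois extension $K/F$ inside $\bar\Q_p$ over which $\tau$ admits a model $\rho : W_{\Q_p} \to \GL_n(K)$; the existence of such $K$ uses only finite generation of $\tau(W_{\Q_p})$. Since $\tau$ is absolutely irreducible with character in $F$, each Galois conjugate $\rho^\sigma$ for $\sigma \in \Gamma := \Gal(K/F)$ has the same character as $\rho$ and is therefore isomorphic to $\rho$; pick intertwiners $u_\sigma \in \GL_n(K)$. By Schur's lemma these are unique up to scalars in $K^\times$, so their images $[u_\sigma] \in \mathrm{PGL}_n(K)$ form a genuine $1$-cocycle. The boundary map for $1 \to K^\times \to \GL_n(K) \to \mathrm{PGL}_n(K) \to 1$ sends this cocycle to the \emph{Schur class}
$$\beta \in H^2(\Gamma, K^\times) \hookrightarrow \mathrm{Br}(F).$$
By Hilbert $90$ in the form $H^1(\Gamma,\GL_n(K)) = 1$, the $\mathrm{PGL}_n$-cocycle is trivial precisely when $\beta = 0$, and its triviality is in turn equivalent to $\rho$ being $K$-conjugate to a representation with values in $\GL_n(F)$; the same discussion applies over any intermediate field $L$ with $F \subseteq L \subseteq K$, the relevant obstruction being the image of $\beta$ in $\mathrm{Br}(L)$.

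Then I would kill $\beta$ using local class field theory. Since $F$ is $p$-adic, the invariant map identifies $\mathrm{Br}(F) \cong \Q/\Z$, and for any finite extension $L/F$ one has $\mathrm{inv}_L \circ \mathrm{res}_{L/F} = [L:F] \cdot \mathrm{inv}_F$. If $\beta$ has order $m$, the unramified extension $F'/F$ of degree $m$ therefore kills $\beta$. Replacing $K$ by the Galois compositum $K F'$ if necessary and running the previous paragraph over $F'$ in place of $F$, we obtain a $K$-conjugate of $\rho$ with values in $\GL_n(F')$, realizing $\tau$ over the finite unramified extension $F'/E(\tau)$.

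The key technical content is the combination of two classical facts — the descent obstruction for an absolutely irreducible representation living in the Brauer group of the field of definition (via Schur's lemma together with Hilbert $90$ for $\GL_n$), and the property that every Brauer class over a $p$-adic local field is killed by some finite unramified extension. The former is where one must be careful to set up the cocycle correctly and to know that no finer obstruction arises; everything else (finite generation of $\tau(W_{\Q_p})$, choice of initial $K$, the CFT invariant computation) is routine.
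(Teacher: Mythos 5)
Your proof is correct and follows essentially the same route as the paper: the paper simply cites Kratzer (\cite[1.4]{Kra}) for the fact that the obstruction to realizing $\tau$ over $E(\tau)$ lies in the Brauer group of $E(\tau)$ — which you reprove directly via the Schur-class cocycle, Schur's lemma, and Hilbert 90 for $\GL_n$ — and then kills the class by a finite unramified extension using the local invariant map, exactly as you do.
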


\begin{proof}
From the results of \cite[1.4]{Kra}, we see that
the obstruction to realizing $\tau$ over $E(\tau)$ is in the
Brauer group of $E(\tau)$. 
An element of the Brauer group can be killed by taking a finite
unramified extension, hence the result.
\end{proof}

\subsubsection{$(\phi,\GF)$-modules}

We fix a finite Galois extension $F$ of $\Q_p$, and denote by $F_0$ the
maximal subextension of $F$ that is unramified over $\Q_p$.

Let $A$ be a $\Q_p$-algebra. Then a $(\phi,\GF)$-module $M$ over
$F_0\otimes_{\Q_p} A$ is a
free $F_0\otimes_{\Q_p}A$-module of finite rank, endowed with commuting
actions of an automorphism $\phi$ and the group $\GF$. The action of
$\phi$ is $A$-linear and $F_0$-semi-linear (with respect to the Frobenius
automorphism of $F_0$), and the action of $\GF$ is $F_0$-semi-linear
(with respect to the action of $\GF$ on $F_0$) and $A$-linear.

Then:
\begin{prop}
\label{Weil}
Let $A$ be an $F_0$-algebra. Then there is an equivalence of categories
between $(\phi,\GF)$-modules over $F_0\otimes_{\Q_p} A$ and Weil representations over a
free $A$-module that are trivial on $I_F$, and this equivalence preserves
rank. Moreover this construction is functorial in $A$ (in the category of
$F_0$-algebras).
\end{prop}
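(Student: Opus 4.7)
The plan is to construct explicit quasi-inverse functors in both directions, using the decomposition of $F_0 \otimes_{\Q_p} A$ induced by the $F_0$-algebra structure on $A$. Setting $f = [F_0:\Q_p]$, letting $\iota : F_0 \to A$ be the structural embedding and $\sigma$ the Frobenius of $F_0$, the map $x \otimes a \mapsto (\iota(\sigma^i(x))\, a)_{i \in \Z/f}$ identifies $F_0 \otimes_{\Q_p} A$ with $\prod_{i \in \Z/f} A$. Any free $F_0 \otimes_{\Q_p} A$-module $M$ of rank $n$ therefore decomposes as $M = \bigoplus_i M_i$, with each $M_i$ a free $A$-module of rank $n$; the Frobenius $\phi$ becomes $A$-linear and cyclically shifts indices ($\phi : M_i \isom M_{i+1}$); the inertia subgroup $I := \Gal(F/F_0) \subset \GF$ acts $A$-linearly and preserves each $M_i$; and a general $g \in \GF$ sends $M_i$ to $M_{i+\bar{g}}$, where $\bar{g}$ is the image of $g$ in $\Gal(F_0/\Q_p) \cong \Z/f$.

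To pass from a $(\phi, \GF)$-module $M$ to a Weil representation, I set $V := M_0$ and define an action of $W_{\Q_p}$ on $V$ via the presentation $W_{\Q_p}/I_F \cong \langle \phi_0 \rangle \ltimes I$ obtained by fixing a Frobenius lift $\phi_0 \in W_{\Q_p}$. The inertia $I_{\Q_p}$ acts on $V$ through its quotient $I$ using the $A$-linear action of $I$ on $M_0$, while $\phi_0$ acts by $\phi^f|_{M_0}$, which is $A$-linear because $\sigma^f = \id_{F_0}$. The semidirect-product relations satisfied by $\phi_0$ and $I$ in $W_{\Q_p}/I_F$ translate precisely into the commutation of $\phi$ with the $\GF$-action on $M$, so these assignments yield a group action of $W_{\Q_p}/I_F$ on $V$ and hence a Weil representation on the free $A$-module $V$ of rank $n$, trivial on $I_F$.

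In the reverse direction, given such a Weil representation $\rho$ on $V$, I set $M := \bigoplus_{i \in \Z/f} V_i$ with each $V_i$ a copy of $V$ as an $A$-module, turn $M$ into an $F_0 \otimes_{\Q_p} A$-module by having the $i$-th factor of $\prod_i A$ act on $V_i$, declare $\phi : V_i \to V_{i+1}$ to be $\id_V$ for $i < f-1$ and $\rho(\phi_0)$ for $i = f-1$ (so that $\phi^f|_{V_0} = \rho(\phi_0)$), and define the $\GF$-action so that $g \in \GF$ sends $V_i$ to $V_{i+\bar{g}}$ through the action of a chosen lift of $g$ in $W_{\Q_p}$ (conjugated appropriately by powers of $\phi_0$ so as to be independent of the lift modulo $I_F$). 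The semidirect-product relations between $\phi_0$ and $I$ translate exactly into the required commutation of $\phi$ and the $\GF$-action on $M$, and the description above shows that $M$ is free of rank $n$ over $F_0 \otimes_{\Q_p} A$.

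The main verification is that the $W_{\Q_p}$-action from the first construction is a genuine group homomorphism, which reduces to checking that the operators assigned to $I$ and to $\phi_0$ satisfy the defining relation of the semidirect product $W_{\Q_p}/I_F$; this is precisely the content of the commutation of $\phi$ with $\GF$ on $M$. Quasi-inverseness is then routine: starting from $M$, reconstructing $\bigoplus V_i$ and identifying each $V_i \cong M_i$ via the iterates of $\phi$ recovers $M$ on the nose, and starting from $V$ the zeroth component of $M$ is $V$ by construction. Rank preservation is built into the decomposition $M = \bigoplus M_i$, and functoriality in $A$ for morphisms of $F_0$-algebras is automatic since the splitting $F_0 \otimes_{\Q_p} A \cong \prod_i A$ and every piece of data above are visibly functorial.
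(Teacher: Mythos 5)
Your decomposition $M=\bigoplus_{i\in\Z/f}M_i$ and the bookkeeping of how $\phi$ and $\GF$ permute the factors are fine, but the forward functor you build on it does not work. In $W_{\Q_p}/I_F\cong\langle\phi_0\rangle\ltimes I$ the defining relation is $\phi_0\tau\phi_0^{-1}=g_0\tau g_0^{-1}$ for $\tau\in I\cong\Gal(F/F_0)$, where $g_0$ is the image of $\phi_0$ in $\GF$. Since $\phi$ commutes with the $\GF$-action on $M$, your candidate operator $\phi^f|_{M_0}$ for $\phi_0$ commutes with the action of $I$ on $M_0$, so the relation you must verify reduces to asking that $\tau$ and $g_0\tau g_0^{-1}$ act identically on $M_0$; this fails for suitable $M$ as soon as $F/\Q_p$ is non-abelian (exactly the situation of the supersingular types of Section \ref{sssect}), so the sentence ``the semidirect-product relations translate precisely into the commutation of $\phi$ with the $\GF$-action'' is the step that breaks: commutation gives commutativity, whereas the semidirect product requires the conjugation twist by $g_0$. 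The underlying problem is that your recipe never uses the action on $M$ of the elements of $\GF$ lifting the Frobenius of $F_0$, and even when it is well defined it cannot be an equivalence: for $F=F_0=\Q_{p^2}$ and rank one, the $(\phi,\GF)$-modules attached to an unramified character $\chi$ and to $\chi\cdot\unr(-1)$ are non-isomorphic, yet both have trivial $I$-action and the same $\phi^2|_{M_0}$, hence the same image under your functor.

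The construction the paper invokes (it simply cites \cite{BM} for this direction and notes that the converse is immediate) untwists $\phi$ by the \emph{full} $\GF$-action instead of discarding its non-inertial part: for $g\in W_{\Q_p}$ let $\overline{g}\in\GF$ be its image and $\alpha(g)$ the power of $\sigma$ it induces on $F_0$; then $r(g)=\overline{g}\circ\phi^{-\alpha(g)}$ is $F_0\otimes_{\Q_p}A$-linear, $g\mapsto r(g)$ is a group homomorphism precisely because $\phi$ and $\GF$ commute, it is trivial on $I_F$, and it preserves each $M_i$; the Weil representation is $r$ restricted to $M_0$, so $\phi_0$ acts by $g_0\circ\phi^{-1}|_{M_0}$ (up to the arithmetic/geometric normalization of Frobenius), not by $\phi^f|_{M_0}$. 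Your inverse construction hides the same missing twist in the phrase ``conjugated appropriately by powers of $\phi_0$''; once the forward functor is replaced by $r|_{M_0}$, that direction can be made precise and the quasi-inverse, rank-preservation and functoriality checks do go through along the lines you indicate.
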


\begin{proof}
For a given $A$,
the construction of the Weil representation from the $(\phi,\GF)$-module
is explained in \cite{BM}, and the converse construction is immediate.
\end{proof}

We will make use of this equivalence as some things are more naturally
expressed in terms of $(\phi,\GF)$-modules, whereas others are more
easily proved in terms of representations of the Weil group (for example
Proposition \ref{familyrepr}).

In the same situation, we also define a $(\phi,N,\GF)$-module over
$F_0\otimes_{\Q_p}A$ to be a
$(\phi,\GF)$-module over $F_0\otimes_{\Q_p}A$ that is additionally endowed with a
$F_0\otimes_{\Q_p}A$-linear endomorphism $N$ satisfying $N\phi = p\phi N$
that commutes with the action of $\GF$.

\subsection{Universal (filtered) $(\phi,N)$-modules with descent data}
\label{universal}

We recall a few definitions concerning objects attached to $p$-adic
representations of $G_{\Q_p}$.
If $F/\Q_p$ is a finite extension, we denote by $F_0$ be maximal
unramified extension of $\Q_p$ contained in $F$. 

Let $V$ be a continuous representation of $G_{\Q_p}$ over an $E$-vector space
for some finite $E/\Q_p$. Let $F$ be a finite Galois extension of $\Q_p$.
We denote by $\Dcr{F}(V)$ the $F_0\otimes_{\Q_p}E$-module 
$(B_{crys}\otimes_{\Q_p}V)^{G_F}$. It is a $(\phi,\GF)$-module
over $F_0\otimes_{\Q_p}E$.
If $V$ becomes crystalline over $F$ then $\Dcr{F}(V)$ is a free
$F_0\otimes_{\Q_p}E$-module of rank $\dim_E(V)$.
We denote by $\Dst{F}(V)$ the $F_0\otimes_{\Q_p}E$-module 
$(B_{st}\otimes_{\Q_p}V)^{G_F}$. It is endowed with a structure of
$(\phi,N,\GF)$-module over $F_0\otimes_{\Q_p}E$.
If $V$ becomes semi-stable over $F$ then is it a free
$F_0\otimes_{\Q_p}E$-module of rank $\dim_E(V)$. If $V$ becomes
crystalline over $F$ then $\Dst{F}(V)$ and $\Dcr{F}(V)$ coincide as 
$(\phi,\GF)$-modules, and $N=0$. 
We denote by $\Ddr{F}(V)$ the $F\otimes_{\Q_p}E$-module 
$(B_{dR}\otimes_{\Q_p}V)^{G_F}$. It is a $F\otimes_{\Q_p}E$-module with a
semi-linear action of $\GF$, and is endowed with a separated
exhaustive decreasing filtration by sub-$F\otimes_{\Q_p}E$-modules that
is stable under the action of $\GF$, and satisfies an additional
condition called admissibility. If $V$ is potentially semi-stable, then
$\Ddr{\Q_p}(V)$ is an $E$-vector space of dimension $\dim_E(V)$.
Moreover, we have that $\Ddr{F}(V) = F\otimes_{F_0}\Dst{F}(V)$ as an
$F\otimes_{\Q_p}E$-module, so this endows $F\otimes_{F_0}\Dst{F}(V)$ with a
filtration as above, that is, a structure of filtered
$(\phi,N,\GF)$-module.

\begin{theo}
\label{modulest}
Let $F$ be a finite Galois extension of
$\Q_p$. Let $X$ be a reduced rigid analytic space, let $\V$ be a locally
free $\OO_X$-module of rank $n$ with a continuous action of $G_{\Q_p}$. Assume
that for all $x \in X$, $\V_x$ is potentially semi-stable with weights
independent of $x$, and becomes semi-stable over $F$. Then there exists a
projective $F_0\otimes_{\Q_p}\OO_X$-module $D$ of rank $n$, endowed with
a structure of $(\phi,N,\GF)$-module over $F_0\otimes_{\Q_p}\OO_X$,
such that for
all $x$, $D_x$ is isomorphic, as a $(\phi,N,\GF)$-module,
to $\Dst{F}(\V_x)$.
\end{theo}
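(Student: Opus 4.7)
The plan is to construct $D$ by applying techniques for families of potentially semi-stable representations, in the spirit of Berger--Colmez. The underlying strategy is to take $D$ to be the $G_F$-invariants of $B_{st} \otimes \V$, suitably interpreted in a family, and then verify that this yields a projective module of the expected rank with the claimed fiber-wise description.

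First, I would reduce to the case where $X$ is affinoid, $X = \mathrm{Sp}(A)$, and $\V$ is free of rank $n$ over $\O_X$. The construction will be canonical, so it glues to produce the global object. Second, I would define $D$ via periods: set $D = (B_{st}\mathbin{\hat\otimes}_{\Q_p} \V(X))^{G_F}$ with a natural topology. The ring $B_{st}$ carries commuting actions of $\phi$, $N$, and $G_{\Q_p}$; since $G_F$ acts trivially on $F_0 \subset B_{st}^{G_F}$ and the action on $F_0$ factors through $\GF$, one obtains $D$ naturally as an $F_0 \otimes_{\Q_p} A$-module equipped with the required $(\phi,N,\GF)$-structure (the operator $N$ being $F_0 \otimes_{\Q_p} A$-linear and $\phi$, $\GF$ being semilinear in the appropriate sense).

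Third, one must identify the fibers and prove projectivity. Fiber-wise identification at $x \in X$ should follow from the compatibility of the construction with base change to residue fields, reducing to the classical statement that $\Dst{F}(\V_x)$ is free of rank $\dim_{E_x} \V_x = n$ because $\V_x$ becomes semi-stable over $F$. The constancy of Hodge--Tate weights and the common field $F$ of semi-stability are the key inputs that make this uniform across $x$. For projectivity, the natural route is to pass through the theory of families of $(\phi,\Gamma)$-modules in the sense of Berger--Colmez (and further developed by Kedlaya--Pottharst--Xiao): attach to $\V$ a family of $(\phi,\Gamma)$-modules over the relative Robba ring, and then apply their extraction of families of $\Dst{F}$-modules, which is known to be projective of the expected rank under the constant-weights hypothesis.

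The main obstacle is precisely this projectivity/rank statement. Formation of $G_F$-invariants is not exact in families, so one cannot directly deduce the good behavior of $D$ from its pointwise behavior. The assumption of constant Hodge--Tate weights and a common extension $F$ over which every $\V_x$ becomes semi-stable provides exactly the rigidity needed: it forces the relative dimension of the $G_F$-invariants to be locally constant equal to $n$, and combined with reducedness of $X$ one concludes that $D$ is in fact a projective $F_0 \otimes_{\Q_p} \O_X$-module of rank $n$. The $\phi$, $N$, and $\GF$ structures, being defined intrinsically from $B_{st}$, then automatically commute as required.
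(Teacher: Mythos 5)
Your proposal is correct and follows essentially the same route as the paper, which simply invokes Bellovin's result (\cite[Theorem 5.1.2]{Bel}, see also \cite[Th\'eor\`eme C]{BC}) by viewing $\V$ as a representation of $G_F$ and taking $D$ to be the module $\D_{B_{st}}(\V)$ constructed there; your delegation of the projectivity/rank statement to the Berger--Colmez--Bellovin theory of families is exactly what the paper does.
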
 

\begin{proof}
This follows immediately from \cite[Theorem 5.1.2]{Bel}: we take the
module $D$ to be the module called $\D_{B_{st}}(\V)$ there,
considering $\V$ as a representation of $G_F$ (see also
\cite[Théorème C]{BC}).
\end{proof}

\begin{theo}
\label{moduledR}
Let $F$ be a finite Galois extension of
$\Q_p$. Let $X$ be a reduced rigid analytic space, let $\V$ be a locally
free $\OO_X$-module of rank $n$ with a continuous action of $G_{\Q_p}$. Assume
that for all $x \in X$, $\V_x$ is potentially semi-stable with weights
independent of $x$, and becomes semi-stable over $F$. Let $D$ be as in
the conclusion of Theorem \ref{modulest}.
Then
$F\otimes_{F_0}D$ is endowed of a filtration by locally free
sub-$F\otimes_{\Q_p}\OO_X$-modules, such that the graded parts are also
locally free, such that for
all $x$, $(F\otimes_{F_0}D)_x$ is isomorphic, as a filtered
$(\phi,N,\GF)$-module,
to $\Ddr{F}(\V_x)$.
\end{theo}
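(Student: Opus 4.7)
The plan is to invoke the family version of the de Rham functor and then derive local freeness of the filtration pieces from the constancy of Hodge--Tate weights.

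First, following the construction in \cite[Theorem 5.1.2]{Bel} (and its source \cite[Théorème C]{BC}), applied to the period ring $B_{dR}$ rather than $B_{st}$, I would define a coherent $F\otimes_{\Q_p}\O_X$-module $\D_{dR,F}(\V) := (B_{dR}\otimes_{\Q_p}\V)^{G_F}$, together with its natural decreasing filtration $\Fil^i\D_{dR,F}(\V) := (\Fil^iB_{dR}\otimes_{\Q_p}\V)^{G_F}$ by coherent sub-$F\otimes_{\Q_p}\O_X$-modules, and a compatible semi-linear $\GF$-action. The standard comparison isomorphism $F\otimes_{F_0}\Dst{F}(V) \isom \Ddr{F}(V)$ for individual representations upgrades to a $\GF$-equivariant isomorphism $F\otimes_{F_0}D \isom \D_{dR,F}(\V)$ of $F\otimes_{\Q_p}\O_X$-modules in families (either by redoing the family construction for $B_{dR}$ directly, or by inverting the period $t$ starting from $D$). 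Pulling back the filtration yields the required filtration on $F\otimes_{F_0}D$, which specializes at each closed point $x$ to $\Fil^i\Ddr{F}(\V_x)$ by compatibility of the construction with base change.

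Second, I would establish local freeness of the pieces. Each $\Fil^i(F\otimes_{F_0}D)$ is a coherent sub-$F\otimes_{\Q_p}\O_X$-module of the locally free module $F\otimes_{F_0}D$, so its fiber at each $x\in X$ has $k(x)$-dimension equal to $\dim_{k(x)}\Fil^i\Ddr{F}(\V_x)$, a quantity determined entirely by the Hodge--Tate weights of $\V_x$. Since these weights are constant on $X$ by hypothesis, the fiber dimensions are locally constant; because $X$ is reduced, a coherent sheaf of locally constant fiber dimension is locally free, giving local freeness of each $\Fil^i(F\otimes_{F_0}D)$. The short exact sequences $0 \to \Fil^{i+1} \to \Fil^i \to \mathrm{gr}^i \to 0$ then force the graded pieces $\mathrm{gr}^i$ to be locally free as well.

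The main technical obstacle is showing that the formation of $\Fil^i\D_{dR,F}(\V)$ commutes with the fiber functor at closed points, equivalently that the $G_F$-invariants in $\Fil^iB_{dR}\otimes_{\Q_p}\V$ commute with base change on the reduced rigid space $X$. This is the analogue for $\Fil^iB_{dR}$ of the base change compatibility already invoked in Theorem \ref{modulest} for $B_{st}$, and is handled by the Berger--Colmez techniques using the finite-level filtration quotients $B_{dR}^+/t^n$, which are amenable to the standard sheaf-theoretic arguments and allow one to pass to the inverse limit while preserving the coherent filtration structure.
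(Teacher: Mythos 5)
Your overall route is the same as the paper's: the paper obtains the filtration by citing Bellovin's family de Rham functor (\cite[Theorem 5.1.7]{Bel}), identifying $F\otimes_{F_0}D$ with the module called $\D_{B_{dR}}(\V)$ there and the filtration with the modules $\D_{B_{dR}}^{[a,b]}(\V)$; your first and third paragraphs essentially re-sketch that construction and its base-change compatibility, which is acceptable (if redundant with the citation).

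The genuine gap is in your second paragraph. The theorem asserts that the filtration steps are locally free as $F\otimes_{\Q_p}\O_X$-modules, not merely as $\O_X$-modules, and your argument --- locally constant $k(x)$-dimension of the fibers plus reducedness of $X$ --- only yields local freeness over $\O_X$. For local freeness over $F\otimes_{\Q_p}\O_X$ you also need each fiber $\Fil^i\Ddr{F}(\V_x)$ to be \emph{free} over $F\otimes_{\Q_p}E_x$, since a locally free $F\otimes_{\Q_p}\O_X$-module specializes to a free $F\otimes_{\Q_p}E_x$-module. For a general de Rham representation of $G_F$ this freeness fails: the filtration jumps may differ from one embedding $F\hookrightarrow\bar\Q_p$ to another, so the $\Fil^i$ are only projective-with-varying-ranks over the factors of $F\otimes_{\Q_p}E_x$. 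This is precisely the point the paper flags as the one thing to check: it holds here because $\V_x$ is a representation of the whole group $G_{\Q_p}$, so that $\Ddr{F}(\V_x)=F\otimes_{\Q_p}\Ddr{\Q_p}(\V_x)$ compatibly with filtrations and the jumps are the same at every embedding (the paper invokes \cite[Lemma 2.1]{Sav} for this). Your proof never uses the $G_{\Q_p}$-hypothesis at this step --- you only track $k(x)$-dimensions, which are indeed determined by the weights --- so as written the claimed local freeness of $\Fil^i$ and of the graded pieces over $F\otimes_{\Q_p}\O_X$ does not follow. Adding the pointwise freeness statement (via the $G_{\Q_p}$-structure or Savitt's lemma) closes the gap.
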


\begin{proof}
This follows from \cite[Theorem 5.1.7]{Bel}, as $F\otimes_{F_0}D$ is the
$F\otimes_{\Q_p}\OO_X$-module that is called $\D_{B_{dR}}(\V)$ there,
considering $\V$ as a representation of $G_F$. Indeed the filtration, and
the graded parts, are given by the modules called
$\D_{B_{dR}}^{[a,b]}(\V)$. The point that we need to check is that for
all $[a,b]$, the $F\otimes_{\Q_p}E_x$-modules
$\D_{B_{dR}}^{[a,b]}(\V_x)$ are actually free (then their rank is
independent of $x$ by the condition on the weights). This comes from
\cite[Lemma 2.1]{Sav}, and here we use the fact that we start
from a representation of $G_{\Q_p}$.
\end{proof}

Let now $(k,\tau,\bar\rho,\psi)$ be a deformation data, as defined in
Definition \ref{defdata}. Let $E$ be a finite
extension of $\Q_p$ satisfying the following conditions:
\begin{enumerate}
\item
the residual representation $\bar\rho$ can be realized on the residue
field of $E$

\item
the type $\tau$ can be realized on $E$

\item
the character $\psi$ takes its values in $E^\times$
\end{enumerate}

Let $R^\psi(k,\tau,\bar\rho)[1/p]$ be the ring defined by Kisin attached to
this data, as recalled in Section \ref{kisinrings}. It is an
$\OO_E$-algebra.
We can apply Theorems \ref{modulest} and \ref{moduledR} to the rigid
analytic space $X = \X^\psi(k,\tau,\bar\rho)$ attached to the Kisin
ring $R^\psi(k,\tau,\bar\rho)[1/p]$. Indeed, we know that these rings are
reduced, and the hypotheses come from the definition of the rings.

\subsection{Working in families}

\subsubsection{Reduction of an endomorphism}

\begin{prop}
\label{diago}
Let $K$ be a field and $A$ be a $K$-algebra. 
Let $\phi$ be an $A$-linear
endomorphism of $A^2$, and assume that the characteristic polynomial of
$\phi$ is in fact in $K[X]$, and that it is split over $K$ with distinct
eigenvalues. Then, Zariski-locally on $A$, $\phi$ is diagonalizable.
\end{prop}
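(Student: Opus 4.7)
The plan is to use the spectral projectors associated with the two eigenvalues. Write $\chi_\phi(X) = (X-\alpha)(X-\beta)$ with $\alpha,\beta\in K$ distinct, so $\alpha-\beta\in K^\times$ and in particular is invertible in $A$. By the Cayley--Hamilton theorem, $(\phi-\alpha\cdot\id)(\phi-\beta\cdot\id)=0$ in $\End_A(A^2)$.

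Define
\[
e_1 = \frac{\phi-\beta\cdot\id}{\alpha-\beta}, \qquad e_2 = \frac{\phi-\alpha\cdot\id}{\beta-\alpha},
\]
so that $e_1+e_2 = \id$ and $e_1 e_2 = e_2 e_1 = 0$ by Cayley--Hamilton, which forces $e_1^2=e_1$ and $e_2^2=e_2$. These are thus commuting orthogonal idempotents in $\End_A(A^2)$, giving a direct sum decomposition
\[
A^2 = M_1 \oplus M_2, \qquad M_i := e_i(A^2),
\]
where $\phi$ acts on $M_1$ as multiplication by $\alpha$ and on $M_2$ as multiplication by $\beta$.

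Next I would argue that each $M_i$ is a finitely generated projective $A$-module (as a direct summand of $A^2$) of constant rank $1$. The rank can be checked at each point $\mathfrak{p}\in\spec A$ by passing to the residue field $\kappa(\mathfrak{p})$: the induced endomorphism on $\kappa(\mathfrak{p})^2$ still has characteristic polynomial $(X-\alpha)(X-\beta)$ with $\alpha\neq\beta$ in $K\subset\kappa(\mathfrak{p})$, so its eigenspaces are each one-dimensional, and these eigenspaces are exactly $M_i\otimes_A\kappa(\mathfrak{p})$. Hence $M_1$ and $M_2$ are projective of constant rank $1$.

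Finally, a finitely generated projective $A$-module of rank $1$ is Zariski-locally free, so on a suitable Zariski open cover of $\spec A$ we can pick generators $v_1$ of $M_1$ and $v_2$ of $M_2$; the decomposition $A^2 = M_1 \oplus M_2$ shows that $(v_1,v_2)$ is then an $A$-basis of $A^2$ in which $\phi$ is diagonal with eigenvalues $\alpha,\beta$. There is no real obstacle here: the only point that requires care is the verification that the rank of $M_i$ is $1$ at every prime, which is immediate once one observes that $\alpha,\beta$ remain distinct in every residue field because they already lie in $K$.
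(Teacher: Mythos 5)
Your proof is correct, and it takes a genuinely different route from the one in the paper. The paper argues in coordinates: writing $\phi = \matr abcd$, it solves the linear systems for eigenvectors by hand and observes that the two solution formulas become valid after inverting $f = (d-\lambda)/(\mu-\lambda)$ and $1-f$ respectively, which gives an explicit cover of $\spec A$ by the two basic opens $D(f)$ and $D(1-f)$. You instead use the Lagrange/spectral idempotents $e_1,e_2$ supplied by Cayley--Hamilton (valid over any commutative ring, and $\alpha-\beta$ is a unit of $A$ since it lies in $K^\times$), obtain the $\phi$-stable splitting $A^2 = M_1\oplus M_2$ on which $\phi$ acts by the scalars $\alpha,\beta$, and then invoke the standard fact that a finitely generated projective module is Zariski-locally free, checking the rank is $1$ at every prime via the residue fields (where $\alpha\neq\beta$ persists because the residue fields contain $K$ -- the degenerate case $A=0$ being vacuous). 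The trade-off: the paper's computation is elementary and produces a completely explicit two-element cover, whereas your argument is coordinate-free, and it generalizes with no change to an endomorphism of $A^n$ whose characteristic polynomial lies in $K[X]$ and has $n$ distinct roots in $K$, at the cost of appealing to local freeness of finitely generated projectives, so the resulting cover is less explicit. Both proofs hinge on the same key point, the invertibility of $\alpha-\beta$ in $A$.
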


\begin{proof}
Let $\lambda$ and $\mu$ be the roots of the characteristic polynomial of
$\phi$, and let $\matr  abcd$ be the matrix of $\phi$ in the canonical
basis of $A^2$ (so that $a+d = \lambda+\mu$ and $ad-bc = \lambda\mu$).

We are looking for a basis $(f_1,f_2)$ of $A^2$, with $f_1 = xe_1+ye_2$, $f_2 = e_2$,
such that the matrix of $\phi$ in this basis is upper triangular.
The new basis is as wanted if $x,y$ satisfy one of the following
systems of equations:
$$
(a-\lambda)x+by=0 \qquad\text{and}\qquad
cx + (d-\lambda)y = 0
$$
or
$$
(a-\mu)x+by=0 \qquad\text{and}\qquad
cx + (d-\mu)y = 0.
$$

Assume that $u = d-\lambda$ is invertible. We solve the first system by
setting $x=1$, $y=-c/(d-\lambda)$.
In the first case, in our new basis $\phi$ has a matrix of the form
$\matr {\lambda}{b}{0}{d}$, and actually $d = \mu$ by the trace
condition. As $\lambda-\mu$ is invertible, we can change the basis again
so that in the new basis, $\phi$ has matrix $\matr
{\lambda}00{\mu}$.

Assume now that $v = a-\lambda$ is invertible. Then so is $d-\mu = -v$.
We solve the second system by setting $x=1$, $y = -c/(d-\mu)$.
In this case we do the same thing after exchanging $\lambda$ and $\mu$.

Note that $u+v = \mu-\lambda$ is invertible by assumption.
We set $f = (d-\lambda)/(\mu-\lambda)$, $A_1 = A[f^{-1}]$, $A_2 =
A[(1-f)^{-1}]$. Then as we just saw in $A_1$ and $A_2$ there is a basis
in which the matrix of $\phi$ is $\matr \lambda{0}{0}\mu$, which gives
the result.
\end{proof}

\subsubsection{Isomorphism of group representations}

\begin{theo}
\label{familyrepr}
Let $K$ be a field of characteristic zero, and $A$ a $K$-algebra.

Let $G$ be a group.
Let $\rho : G \to \GL_n(K)$ be a representation that is absolutely
irreducible.
Let $\rho' : G \to \GL_n(A)$ be a representation.
Assume that for all $g$ in $G$, we have $\tr \rho(g) = \tr \rho'(g)$.

Then, Zariski-locally on $A$,
there is an $M \in \GL_n(A)$ such that $\rho'(g) = M \rho(g) M^{-1}$
for all $g\in G$.
\end{theo}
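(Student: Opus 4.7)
The plan is to upgrade the equality of traces to an equality of the $A$-algebra structures spanned by $\rho$ and $\rho'$, and then to recognize this abstract identification as an inner automorphism of $M_n(A)$, which forces the two representations to be conjugate.

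\textbf{Step 1: A convenient basis.}
Since $\rho$ is absolutely irreducible, Burnside's theorem gives elements $g_1,\dots,g_{n^2}\in G$ such that $\rho(g_1),\dots,\rho(g_{n^2})$ form a $K$-basis of $M_n(K)$. In characteristic zero the trace pairing $(X,Y)\mapsto\tr(XY)$ is non-degenerate on $M_n(K)$, so the matrix
$T\in M_{n^2}(K)$ with entries $T_{ij}=\tr(\rho(g_ig_j))=\tr(\rho(g_i)\rho(g_j))$ is invertible in $K$, hence also in $A$.

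\textbf{Step 2: Transferred basis in $M_n(A)$.}
By the hypothesis $\tr\rho=\tr\rho'$ on $G$, one has $\tr(\rho'(g_i)\rho'(g_j))=\tr(\rho'(g_ig_j))=T_{ij}$. Consider the $A$-linear map $\Psi:A^{n^2}\to M_n(A)$, $e_i\mapsto\rho'(g_i)$, and its ``adjoint'' $\Psi^*:M_n(A)\to A^{n^2}$, $X\mapsto(\tr(X\rho'(g_j)))_j$. Then $\Psi^*\circ\Psi$ has matrix $T\in\GL_{n^2}(A)$, so $\Psi$ is split injective, and since $M_n(A)$ is free of rank $n^2$, $\Psi$ is an isomorphism; that is, $\rho'(g_1),\dots,\rho'(g_{n^2})$ is an $A$-basis of $M_n(A)$.

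\textbf{Step 3: An $A$-algebra automorphism of $M_n(A)$.}
The surjection $\rho:K[G]\twoheadrightarrow M_n(K)$ has some kernel $I$; writing $M_n(A)=M_n(K)\otimes_K A$ we get a surjection $A[G]\twoheadrightarrow M_n(A)$ with kernel $I\otimes_K A$. I claim $\rho':A[G]\to M_n(A)$ kills $I\otimes_K A$: for $x\in I$ and any $y\in K[G]$,
\[
\tr(\rho'(x)\rho'(y))=\tr(\rho'(xy))=\tr(\rho(xy))=\tr(\rho(x)\rho(y))=0,
\]
so by Step 2 the element $\rho'(x)$ pairs trivially against a basis of $M_n(A)$, hence $\rho'(x)=0$. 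Thus $\rho'$ factors through an $A$-algebra homomorphism $\phi:M_n(A)\to M_n(A)$ sending $\rho(g)\otimes 1$ to $\rho'(g)$. Step 2 shows $\phi$ sends a basis to a basis, so $\phi$ is an $A$-algebra automorphism.

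\textbf{Step 4: Inner Zariski-locally.}
To finish, it suffices to show that any $A$-algebra automorphism $\phi$ of $M_n(A)$ is Zariski-locally inner. Consider the orthogonal system of primitive idempotents $e_i:=\phi(E_{ii})\in M_n(A)$ summing to $1$, and the direct sum decomposition $A^n=\bigoplus_i e_i(A^n)$. Each $e_i(A^n)$ is a finitely generated projective $A$-module, and fibrewise (i.e.\ after reducing modulo any maximal ideal) each $e_i$ is conjugate to $E_{ii}$, so $e_i(A^n)$ has constant rank $1$. Rank-$1$ projective $A$-modules are Zariski-locally free, so after localizing $A$ we may simultaneously trivialize the $e_i(A^n)$ and obtain $P\in\GL_n(A)$ with $Pe_iP^{-1}=E_{ii}$. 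Then $P\phi(E_{ij})P^{-1}$ lies in $E_{ii}M_n(A)E_{jj}=AE_{ij}$, hence equals $a_{ij}E_{ij}$ with $a_{ij}\in A^\times$ (by the multiplicative relations among the $E_{ij}$), and a further conjugation by a diagonal matrix $D$ reduces to $DP\phi(E_{ij})P^{-1}D^{-1}=E_{ij}$ for all $i,j$. Thus $\phi$ is conjugation by $(DP)^{-1}$, giving $M\in\GL_n(A)$ with $\rho'(g)=\phi(\rho(g))=M\rho(g)M^{-1}$ for all $g\in G$.

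The main technical obstacle is Step 4: verifying that $\phi$ is Zariski-locally inner. The key input is the triviality of the Brauer class of $M_n(A)$ (reflected concretely in the Zariski-local freeness of rank-$1$ projective modules), together with the fact that primitive idempotents of $M_n$ over a commutative ring are fibrewise conjugate.
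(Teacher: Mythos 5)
Your argument is correct, and its overall route is the same as the paper's: first convert the trace identity into an $A$-algebra automorphism $\phi$ of $M_n(A)$ carrying $\rho(g)$ to $\rho'(g)$, then show such an automorphism is inner after Zariski localization. The difference is that the paper disposes of both steps by citation — Rouquier's theorem on pseudo-characters for the first, and Knus--Ojanguren (IV, Prop.\ 1.3) for the second — whereas you prove both directly: your Steps 1--3 exploit the fact that $\rho$ is defined over the field $K$ and is absolutely irreducible (Burnside basis plus perfectness of the trace pairing), which is genuinely simpler than Rouquier's general pseudo-character machinery and suffices here; and your Step 4 is in effect a proof of the Knus--Ojanguren statement in the split case $M_n(A)$, via the idempotents $e_i=\phi(E_{ii})$, local freeness of the rank-one projectives $e_i(A^n)$, and the cocycle trick $a_{ij}=a_{1i}^{-1}a_{1j}$ to absorb the units by a diagonal conjugation. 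The only points worth tightening are routine: in Step 2, that a split injection $A^{n^2}\hookrightarrow M_n(A)\cong A^{n^2}$ is automatically an isomorphism (localize and count ranks), and in Step 4, that rank one at closed points propagates to all primes (local constancy of the rank, or run the Skolem--Noether argument over the residue field of an arbitrary prime). With those remarks your proof is a complete, self-contained substitute for the paper's citations.
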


\begin{proof}
By \cite[Théorème 5.1]{Rou}, there is an $A$-algebra automorphism $\tau$
of $M_n(A)$ such that for all $g \in G$, $\rho'(g) = \tau\rho(g)$. 
By \cite[IV. Proposition 1.3]{KO}, there is a family $(f_i)$ in $A$ 
generating the unit ideal such that for all $i$, the automorphism of $M_n(A[1/f_i])$
induced by $\tau$ is inner. Hence the result.
\end{proof}

\subsubsection{Variations on Hilbert 90}

\begin{prop}
\label{h90fam}
Let $K$ be an infinite field, and $L/K$ be a finite Galois extension of
fields. 

\begin{enumerate}
\item
Let $M$ be a finite
$K$-algebra. Then $H^1(\Gal(L/K),(L\otimes_KM)^\times) = 0$.

\item
Let $A$ be a $K$-algebra. 
Assume that for every maximal ideal $\m$ of $A$, $A/\m$ is a finite
extension of $K$.
Let 
$c \in H^1(\Gal(L/K), (L\otimes_KA)^\times)$. 
There exists a family of elements 
$(f_i)$ in $A$ that generate the unit ideal such that the image
of $c$ in $H^1(\Gal(L/K), (L\otimes_KA[f_i^{-1}])^\times)$ is zero for
all $i$.

\end{enumerate}
\end{prop}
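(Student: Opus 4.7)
My plan is to prove (1) by a devissage on the structure of $M$, and (2) by first reducing to the case where $A$ is finitely generated over $K$ and then interpreting the cohomology class as a line bundle via Galois descent.

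For (1), I would first decompose $M$ as a finite product of local Artin $K$-algebras; since $L\otimes_K -$ and $H^1(G,-)$ both commute with finite products, this reduces to the case where $M$ is local Artin with finite residue field $k/K$. The short exact sequence of $G$-modules
\[ 1 \to 1 + N \to (L\otimes_K M)^\times \to (L\otimes_K k)^\times \to 1, \]
with $N = L\otimes_K \m_M$ nilpotent, then splits the problem in two. Filtering $1 + N$ by the subgroups $1 + N^j$ gives graded pieces that, as additive $G$-modules, are isomorphic to $L\otimes_K V$ for finite-dimensional $K$-vector spaces $V$; by the normal basis theorem $L \simeq K[G]$ as a $K[G]$-module, so $L\otimes_K V$ is free over $K[G]$ and hence $G$-cohomologically trivial, whence $H^1(G, 1+N) = 0$. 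For $(L\otimes_K k)^\times$, separability of $L/K$ ensures that $L\otimes_K k$ is reduced, hence a product $\prod_j L_j$ of fields (finite extensions of $L$) permuted transitively by $G$; as a $G$-module this is $\operatorname{Coind}_H^G L_1$ for $H$ the stabilizer of one factor, and Shapiro's lemma plus classical Hilbert 90 applied to the Galois extension $L_1/L_1^H$ gives $H^1(G, (L\otimes_K k)^\times) = H^1(H, L_1^\times) = 0$.

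For (2), since $G$ is finite the cocycle $c$ takes only finitely many values in $(L\otimes_K A)^\times$, and these values together with their inverses involve only finitely many elements of $A$; letting $A_0 \subset A$ be the finitely generated $K$-subalgebra they generate, $c$ descends to a class $c_0 \in H^1(G, (L\otimes_K A_0)^\times)$, and it suffices to trivialize $c_0$ Zariski-locally on $\spec A_0$. Galois descent for the faithfully flat Galois cover $A_0 \to L\otimes_K A_0$ yields an injection
\[ H^1(G, (L\otimes_K A_0)^\times) \hookrightarrow \operatorname{Pic}(A_0), \]
sending $c_0$ to the class of a line bundle $\mathcal{L}$ on $\spec A_0$ that is trivialized after pullback to $\spec(L\otimes_K A_0)$. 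Since any line bundle on a scheme is Zariski-locally trivial, there exist $f_1, \dots, f_r \in A_0$ generating the unit ideal with $\mathcal{L}|_{\spec A_0[f_i^{-1}]}$ trivial for each $i$; applying the same injection over the localizations, the image of $c_0$ in each $H^1(G, (L\otimes_K A_0[f_i^{-1}])^\times)$ is zero, and the $f_i$ pull back to the desired family on $A$.

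The main obstacle will be justifying the descent injection in (2) at the required level of generality; a concrete construction suffices, where a $1$-cocycle $c_0$ is twisted into descent data for a rank-one projective $A_0$-module, giving both the line bundle and the injectivity. A secondary technical point in (1) is verifying that $L\otimes_K k$ is reduced even when $k/K$ is inseparable over an imperfect $K$; this uses that $L/K$ is separable algebraic, hence contained in $K^{\mathrm{sep}}$, so no purely inseparable element of $k$ acquires a root in $L$. A fully hands-on alternative for (2) would apply part (1) pointwise at each maximal ideal $\m$ of $A_0$ (where $A_0/\m$ is finite over $K$ by hypothesis, or by the Nullstellensatz for finitely generated $A_0$) and patch local trivializations, but passing from trivialization at a closed point to trivialization over a Zariski neighborhood requires a careful approximation argument that is not as clean as the descent route.
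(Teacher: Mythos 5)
Your proposal is correct, but it takes a genuinely different route from the paper. For (1), the paper runs the classical averaging argument: for a cocycle $c$ it forms $\phi(c,x)=\sum_{\gamma\in\Gal(L/K)}\gamma(x)c(\gamma)$, observes that $c$ is a coboundary as soon as $\phi(c,x)$ is invertible, and uses finiteness of $M$ together with infiniteness of $K$ to find such an $x$ (exactly as in the case $M=M_n(K)$); this is uniform and, as the paper remarks, does not need $M$ to be commutative. Your d\'evissage (local Artin factors, the filtration by $1+N^j$ handled via the normal basis theorem, and Shapiro plus classical Hilbert 90 for the \'etale quotient $(L\otimes_K k)^\times$) is a correct alternative for commutative $M$ — which is all the paper ever uses — and does not need $K$ infinite, but note it does not cover the noncommutative case that the paper's statement literally allows and its one-line argument also handles. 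For (2), the paper deduces the claim from (1): at a maximal ideal $\m$ it applies (1) to $A/\m$ (this is precisely where the hypothesis that $A/\m$ is finite over $K$ enters), picks $x\in L$ with $\phi(c_{A/\m},x)$ invertible, and sets $f=N_A(\phi(c,x))$, where $N_A$ is the norm of the finite $A$-algebra $L\otimes_K A$; then $f\notin\m$, and over $A[f^{-1}]$ the element $\phi(c,x)$ is a unit, so $c$ dies on a Zariski neighborhood of $\m$. Your route instead realizes $c$ as a rank-one projective module via Galois descent along $A\to L\otimes_K A$ (equivalently the injection $H^1(\Gal(L/K),(L\otimes_K A)^\times)\hookrightarrow\operatorname{Pic}(A)$ coming from Hochschild--Serre together with Grothendieck's Hilbert 90) and quotes Zariski-local triviality of invertible modules; this is sound — the formation of the descended module commutes with the flat base changes $A\to A[f_i^{-1}]$ because $G$-invariants are a kernel — and it even makes the reduction to a finitely generated $A_0$ and the maximal-ideal hypothesis unnecessary, so you prove a slightly stronger statement, at the cost of invoking descent machinery where the paper's argument is completely explicit and elementary.
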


\begin{proof}
Let $M$ be a $K$-algebra, and 
$c \in H^1(\Gal(L/K), (L\otimes_KM)^\times)$.
Let $x\in L$. We set 
$\phi(c,x) =
\sum_{\gamma\in\Gal(L/K)}\gamma(x)c(\gamma) \in L\otimes_KM$. 
We have for all $g\in \Gal(L/K)$, $c(g)g(\phi(c,x)) = \phi(c,x)$, 
so $c = 0$ as soon
as we can find an $x$ such that $\phi(c,x)$ is
invertible in $L\otimes_KM$. Point (1) is well-known, and is proved by showing that if
$M$ is finite over $K$ then such an $x$ exists, with a proof similar to
the case where $M = M_n(K)$ (here we do not need $M$ to be commutative).

For any commutative $K$-algebra $M$, the $M$-algebra $L\otimes_KM$ is
finite. We denote by $N_M$ the norm map $L\otimes_KM \to M$, so that for
all $x\in L\otimes_KM$, we have $x\in (L\otimes_KM)^\times$ if and only
if $N_M(x) \in M^\times$.

Moreover the norm map commutes with base
change: let $u : M \to M'$ be a map of $K$-algebras,
then
$N_{M'}(1\otimes u)(x) = u(N_{M}(x))$ for all $x\in L\otimes_KM$.

Let now $A$ be as in point (2)
and let $c \in H^1(\Gal(L/K), (L\otimes_KA)^\times)$.
For an extension $A'$ of $A$, denote by $c_{A'}$ the image of $c$ in 
$H^1(\Gal(L/K), (L\otimes_KA')^\times)$.

Let $\m$ be a maximal ideal of $A$, and $K_{\m} = A/\m$. Then $K_{\m}$ is
a finite extension of $K$. So there exists an $x\in L$ such that
$\phi(c_{K_{\m}},x)$ is invertible in $L\otimes_KK_{\m}$. Let $f =
N_A(\phi(c,x)) \in A$. 
Then $D_f$ is a neighborhood of $\m$ in $\spec A$.
Moreover the image of $\phi(c,x)$ in $L\otimes_KA[f^{-1}]$ is
invertible, so $c_{A[f^{-1}]} = 0$. 

So we see that there is a covering of $\spec A$ by open subsets of the
form $D_f$ with $c_{A[f^{-1}]} = 0$, which is what we wanted.
\end{proof}

\subsection{The crystalline case}
\label{cryssect}

We want to prove Theorem \ref{parameter} for the case where the Galois
type is of the form (1), that is, $\tau = \chi \oplus \chi$ for some
smooth character $\chi$ of $I_{\Q_p}$ that extends to $W_{\Q_p}$. By
twisting by the character $\chi$, we can reduce to the case where $\tau$
is the trivial representation of $I_{\Q_p}$, that is, the case
of crystalline deformation rings. Recall from Section \ref{crysrings} the
definition of the parameter $a_p$.

\begin{prop}
\label{paramcrys}
There is an element $a_p \in R(k,\bar\rho)[1/p]$ such that for any finite
extension $E_x$ of $E$ and $x : R(k,\bar\rho)[1/p] \to E_x$ corresponding to
a representation $\rho_x$, $a_p(x)$ is the value of $a_p$ corresponding
to $\rho_x$ by the classification of Lemma \ref{isomclass}.

In particular, we can see $a_p$ as an analytic map from $\X(k,\bar\rho)$ to
$\mathbb{A}^{1,rig}$. 
Moreover, $a_p$ induces an injective map from $\X(k,\bar\rho)(\bar\Q_p)$
to $D(0,1)^+$.
\end{prop}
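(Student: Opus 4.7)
The plan is to realize $a_p$ as the trace of a Frobenius operator acting on a universal $\phi$-module over $R(k,\bar\rho)[1/p]$. First, the rigid generic fibre $\X(k,\bar\rho)$ carries a universal family $\V$ of crystalline representations arising from the construction of the Kisin ring: $\V$ is locally free of rank $2$ with a continuous $G_{\Q_p}$-action, crystalline at every closed point with Hodge-Tate weights $(0,k-1)$ and determinant $\cyc^{k-1}$. I would apply Theorem \ref{modulest} to $\V$ with $F = \Q_p$; since $F_0 = \Q_p$, this produces a projective $R(k,\bar\rho)[1/p]$-module $D$ of rank $2$ equipped with an $R(k,\bar\rho)[1/p]$-linear Frobenius $\phi$, whose fibre $D_x$ at a closed point $x$ is isomorphic as a $\phi$-module to the crystalline module attached to $\rho_x$.

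I would then set
\[
  a_p \;:=\; \tr\!\bigl(\phi \mid D\bigr) \;\in\; R(k,\bar\rho)[1/p],
\]
which is globally defined because $\phi$ is linear and $D$ is locally free of rank $2$. To verify the characterizing property, pick $x \in \X(k,\bar\rho)(\bar\Q_p)$ and let $b(x) \in D(0,1)^+$ be the unique parameter with $\rho_x \simeq V_{k,b(x)}$ given by Lemma \ref{isomclass}. The explicit matrix of $\phi$ on $D_{k,b(x)}$ recalled in the paper is $\matr{0}{-1}{p^{k-1}}{b(x)}$, so its trace equals $b(x)$. Combining this computation with the compatibility of $D$ with specialization provided by Theorem \ref{modulest} gives $a_p(x) = b(x)$, and in particular $a_p(x) \in D(0,1)^+$.

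Injectivity of the induced map $\X(k,\bar\rho)(\bar\Q_p) \to D(0,1)^+$ is then immediate: if $a_p(x) = a_p(x')$, then $\rho_x$ and $\rho_{x'}$ have the same classifying parameter via Lemma \ref{isomclass}, hence are isomorphic, and this forces $x = x'$ by the bijection between $\Max(R(k,\bar\rho)[1/p])$ and isomorphism classes of crystalline lifts of $\bar\rho$ recalled in Section \ref{kisinrings}.

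The main technical point is a convention check: Theorem \ref{modulest} is stated for a general representation $\V$, while the paper defines $V_{k,a_p}$ through the normalization $\Dcr{\Q_p}(V_{k,a_p}^*) = D_{k,a_p}$, so one needs to track whether to feed $\V$ or its dual $\V^\vee$ into the construction so that $\tr(\phi \mid D)$ returns the paper's $a_p$ rather than some dual invariant. In either convention, a trivial modification (replacing $\tr(\phi)$ by $\tr(p^{k-1}\phi^{-1})$ if needed, which makes sense because $\det\phi = p^{k-1}$ is a unit in $R(k,\bar\rho)[1/p]$) yields the desired interpolating function, and it is visibly an analytic map $\X(k,\bar\rho) \to \mathbb{A}^{1,rig}$.
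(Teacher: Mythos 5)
Your proposal follows essentially the same route as the paper: apply Theorem \ref{modulest} with $F=\Q_p$ to the universal family over $\X(k,\bar\rho)$, obtain a projective rank-$2$ module $D$ with linear Frobenius whose fibre at $x$ is the crystalline module of $\rho_x$, and read off $a_p$ as a trace of Frobenius; injectivity then follows from Lemma \ref{isomclass} and the bijection between $\Max(R(k,\bar\rho)[1/p])$ and isomorphism classes of lifts. The one point where your bookkeeping goes astray is the convention check you flag at the end. The fibre $D_x$ is $\Dcr{\Q_p}(\rho_x)$, whereas $D_{k,a_p}$ is by definition $\Dcr{\Q_p}(\rho_x^*)$, so the paper takes $a_p$ to be the trace of $\phi$ on the \emph{dual} of $D$, which equals $\tr(\phi^{-1}\,|\,D)$ since the dual Frobenius is the transpose inverse. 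On $D_x$ itself one has $\det(\phi)=p^{-(k-1)}$ (because $\det\rho_x=\cyc^{k-1}$) and $\tr(\phi)(x)=a_p(x)p^{-(k-1)}$, so your fallback formula $\tr(p^{k-1}\phi^{-1})$ evaluates to $p^{k-1}a_p(x)$, off by a factor $p^{k-1}$; the correct correction is simply $\tr(\phi^{-1}\,|\,D)$, or equivalently $p^{k-1}\tr(\phi\,|\,D)$. This is a normalization slip rather than a gap, and it matters only because the statement asserts the literal $a_p$ of Lemma \ref{isomclass} and the containment of the image in $D(0,1)^+$; with the dual taken as in the paper, your argument is complete.
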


\begin{proof}
Consider the $\phi$-module $D$ which is
obtained from applying Theorem
\ref{modulest} to the rigid space $\X(k,\bar\rho)$ attached to 
the ring $R(k,\bar\rho)[1/p]$. It is a projective module of rank $2$ over 
$R(k,\bar\rho)[1/p]$ 
and is such that for all $x : R(k,\bar\rho)[1/p] \to E_x$
corresponding to a representation $\rho_x$, 
$D\otimes_{R(k,\bar\rho)[1/p]}E_x$ is the $\phi$-module 
$D_x$ attached to $\rho_x$ (forgetting the filtration). 
Now observe that $a_p$, as defined in Lemma \ref{isomclass}, 
is the trace of $\phi$ on the dual of 
$D$, so it is an element of $R(k,\bar\rho)[1/p]$, and $a_p(x)$ is the
evaluation at $x$ of the
trace of $\phi$ on the dual of $D$.
\end{proof}

\subsection{The crystabelline case}
\label{crystabsect}

We suppose here that $\tau = \chi_1 \oplus \chi_2$, where $\chi_1$ and
$\chi_2$ are distinct characters of $I_{\Q_p}$ with finite image that
extend to characters of $W_{\Q_p}$, so that the
representations classified by 
$R^{\psi}(k,\tau,\bar\rho)$ become crystalline on an abelian extension of
$\Q_p$. In this case we show the existence of a function $\lambda$ as in
Proposition \ref{parameter}
when $\chi_1 \neq \chi_2$. We make use of the results of 
\cite{GM}, which classifies the filtered $\phi$-modules with descent data
that give rise to a Galois representation of inertial type $\tau$ and
Hodge-Tate weights $(0,k-1)$. We summarize their results for such a
$\tau$. 

The characters $\chi_i$ factor through $F = \Q_p(\zeta_{p^m})$ for some
$m \geq 1$, so the Galois representations we are interested in become
crystalline on $F$, and so are given by filtered $(\phi,\GF)$-modules. Note that here $F_0 = \Q_p$.

Let $E$ be a finite extension of $\Q_p$ containing the values of $\chi_1$
and $\chi_2$.
Let $\alpha$, $\beta$ be in $\OO_E$
with $v_p(\alpha) + v_p(\beta) = k-1$. We define a $(\phi,\GF)$-module
$\Delta_{\alpha,\beta}$ as follows:
let $\Delta_{\alpha,\beta} = Ee_1 \oplus Ee_2$, with 
$g(e_1) = \chi_1(g)e_1$ and $g(e_2)= \chi_2(g)e_2$ for all
$g\in\Gal(F/\Q_p)$.
The action of $\phi$ is given by:
$\phi(e_1) = \alpha^{-1} e_1$ and $\phi(e_2) =
\beta^{-1} e_2$.
We are looking at filtrations on $\Delta_{\alpha,\beta,F} =
F\otimes_{\Q_p}\Delta_{\alpha,\beta}$ satisfying 
$\Fil^i\Delta_{\alpha,\beta,F} = 0$ if $i \leq 1-k$, 
$\Fil^i\Delta_{\alpha,\beta,F} = \Delta_{\alpha,\beta}$ if $i>0$, and
$\Fil^i\Delta_{\alpha,\beta,F} = \Fil^0\Delta_{\alpha,\beta,F}$ for $1-k < i
\leq 0$ is a $F\otimes_{\Q_p} E$-line.

We summarize now the results that are given in \cite[Section 3]{GM}.

\begin{prop}
Fix $\alpha$, $\beta$ in $\OO_E$ with $v_p(\alpha)+v_p(\beta)=k-1$. Then there
exists a way to choose $\Fil^{0}(\Delta_{\alpha,\beta,F}) 
\subset \Delta_{\alpha,\beta,F} = \Delta_{\alpha,\beta}\otimes F$ that
makes it an admissible filtered $(\phi,\GF)$-module.

If neither $\alpha$ nor $\beta$ is a unit, then all such choices give
rise to isomorphic filtered $(\phi,\GF)$-modules, which are
irreducible.

If $\alpha$ or $\beta$ is a unit, the choices give rise to two
isomorphism classes of filtered $(\phi,\GF)$-modules, one
being reducible split and the other reducible non-split.
\end{prop}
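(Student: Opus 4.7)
The plan is to parametrize all $\Gal(F/\Q_p)$-stable $F\otimes_{\Q_p}E$-lines $L$ inside $\Delta_{\alpha,\beta,F}$, determine which choices yield weakly admissible filtered modules (then invoke Colmez--Fontaine), and finally classify the resulting filtered $(\phi,\Gal(F/\Q_p))$-modules up to isomorphism.

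For the parametrization, the idea is that the two obvious Galois-stable lines $L_0 := (F\otimes E)\cdot e_1$ and $L_\infty := (F\otimes E)\cdot e_2$ come from the $\chi_i$-isotypic decomposition, and all other stable lines are obtained by a single twisting construction. Since $F = \Q_p(\zeta_{p^m})$ is abelian over $\Q_p$, the normal basis theorem identifies $F\otimes_{\Q_p}E$ with the regular $E$-representation of $\Gal(F/\Q_p)$, so the character $\chi_1\chi_2^{-1}$ appears there with multiplicity one, spanned by some $\lambda \in F\otimes E$. The ``generic'' stable lines are then $L_t := (F\otimes E)(e_1 + t\lambda e_2)$ for $t \in E^\times$, and altogether the candidate filtrations form a $\mathbb{P}^1(E)$.

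For admissibility, I would first observe that $t_H(\Delta) = t_N(\Delta)$ is built into the hypothesis $v_p(\alpha) + v_p(\beta) = k-1$. Then for each candidate $L$, I would check $t_H(D') \leq t_N(D')$ on each proper $\phi$- and $\Gal$-stable sub-$E$-line $D'$; since $\chi_1 \neq \chi_2$, the only such subs are $D' = Ee_1, Ee_2$, with Newton numbers determined by $v_p(\alpha)$ and $v_p(\beta)$. The induced Hodge number of $D'$ takes its extremal value when $F\otimes D' = L$ and vanishes otherwise. This yields three regimes: $L = L_0$ is admissible iff $\beta \in \O_E^\times$; $L = L_\infty$ iff $\alpha \in \O_E^\times$; and every generic $L_t$ is admissible unconditionally. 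In particular an admissible choice always exists.

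For the isomorphism classification, I note that the $(\phi,\Gal)$-automorphism group of $\Delta$ equals $\{\mathrm{diag}(s_1,s_2) : s_i \in E^\times\}$: the condition $\chi_1 \neq \chi_2$ forces any such automorphism to preserve the decomposition $Ee_1\oplus Ee_2$, and the $\phi$-compatibility forces scalars on each factor. These automorphisms send $L_t \mapsto L_{(s_2/s_1)t}$ and fix $L_0, L_\infty$, so the admissible filtrations fall into at most three isomorphism classes. When neither $\alpha$ nor $\beta$ is a unit, only the generic class is admissible, giving one class; irreducibility follows because $Ee_1, Ee_2$ have induced Hodge number strictly below Newton number, so neither defines an admissible sub. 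When one of $\alpha,\beta$ is a unit (both cannot be, since $v_p(\alpha)+v_p(\beta) = k-1 \geq 1$), the generic class and one axis class are admissible: the axis case manifestly decomposes as $Ee_1 \oplus Ee_2$ in the filtered category (split reducible), while the generic case has an admissible sub $Ee_j$ (with $t_H=t_N=0$) but cannot split, because $L_t$ is transverse to both $F\otimes Ee_1$ and $F\otimes Ee_2$.

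The main obstacle is the first step: the clean identification of Galois-stable $F\otimes E$-lines via character eigenspaces requires care, because $F\otimes_{\Q_p}E$ is a product of fields rather than a single field. One must verify that the eigenvector $\lambda$ is a non-zero-divisor in $F\otimes E$, so that $e_1 + t\lambda e_2$ genuinely generates a rank-one direct summand and not a submodule that degenerates on some component of $\mathrm{Spec}(F\otimes E)$. Once this parametrization is in hand, the weak admissibility check and the automorphism-group computation are routine.
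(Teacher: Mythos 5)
The paper itself does not prove this proposition: it is quoted as a summary of \cite[Section 3]{GM}, so there is no internal proof to compare with, and your argument is in effect a reconstruction of the computation underlying that citation. Your route is sound and is essentially the standard one: parametrize the $\GF$-stable $F\otimes_{\Q_p}E$-lines in $\Delta_{\alpha,\beta,F}$ by $\P^1(E)$, check weak admissibility against the only two $(\phi,\GF)$-stable sub-$E$-lines $Ee_1$, $Ee_2$ (which exist since $\chi_1\neq\chi_2$ on $\GF$, as $F/\Q_p$ is totally ramified), and act by the automorphism group, which Galois-equivariance forces to be the diagonal torus $\{\mathrm{diag}(s_1,s_2)\}$, sending $L_t$ to $L_{(s_2/s_1)t}$; this gives exactly the three stated conclusions (which axis is admissible when which of $\alpha,\beta$ is a unit is a normalization issue between the $\phi$-eigenvalues and the filtration degrees and does not affect the statement). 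The verification you flag as the main obstacle is real but short, and it also closes the completeness of your list of stable lines: the $\Gal(F/\Q_p)$-invariants of $F\otimes_{\Q_p}E$ equal $E$, which has no nontrivial idempotents, so $\Gal(F/\Q_p)$ permutes the primitive idempotents of $F\otimes_{\Q_p}E$ transitively; hence any nonzero $\lambda$ with $g(\lambda)=(\chi_1\chi_2^{-1})(g)\lambda$ has Galois-stable nonempty support, so it is nonzero in every component and is a unit, and likewise for a stable line generated by $ae_1+be_2$ the vanishing loci of $a$ and of $b$ are Galois-stable, hence empty or everything, so the line is $L_0$, $L_\infty$, or $L_t$ with $t\in E^\times$. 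With that two-line argument inserted, the admissibility table, the single generic isomorphism class, irreducibility when $v_p(\alpha),v_p(\beta)>0$, and the split/non-split dichotomy in the unit case (a filtered splitting would have to respect $Ee_1\oplus Ee_2$, to which the generic line is transverse) all go through as you describe.
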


We denote by $D_{\alpha,\beta}$ the isomorphism class of
admissible filtered $(\phi,\GF)$-module given by a choice of
filtration that makes it into either an irreducible module (if neither
$\alpha$ nor $\beta$ is a unit) or a reducible non-split module (if 
$\alpha$ or $\beta$ is a unit).

Then it follows from the computations of \cite[Section 3]{GM} that:

\begin{prop}
Let $V$ be a potentially crystalline representation with coefficients in
$E$, of inertial type
$\tau$ and Hodge-Tate weights $(0,k-1)$ that is not reducible split. Then
there exists a unique pair $(\alpha,\beta) \in \OO_E$ with
$v_p(\alpha)+v_p(\beta)=k-1$ such that $\Dcr{F}(V)$ is isomorphic to
$D_{\alpha,\beta}$ as a filtered $(\phi,\GF)$-module.
\end{prop}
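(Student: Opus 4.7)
The plan is to exploit the fact that the inertia action provides a canonical decomposition of $\Dcr{F}(V)$, from which $\alpha$ and $\beta$ can be read off as Frobenius eigenvalues.

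First, I would construct the pair $(\alpha,\beta)$. Since $V$ has inertial type $\tau=\chi_1\oplus\chi_2$ with $\chi_1\neq\chi_2$, the inertia subgroup acts on $\Dcr{F}(V)$ (via its image in $\Gal(F/\Q_p)$) through $\chi_1\oplus\chi_2$. As the characters are distinct, this yields a canonical $E$-linear decomposition $\Dcr{F}(V)=D_1\oplus D_2$ into inertia eigenlines. The Frobenius $\phi$ commutes with the $\Gal(F/\Q_p)$-action (since on $B_{\mathrm{crys}}$ these actions commute and $F_0=\Q_p$ so $\phi$ is $E$-linear), hence preserves this decomposition. Choosing basis vectors $e_i$ of $D_i$, I would define $\alpha,\beta\in E^\times$ by $\phi(e_1)=\alpha^{-1}e_1$ and $\phi(e_2)=\beta^{-1}e_2$; these scalars depend only on $\Dcr{F}(V)$, not on the choice of $e_i$.

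Next I would establish $\alpha,\beta\in\O_E$ and $v_p(\alpha)+v_p(\beta)=k-1$. The sum of valuations follows directly from weak admissibility $t_N(\Dcr{F}(V))=t_H(\Dcr{F}(V))$, since the total Newton slope is $-v_p(\alpha)-v_p(\beta)$ and the total Hodge slope is $-(k-1)$ (or $1-k$, in the paper's filtration convention). Individual integrality comes from applying weak admissibility to the sub-$(\phi,\GF)$-modules $D_1$ and $D_2$: each induced Hodge filtration on the line $(D_i)_F$ has a single jump, either at the weight $0$ position or at the weight $k-1$ position. The inequality $t_N(D_i)\leq t_H(D_i)$ then gives $-v_p(\alpha)\leq 0$ or $-v_p(\alpha)\leq -(k-1)$, and similarly for $\beta$. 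The case where both $D_1$ and $D_2$ carry the large-weight part of the filtration is incompatible with the total being $1-k$; the remaining cases yield $v_p(\alpha)\geq 0$ and $v_p(\beta)\geq 0$.

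Then I would identify $\Dcr{F}(V)$ with $D_{\alpha,\beta}$. The chosen basis gives an isomorphism of $(\phi,\GF)$-modules from $\Dcr{F}(V)$ onto $\Delta_{\alpha,\beta}$, carrying the filtration on $\Dcr{F}(V)$ to some admissible choice of filtration line in $\Delta_{\alpha,\beta,F}$. By the classification of \cite{GM} recalled just above the statement, the admissible filtrations produce either the single isomorphism class $D_{\alpha,\beta}$ (when both of $\alpha,\beta$ lie in $\m_E$) or, otherwise, two isomorphism classes distinguished by whether the associated representation is reducible split or not; the hypothesis that $V$ is not reducible split isolates the class $D_{\alpha,\beta}$ in all cases.

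For uniqueness, any isomorphism of filtered $(\phi,\GF)$-modules $\Dcr{F}(V)\cong D_{\alpha',\beta'}$ must respect the decomposition into inertia eigenspaces for $\chi_1$ and $\chi_2$ (again because $\chi_1\neq\chi_2$), so the eigenvalues of $\phi$ on the $\chi_i$-eigenline are forced, giving $(\alpha',\beta')=(\alpha,\beta)$. The main obstacle will be the integrality step: the bookkeeping of induced Hodge weights on the two inertia eigenlines and the elimination of the reducible-split case have to be done carefully to cover uniformly both the generic situation (neither unit) and the two unit cases.
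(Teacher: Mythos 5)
The paper offers no written proof of this proposition: it simply asserts that it "follows from the computations of \cite[Section 3]{GM}", and the classification it needs is the one quoted in the proposition just above. Your plan is essentially a reconstruction of that computation, and in outline it is the intended one: since $\chi_1\neq\chi_2$ and $F_0=\Q_p$, the $\Gal(F/\Q_p)$-action on $\Dcr{F}(V)$ is $E$-linear and splits it canonically into two eigenlines preserved by $\phi$, which produces the pair $(\alpha,\beta)$ and, because the eigenlines are intrinsic, also its uniqueness; the relation $v_p(\alpha)+v_p(\beta)=k-1$ comes from $t_H=t_N$; and the identification of the filtration class with $D_{\alpha,\beta}$ uses the quoted classification together with the hypothesis that $V$ is not reducible split. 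So you are not taking a different route; you are writing out the citation.

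There is, however, a concrete problem in your integrality step. Weak admissibility says $t_H(D')\leq t_N(D')$ for sub-$(\phi,\GF)$-modules $D'$ with the induced filtration (equality for the whole module); your inequality is reversed. Moreover, your elimination of the "both eigenlines carry the large weight" case implicitly assumes that the Hodge slope of the whole module is the sum of the Hodge slopes of the two eigenlines, each with its subspace filtration; this fails precisely in the generic situation where the filtration line meets neither eigenline, in which case both eigenlines have induced jump at the $1-k$ level while the total is still $1-k$ (additivity of $t_H$ holds for a submodule and the corresponding quotient with the quotient filtration, not for two complementary submodules with subspace filtrations). Concretely, take $\beta\in\O_E^\times$, $v_p(\alpha)=k-1$ and the filtration line in general position: the module is admissible (it is the reducible non-split case), the induced filtration on the $\beta$-eigenline jumps at $1-k$, and yet $v_p(\beta)=0$, contradicting the implication you state. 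The repair is straightforward: the induced Hodge slope of each eigenline lies in $[1-k,0]$, so the correct subobject inequality gives $v_p(\alpha)\leq k-1$ and $v_p(\beta)\leq k-1$, and combined with $v_p(\alpha)+v_p(\beta)=k-1$ this forces $v_p(\alpha),v_p(\beta)\geq 0$; alternatively, apply the reversed inequality to each eigenline viewed as a quotient, with the quotient filtration. (Note also that since $F\otimes_{\Q_p}E$ is not a field, the induced filtration on an eigenline need not have a single jump, it may differ on the various components, but only the bounds $1-k\leq t_H\leq 0$ are used in the corrected argument.) The remaining steps, namely the eigenline construction, the appeal to the quoted classification plus the non-split hypothesis, and the uniqueness via Frobenius eigenvalues, are fine.
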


Let $E = E(k,\tau,\bar\rho,\psi)$ be a finite extension of $\Q_p$ such that
$\bar\rho$ can be defined over the residue field of $E$, $E$ contains the
images of $\chi_1$ and $\chi_2$ and of the character $\psi$. Then the
ring $R^\psi(k,\tau,\bar\rho)$ can be defined over $E$. Moreover:

\begin{prop}
\label{crystabfam}
Let $\bar\rho$ be a representation with trivial
endomorphisms. 
There are elements $\alpha,\beta\in R^{\psi}(k,\tau,\bar\rho)[1/p]$ such
that for each closed point $x$ of $\spec R^{\psi}(k,\tau,\bar\rho)[1/p]$
corresponding to a representation $\rho_x$,
$\Dcr{F}(\rho_x)$ is isomorphic to $\Delta_{\alpha(x),\beta(x)}$ as a
$(\phi,\GF)$-module.
\end{prop}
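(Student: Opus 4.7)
The plan is to construct $\alpha$ and $\beta$ by applying Theorem \ref{modulest} to the universal family over $\X^\psi(k,\tau,\bar\rho)$ and then splitting the resulting $(\phi,\Gal(F/\Q_p))$-module according to the inertial action. Set $R = R^\psi(k,\tau,\bar\rho)[1/p]$ and $F = \Q_p(\zeta_{p^m})$, so that $F_0 = \Q_p$. Theorem \ref{modulest} produces a projective $R$-module $D$ of rank $2$ with commuting $R$-linear actions of $\phi$ and $\Gal(F/\Q_p)$, such that $D_x \simeq \Dcr{F}(\rho_x)$ as $(\phi,\Gal(F/\Q_p))$-modules at every closed point $x$.

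Since $\chi_1 \neq \chi_2$, I would choose $g \in \Gal(F/\Q_p)$ with $\chi_1(g) \neq \chi_2(g)$; both values lie in $E^\times \subset R^\times$. Fibrewise $g$ acts on $D_x$ with characteristic polynomial $(X-\chi_1(g))(X-\chi_2(g))$, so the endomorphism $(g-\chi_1(g))(g-\chi_2(g)) \in \End_R(D)$ vanishes at every closed point of $\spec R$. Because $R$ is a reduced Jacobson $E$-algebra and $D$ is finitely generated projective, this forces the endomorphism to be identically zero; hence $e = (g-\chi_2(g))/(\chi_1(g)-\chi_2(g))$ is a global idempotent, and setting $D_1 = eD$, $D_2 = (1-e)D$ gives a decomposition $D = D_1 \oplus D_2$ in which each $D_i$ is an invertible $R$-module cutting out the $\chi_i(g)$-eigenspace of $g$ on every fibre.

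Next I would upgrade this to a decomposition of $(\phi,\Gal(F/\Q_p))$-modules. Since $\Gal(F/\Q_p) \simeq (\Z/p^m\Z)^\times$ is abelian, every $h \in \Gal(F/\Q_p)$ commutes with $g$, so it preserves each $D_i$ and acts there by an element of $R = \End_R(D_i)$; this scalar equals $\chi_i(h) \in E$ at every closed point, hence globally, by the same Jacobson/reduced argument. Similarly $\phi$ preserves each $D_i$ (as it commutes with $\Gal(F/\Q_p)$) and is $R$-linear (because $F_0 = \Q_p$), so it acts on $D_i$ by a unique scalar $\phi_i \in R$. At every closed point $\phi_i(x)$ is a nonzero element of $E_x$ (by the classification recalled from \cite{GM}), so $\phi_i$ lies in no maximal ideal of the Jacobson ring $R$ and is therefore a unit. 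I set $\alpha = \phi_1^{-1}$ and $\beta = \phi_2^{-1}$; by construction $D_x \simeq \Delta_{\alpha(x),\beta(x)}$ as $(\phi,\Gal(F/\Q_p))$-modules at every closed point $x$.

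The main obstacle I anticipate is the repeated passage from fibrewise identities to identities in $\End_R(D)$ or $R$, which rests on the fact that $R$ is a reduced Jacobson $E$-algebra; this is where it is essential that we have inverted $p$. A secondary subtlety is that the global $g$-eigenspace decomposition is automatically respected by all of $\Gal(F/\Q_p)$, but this follows cleanly from commutativity. Everything else is essentially formal once Theorem \ref{modulest} provides the global family.
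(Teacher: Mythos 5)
Your argument is correct, and its skeleton matches the paper's: both start from Theorem \ref{modulest} to obtain a rank-$2$ projective $(\phi,\Gal(F/\Q_p))$-module $D$ over $R^{\psi}(k,\tau,\bar\rho)[1/p]$ interpolating the $\Dcr{F}(\rho_x)$, and both extract $\alpha,\beta$ from the action of $\phi$ on the $\chi_1$- and $\chi_2$-isotypic pieces, using that $\phi$ is $R$-linear (as $F_0=\Q_p$) and commutes with the Galois action. Where you diverge is the diagonalization step: the paper invokes Proposition \ref{diago} to produce, Zariski-locally on $\spec R^{\psi}(k,\tau,\bar\rho)[1/p]$, a basis $(e_1,e_2)$ of eigenvectors for $\Gal(F/\Q_p)$, and then the eigenvalues of $\phi$, being canonically defined, glue to global elements; you instead pick $g$ with $\chi_1(g)\neq\chi_2(g)$, show $(g-\chi_1(g))(g-\chi_2(g))=0$ in $\End_R(D)$ by checking it fibrewise and using that the ring is reduced and Jacobson, and obtain a \emph{global} idempotent and a global splitting $D=D_1\oplus D_2$ into invertible modules on which $\phi$ acts by units $\phi_i\in R^\times$. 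Your route buys a cleaner global statement with no covering or gluing, and it makes explicit a point the paper leaves implicit (that the fibrewise characteristic polynomial of $g$, with coefficients in $E$, really controls the family — exactly the reduced-Jacobson passage you highlight, which is also what legitimizes applying Proposition \ref{diago} with $K=E$); the paper's route, on the other hand, reuses Proposition \ref{diago} uniformly in the later cases (Propositions \ref{Linv} and \ref{Lgenan}), which is why it is phrased that way. One small remark: the non-vanishing of $\phi_i(x)$ does not really need the classification of \cite{GM} — it follows already from bijectivity of $\phi$ on $\Dcr{F}(\rho_x)$ — but this does not affect the proof.
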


\begin{proof}
By Theorem \ref{modulest} applied to the rigid analytic space
$\X^\psi(k,\tau,\bar\rho)$ attached to $R^\psi(k,\tau,\bar\rho)[1/p]$, there exists 
a $\phi$-module $D$ with descent data by $\Gal(F/\Q_p)$,
where $D$ is a projective module of rank $2$ over
$R^\psi(k,\tau,\bar\rho)[1/p]$,
such that for each closed point $x$ of $\spec R^{\psi}(k,\tau,\bar\rho)[1/p]$,
$\Dcr{F}(\rho_x)$ is isomorphic to $D\otimes_RE_x$ (where $E_x$ is the
field of coefficients of $\rho_x$) as a $(\phi,\GF)$-module.

Applying Proposition \ref{diago}, we see that
the action of $\Gal(F/\Q_p)$ on $D$ is given as the action of
$\Gal(F/\Q_p)$ on each $\Delta_{\alpha,\beta}$: that is, Zariski-locally
on $\spec R^{\psi}(k,\tau,\bar\rho)[1/p]$, we can write $D =
Re_1\oplus Re_2$, with $g(e_1) = \chi_1(g)e_1$ and $g(e_2)=\chi_2(g)e_2$.

As the action of $\phi$ on $D$ commutes with the action of
$\Gal(F/\Q_p)$, this shows that the eigenvalues of $\phi$ acting on
$D$ are in fact in $R^{\psi}(k,\tau,\bar\rho)[1/p]$, that is,
$\alpha$ and $\beta$ are elements of $R^{\psi}(k,\tau,\bar\rho)[1/p]$.
\end{proof}

Moreover, if we fix the determinant of the Galois representation
corresponding to $D_{\alpha,\beta}$ then we 
fix $\alpha\beta$.  
So the function $\alpha$ is injective on points, 
so it can play the role of the function $\lambda$ of Theorem
\ref{parameter}.

Let $X^{\psi}(k,\tau,\bar\rho)$ be the image of
$\X^{\psi}(k,\tau,\bar\rho)(\bar\Q_p)$ in $\bar\Q_p$, then we see that 
$X^{\psi}(k,\tau,\bar\rho)$ is contained in the set $\{x, 0 \leq v_p(x)
\leq k-1\}$, with the irreducible representations corresponding to the
subset of elements that are in $\{x, 0 < v_p(x) < k-1\}$.

\subsection{Semi-stable representations}
\label{sstablesect}
We now assume $p>2$ and we study the case of the deformation rings attached to a discrete
series extended type
of the form $\tau = \chi_1 \oplus \chi_2$, where $\chi_1$ and $\chi_2$
are characters of $W_{\Q_p}$ that have the same restriction to inertia, and
such that $\chi_1(F) = p\chi_2(F)$ for any Frobenius element $F$. As in
the case of crystalline representations, we can twist by a smooth
character of $W_{\Q_p}$ and reduce to the case where $\chi_1$ and
$\chi_2$ are trivial on inertia. Then the deformation rings
$R^\psi(k,\tau,\bar\rho)$ classify representations that are semi-stable,
and only a finite number of the representations that appear can be
crystalline.

Let $\rho$ be a semi-stable, non-crystalline representation of dimension
$2$ of $G_{\Q_p}$, with Hodge-Tate weights $(0,k-1)$ for some $k\geq 2$.
Then we know (see for example \cite[Section 3.1]{GM}), that the filtered
$(\phi,N)$-module $\Dst{}(\rho)$ is isomorphic to exactly one
$D_{\alpha,\LL}$ for some $\alpha$ with $v(\alpha) = k/2$, some $\LL\in
\bar\Q_p$ and some finite extension $E$ containing $\alpha$ and $\LL$, for
$(\phi,N)$-modules $D_{\alpha,\LL}$ defined as follows:
$D_{\alpha,\LL} = Ee_1 \oplus Ee_2$, $\phi(e_1) = p\alpha^{-1}e_1$, 
$\phi(e_2) = \alpha^{-1}e_2$, $Ne_1 = e_2$, $\Fil^0D_{\alpha,\LL} =
E(e_1-\LL e_2)$. Then $\LL$ is the $\LL$-invariant of Fontaine, as defined
in \cite[\S 9]{Maz}. Let $\rho$
be a crystalline representation of dimension $2$ of $G_{\Q_p}$, we set
its $\LL$-invariant to be $\infty$.

\begin{prop}
\label{Linv}
Let $\X$ be a rigid analytic space defined over some finite extension $E$
of $\Q_p$. Assume that $\X$ is endowed with a $2$-dimensional
representation $\rho$ of $G_{\Q_p}$ such that for all $x \in \X$,
$\rho_x$ is semi-stable with Hodge-Tate weights $(0,k-1)$, the Weil
representation attached to $\rho_x$ is independent of $x$, there exists
at least one $x$ such that $\rho_x$ is not crystalline, and none of the
$\rho_x$ are reducible split.
Then
there exists a rigid analytic map $\LL : \X \to \PP^1_E$, defined over
$E$, such that for all $x$, $\LL(x)$ is the $\LL$-invariant of $\rho_x$.
\end{prop}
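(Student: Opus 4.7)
The plan is to extract the $\L$-invariant directly from the universal $(\phi,N)$-module with its filtration. Since every $\rho_x$ is semi-stable (so becomes semi-stable over $F=\Q_p$), Theorem \ref{modulest} applied to $\X$ and $\V = \rho$ produces a locally free rank-$2$ $\O_\X$-module $D$ endowed with commuting $\O_\X$-linear endomorphisms $\phi$ and $N$ satisfying $N\phi = p\phi N$, whose fiber at $x$ is $\Dst{}(\rho_x)$. Theorem \ref{moduledR} endows $D$ with a filtration by locally free sub-$\O_\X$-modules with locally free graded pieces; the interesting step of the filtration is $\Fil^0 D \subset D$, a line bundle on $\X$ whose fiber at $x$ is the degree-$0$ part of $\Ddr{}(\rho_x)$. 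Because the Weil representation of each $\rho_x$ is constant, the characteristic polynomial of $\phi$ on $D$ is a fixed element $X^2 - (p+1)\alpha^{-1}X + p\alpha^{-2} \in E[X]$ (after possibly replacing $E$ by a quadratic extension so that the eigenvalues $p\alpha^{-1}$ and $\alpha^{-1}$ lie in $E$; we then descend at the end).

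Since these eigenvalues are distinct, Proposition \ref{diago} implies that, on some Zariski-open cover $\{U_i\}$ of $\X$, I can trivialize $D_{|U_i}$ by a basis $(f_1,f_2)$ in which $\phi$ is diagonal: $\phi(f_1)=p\alpha^{-1}f_1$, $\phi(f_2)=\alpha^{-1}f_2$. Writing $N$ in this basis as $\bigl(\begin{smallmatrix}a&b\\c&d\end{smallmatrix}\bigr)$ and comparing $N\phi$ with $p\phi N$ term by term forces $a=b=d=0$, so $Nf_1 = c f_2$ and $Nf_2 = 0$ for a unique section $c \in \O_\X(U_i)$. The locus $\{c=0\}$ is exactly where the representation is crystalline, matching the convention $\L=\infty$ for crystalline fibers.

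On $U_i$ I trivialize the line bundle $\Fil^0 D$ by a generator $xf_1 + yf_2$ with $x,y \in \O_\X(U_i)$ not simultaneously vanishing at any point. To compare with the model $D_{\alpha,\L}$, where $Ne_1=e_2$ and $\Fil^0 = E(e_1 - \L e_2)$, I rescale by setting $e_1 = f_1$, $e_2 = c f_2$; then $\Fil^0 = \O_\X(xe_1 + (y/c)e_2)$, which fiberwise matches $e_1 - \L e_2$ precisely when $\L = -y/(cx)$. I therefore define $\L_{|U_i} : U_i \to \P^1_E$ by the projective coordinates $(cx : -y)$. This is a well-defined rigid analytic map because $(x,y)$ never simultaneously vanish, and $c=0$ produces the point $(0:-y)=\infty$, corresponding correctly to crystalline fibers.

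The remaining verification, which is the heart of the argument, is that this local recipe is intrinsic and therefore glues to a global map $\L : \X \to \P^1_E$. If I replace $(f_1,f_2)$ by $(\lambda f_1, \mu f_2)$ (the only freedom, since the $\phi$-eigenspaces are distinct lines and hence each determined up to scaling), then $c$ becomes $\lambda c/\mu$ while $(x,y)$ becomes $(x/\lambda, y/\mu)$, and the projective point $(cx:-y)$ is invariant; this guarantees the maps on $U_i \cap U_j$ agree. Finally, to return from the quadratic extension $E'$ where $\alpha$ lives back to the original $E$, I observe that the $\L$-invariant is intrinsic to each $\rho_x$ and independent of the choice of $\alpha$ versus its Galois conjugate (swapping eigenvalues would merely reorder $(e_1,e_2)$, but the filtration data remains attached to the eigenline on which $N$ is nonzero), so the $\L$ constructed over $E'$ is $\Gal(E'/E)$-equivariant and descends. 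The main obstacle in practice is the bookkeeping to show that the choice of eigenvalue labelling is forced by the semi-stable (non-split) structure — namely, that $N$ must send the $p\alpha^{-1}$-eigenline to the $\alpha^{-1}$-eigenline and not the reverse — which follows from the relation $N\phi=p\phi N$ together with the condition that no $\rho_x$ is reducible split.
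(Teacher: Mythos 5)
Your construction follows the same skeleton as the paper's proof (Theorems \ref{modulest} and \ref{moduledR} to get the family of filtered $(\phi,N)$-modules, Proposition \ref{diago} to diagonalize $\phi$ Zariski-locally, the relation $N\phi=p\phi N$ to get $Nf_1=cf_2$, $Nf_2=0$), and your single projective formula $(cx:-y)$ is a reasonable way to package the two affine charts that the paper treats separately. But there is a genuine gap at exactly the point where the hypothesis ``none of the $\rho_x$ is reducible split'' has to enter. For $(cx:-y)$ to define a map to $\P^1$ you need the two sections $cx$ and $y$ to have no common zero, and your justification (``because $(x,y)$ never simultaneously vanish'') does not give this: it leaves open the case $c(x_0)=0$ and $y(x_0)=0$ with $x(x_0)\neq 0$. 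At such a point $N$ vanishes (so $\rho_{x_0}$ is crystalline) and the filtration line is the $\phi$-eigenline spanned by $f_1$, so the filtered module is decomposable and $\rho_{x_0}$ is reducible split --- this is what the hypothesis excludes, and it is precisely the paper's key lemma (that $N$ and $h=\det(f,\phi(f))$ do not vanish simultaneously, proved from the no-split assumption). As written, you never make this argument; you instead invoke the no-split hypothesis only at the end, to force $N$ to send the $p\alpha^{-1}$-eigenline to the $\alpha^{-1}$-eigenline, but that labelling is automatic from $N\phi=p\phi N$ alone (if $\phi v=\mu v$ then $\phi(Nv)=(\mu/p)Nv$), so the hypothesis is used where it is not needed and omitted where it is essential. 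Relatedly, the intermediate rescaling $e_2=cf_2$ and the expression $y/c$ are only meaningful off the crystalline locus, so the fiberwise identification with $D_{\alpha,\L}$ has to be supplemented by the common-zero argument above to cover the points with $c=0$.

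Two smaller remarks. The passage to a quadratic extension of $E$ is unnecessary: the trace of $\phi$ is a constant function defined over $E$ and equals $(p+1)\alpha^{-1}$, so $\alpha\in E$ and both eigenvalues already lie in $E$ (the paper notes this just after the statement); your sketched Galois-descent step can simply be dropped. Once the common-zero verification is added, your choice-independence computation (under $f_1\mapsto\lambda f_1$, $f_2\mapsto\mu f_2$ and rescaling of the generator of $\Fil^0D$, the point $(cx:-y)$ is unchanged) does give a clean gluing, somewhat slicker than the paper's pointwise-characterization gluing and its case division into ``$N$ nowhere vanishing'' versus ``$h$ a unit''.
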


Note that under these conditions, the $\alpha$ of $D_{\alpha,\LL}$ is
independent of $x$, and is in $E$.

This proposition applies in the following situation: let $p>2$, let $\X =
\X^\psi(k,\tau,\bar\rho)$ be the deformation space for the extended type
$\tau$, and $\bar\rho$ is not reducible split. Then the function $\LL$ can
play the role of $\lambda$ of Proposition \ref{parameter}. 

\begin{proof}
In order to prove this result, it is enough to prove it for an admissible
covering of $\X$. Indeed, the condition that $\LL(x)$ is the
$\LL$-invariant of $\rho_x$ ensures that the functions defined on each
subset of the covering will glue. In particular, we can assume that $\X$
is affinoid, coming from a Tate algebra $A$ over $E$.

By Theorems \ref{modulest} and \ref{moduledR}, there is a projective $A$-module $D$ of
rank $2$ over $A$, endowed with a structure of filtered
$(\phi,N)$-module, such that for all $x \in \Max(A)$, $D_x$ is
$\Dst{}(\rho_x)$. Consider the action of $\phi$ on $D$: it has eigenvalues
$p\alpha^{-1}$ and $\alpha^{-1}$. By Proposition \ref{diago}, we can
assume, after replacing $A$ by a Zariski covering, that $D$ is free over
$A$, with a basis $e_1,e_2$ such that $\phi(e_1) = p\alpha^{-1}e_1$ and
$\phi(e_2) = \alpha^{-1}e_2$. By the commutation relations between $\phi$
and $N$, there is a $\lambda\in A$ such that $Ne_1 = \lambda e_2$.
Moreover, we can assume that there is a
free $A$-module $L$ of rank $1$ in $D$, with quotient that is also free
of rank $1$, that gives the non-trivial step of the filtration. We fix a
basis $f$ of $L$.

Let
$h = \det(f,\phi(f))$. Let us show that $N$ and $h$ do not vanish
simultaneously. If this is the case, let $x$ be a point where they both
vanish. Then $\rho_x$ is crystalline, as $N_x = 0$,
and the filtration of
the associated filtered $\phi$-module is generated by an eigenvector of
$\phi$, as $h_x= 0$. Then the representation $\rho_x$ is necessarily split reducible.
But by hypothesis this cannot happen.
So by replacing $\Max(A)$ by a Zariski cover, we can assume that either
$N$ never vanishes, or $h$ in a unit in $A$.

Assume first that $N$ never vanishes, that is, $\rho_x$ is never 
crystalline.
Then the $\lambda$ as defined above is
actually a unit in $A$, so we can modify the basis $(e_1,e_2)$ so that
$\lambda = 1$. 
Write $f$ in this basis as $ae_1 + be_2$,
with $a,b\in A$. By
specializing at each $x\in\Max(A)$, we see that $a(x) \neq 0$ for all
$x$, as this would contradict the admissibility condition of the filtered
module. So $a \in A^\times$. Then by definition of the $\LL$-invariant, we
have $\LL(x) = -(b/a)(x)$ for all $x \in \Max(A)$. So the function $\LL$ is
indeed an analytic function on $\Max(A)$.

Assume now that $h$ is a unit in $A$.
Let $(e_1,e_2)$ be the basis of $D$ defined above such that each $e_i$ is an
eigenvector for $\phi$.
We can write $f = ae_1 + be_2$ for some $a,b\in
A$. Then the condition on $h$ implies that $a$ and $b$ are in $A^\times$,
that is, $(ae_1,be_2)$ is also a basis of $D$ over $A$. So we can modify
the basis so that we have moreover $f = e_1+e_2$.
After specializing at $x \in \Max(A)$ an easy computation shows that
$\lambda(x) = -1/\LL(x)$ (and in particular the condition on $h$ implies
that $\LL$ does not take the value $0$).  So we have defined an analytic
function $\Max(A) \to \PP^1$ by taking $\LL = 1/\lambda$.
\end{proof}

\subsection{Supercuspidal types}
\label{sssect}

In this Section, assume that $p>2$.
We consider now the case where the type is supercuspidal, that is, the
Weil representation is (absolutely) irreducible.

\subsubsection{Defining the generalized $\LL$-invariant}

We fix once and for all a supercuspidal extended type $\tau$, that is, a
smooth absolutely irreducible representation $\tau : W_{\Q_p} \to \GL_2(E_0)$ for some
finite extension $E_0$ of $\Q_p$. This corresponds to cases (2) and (3) of the
classification of types of \cite[Lemma 2.1]{GM}. Note that we can take
$E_0$ to be an unramified extension of the definition field of $\tau$ by
Lemma \ref{unramW}.

Let $F$ be a finite Galois extension of $\Q_p$ such that $\tau$ is
trivial on $I_F$, and let $F_0$ be the maximal unramified extension of
$\Q_p$ contained in $F$. We assume, after taking an unramified extension
of $E_0$ if necessary, that $F_0 \subset E_0$.

Let $\DcrO$ be the $(\phi,\GF)$-module corresponding
to $\tau$ via the correspondence of Proposition \ref{Weil}. Let
$\DdRO = F\otimes_{F_0}\DcrO$. It is endowed with an action of
$\GF$ coming from the one on $\DcrO$. Then:

\begin{lemm}
\label{dim2}
Assume that there exists as least one potentially crystalline
representation $\rho$ with coefficients in $E$ for some finite extension
$E$ of $E_0$, such that $\Ddr{F}(\rho)$ is isomorphic to 
$\DdRO^{\GF}\otimes_{E_0}E$ as a $F\otimes_{\Q_p}E$-module with an action
of $\GF$.
Then $\DdRO^{\GF}$ is an $E_0$-vector space of dimension $2$.
\end{lemm}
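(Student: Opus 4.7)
My plan is to combine the semi-linear structure of $\DdRO$ with the dimension count coming from the hypothesized representation $\rho$.

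First, I would unwind the definitions: since $\tau$ is a 2-dimensional Weil representation, the correspondence of Proposition \ref{Weil} tells us that $\DcrO$ is free of rank $2$ over $F_0 \otimes_{\Q_p} E_0$. Therefore $\DdRO = F \otimes_{F_0} \DcrO$ is free of rank $2$ over $F \otimes_{\Q_p} E_0$, so in particular $\dim_{E_0} \DdRO = 2[F:\Q_p]$. The $\GF$-action on $\DdRO$ is semi-linear over the natural action of $\GF$ on $F$ (and $E_0$-linear), as it extends the $F_0$-semi-linear action on $\DcrO$.

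Next, I would use the representation $\rho$ to pin down $\dim_{E_0}(\DdRO^{\GF})$. Since $\rho$ is potentially crystalline of dimension $2$ with coefficients in $E$ and becomes crystalline over $F$, standard $p$-adic Hodge theory gives that $\Ddr{F}(\rho)$ is free of rank $2$ over $F \otimes_{\Q_p} E$, and its $\GF$-invariants $\Ddr{\Q_p}(\rho)$ form an $E$-vector space of dimension $2$. The hypothesized isomorphism $\Ddr{F}(\rho) \cong \DdRO^{\GF} \otimes_{E_0} E$ of $F \otimes_{\Q_p} E$-modules with $\GF$-action then allows me to compare. Taking $\GF$-invariants on both sides and using that $\GF$ acts trivially on both $\DdRO^{\GF}$ (by definition) and on $E$, I get
\[
\Ddr{\Q_p}(\rho) \;=\; \Ddr{F}(\rho)^{\GF} \;\cong\; (\DdRO^{\GF} \otimes_{E_0} E)^{\GF} \;=\; \DdRO^{\GF} \otimes_{E_0} E,
\]
so $\DdRO^{\GF} \otimes_{E_0} E$ has $E$-dimension $2$, which forces $\dim_{E_0} \DdRO^{\GF} = 2$.

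The main subtle point is making rigorous sense of the isomorphism in the hypothesis as one of $F \otimes_{\Q_p} E$-modules: one has to view $\DdRO^{\GF} \otimes_{E_0} E$ as carrying an $F$-module structure coming (implicitly) from Galois descent along $F/\Q_p$, namely via the natural descent isomorphism $F \otimes_{\Q_p} \DdRO^{\GF} \isom \DdRO$ provided by the semi-linear $\GF$-action and Hilbert 90. In fact, this descent statement already gives an unconditional proof: $\dim_{E_0} \DdRO^{\GF} = \dim_{E_0} \DdRO / [F:\Q_p] = 2$. So the role of the hypothesis on $\rho$ is really to ensure that this dimension count is compatible with the identifications that will be used in the rest of Section \ref{sssect} to construct the parameter $\L$.
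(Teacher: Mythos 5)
Your conditional argument is essentially the paper's own proof, and your reading of the hypothesis is the intended one: the superscript $\GF$ in the printed statement is a typo (as literally written the two sides would have different $E$-dimensions as soon as $F\neq\Q_p$), and the paper's proof works with $\Ddr{F}(\rho)\cong\DdRO\otimes_{E_0}E$, takes $\GF$-invariants, identifies the left-hand side with $\Ddr{\Q_p}(\rho)$, which has $E$-dimension $2$ because $\rho$ is de Rham as a $G_{\Q_p}$-representation, and then uses the $E_0$-linearity of the $\GF$-action so that forming invariants commutes with the scalar extension $E_0\to E$, giving $(\DdRO\otimes_{E_0}E)^{\GF}=\DdRO^{\GF}\otimes_{E_0}E$; with the typo corrected, it is this base-change step (rather than ``the action is trivial'') that justifies the last equality in your displayed chain. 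Your further observation is correct and is a genuinely different route, not the one taken by the paper: $\DdRO$ is free of rank $2$ over $F\otimes_{\Q_p}E_0$ with an $F$-semi-linear, $E_0$-linear action of $\GF$, so classical Galois descent (Hilbert 90 for the finite Galois extension $F/\Q_p$; compare part (1) of Proposition \ref{h90fam} applied with $M=M_2(E_0)$, whose proof does not use commutativity of $M$) gives $F\otimes_{\Q_p}\DdRO^{\GF}\isom\DdRO$, hence $\dim_{E_0}\DdRO^{\GF}=2$ unconditionally, so the hypothesis on $\rho$ is superfluous for the dimension count. What each approach buys: your descent argument yields a cleaner, hypothesis-free statement (so $V_{\tau}$ is well defined even before knowing $\E_{\tau}\neq\emptyset$), while the paper's argument stays within the $p$-adic Hodge-theoretic facts already in play, and its hypothesis is harmless in the application since the proof of Theorem \ref{Lgen} immediately reduces to the case $\E_{\tau}\neq\emptyset$; note that the Remark following the lemma only mentions the explicit computations of \cite{GM} as an alternative, not descent.
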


\begin{proof}
Let $D = \DdRO\otimes_{E_0}E$, with its action of $\GF$, which is
isomorphic to the $\phi$-module $\Ddr{F}(\rho)$ with its action of
$\GF$ for some potentially crystalline representation $\rho$. Then 
$\Ddr{F}(\rho)^{\GF} = \Ddr{\Q_p}(\rho)$ is an $E$-vector space of
dimension $2$, as $\rho$ is de Rham as a $G_{\Q_p}$-representation.
The action of $\GF$ on $\DdRO$ is $E_0$-linear. So the dimension of
its subspace of fixed elements is invariant by extension of scalars.
Hence the result.
\end{proof}

\begin{rema}
We could also make use of the results of \cite{GM}, which give an
explicit basis of the $E$-vector space $(\DdRO\otimes_{E_0}E)^{\GF}$ for some
extension $E$ of $E_0$. 
\end{rema}

We denote by $V_{\tau}$ the $E_0$-vector space of dimension $2$ given 
by Lemma \ref{dim2}.

Any potentially semi-stable representation of
extended type $\tau$ becomes crystalline when restricted to $G_F$. 
For any such representation $\rho$, with coefficients in an extension $E$
of $E_0$, $\Dcr{F}(\rho)$ is a
$(\phi,\GF)$-module over $F_0\otimes_{\Q_p} E$. We
have that $\Ddr{F}(\rho)$ is canonically isomorphic to $F \otimes_{F_0}
\Dcr{F}(\rho)$, and is endowed with an admissible filtration. Moreover,
$\Ddr{F}(\rho)^{\GF} = \Ddr{\Q_p}(\rho)$ is an $E$-vector space of
dimension $2$.

We also fix an integer $k \geq 2$, a continuous character
$\psi: G_{\Q_p} \to E_0^\times$. Note that there is no loss of generality
in considering only characters with values in $E_0$, as the
compatibility condition between type and determinant shows that if
$R^\psi(k,\tau,\bar\rho)$ is non-zero then $\psi$ takes its values in $E_0$.

Let $\E_{\tau}$ be the set of Galois
representations $\rho : G_{\Q_p} \to \GL_2(\bar{\Q}_p)$ that are
potentially crystalline of extended type $\tau$, Hodge-Tate weights
$(0,k-1)$, and determinant $\psi$. Then:

\begin{theo}
\label{Lgen}
There exists a map $\ltau : \E_{\tau} \to
\PP(V_{\tau}\otimes_{E_0}\bar{\Q}_p)$ such that two elements $\rho$, $\rho'$ of $\E_{\tau}$
are isomorphic if and only if $\ltau(\rho) = \ltau(\rho')$.
\end{theo}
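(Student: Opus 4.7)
The plan is to extract $\ltau(\rho)$ from the Hodge filtration on $\Dcr{F}(\rho)$, made canonical by an invariance argument resting on the absolute irreducibility of $\tau$.

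First, for $\rho \in \E_\tau$ defined over a finite extension $E/E_0$ (enlarged if needed so that everything is realized over $E$), the assumption that $\rho$ has extended type $\tau$ means that the Weil representation attached to $\Dcr{F}(\rho)$ is isomorphic to $\tau \otimes_{E_0} E$. Via the equivalence of Proposition~\ref{Weil} this lifts to an isomorphism of $(\phi,\GF)$-modules
$$
\iota_\rho \colon \Dcr{F}(\rho) \otimes_E \bar\Q_p \isom \DcrO \otimes_{E_0} \bar\Q_p.
$$
Because $\tau$ is absolutely irreducible, Schur's lemma applied through Proposition~\ref{Weil} shows that the automorphism group of the right hand side as a $(\phi,\GF)$-module is reduced to $\bar\Q_p^\times$, so $\iota_\rho$ is canonical up to a scalar.

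Next, I would transport the Hodge filtration on $\Ddr{F}(\rho) = F \otimes_{F_0} \Dcr{F}(\rho)$ through $\iota_\rho$ to obtain a $\GF$-stable filtration on $\DdRO \otimes_{E_0} \bar\Q_p$, whose only nontrivial step (for $1 \leq i \leq k-1$) is a rank one free $F \otimes_{\Q_p}\bar\Q_p$-submodule, since the Hodge--Tate weights are $(0,k-1)$. Taking $\GF$-invariants---which commutes with $\otimes_{E_0}\bar\Q_p$ as $\GF$ is finite and acts $E_0$-linearly---produces an $\bar\Q_p$-line $L_\rho \subset V_\tau \otimes_{E_0} \bar\Q_p$. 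Rescaling $\iota_\rho$ rescales $L_\rho$ by the same scalar, so its projective class $\ltau(\rho) := [L_\rho] \in \P(V_\tau \otimes_{E_0} \bar\Q_p)$ depends only on $\rho$.

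For the characterization, the direction $\rho \cong \rho'$ implies $\ltau(\rho) = \ltau(\rho')$ is immediate from functoriality of $\Dcr{F}$. Conversely, if $\ltau(\rho) = \ltau(\rho')$, rescaling $\iota_{\rho'}$ appropriately makes the composition $\iota_{\rho'}^{-1} \circ \iota_\rho$ carry the Hodge line of $\rho$ onto that of $\rho'$, yielding an isomorphism of filtered $(\phi,\GF)$-modules between $\Dcr{F}(\rho) \otimes \bar\Q_p$ and $\Dcr{F}(\rho') \otimes \bar\Q_p$. (In the supersingular case the monodromy operator $N$ is zero on both sides, as noted after Definition~\ref{deftype}, so this is the full structure to match.) Since $\Dst{F}$ is fully faithful on potentially semi-stable representations that become semi-stable over $F$, this descends to an isomorphism $\rho \cong \rho'$ of Galois representations. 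The main subtlety, in my view, is the well-definedness of $L_\rho$ up to scalar: this is exactly where the absolute irreducibility of $\tau$ enters, through Schur's lemma combined with Proposition~\ref{Weil}; the remaining steps are routine bookkeeping in the equivalences between Weil representations, $(\phi,\GF)$-modules, and admissible filtered modules with descent data.
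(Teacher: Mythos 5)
Your construction follows the paper's proof essentially step by step: you rigidify the situation by an isomorphism $\WD(\rho)\simeq\tau\otimes_{E_0}\bar\Q_p$, unique up to scalar by absolute irreducibility of $\tau$, transport the unique nontrivial step of the Hodge filtration of $\Ddr{F}(\rho)$ to $\DdRO\otimes_{E_0}\bar\Q_p$, and take the induced line in $V_{\tau}\otimes_{E_0}\bar\Q_p$; your verification of the two directions of the equivalence is in fact spelled out more explicitly than in the paper, and is fine.

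There is, however, a gap at the descent step, which is exactly the point the paper handles with Hilbert 90. The transported Hodge line is a $\GF$-stable, free rank-one $F\otimes_{\Q_p}\bar\Q_p$-submodule of $\DdRO\otimes_{E_0}\bar\Q_p$ on which $\GF$ acts $F$-semilinearly. What you need is that its $\GF$-invariants form a one-dimensional $\bar\Q_p$-subspace of $V_{\tau}\otimes_{E_0}\bar\Q_p$ \emph{which generates the line over} $F\otimes_{\Q_p}\bar\Q_p$; the second half is also used implicitly in your converse direction, where you must recover the filtration from $\ltau(\rho)$. Your stated justification, that taking $\GF$-invariants commutes with $-\otimes_{E_0}\bar\Q_p$, only identifies $(\DdRO\otimes_{E_0}\bar\Q_p)^{\GF}$ with $V_{\tau}\otimes_{E_0}\bar\Q_p$; it says nothing about the invariants of the line, which is not presented to you as the base change of anything defined over $E_0$, and the relevant descent is along $F/\Q_p$, not along $\bar\Q_p/E_0$. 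The missing input is Galois descent for a semilinear $\GF$-action on a rank-one $F\otimes_{\Q_p}\bar\Q_p$-module: choosing a generator $f$ of the line, the map $g\mapsto g(f)/f$ is a cocycle with values in $(F\otimes_{\Q_p}E_x)^\times$ (with $E_x/E_0$ finite containing a field of definition), and its trivialization, i.e.\ the vanishing of $H^1(\GF,(F\otimes_{\Q_p}E_x)^\times)$, produces a $\GF$-invariant generator; this is precisely Proposition \ref{h90fam}(1), which the paper invokes at this point. (Alternatively one could argue that the filtration on $\Ddr{F}(\rho)$ is the base change along $F/\Q_p$ of the filtration on $\Ddr{\Q_p}(\rho)$ because $\rho$ is de Rham already as a $G_{\Q_p}$-representation, but some such argument must be supplied; Schur's lemma and the irreducibility of $\tau$, which you correctly use to make the identification canonical up to scalar, do not address this descent.)
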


\begin{proof}
We can assume that $\E_\tau$ is not empty, otherwise the statement is
trivially true.
Let $\rho : G_{\Q_p} \to \GL_2(\bar\Q_p)$ be an element of $\E_{\tau}$. Then
$\WD(\rho)$, the Weil-Deligne representation attached to $\rho$, is
actually a Weil representation as $\rho$ is potentially crystalline. By
definition, $\WD(\rho)$ is isomorphic to $\tau\otimes_{E_0}\bar\Q_p$ as a
representation of $W_{\Q_p}$. We fix such an isomorphism $u$, it is
unique up to a scalar by the irreduciblity of $\tau$. Then $u$ gives us
an isomorphism between 
$\Dcr{F}(\rho)$ and $\DcrO\otimes_{E_0}\bar\Q_p$ 
as $\phi$-modules with an action of $\GF$, by Proposition \ref{Weil}.
This also gives us an isomorphism, that we still call $u$, between
$\Ddr{F}(\rho)$ and $\DdRO\otimes_{E_0}\bar\Q_p$. 

The isomorphism class of $\rho$ is entirely determined by the filtration
on $\Ddr{F}(\rho)$. As the Hodge-Tate weights of $\rho$ are known, the
only necessary information is the $F\otimes_{\Q_p}\bar\Q_p$-line corresponding
to the non-trivial steps of the filtration. This line is invariant by the
action of $\GF$. By the isomorphism $u$, this gives rise to a
$\GF$-invariant $F\otimes_{\Q_p}\bar\Q_p$-line in
$\DdRO\otimes_{E_0}\bar\Q_p$. This line is generated by an element of
$\DdRO\otimes_{E_0}\bar\Q_p$ that is invariant by $\GF$ by (1) of Proposition
\ref{h90fam}, hence by an element of $\DdRO^{\GF}\otimes_{E_0}\bar\Q_p$.

We define $\ltau(\rho)\in \PP(\DdRO^{\GF}\otimes_{E_0}\bar\Q_p)$ 
to be the line generated by this element in 
$\DdRO^{\GF}\otimes_{E_0}\bar\Q_p$.
This does not depend on the choices made, as $u$ is unique up to
multiplication by a scalar, and the invariant element generating the line
is well-defined up to multiplication by a scalar.
\end{proof}

\subsubsection{Making it into an analytic function}

Let $\X$ be the rigid analytic space corresponding to the deformation
ring $R^{\psi}(k,\tau,\bar\rho)$ for some representation $\bar\rho$ with
trivial endomorphisms and some supercuspidal extended type $\tau$.
Let $E = E(k,\tau,\bar\rho,\psi)$ be the field $E_0$ defined above.

\begin{prop}
\label{Lgenan}
There exists a rigid analytic map $\ltau : \X \to \PP(V_{\tau})$, defined over
$E$, such that for all $x$, $\ltau(x)$ is the $\ltau$-invariant of $\rho_x$
as defined in Theorem \ref{Lgen}.
\end{prop}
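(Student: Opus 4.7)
The plan is to realize pointwise construction of $\ltau$ from Theorem~\ref{Lgen} in families. First, Theorem~\ref{modulest} applied to $\X = \X^{\psi}(k,\tau,\bar\rho)$ produces a projective $(\phi,\GF)$-module $D$ of rank $2$ over $F_0\otimes_{\Q_p}\O_\X$ (with $N=0$, since extended type~(4) is potentially crystalline), and Theorem~\ref{moduledR} endows $F\otimes_{F_0}D$ with a locally free $F\otimes_{\Q_p}\O_\X$-submodule $L$ of rank $1$, stable under $\GF$, specialising at each closed point $x$ to the non-trivial step of the Hodge filtration of $\Ddr{F}(\rho_x)$.

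Next I would trivialise the $(\phi,\GF)$-module structure. By Proposition~\ref{Weil}, $D$ corresponds (after passing to a Zariski cover on which it is free) to a Weil representation $W(D)$ of $W_{\Q_p}$ on a free $\O_\X$-module of rank $2$. At every closed point $x$, $W(D)_x$ is isomorphic to $\tau\otimes_{E_0}E_x$, so the traces of $W(D)$ and those of $\tau\otimes_{E_0}\O_\X$ specialise to the same value at every maximal ideal; since $\X$ is reduced, they are equal. Absolute irreducibility of $\tau$ together with Theorem~\ref{familyrepr} then yields, Zariski-locally on $\X$, an isomorphism of Weil representations, equivalently an isomorphism of $(\phi,\GF)$-modules $u : D \isom \DcrO\otimes_{E_0}\O_\X$; by Schur's lemma in families $u$ is unique up to multiplication by an element of $\O_\X^{\times}$.

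Via $u$ the line $L$ transports to a $\GF$-stable $F\otimes_{\Q_p}\O_\X$-line $L'\subset \DdRO\otimes_{E_0}\O_\X$ which is locally free of rank $1$. The obstruction to choosing a $\GF$-invariant generator of $L'$ lies in $H^1(\GF, (F\otimes_{\Q_p}\O_\X)^{\times})$. Here Proposition~\ref{h90fam}(2), applied with $K=\Q_p$, $L=F$, $A=\O_\X$ (the assumption on maximal ideals holds by~\ref{qaffalg}, the residue fields being finite over $E$), guarantees that this cohomology class is killed on a further Zariski cover. After refining, $L'$ admits a generator $v \in (\DdRO\otimes_{E_0}\O_\X)^{\GF} = V_{\tau}\otimes_{E_0}\O_\X$ (the identification uses that the $\GF$-action is $E_0$-linear, and flat base change commutes with invariants). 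The generator $v$ is unique up to an element of $\O_\X^{\times}$, so the line $\O_\X\cdot v\subset V_{\tau}\otimes_{E_0}\O_\X$ is independent of the choices made in the two trivialisation steps and defines a local rigid analytic morphism $\X\to\P(V_{\tau})$ over $E=E_0$. These local morphisms agree on overlaps by canonicity, hence glue to the required global $\ltau$, and the construction specialises at each closed point $x$ to $\ltau(\rho_x)$ by the very definition of Theorem~\ref{Lgen}.

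The main obstacle is the combination of the two trivialisations: one of the underlying $(\phi,\GF)$-module structure, relying on the family version of Rouquier's theorem via Proposition~\ref{familyrepr}, the other of a semi-linear $\GF$-line, relying on the family Hilbert~90 of Proposition~\ref{h90fam}(2). Both hold only locally on $\X$, but because each is canonical up to a scalar the resulting point of $\P(V_{\tau})$ is independent of the choices, which is exactly what is needed to glue the local data into a globally defined rigid analytic map.
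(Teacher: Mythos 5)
Your proposal is correct and follows essentially the same route as the paper: apply Theorems \ref{modulest} and \ref{moduledR} on an (admissible/Zariski) cover, trivialise the $(\phi,\GF)$-module via Proposition \ref{Weil} and Theorem \ref{familyrepr}, kill the cocycle of the filtration line with Proposition \ref{h90fam}(2), and read off the invariant generator as a map to $\P(V_\tau)$. The only cosmetic difference is that you glue by uniqueness-up-to-scalar of the trivialisations, while the paper glues by noting the value at each point is forced to be $\ltau(\rho_x)$; both are fine.
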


By fixing a basis of the $2$-dimensional $E$-vector space $V_{\tau}$, we
then get a map $\ltau: \X \to \PP^1_E$, which plays the role of $\lambda$
in Theorem \ref{parameter}.

\begin{proof}
It is enough to do this on an admissible covering of $\X$ by affinoid
subspaces. So we can assume that $\X = \Max(A)$ for some affinoid algebra
$A$, and replace $\X$ by an admissible covering by affinoid subspaces as
needed.

Let $\Dcr{F}(A)$ be the $(\phi,\GF)$-module
corresponding to the representation $\rho$. We can assume that $\Dcr{}(A)$ is a
free $A$-module of rank $2$. Using the correspondence between
$(\phi,\GF)$-modules and representations of the Weil
group as in Section \ref{Weil}, and Theorem \ref{familyrepr}, we can assume
that $\Dcr{F}(A) = \DcrO^F\otimes_{E}A$ as a $(\phi,\GF)$-module over
$F_0\otimes_{\Q_p}A$.

Consider now $\Ddr{F}(A)$. It is isomorphic to $F\otimes_{F_0}\Dcr{F}(A)$,
so to $\DdRO^F\otimes_{E}A$ as a $\phi$-module with action of $\GF$.
In particular, it is trivial as an $F\otimes_{\Q_p}A$-module with an
action of $\GF$. Also, it has a basis as an $A$-module given by the
chosen basis of $\DdRO^F$.
The module
$\Ddr{F}(A)$ contains a locally free sub-$F\otimes_{Q_p}A$-module $\f$ of
rank $1$, such that $\Ddr{F}(A)/\f$ is also locally free of rank $1$, that
gives at each point $x$ the filtration on $\Ddr{F}(\rho_x)$. We can assume
that $\f$ and $\Ddr{F}(A)$ are free of rank $1$ over $F\otimes_{\Q_p}A$. 
Moreover, this submodule is invariant by the action of $\GF$. Consider
a basis $f$ of $\f$. Then the action of $\GF$ on $f$ gives rise to an
element $c \in H^1(\GF,(F\otimes_{\Q_p}A)^\times)$. Using Theorem
\ref{h90fam} and replacing $\Max(A)$ by an admissible covering if
necessary, we can assume that $f$ itself is fixed by the action of
$\GF$. 

So we get that $f$ is in $\Ddr{F}(A)^{\GF}$, which is canonically
isomorphic to $\DdRO^{\Q_p}\otimes_EA$. So $f$ defines an analytic
map over $\Max(A)$ with values in $\PP(\DdRO^{\Q_p}) = \PP(V_\tau)$, which is what we wanted.

\end{proof}

\end{document}